\documentclass[12pt]{article}

\usepackage[margin=1.25in]{geometry}

\usepackage{amssymb,amsmath}
\usepackage{amsthm}
\usepackage{hyperref}
\usepackage{mathrsfs}
\usepackage{textcomp}
\usepackage{enumerate}
\usepackage{framed}
\usepackage{ulem}
\usepackage{color,xcolor}

\usepackage{soul}
\soulregister\cite7 
\soulregister\citep7 
\soulregister\citet7 
\soulregister\ref7 
\soulregister\pageref7 

\hypersetup{
    colorlinks,%
    citecolor=blue,%
    filecolor=blue,%
    linkcolor=blue,%
    urlcolor=black
}

\newtheorem{theorem}{Theorem}[section]
\newtheorem{definition}{Definition}[section]
\newtheorem{lemma}{Lemma}[section]
\newtheorem{remark}{Remark}[section]
\newtheorem{proposition}{Proposition}[section]
\newtheorem{corollary}{Corollary}[section]

\numberwithin{equation}{section}

\allowdisplaybreaks[3]
\makeatletter

\newdimen\bibspace
\setlength\bibspace{0pt}   
\renewenvironment{thebibliography}[1]{%
 \section*{\refname 
       \@mkboth{\MakeUppercase\refname}{\MakeUppercase\refname}}%
     \list{\@biblabel{\@arabic\c@enumiv}}%
          {\settowidth\labelwidth{\@biblabel{#1}}%
           \leftmargin\labelwidth
           \advance\leftmargin\labelsep
           \itemsep\bibspace
           \parsep\z@skip     %
           \@openbib@code
           \usecounter{enumiv}%
           \let\p@enumiv\@empty
           \renewcommand\theenumiv{\@arabic\c@enumiv}}%
     \sloppy\clubpenalty4000\widowpenalty4000%
     \sfcode`\.\@m}
    {\def\@noitemerr
      {\@latex@warning{Empty `thebibliography' environment}}%
     \endlist}

\makeatother

           \newcommand{\ud}{\mathrm{d}}
\newcommand{\be}{\begin{equation}}      \newcommand{\ee}{\end{equation}}

              \newcommand{\sn}{\mathbb{S}^n}

\newcommand{\alp}{\ensuremath{\alpha}}
\newcommand{\bt}{\ensuremath{\beta}}
\newcommand{\wdt}{\ensuremath{\widetilde}}
\newcommand{\ub}{\ensuremath{\mathrm{B}}}

\begin{document}

\title{Compactness and existence results of the prescribing fractional $Q$-curvatures problem
 on $\mathbb{S}^n$}

\author{ {\sc Yan Li}\,, {\sc Zhongwei Tang}\thanks{The research was supported by National Science Foundation of China(12071036,12126306)}\, and {\sc Ning Zhou} \\
\small School of Mathematical Sciences, \\
\small Laboratory of Mathematics and Complex Systems, MOE,\\
\small Beijing Normal University, Beijing, 100875, P.R. of China}

\date{}

\maketitle

\begin{abstract}
This paper is devoted to establishing the compactness and existence results
of the solutions to the prescribing fractional $Q$-curvatures problem of order $2\sigma$
on $n$-dimensional standard sphere
when  $ n-2\sigma=2$, $\sigma=1+m/2,$ $m\in \mathbb{N}_{+}.$
The compactness results are novel and optimal.
In addition, we prove a degree-counting formula of all solutions
to achieve the existence.
From our results, we can know where blow up occur.
Furthermore, the sequence of solutions
that blow up precisely at any finite distinct
location can be constructed.
It is worth noting that our results include the case of multiple harmonic.
\end{abstract}

{\noindent\bf Mathematics Subject Classification (2020)}: 35R09,35B44,35J35

\section{Introduction}
The study of the prescribing scalar curvature problem on
Riemannian manifolds, which dates back to \cite{kwj,kwa,kwa1},
has received a lot of attention. In the case of $n$-dimensional standard
sphere $(\mathbb{S}^{n},g_{0}),$ this is known as Nirenberg problem.
The classical Nirenberg problem is as follows:
which function $K$ on $(\mathbb{S}^{n},g_{0})$
is the scalar curvature
(Gauss curvature in dimension $n=2$)
of a metric $g$ that is conformal to $g_{0}$?
If we denote $g=e^{2v} g_{0}$
in the two dimensional case and
$g=v^{\frac{4}{n-2}}g_{0}$
in the $n$ $(n\geq 3)$ dimensional case,
 this problem is equivalent to solving the following nonlinear elliptic equations:
\be\label{1.69}
-\Delta_{g_0}v+1 =K e^{2 v} \quad \text { on }\, \mathbb{S}^{2},\\
\ee
and
\be\label{1.70}
-\Delta_{g_0}v+c(n)R_0 v=c(n) K v^{\frac{n+2}{n-2}} \quad \text { on }\, \mathbb{S}^{n},\quad n \geq 3,
\ee
where $\Delta_{g_{0}}$ is the Laplace-Beltrami operator, $c(n)=\frac{n-2}{4(n-1)},$
 $R_0=n(n-1)$ is the scalar curvature associated to $g_0.$

A first answer to   the Nirenberg problem was given by
Koutroufiotis \cite{Kou}, which established  the existence
of the solutions to \eqref{1.69}
 by assuming that $K$ is
an antipodally symmetric function which close to 1.
Morse \cite{MOn1973} proved the existence of antipodally
symmetric solutions to \eqref{1.69}  for all antipodally
symmetric functions $K$ which are positive somewhere.
Chang and Yang \cite{CYPrescribing1987}
further extended this existence result to the case of
$K$ without making any symmetry assumptions.
Moreover, Bahri and Coron \cite{BC} presented a sufficient condition
for the existence of solutions to  \eqref{1.70} in
dimension $n=3$.
As for  the compactness of all solutions in dimensions $n=2,3,$
 Chang et al. \cite{CGY}, Han \cite{Han}, and
Schoen and Zhang \cite{SZPrescribed1996} proved
that  a sequence of solutions cannot blow up at
more than one point.
Li \cite{LYYJ, LYY}  established the compactness
and existence results for \eqref{1.70}.
In these two papers, the compactness result is very
different from the previous low-dimensional case.
In fact, when $n=2$ or $n=3,$ a sequence of solutions
cannot blow up at more than one point.
However, if $n>3$, there could be blow up at many points,
which considerably complicates the study of the problem.
There have been many papers on the problem and related ones, see e.g.,
\cite{cl,cd,em,hl,ji,msa,msj,wy}.

The linear operators defined on left-hand  side
of \eqref{1.69} and \eqref{1.70} are called the conformal Laplacian
associated to the metric $g_{0}$ and are denoted as $P_{1}^{g_{0}}.$
For any Riemannian manifold $(M,g),$
let $R_{g}$ be the scalar curvature of $(M,g),$
and the conformal Laplacian be defined as $P_{1}^{g}=-\Delta_{g}+\frac{n-2}{4(n-1)}R_{g}.$
The Paneitz operator $P_{2}^{g}$ is
another conformal invariant operator, which was discovered by Paneitz \cite{Pa}.
Graham et al. \cite{GJMS} constructed a sequence of conformally
covariant elliptic operators $\{P_{k}^{g}\}$
 on Riemannian manifolds for all positive integers $k$ if
$n$ is odd, and for $k\in\{ 1,\cdots,n/2\}$  if $n$ is even,
which are called GJMS operators.
Juhl \cite{ju1,ju2}  found  an explicit formula and a recursive formula
for GJMS operators and $Q$-curvatures (see also Fefferman and
 Graham \cite{FGJuhl2013}).
 Graham and Zworski \cite{GZ} presented a family
  of fractional order conformally invariant operators
  $P_{\sigma}^{g}$  of non-integer order
 $\sigma\in (0,n/2)$
  on the conformal infinity of asymptotically hyperbolic manifolds.
 In addition, Chang and Gonz\'alez \cite{CG}
 showed that the operator $P_{\sigma}^{g},$
 $\sigma\in (0,n/2)$ can  be defined as a Dirichlet-to-Neumann operator
  of a conformally compact
Einstein manifold by using localization method in \cite{CS},
they also provided some new interpretations and properties of those fractional
operators and their associated fractional $Q$-curvatures.

Regarded as a generalization of Nirenberg problem,
the prescribing fractional $Q$-curvature problem
 of order $2\sigma$ on $\mathbb{S}^{n}$
 can be described as:
which function $K$ on $\mathbb{S}^n$ is the fractional $Q$-curvature of a
metric $g$ on $\mathbb{S}^n$  conformally equivalent to $g_0?$
If we denote $g=v^{4/(n-2\sigma)}g_{0},$
this problem can be represented  as finding the solution of the following
nonlinear equation with critical exponent:
\be\label{1.1}
P_{\sigma}^{g_{0}}(v)=c(n, \sigma)
K v^{\frac{n+2 \sigma}{n-2 \sigma}} \quad \text { on }\, \mathbb{S}^{n},
\ee
where $n\geq 2,$ $0<\sigma<n/2,$
$c(n,\sigma)=\Gamma(\frac{n}{2}+\sigma)/\Gamma(\frac{n}{2}-\sigma),$
 $\Gamma$ is the Gamma function,  $K$ is a function defined on $\mathbb{S}^n,$
 $P_{\sigma}^{g_{0}}$ is an intertwining operator of   $2\sigma$-order:
$$
P_{\sigma}^{g_{0}}=\frac{\Gamma(B+\frac{1}{2}+\sigma)}{\Gamma(B+\frac{1}{2}-\sigma)}, \quad B=\sqrt{-\Delta_{g_{0}}
+\Big(\frac{n-1}{2}\Big)^{2}}.
$$
 In what follows, $P_{\sigma}^{g_{0}}$ is simply written  as $P_{\sigma}.$
It can be viewed as the pull back operator of the $\sigma$
power of the Laplacian $(-\Delta)^{\sigma}$ on $\mathbb{R}^{n}$
via the stereographic projection:
$$
(P_{\sigma}(v)) \circ F=|J_{F}|^{-\frac{n+2 \sigma}{2 n}}(-\Delta)^{\sigma}(|J_{F}|^{\frac{n-2 \sigma}{2 n}}(v \circ F)) \quad \text { for } \, v \in C^{2\sigma}(\mathbb{S}^{n}),
$$
where $F$ is the inverse of the stereographic projection and $|J_{F}|$
is the determinant of the Jacobian of $F$.
In addition, the Green function of $P_{\sigma}$ is the spherical
Riesz potential, i.e.,
\be\label{2.27}
P_{\sigma}^{-1}f(\xi)=c_{n,\sigma}
\int_{\sn}\frac{f(\zeta)}{|\xi-\zeta|^{n-2\sigma}}\ud vol_{g_{\sn}}(\zeta)
\quad \text{ for }\, f\in L^{p}(\sn),
\ee
where $c_{n,\sigma}
=\frac{\Gamma(\frac{n-2\sigma}{2})}{2^{2\sigma}\pi^{n/2}\Gamma(\sigma)},$
$p>1,$ and $|\cdot|$ is the Euclidean distance in $\mathbb{R}^{n+1}.$

Many research have been conducted on the
fractional operators $P_{\sigma}^{g}$ and their
associated fractional $Q$-curvature,
 for instance, see \cite{AC,Chti1,clz,Chti2,CROn2011,
DMAPrescribingI2002,DMAPrescribingII2002,ERMountain2002,
JLXOn2014,JLXOn2015,jlxm,Liu_cpaa}.
The flatness of the prescribing fractional $Q$-curvature function $K$
plays a crucial role in the study of this problem.
We begin with the definition of the $\beta$-flatness condition that characterizes flatness.

{\bf $\beta$-flatness condition:}
Let $ K\in C^{1}(\mathbb{S}^n)$
($K\in C^{1,1}(\mathbb{S}^{n})$ if $0<\sigma\leq1/2$)
 be a positive function and $\beta $ is
a positive constant,
we say that $K$ satisfies the $\beta$-flatness condition if for
every critical point $\xi_{0}$ of $K,$ in some geodesic normal
coordinates $\{y_{1}, \cdots, y_{n}\}$ centered at $\xi_{0}$,
there exists a small neighborhood $\mathscr{O}$ of $0$  and
$a_{j}(\xi_{0})\ne 0,$ $\sum_{j=1}^{n}a_j(\xi_{0})\neq 0,$
such that
$$
K(y)=K(0)+\sum_{j=1}^{n} a_{j}(\xi_{0})|y_{j}|^{\beta}+R(y) \quad \text { in } \mathscr{O},
$$
where $\sum_{s=0}^{[\beta]}
|\nabla^s R(y)||y|^{-\bt+s}\rightarrow 0$ as $y\rightarrow 0,$
here $\nabla^{s}$ denotes all possible derivatives of order $s$ and
$[\beta]$ is the integer part of $\beta.$
 We call $\beta$ the  flatness order.

For $\sigma\in(0,1)$ and $\beta\in(n-2\sigma, n),$ Jin et al. \cite{JLXOn2014, JLXOn2015}
proved the existence of the solutions to \eqref{1.1}
and derived some compactness properties  when $K$
satisfies the $\beta$-flatness condition by using
the approach based on approximation of the solutions
to \eqref{1.1} by a blow up
subcritical method.
For $\sigma\in(0,n/2)$ and $\beta\in(n-2\sigma,n),$
Jin et al. \cite{jlxm} developed a unified approach to establish blow up profiles,
compactness and existence of positive solutions to \eqref{1.1} when $K$ satisfies
$\beta$-flatness condition by making use of integral representations.
Since their conclusions are valid
only when the flatness order
$n-2\sigma<\beta<n,$ some very interesting functions $K$ are excluded.
In fact, note that an important class of functions,
which is worth including in the results of existence and compactness
for \eqref{1.1},
are the Morse functions with only non-degenerate critical points.
Such functions  satisfy the $2$-flatness condition.

Existence results of the solutions to \eqref{1.1}
were given  when $\beta\in (1,n-2\sigma]$ by Abdelhedi et al. \cite{Chti1},
 and  when $\beta\in [n-2\sigma, n)$ by Chtioui  and Abdelhedi  \cite{Chti2}.
Under a so-called ``non-degenerate condition'',
Khadijah and Chtioui \cite{kc} studied the lack of compactness and provided
the existence results for \eqref{1.1} when $\beta=n-2\sigma=2,$ $\sigma\in (0,n/2).$

However, under the  assumption of the flatness order $\beta=n-2\sigma$
of prescribing curvature function $K,$
the precise  compactness results of the solutions to \eqref{1.1} are unknown.
When $\sigma=1$ and $n=2\sigma+2=4,$
 Li \cite{LYY} obtained  the optimal compactness and
 a degree-counting formula of the solutions to \eqref{1.70}
when $K$ is some special class of functions satisfying condition $2=n-2\sigma$-flatness condition.
Therefore, a quite natural question arises: can we establish
the optimal compactness results and  provide
a degree-counting formula of the solutions to \eqref{1.1}
when the curvature function $K$ is specified as a
special function satisfying the $\beta$-flatness condition with $\beta=n-2\sigma=2$?
The main target of this article is to give an affirmative answer to this question.

In the present paper, we are interested to the prescribing fractional $Q$-curvature problem
\eqref{1.1}, $n=2\sigma+2,$ $\sigma=1+m/2,$ $m\in \mathbb{N}_{+}.$
Our aim is to establish the optimal compactness and existence results
 of the solutions, when the
prescribing curvature function $K$ is some special function
satisfying $2=n-2\sigma$-flatness condition.
In order to obtain an existence result, we will prove a degree-counting
formula of the solutions to \eqref{1.1}.
This counting formula, together with the compactness results completely describes
where blow up occur. Especially, from our results,
we can construct a sequence of solutions to \eqref{1.1}
 that blow up precisely at these points for any finite distinct points
on $\mathbb{S}^n.$

First of all, Eq. \eqref{1.1} is not always solvable.
Indeed, we have the Kazdan-Warner type obstruction:
for any conformal Killing vector field $X$ on $\mathbb{S}^{n}$, there holds
$$
\int_{\mathbb{S}^{n}}(\nabla_{X} K) v^{\frac{2 n}{n-2 \sigma}} \mathrm{d} v o l_{g_{\mathbb{S}^{n}}}=0
$$
for any solution $v$ of \eqref{1.1},
see \cite{be,xu}.

Before state our results, we introduce some definitions and notations.

For $K \in C^{2}(\mathbb{S}^{n}),$
we introduce the following notation:
\be\label{1.2}
\begin{aligned}
\mathscr{K}&=\{q \in \mathbb{S}^{n}: \nabla_{g_{0}} K(q)=0\}, \\
\mathscr{K}^{+}&=\{q \in \mathbb{S}^{n}: \nabla_{g_0} K(q)=0,\,
\Delta_{g_0} K(q)>0\}, \\
\mathscr{K}^{-}&=\{q \in \mathbb{S}^{n}: \nabla_{g_{0}} K(q)=0,\, \Delta_{g_{0}} K(q)<0\},\\ \mathscr{M}_{K}&=\{v \in C^{2\sigma}(\mathbb{S}^{n}): v \text { satisfies \eqref{1.1}} \}.
\end{aligned}
\ee

For any $k$ $(k \geq 1)$ distinct points $q^{(1)}, \cdots, q^{(k)}
\in \mathscr{K} \backslash \mathscr{K}^{+},$ the
 $k \times k$ symmetric matrix $M=(M(q^{(1)}, \cdots, q^{(k)}))$ is defined by
\be\label{M}
\begin{aligned}
M_{ii}&=-
 \frac{\Delta_{g_{0}}K(q^{(i)})}
{K(q^{(i)})^{{n}/{2\sigma}}},
\\
M_{i j}&
=-n(n-1)\frac{G_{q^{(i)}}(q^{(j)})}
{(K(q^{(i)}) K(q^{(j)}))^{{1}/{2\sigma}}},\quad i\ne j,
\end{aligned}
\ee
where
\be\label{1.8}
G_{q^{(i)}}(q^{(j)})
=\frac{1}{1-\cos d(q^{(i)},q^{(j)})}
\ee
 is the Green's function of $P_{\sigma}$ on $\mathbb{S}^n,$
 and  $d(\cdot\,,\,\cdot)$ denotes the geodesic distance.
Let $\mu(M)$ denote the smallest eigenvalue of $M$, and
when $k=1,$
 $$
 \mu(M)=M=-\frac{\Delta_{g_0}K(q^{(1)})}{K(q^{(1)})^{{n}/{2\sigma}}}.
 $$

In what follows, we define
\begin{align}
\begin{aligned}\label{1.80}
C^{2}(\mathbb{S}^{n})^{*}:=
&\{ K \in C^{2}(\mathbb{S}^{n}): K>0\, \text{ on } \,\mathbb{S}^{n}, \text{ and }\\
&\quad K \text{ has only non-degenerate critical points}\},
\end{aligned}
\end{align}
and
\begin{align}\label{1.79}
\begin{aligned}
\mathscr{A}=&\left\{K\in C^{2}(\mathbb{S}^{n})^{*}:
\Delta_{g_{0}} K \neq 0 \text { on } \mathscr{K}, \text{ and }\right.\\
&\left.\quad \mu(M(q^{(1)}, \cdots, q^{(k)})) \neq 0, \forall\, q^{(1)},
\cdots, q^{(k)} \in \mathscr{K}^{-}, k \geq 2\right\}.
\end{aligned}
\end{align}
We can observe  that $\mathscr{A}$ is open in $C^{2}(\mathbb{S}^{n})$ and
   $\mathscr{A}$ is dense in $C^2(\mathbb{S}^n)^*$
with respect to the $C^{2}(\mathbb{S}^{n})$ norm.

\begin{remark}
In this paper, we mainly establish the compactness and existence results for
\eqref{1.1} when $K\in \mathscr{A}.$
It is worth noting that the sign of the smallest eigenvalue of $M(q^{(1)},\cdots, q^{(k)})$
plays a key  role in counting formula of all sloutions and compactness results.
\end{remark}

We will introduce an integer-valued continuous function Index: $\mathscr{A}\rightarrow\mathbb{Z},$
which has an explicit formula for $K\in \mathscr{A}$ being a Morse function.

\begin{definition}\label{defn1.2}
We define $\mathrm{Index}$: $\mathscr{A}\rightarrow \mathbb{Z}$ by the following properties:
\begin{enumerate}
\item [(i)] For any Morse function $K\in \mathscr{A}$ with $\mathscr{K}^{-}=\{ q^{(1)},\cdots, q^{(s)}\},$
we define
\be\label{index}
\mathrm{Index}(K)=
-1+\sum_{k=1}^{s}
\sum_{\substack{\mu(M(q^{(i_1)},\cdots, q^{(i_{k})}))>0,\\
1\leq i_{1}< \cdots< i_k \leq s}}(-1)^{k-1+\sum_{j=1}^{k}i(q^{(i_{j})})},
\ee
where $i(q^{(i_{j})})$ denotes the Morse index of $K$ at $q^{(i_{j})}.$

\item [(ii)] $\mathrm{Index}:$ $\mathscr{A}\rightarrow \mathbb{Z}$ is continuous with respect to the $C^{2}(\mathbb{S}^n)$ norm of $\mathscr{A}$ and hence is locally constant.
\end{enumerate}
\end{definition}

\begin{remark}
The existence and  uniqueness of the $\mathrm{Index}$ mapping  follows
from Theorem \ref{thm2.1} and the proof of Theorem \ref{thm4} below.
\end{remark}

Our first result is about the compactness of the solutions
when $K\in \mathscr{A},$
 which is:
\begin{theorem}\label{thm2.1}
Let $\sigma=1+m/2,$ $m\in \mathbb{N}_{+}$ and
$n=2\sigma+2.$ Let $\mathscr{A}$ be as in \eqref{1.79} and $K\in \mathscr{A}.$  Then
for any $\alpha\in(0,1),$
there exists constants
$\delta=\delta(K)>0$ and $C=C(K)>0,$ such that for any $\mathcal{K}\in C^{2}(\mathbb{S}^{n})$
satisfying $\|\mathcal{K}-K\|_{C^{2}(\mathbb{S}^{n})}<\delta,$
 and any $v\in \mathscr{M}_{\mathcal{K}},$
 we have
\begin{align}
v\in C^{2\sigma,\alpha}(\mathbb{S}^n):
1/C<v<C,\, \|v\|_{C^{2\sigma,\alpha}(\mathbb{S}^n)}<C,
\end{align}
 where $\mathscr{M}_{\mathcal{K}}$ is as in \eqref{1.2}.
\end{theorem}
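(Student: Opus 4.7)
The plan is to argue by contradiction: assume the conclusion fails, so there exist $\mathcal{K}_i \to K$ in $C^2(\mathbb{S}^n)$ and $v_i \in \mathscr{M}_{\mathcal{K}_i}$ whose $L^\infty$ norms tend to infinity. The heart of the matter is the a priori upper bound $\max v_i \le C$; once this is established, a positive lower bound $\min v_i \ge 1/C$ follows because the integral representation \eqref{2.27} has a strictly positive kernel on $\mathbb{S}^n$ (yielding a global Harnack-type inequality), and the $C^{2\sigma,\alpha}$-bound then follows from Schauder-type estimates for $P_\sigma$ together with $\mathcal{K}_i \to K$ in $C^2$. Thus everything reduces to ruling out blow-up.

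First I would carry out a standard blow-up analysis in the integral-equation framework, following the strategy of Jin-Li-Xiong-Mei \cite{jlxm} adapted from the regime $\beta>n-2\sigma$ to our resonant case $\beta=n-2\sigma=2$. Selecting local maxima $q_i^{(j)}$ of $v_i$ and rescaling by $v_i(q_i^{(j)})^{-2/(n-2\sigma)}$, each rescaled sequence converges to the standard bubble solving $(-\Delta)^\sigma U = c(n,\sigma)K(q^{(j)})U^{(n+2\sigma)/(n-2\sigma)}$ on $\mathbb{R}^n$. A uniform Harnack inequality on annuli together with an exhaustion argument yields finitely many isolated simple blow-up points $q^{(1)},\cdots,q^{(k)}\in\mathbb{S}^n$, and a first local Pohozaev-type identity forces $q^{(j)}\in\mathscr{K}$ for each $j$.

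Next, from a refined local Pohozaev identity expanded using the 2-flatness structure $K(y)=K(q^{(j)})+\tfrac12\sum_\ell a_\ell(q^{(j)})y_\ell^2+R(y)$ at each critical point, the leading balance becomes a linear relation coupling the diagonal quantities $M_{jj}=-\Delta_{g_0}K(q^{(j)})/K(q^{(j)})^{n/(2\sigma)}$ to the off-diagonal Green's function interactions $M_{ij}$ from \eqref{M}. The interplay is tight precisely because $\beta=n-2\sigma$: the polynomial term $|y|^2$ in $K$ and the Green kernel $|y|^{-(n-2\sigma)}=|y|^{-2}$ produce integrals of the same asymptotic order, so both effects combine into exactly the matrix $M$ acting on the vector $b=(b_1,\cdots,b_k)$ of (suitably normalized, limiting) concentration amplitudes. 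The resulting balance reads $Mb=0$ with all $b_j>0$. Since the off-diagonal entries $M_{ij}<0$, the $j$-th equation $M_{jj}b_j=-\sum_{i\ne j}M_{ji}b_i>0$ forces $M_{jj}>0$, so $\Delta_{g_0}K(q^{(j)})<0$ and $q^{(j)}\in\mathscr{K}^-$. Moreover, $b>0$ being an eigenvector of $M$ with eigenvalue $0$ forces, by a Perron-Frobenius argument applied to $-M$, that $\mu(M(q^{(1)},\cdots,q^{(k)}))=0$. If $k=1$ this reads $\Delta_{g_0}K(q^{(1)})=0$, contradicting the first clause of $K\in\mathscr{A}$; if $k\ge 2$, it contradicts the second clause of \eqref{1.79}.

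The principal obstacle is the precise derivation of the balance $Mb=0$ in this resonant critical case. Unlike the non-resonant regime treated in \cite{jlxm}, where each blow-up point produces an independent scalar Pohozaev identity at leading order, here the quadratic part of $K$ and the Green-kernel cross-interactions contribute at the same logarithmic scale, and one must carefully extract the correct coefficients in front of $M_{jj}$ and $M_{ij}$. Moreover, the non-integer parameter $\sigma=1+m/2$ rules out a straightforward integration-by-parts computation, so the asymptotic bookkeeping must be carried out through the integral representation \eqref{2.27} or through the Caffarelli-Silvestre-type extension from \cite{CS}; this is where the technical weight of the proof will lie.
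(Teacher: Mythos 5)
Your proposal follows essentially the same route as the paper: argue by contradiction, reduce via the Jin--Li--Xiong-type blow-up analysis to finitely many isolated simple blow-up points located at critical points of $K$, and extract from a Pohozaev-type identity through the integral representation the critical-case balance $M\lambda=0$ with $\lambda>0$, which by the Perron--Frobenius-type eigenvector argument forces $\mu(M)=0$ (and $\Delta_{g_0}K(q^{(1)})=0$ when $k=1$), contradicting $K\in\mathscr{A}$ --- this is exactly the content of the paper's Theorem \ref{thm1}, parts (i), (iii), (iv), applied with $\tau_i=0$, followed by Harnack and Schauder estimates. The only minor deviation is that the paper rules out single-point blow-up by a separate Kazdan--Warner identity argument, whereas you handle $k=1$ directly through the same Pohozaev balance via \eqref{1.5}, which is equally valid.
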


For any given $\sigma=\frac{n-2}{2},$ $0<\alpha<1,$ $R>0,$   we define
\be\label{1.82}
\mathscr{O}_{R}:=\{v\in C^{2\sigma,\alpha}(\mathbb{S}^n):
1/R<v<R,\, \|v\|_{C^{2\sigma,\alpha}(\mathbb{S}^n)}<R\}.
\ee

Our second  result is about  degree-counting  formula and the existence of the solutions to \eqref{1.1},
which is:
\begin{theorem}\label{thm4}
Let $\sigma=1+m/2,$ $m\in \mathbb{N}_{+}$ and $n=2\sigma+2.$
Let $\mathscr{A}$ be as in \eqref{1.79}, $K\in \mathscr{A}$
and $\mathrm{Index}(K)$ be as in Definition \ref{defn1.2}.
Then for any $\alpha\in (0,1),$ there exists a constant $R_{0}=R_{0}(K,\alpha),$ such that for all $R>R_{0},$
we have
\be\label{1.38}
\deg_{C^{2\sigma,\alpha}}(v-P_{\sigma}^{-1}(c(n,\sigma)Kv^{\frac{n+2\sigma}{n-2\sigma}}), \mathscr{O}_{R}, 0)=\mathrm{Index}(K),
\ee
where  $\deg_{C^{2\sigma,\alpha}}$ denotes the
Leray-Schauder degree in $C^{2\sigma,\alpha}(\mathbb{S}^n).$

Furthermore, if $\mathrm{Index}(K)\ne 0,$ then
\eqref{1.1} has at least one solution.
\end{theorem}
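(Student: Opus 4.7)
The plan is to establish \eqref{1.38} by first legitimizing the degree via the a priori bound of Theorem \ref{thm2.1}, then reducing to a generic Morse representative through homotopy invariance inside $\mathscr{A}$, and finally computing the degree via a blow-up/finite-dimensional reduction that reproduces the combinatorial sum \eqref{index}. Theorem \ref{thm2.1} supplies, for $K\in\mathscr{A}$, a uniform $C^{2\sigma,\alpha}$ bound together with $1/C<v<C$ for every solution $v$ of \eqref{1.1}; hence there is an $R_0=R_0(K,\alpha)$ so that no solution lies on $\partial\mathscr{O}_R$ for $R>R_0$. Since \eqref{2.27} represents $P_\sigma^{-1}$ as a Riesz-potential integral operator, the map $v\mapsto v-P_\sigma^{-1}(c(n,\sigma)Kv^{(n+2\sigma)/(n-2\sigma)})$ is a compact perturbation of the identity on $C^{2\sigma,\alpha}(\mathbb{S}^n)$, so the Leray--Schauder degree in \eqref{1.38} is well defined and independent of $R>R_0$.

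Next I exploit homotopy invariance within $\mathscr{A}$. Any continuous path $K_t$ in $\mathscr{A}$ produces uniformly a priori bounded solution sets (the constants in Theorem \ref{thm2.1} are uniform on compact subfamilies), so the degree is constant along the path, and by Definition \ref{defn1.2}(ii) so is $\mathrm{Index}(K_t)$. It therefore suffices to verify \eqref{1.38} for a dense subclass of each connected component of $\mathscr{A}$. I would reduce to a Morse function $K$ whose $\mathscr{K}^-=\{q^{(1)},\ldots,q^{(s)}\}$ is in general position relative to the Green's function, so that every principal submatrix $M(q^{(i_1)},\ldots,q^{(i_k)})$ is nondegenerate, and \eqref{index} applies verbatim.

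The heart of the proof is a blow-up analysis with finite-dimensional reduction. Approximate solutions carrying $k$ bubbles concentrating at distinct points $p^{(j)}$ near a subset $\{q^{(i_1)},\ldots,q^{(i_k)}\}\subset\mathscr{K}\setminus\mathscr{K}^+$ with scales $\lambda_j\to\infty$ are plugged into \eqref{1.1}; Lyapunov--Schmidt projection onto the orthogonal complement of the bubble kernel yields a smooth reduced functional $\Phi$ in the parameters $(p^{(j)},\lambda_j)$. After a rescaling $t_j\sim\lambda_j^{-1}$ of the scale variables, the leading part of $\Phi$ takes the shape
\begin{equation*}
\Phi\sim \tfrac{c_0}{2}\,t^{T}M(q^{(i_1)},\ldots,q^{(i_k)})\,t+\Psi(p^{(1)},\ldots,p^{(k)})+\mathrm{l.o.t.},
\end{equation*}
with $c_0>0$ universal and $\Psi$ a smooth function having a nondegenerate critical point at $(q^{(i_1)},\ldots,q^{(i_k)})$ of Morse index $\sum_j i(q^{(i_j)})$. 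The $t$-critical equation admits a (unique, nondegenerate) solution in the open sector $\{t_j>0\}$ precisely when $\mu(M)>0$, and at that solution the Morse index in the $t$-variables equals $k-1$. Each admissible cluster therefore contributes local Leray--Schauder degree $(-1)^{k-1+\sum_j i(q^{(i_j)})}$, which summed over all admissible subsets together with the baseline contribution $-1$ from the unique noncollapsing branch (a compact perturbation of identity with index $-1$ at the "trivial" branch) yields exactly $\mathrm{Index}(K)$, establishing \eqref{1.38}. The existence claim then follows immediately: if $\mathrm{Index}(K)\neq 0$ the degree on $\mathscr{O}_R$ is nonzero, forcing a zero of the map and hence a solution of \eqref{1.1}.

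The main obstacle is controlling the reduced functional in the borderline flatness regime $\beta=n-2\sigma=2$. Unlike the subcritical regime $\beta\in(n-2\sigma,n)$ treated in \cite{jlxm}, here the self-interaction contributions at each $q^{(i)}$ (governed by $\Delta_{g_0}K(q^{(i)})$) and the cross-interaction contributions between distinct clusters (governed by $G_{q^{(i)}}(q^{(j)})$) enter the reduced functional at the \emph{same} order, $O(\lambda^{-2})$ up to logarithmic factors, and must be resummed into the single quadratic form $M$ of \eqref{M}. Producing sufficiently sharp bubble expansions for the nonlocal operator $P_\sigma$ with $\sigma=1+m/2$, tracking the logarithmic corrections in the energy, and inverting the linearization uniformly on the orthogonal complement in $C^{2\sigma,\alpha}$---so that the Hessian of $\Phi$ in the scale directions is faithfully governed by $M$ and the sign of $\mu(M)$ really does decide existence of a critical point---is where the technical weight of the argument lies.
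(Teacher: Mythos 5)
Your overall frame (a priori bounds from Theorem \ref{thm2.1} to make the degree well defined, reduction to a Morse function by homotopy inside $\mathscr{A}$, and a bubble-cluster computation reproducing \eqref{index}) is in the spirit of the paper, but the central step has a genuine gap. You try to extract the cluster contributions directly at the critical exponent: you claim the reduced functional has leading part $\tfrac{c_0}{2}\,t^{T}Mt+\Psi(p)$ and that the $t$-equation has a unique nondegenerate solution in $\{t_j>0\}$, of Morse index $k-1$, precisely when $\mu(M)>0$. This cannot be right as stated: the quadratic form $t^{T}Mt$ is homogeneous, so its only critical point is $t=0$ (infinite scales), and indeed for $K\in\mathscr{A}$ Theorem \ref{thm2.1} excludes blowing-up solutions of \eqref{1.1}, so the clusters contain no zeros of the map on $\mathscr{O}_R$ and cannot ``contribute local Leray--Schauder degree'' there. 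What makes the clusters enter the count in the paper is the subcritical approximation $P_{\sigma}v=c(n,\sigma)Kv^{\,n-1-\tau}$: the exponent defect produces the extra term $-\tfrac{\sigma\tau}{4}\sum_j K(\overline{P}_j)^{-1/\sigma}\log s_j$ in the reduced function $F$, and it is this term together with $\mu(M)>0$ (strict convexity of $F$) that creates a unique interior critical point --- of Morse index \emph{zero} in the scale variables, the factor $(-1)^{k}$ coming instead from the $k$ directions $\alpha_i-K(P_i)^{-1/2\sigma}$ --- so that each $\Sigma_{\tau}$ carries degree $(-1)^{k+\sum_j i(\overline{P}_j)}$ (Theorem \ref{thm3}). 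Your sign bookkeeping ($t$-index $k-1$, cluster degree $(-1)^{k-1+\sum i}$) has no supporting computation and conflates the cluster degree of the subcritical problem with the deficit it causes in the critical one.

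Second, your ``baseline contribution $-1$ from the unique noncollapsing branch'' is unsupported: there is no distinguished trivial branch. In the paper, $-1$ is the \emph{total} $H^{\sigma}$ degree of the subcritical problem for $\tau$ in a fixed range (Proposition \ref{prop444}), obtained by homotoping $K$ to $K^{*}(x)=x_{(n+1)}+2$ and invoking the Kazdan--Warner obstruction; then Proposition \ref{prop3} plus excision gives that the degree of the bounded solutions equals $-1-\sum(-1)^{k+\sum i}=\mathrm{Index}(K)$, and only afterwards does one let $\tau\to 0$ (using Theorem \ref{thm2.1} again) and convert the $H^{\sigma}$ degree into the $C^{2\sigma,\alpha}$ degree appearing in \eqref{1.38} --- a conversion step your proposal also leaves unaddressed. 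Without the subcritical deformation (or an equivalent device, e.g.\ a full critical-points-at-infinity/Morse-theoretic setup), the argument as written does not yield the formula.
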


\begin{remark}\label{re1}
It follows from Theorem \ref{thm1} that when $K\in\mathscr{A},$ the solutions to \eqref{1.1}  belong to $\mathscr{O}_{R}$ for some
$R>0.$ We call the left-hand side of \eqref{1.38}  the total degree of the solutions to the
conformally invariant equation.
 From Theorem \ref{thm4}, the total degree is $\mathrm{Index}(K).$
\end{remark}

For any finite subset $\mathcal{R}\subset \mathbb{S}^{n},$
we use $\sharp\mathcal{R}$ to denote the number of elements in the set $\mathcal{R}$.
Let us now state a corollary of Theorem \ref{thm4}, which is:
\begin{corollary}\label{cor1}
Let $\sigma=1+m/2,$ $m\in \mathbb{N}_{+}$ and $n=2\sigma+2.$
Let $\mathscr{A}$ be as in \eqref{1.79} and $K\in \mathscr{A}$ be a
 Morse function satisfying $\sharp\mathscr{K}^{-}\leq 1$ or
for any distinct $P,$ $Q\in \mathscr{K}^{-},$
\be\label{1.97}
\Delta_{g_{0}}K(P)\Delta_{g_{0}}K(Q)<\frac{n^2(n-1)^2}{4}K(P)K(Q).
\ee
Then for any $\alpha\in(0,1),$ there exists a constant $C=C(K,\alpha)>0,$
such that for all solutions $v$ to \eqref{1.1}, we have
$v\in \mathscr{O}_{C},$
and for all $R\geq C,$
$$
\deg_{C^{2\sigma,\alpha}}
(v-P_{\sigma}^{-1}(c(n,\sigma)Kv^{\frac{n+2\sigma}{n-2\sigma}}), \mathscr{O}_{R}, 0)
=-1+\sum_{\substack{\nabla_{g_0}K(q_{0})=0,
\\ \Delta_{g_{0}}K(q_0)<0}}(-1)^{i(q_{0})},
$$
where $\mathscr{O}_{C}$ is as in \eqref{1.82} and
$i(q_0)$ denotes the Morse index of $K$ at $q_{0}.$

Furthermore, if
$$
\sum_{\substack{\nabla_{g_0}K(q_{0})=0,
\\ \Delta_{g_{0}}K(q_0)<0}}(-1)^{i(q_{0})}\ne 1,
$$
then \eqref{1.1} has at least one solution.
\end{corollary}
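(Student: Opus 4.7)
The plan is to deduce the corollary directly from Theorem \ref{thm2.1} and Theorem \ref{thm4}, with the only genuine content being an algebraic simplification of $\mathrm{Index}(K)$ under the extra hypothesis. Compactness of every solution in some $\mathscr{O}_C$ is immediate upon applying Theorem \ref{thm2.1} with $\mathcal{K}=K$, and then for every $R\geq\max\{C,R_{0}\}$ Theorem \ref{thm4} gives
\[
\deg_{C^{2\sigma,\alpha}}\bigl(v-P_{\sigma}^{-1}(c(n,\sigma)Kv^{(n+2\sigma)/(n-2\sigma)}),\mathscr{O}_{R},0\bigr)=\mathrm{Index}(K).
\]
The existence conclusion is the standard Leray-Schauder principle once this degree is nonzero. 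So the task reduces to proving
\[
\mathrm{Index}(K)=-1+\sum_{q_{0}\in\mathscr{K}^{-}}(-1)^{i(q_{0})}.
\]

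By Definition \ref{defn1.2}, this amounts to showing that in the sum \eqref{index} only the singleton ($k=1$) terms survive, i.e.\ that $\mu(M(q^{(i_{1})},\ldots,q^{(i_{k})}))<0$ for every $k\geq 2$ and every choice of $q^{(i_{1})},\ldots,q^{(i_{k})}\in\mathscr{K}^{-}$, while $\mu(M(q^{(i)}))>0$ for each $q^{(i)}\in\mathscr{K}^{-}$. The latter is automatic since $\mu(M(q^{(i)}))=-\Delta_{g_{0}}K(q^{(i)})/K(q^{(i)})^{n/(2\sigma)}>0$ because $\Delta_{g_{0}}K(q^{(i)})<0$ on $\mathscr{K}^{-}$. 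The case $\sharp\mathscr{K}^{-}\leq 1$ of the former is vacuous, so the hypothesis \eqref{1.97} is the main input to handle.

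Under \eqref{1.97}, pick any two distinct points $P,Q\in\mathscr{K}^{-}$ and compute the $2\times 2$ determinant
\[
\det M(P,Q)=\frac{\Delta_{g_{0}}K(P)\Delta_{g_{0}}K(Q)-n^{2}(n-1)^{2}G_{P}(Q)^{2}K(P)K(Q)}{(K(P)K(Q))^{n/(2\sigma)}}.
\]
Since $G_{P}(Q)=1/(1-\cos d(P,Q))\geq 1/2$ (with equality only when $P,Q$ are antipodal), one has $n^{2}(n-1)^{2}G_{P}(Q)^{2}\geq n^{2}(n-1)^{2}/4$, so \eqref{1.97} forces the numerator, and hence $\det M(P,Q)$, to be strictly negative. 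Consequently $M(P,Q)$ has one strictly positive and one strictly negative eigenvalue. For $k\geq 3$ and any $q^{(i_{1})},\ldots,q^{(i_{k})}\in\mathscr{K}^{-}$, a $2\times 2$ principal submatrix $M(q^{(i_{a})},q^{(i_{b})})$ exists; iterated Cauchy (eigenvalue) interlacing then yields $\mu(M(q^{(i_{1})},\ldots,q^{(i_{k})}))\leq\mu(M(q^{(i_{a})},q^{(i_{b})}))<0$. This completes the collapse of \eqref{index} to the displayed singleton sum and delivers the degree formula and the existence claim of the corollary.

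The main (and essentially only) technical step is the interlacing argument: one must be comfortable that the negativity of the determinant of a $2\times 2$ principal minor propagates, via Cauchy's inequalities, to negativity of the smallest eigenvalue of every containing symmetric principal submatrix. The remaining estimate $G_{P}(Q)\geq 1/2$ is an elementary feature of the spherical Green's function \eqref{1.8}, and everything else is a direct invocation of Theorems \ref{thm2.1} and \ref{thm4}.
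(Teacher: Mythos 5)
Your proposal is correct and follows exactly the route the paper intends (the corollary is stated without an explicit proof as a consequence of Theorems \ref{thm2.1} and \ref{thm4}): compactness and the degree formula come from those theorems, and the hypothesis \eqref{1.97}, together with $G_{P}(Q)\geq 1/2$ and $n-2\sigma=2$, forces $\det M(P,Q)<0$, hence $\mu<0$ for every subset of $\mathscr{K}^{-}$ of size $k\geq 2$ by Cauchy interlacing (equivalently, by restricting the Rayleigh quotient), so only the singleton terms survive in \eqref{index}. The algebra in your determinant computation and the sign analysis of the singleton terms are both accurate, so no gaps remain.
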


Our third result is about the blow up behavior of the solutions when the
prescribing fractional $Q$-curvature function
$K\in C^{2}(\mathbb{S}^{n})^{*}\backslash\mathscr{A}=\partial \mathscr{A},$ which is:
\begin{theorem}\label{thm2}
Let $\sigma=1+m/2,$ $m\in \mathbb{N}_{+}$ and $n=2\sigma+2.$
Let $\mathscr{A}$ be as in \eqref{1.79} and $C^{2}(\mathbb{S}^n)^{*}$ be as in \eqref{1.80}.
Then for any $K \in C^{2}(\mathbb{S}^{n})^{*} \backslash \mathscr{A}=\partial \mathscr{A},$
there exists $K_{i} \rightarrow K$ in $C^{2}(\mathbb{S}^{n})$ and $v_{i}
\in \mathscr{M}_{K_{i}},$ such that
\be\label{1.72}
\lim _{i \rightarrow \infty}(\max _{\mathbb{S}^{n}} v_{i})=\infty,
\quad \lim _{i \rightarrow \infty}(\min _{\mathbb{S}^{n}} v_{i})=0,
\ee
where  $\mathscr{M}_{K_{i}}$ is as in \eqref{1.2}.
\end{theorem}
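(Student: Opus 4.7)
The plan is to argue by contradiction using the Leray-Schauder degree machinery of Theorem~\ref{thm4}. Suppose no $(K_i,v_i)$ as in the statement exists; then there are $\delta_0,R_0>0$ such that for every $\mathcal K\in C^2(\mathbb S^n)$ with $\|\mathcal K-K\|_{C^2}<\delta_0$, every $v\in\mathscr M_{\mathcal K}$ satisfies $v\in\mathscr O_{R_0}$ (see \eqref{1.82}). Under this standing hypothesis, I would exhibit $K^+,K^-\in\mathscr A\cap B_{\delta_0}(K)$ with $\mathrm{Index}(K^+)\neq\mathrm{Index}(K^-)$ and derive a contradiction.

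\textbf{Degree argument.} Given such $K^\pm$, the affine homotopy $t\mapsto tK^++(1-t)K^-$, $t\in[0,1]$, remains in $B_{\delta_0}(K)$, so by the standing hypothesis all its solutions stay inside $\mathscr O_{R_0}$. Homotopy invariance of the Leray-Schauder degree together with Theorem~\ref{thm4} then yields
\[
\mathrm{Index}(K^+)=\deg_{C^{2\sigma,\alpha}}\bigl(\cdot,\mathscr O_{R_0},0\bigr)\Big|_{K^+}
=\deg_{C^{2\sigma,\alpha}}\bigl(\cdot,\mathscr O_{R_0},0\bigr)\Big|_{K^-}=\mathrm{Index}(K^-),
\]
a contradiction. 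Hence there exist $K_i\to K$ in $C^2$ and $v_i\in\mathscr M_{K_i}$ escaping every $\mathscr O_R$. The simultaneous conclusions in \eqref{1.72} then come from the standard blow-up analysis for \eqref{1.1}: the $L^{2n/(n-2\sigma)}$-energy of $v_i$ is a priori controlled (integrate \eqref{1.1} and use $K_i\to K>0$), so any loss of compactness must occur through a finite sum of bubbles $U_{p_j,\lambda_{i,j}}$ with $\lambda_{i,j}\to 0$, whose heights scale as $\lambda_{i,j}^{-(n-2\sigma)/2}\to\infty$ at $p_j$ and whose values away from the blow-up set are of order $\lambda_{i,j}^{(n-2\sigma)/2}\to 0$, no regular part being allowed by the Kazdan-Warner type obstruction for $K\in\partial\mathscr A$.

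\textbf{The Index jump.} To produce $K^\pm$, split according to which defining condition of \eqref{1.79} fails at $K$. If $\Delta_{g_0}K(q_0)=0$ at some $q_0\in\mathscr K$, choose $\phi\in C^2(\mathbb S^n)$ supported in a small neighborhood of $q_0$ with $\phi(q_0)=|\nabla_{g_0}\phi(q_0)|=0$ and $\Delta_{g_0}\phi(q_0)>0$. For small $\epsilon>0$, $K+\epsilon\phi$ has $q_0\in\mathscr K^+$ (hence omitted from \eqref{index}), while $K-\epsilon\phi$ has $q_0\in\mathscr K^-$ with diagonal entry $M_{q_0q_0}=O(\epsilon)$; since the off-diagonals $M_{q_0q_j}$ in \eqref{M} are bounded away from $0$, every $\mathscr S\ni q_0$ of size $\geq 2$ satisfies $\mu(M(\mathscr S))<0$ for $\epsilon$ small, so only the subset $\{q_0\}$ enters \eqref{index} anew, giving an Index change of $(-1)^{i(q_0)}\neq 0$. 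If instead $\Delta_{g_0}K\neq 0$ on $\mathscr K$ but $\mu(M(\mathscr S_0))=0$ for some $\mathscr S_0\subset\mathscr K^-$ of size $\geq 2$, perturb $K$ in a direction that nudges the values $K(q)$ and $\Delta_{g_0}K(q)$ for $q\in\mathscr S_0$; the $2|\mathscr S_0|$ degrees of freedom allow one to steer $\mu(M(\mathscr S_0))$ to either sign while keeping the already-nonzero $\mu(M(\mathscr S'))$ for $\mathscr S'\neq\mathscr S_0$ on the same side of $0$, producing an Index change of $\pm(-1)^{|\mathscr S_0|-1+\sum_{q\in\mathscr S_0}i(q)}\neq 0$.

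\textbf{Main obstacle.} The technical heart is the Index-jump construction when several degeneracies of either type coexist at $K$: multiple critical points with $\Delta_{g_0}K=0$, or several subsets simultaneously achieving $\mu(M(\mathscr S))=0$, or both. One must then arrange a single perturbation that crosses exactly one boundary condition at a time in order to isolate a nonzero contribution to \eqref{index}. This book-keeping rests on the genericity built into the definition of $\mathscr A$ and on a combinatorial analysis of how the sign pattern $\{\mathrm{sign}\,\mu(M(\mathscr S))\}_{\mathscr S\subset\mathscr K^-}$ depends on $C^2$-perturbations of $K$; making the affine homotopy choice admissible (all intermediate $K_t$ lie within $\delta_0$ of $K$ while $\mathrm{Index}$ differs at the endpoints) is the point that requires the most care.
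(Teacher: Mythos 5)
Your overall mechanism is the same as the paper's: produce nearby functions in $\mathscr{A}$ whose values of $\mathrm{Index}$ differ, and conclude from Theorem \ref{thm4}, homotopy invariance of the Leray--Schauder degree, and the compactness Theorem \ref{thm2.1} that solutions cannot remain uniformly bounded as the prescribed curvature approaches $K$; the paper runs this directly with a one-parameter family $K_t$ ($K_0=K$, $K_{\pm1}\in\mathscr{A}$, $\mathrm{Index}(K_1)\neq\mathrm{Index}(K_{-1})$, and Theorem \ref{thm2.1} forcing the blow-up parameters $t_i\to0$), while you run it by contradiction, which is an equivalent formulation. Your two single-degeneracy jump constructions (a critical point with $\Delta_{g_0}K(q_0)=0$, treated through $K\pm\epsilon\phi$; a single subset $\mathscr{S}_0\subset\mathscr{K}^-$ with $\mu(M(\mathscr{S}_0))=0$, treated through a diagonal shift of $M(\mathscr{S}_0)$) are in substance the paper's Case 1 and are sound: interlacing plus the nonvanishing off-diagonal entries of $M$ gives $\mu<0$ for every larger subset containing $q_0$, and adding (approximately) $\epsilon I$ to $M(\mathscr{S}_0)$ moves $\mu(M(\mathscr{S}_0))$ to either sign while all other, nondegenerate $\mu$'s keep their sign.

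The genuine gap is the case you yourself label the ``main obstacle'': a general $K\in\partial\mathscr{A}$ may carry several coexisting degeneracies (several critical points with $\Delta_{g_0}K=0$ and/or several subsets with $\mu(M(\mathscr{S}))=0$), and in that case the perturbed functions $K\pm\epsilon\phi$, resp.\ the diagonally shifted ones, are not in $\mathscr{A}$, so Theorem \ref{thm4} cannot be applied to them and your degree argument does not start. You describe what would be needed but do not carry it out, so as written the proof covers only the single-degeneracy part of $\partial\mathscr{A}$. The paper closes exactly this point in its Case 2 by a preliminary reduction: perturb $K$ near some of its critical points (keeping the same critical points with the same Morse indices) so as to obtain Morse functions $K_\ell\to K$ in $C^2(\mathbb{S}^n)$ for which precisely one tuple satisfies $\mu(M)=0$, apply the single-degeneracy argument to each $K_\ell$, and pass to a diagonal sequence; inside your contradiction framework the same reduction suffices, since for $\ell$ large $K_\ell\in B_{\delta_0/2}(K)$ and the Case-1 pair $K_\ell^{\pm}\in\mathscr{A}\cap B_{\delta_0}(K)$ already contradicts the assumed uniform bound. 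A secondary, minor point: attributing the absence of a positive regular part in the blow-up limit to the Kazdan--Warner obstruction is not the right reason; what gives $\min_{\mathbb{S}^n}v_i\to0$ (and also upgrades the negation of the statement to the uniform bound $\mathscr{M}_{\mathcal K}\subset\mathscr{O}_{R_0}$ that your contradiction hypothesis uses) is the isolated simple blow-up analysis of Theorem \ref{thm1} together with Propositions \ref{prop4}, \ref{prop7} and the Harnack inequality, which show $v_i(q_i^{(1)})\,v_i$ converges away from the blow-up set, hence $v_i\to0$ there.
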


From Theorems \ref{thm2.1}, \ref{thm4}, and \ref{thm2}, we can know that
the total degree of solutions to \eqref{1.1} strongly depend
on the sign of the  smallest eigenvalue  of $M(q^{(1)},\cdots, q^{(k)}).$
 In fact, the points $q^{(1)}, \cdots, q^{(k)}$ for which
 $\mu(M(q^{(1)}, \cdots, q^{(k)}))$ is positive characterize the so-called asymptotic in the theory of critical points at infinity developed by Bahri \cite{Bahri,
 BC}. For instance, considering a continuous family of functions $K_{t}$ $(0\leq t\leq 1),$ the total degree changes when the smallest eigenvalue of $M(K_{t};(q^{(1)}, \cdots, q^{(k)}))
 $ crosses zero while it remains unchanged when other eigenvalues cross zero.

It follows from Theorem \ref{thm2} that  when $K\in C^{2}(\mathbb{S}^{n})^{*}
\backslash\mathscr{A},$ the solutions
to \eqref{1.1} may blow  up. A natural question is where the  blow up occur?
The following results present the accurate location of the blow up.

For any $K\in C^{2}(\mathbb{S}^{n}),$  we first define
\be\label{1.81}
\begin{gathered}
\mathscr{H}(K)=\{(q^{(1)}, \cdots, q^{(k)}):\,k \geq 1,\,q^{(j)} \in \mathscr{K} \backslash \mathscr{K}^{+},\, \forall\, j: 1 \leq j \leq k, \\
\quad q^{(j)} \neq q^{(\ell)},\, \forall\, j \neq \ell,\, \mu(M(q^{(1)}, \cdots, q^{(k)}))=0\}.
\end{gathered}
\ee

Combined with Theorem \ref{thm1},
we give the fourth result in this paper, which is about the location of blowing up when
$K\in C^{2}(\mathbb{S}^{n})^{*}\backslash\mathscr{A}$ :
\begin{theorem}\label{thm5}
Let $\sigma=1+m/2,$ $m\in \mathbb{N}_{+}$ and $n=2\sigma+2.$
Let $\mathscr{A}$ be as in \eqref{1.79} and $C^{2}(\mathbb{S}^n)^{*}$ be as in \eqref{1.80}.
For a given function $K\in C^{2}(\mathbb{S}^n)^*\backslash\mathscr{A},$ we have  the following results:
\begin{enumerate}
  \item[(i)]
  For any $K_{i}\rightarrow K$ in $C^{2}(\mathbb{S}^n),$
and $v_{i}\in \mathscr{M}_{K_{i}}$ with $\max_{\mathbb{S}^n} v_{i}\rightarrow \infty,$
then for some $ (q^{(1)},\cdots, q^{(k)}) \in \mathscr{H}(K),$ $\{v_{i}\}$ (after passing to
a subsequence) blows up at precisely the $k$ points.
  \item[(ii)]
 For any  $(q^{(1)},\cdots, q^{(k)}) \in\mathscr{H}(K),$ there exists $K_{i}\rightarrow K$ in $C^{2}(\mathbb{S}^n),$
$v_{i}\in \mathscr{M}_{K_{i}},$ such that $\{v_{i}\}$ blows up at precisely the $k$ points.
\end{enumerate}
\end{theorem}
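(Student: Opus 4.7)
The plan splits along parts (i) and (ii). For part (i), given $K_i \to K$ in $C^2(\mathbb{S}^n)$ and $v_i \in \mathscr{M}_{K_i}$ with $\max v_i \to \infty$, I would first invoke the isolated simple blow-up analysis developed earlier in the paper (the machinery underlying the compactness Theorem \ref{thm2.1}) to extract a subsequence concentrating at finitely many distinct points $q^{(1)}, \ldots, q^{(k)} \in \mathbb{S}^n$ with bubble heights $\lambda_i^{(j)} \to \infty$. A localized Pohozaev identity at each $q^{(j)}$ forces $\nabla_{g_0} K(q^{(j)}) = 0$, placing $q^{(j)} \in \mathscr{K}$; the second-order Pohozaev balance, specific to the critical flatness $n - 2\sigma = 2$, then forces $\Delta_{g_0} K(q^{(j)}) \le 0$, so $q^{(j)} \in \mathscr{K} \setminus \mathscr{K}^+$. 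The crucial remaining step is $\mu(M(q^{(1)}, \ldots, q^{(k)})) = 0$. Using the sharp multi-bubble expansion of $v_i$ in terms of the Green's function \eqref{2.27}, the $k$ local balance identities organize into a single linear system $M(K_i)\,\vec c_i = o(|\vec c_i|)$, with each component $c_i^{(j)} > 0$ a normalization of the bubble mass at $q^{(j)}$. Dividing by $|\vec c_i|$ and using the continuity $M(K_i) \to M(K)$ extracts a nonnegative nonzero vector $\vec c \in \ker M(K)$; the sign pattern of $M(K)$ (nonnegative diagonal, strictly negative off-diagonals) forces any such $\vec c$ to be the eigenvector realizing the smallest eigenvalue, hence $\mu(M(K)) = 0$ and $(q^{(1)}, \ldots, q^{(k)}) \in \mathscr{H}(K)$.

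For part (ii), given $(q^{(1)}, \ldots, q^{(k)}) \in \mathscr{H}(K)$, I would construct the blow-up sequence by perturbation together with a Lyapunov--Schmidt reduction. Fix a unit nonnegative eigenvector $\vec c^{\ast} \in \ker M(K)$, and choose $\psi \in C^2(\mathbb{S}^n)$ so that the family $K_t := K + t\psi$ satisfies: $K_t \in \mathscr{A}$ for all small $t > 0$ (possible by density of $\mathscr{A}$ in $C^2(\mathbb{S}^n)^*$); the critical points $q_t^{(j)}$ of $K_t$ near $q^{(j)}$ remain non-degenerate; and the smallest eigenvalue crosses $0$ transversally, $\mu(M(K_t; q_t^{(1)},\ldots,q_t^{(k)})) = c\, t + o(t)$ with a positive constant $c$ depending on $\psi$ and $\vec c^{\ast}$ (a generic first-order condition on $\psi$). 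With the multi-bubble ansatz $v_t = \sum_{j=1}^{k} \alpha_j\, U_{\lambda_j, q_t^{(j)}} + \phi$ and $\phi$ orthogonal to the approximate kernel of the linearization, the Lyapunov--Schmidt procedure produces a reduced finite-dimensional system whose leading part reads $M(K_t)\,\vec\alpha = t\, F_{\psi}(\vec\lambda)$ up to lower-order corrections. The transverse crossing makes this system solvable for small $t > 0$ with $\lambda_j(t) \to \infty$ and $\vec\alpha(t) \to \vec c^{\ast}$ as $t \to 0^+$, yielding $v_{1/i} \in \mathscr{M}_{K_{1/i}}$ that blow up at precisely $q^{(1)}, \ldots, q^{(k)}$.

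The main obstacle is the sharp matrix-form balance identity driving part (i). At the borderline flatness $n - 2\sigma = 2$ the Pohozaev remainder sits precisely at the threshold where the Green's function interactions, the diagonal $\Delta_{g_0} K$ terms, and the logarithmic tails of the bubbles all enter at the same order; isolating the matrix $M(K)$ cleanly requires a precise asymptotic expansion of the spherical Riesz potential \eqref{2.27} against a sum of bubbles, the analogue at $\beta = n - 2\sigma$ of the expansions developed in \cite{jlxm} for the easier range $\beta > n - 2\sigma$. Once this matrix identity is in hand, the passage to the limit in part (i) and the reduction in part (ii) both reduce to standard degree-theoretic arguments tied to the counting formula of Theorem \ref{thm4}.
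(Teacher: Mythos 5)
Part (i) of your proposal is essentially the paper's own route: the paper disposes of (i) by citing Theorem \ref{thm1}, and your ``matrix-form balance identity'' is exactly parts (iii)--(iv) of that theorem (already established in Section \ref{sec3} via the Pohozaev identity of Proposition \ref{prop1.1}), so what you describe as the main obstacle is already available. The one point that must be made explicit — and which your $M(K_i)\vec c_i=o(|\vec c_i|)$ formulation does capture — is that for exact critical solutions one has $\tau_i\equiv 0$, hence every $\mu^{(j)}=0$ in \eqref{1.6}, so the positive vector $\vec\lambda$ lies in $\ker M$ and the Perron--Frobenius argument (as in the proof of Theorem \ref{thm1}) upgrades $\mu(M)\ge 0$ to $\mu(M)=0$; combined with $q^{(j)}\in\mathscr{K}\setminus\mathscr{K}^{+}$ this gives $(q^{(1)},\cdots,q^{(k)})\in\mathscr{H}(K)$. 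So (i) is correct and matches the paper.

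Part (ii) is where your proposal has a genuine gap. You propose a direct Lyapunov--Schmidt construction for the critical equation along a family $K_t=K+t\psi$ with $\mu(M(K_t))=ct+o(t)$, with a reduced system stated as $M(K_t)\,\vec\alpha=tF_\psi(\vec\lambda)$. This mis-identifies the reduced problem: the coefficients $\alpha_i$ are fixed at leading order to $K_t(P_i)^{-1/2\sigma}$ (Proposition \ref{lem5}), and the genuine reduced equations are in the concentration parameters, reading to leading order $\sum_j M_{ij}(K_t)\,t_i^{-2}t_j^{-1}=\mathrm{h.o.t.}$ (compare \eqref{x0}--\eqref{2.0003}); at the critical exponent there is no $\tau\log$-type term to create an interior critical point, and when $\mu(M(K_t))>0$ this interaction energy is exactly the ``critical point at infinity'' configuration, so solvability with $\lambda_j(t)\to\infty$ as $t\to 0^{+}$ hinges on uncontrolled higher-order corrections at the borderline flatness $n-2\sigma=2$; you give no estimates uniform as $K_t$ approaches $\partial\mathscr{A}$, and this is precisely the hard point. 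The paper never constructs these solutions directly: part (ii) is proved as in Theorem \ref{thm2} — after a preliminary perturbation making $(q^{(1)},\cdots,q^{(k)})$ the unique collection with vanishing $\mu$, one builds a family $K_t$ whose eigenvalue crosses zero, computes $\mathrm{Index}(K_1)\neq\mathrm{Index}(K_{-1})$ from Definition \ref{defn1.2} and Theorem \ref{thm4}, and concludes from homotopy invariance of the degree together with the compactness Theorem \ref{thm2.1} that blow-up must occur along some $t_i\to 0$; Theorem \ref{thm1} then localizes the blow-up at precisely the prescribed $k$ points. Your closing remark that everything ``reduces to standard degree-theoretic arguments tied to Theorem \ref{thm4}'' gestures at this, but it is not what your construction does, and as written the construction step would fail or at least requires substantial new analysis not supplied.
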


\begin{corollary}
For any $k\in \mathbb{N}_{+}$  distinct points $q^{(1)}, \cdots, q^{(k)}\in \mathbb{S}^n,$
  there exists a sequence of Morse functions $\{K_{i}\}\subset \mathscr{A},$ such that
  for some $v_{i}\in \mathscr{M}_{K_{i}},$ $\{v_{i}\}$ blows up at precisely the $k$ points.
\end{corollary}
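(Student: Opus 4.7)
The plan is to deduce the corollary from Theorem \ref{thm5}(ii). It suffices to exhibit a Morse function $K\in\partial\mathscr{A}=C^{2}(\mathbb{S}^n)^{*}\setminus\mathscr{A}$ for which the prescribed tuple $(q^{(1)},\ldots,q^{(k)})$ belongs to $\mathscr{H}(K)$; Theorem \ref{thm5}(ii) then produces $K_{i}\to K$ in $C^{2}(\mathbb{S}^n)$ and $v_{i}\in\mathscr{M}_{K_{i}}$ blowing up at exactly these $k$ points, after which a further perturbation will ensure $K_{i}\in\mathscr{A}$.

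For the construction of $K$, consider first $k\geq 2$. Fix $A>0$ and, in geodesic normal coordinates around each $q^{(j)}$, set
\begin{equation*}
K(y)=A-\tfrac{B}{n}\,|y|^{2}+O(|y|^{3}),
\end{equation*}
so that $q^{(j)}$ is a non-degenerate local maximum with $K(q^{(j)})=A$ and $\Delta_{g_{0}}K(q^{(j)})=-2B$. Extend $K$ to a positive Morse function on $\mathbb{S}^{n}$ whose remaining critical points lie away from the $q^{(j)}$'s and have $\Delta_{g_{0}}K\neq 0$. Introducing $\widetilde G$ as the symmetric $k\times k$ matrix with $\widetilde G_{j\ell}=n(n-1)G_{q^{(j)}}(q^{(\ell)})$ for $j\neq\ell$ and zero diagonal, the matrix $M$ from \eqref{M} equals
\begin{equation*}
M=\frac{2B}{A^{n/(2\sigma)}}\,I_{k}-\frac{1}{A^{1/\sigma}}\,\widetilde G,
\end{equation*}
whose smallest eigenvalue is $\mu(M)=\tfrac{2B}{A^{n/(2\sigma)}}-\tfrac{\lambda_{\max}(\widetilde G)}{A^{1/\sigma}}$. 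Using the identity $n/(2\sigma)-1/\sigma=(n-2)/(2\sigma)=1$, which holds because $n=2\sigma+2$, the choice $B=\tfrac{1}{2}A\,\lambda_{\max}(\widetilde G)$ forces $\mu(M)=0$; since $\widetilde G$ is a non-zero symmetric matrix with zero trace and strictly positive off-diagonal entries, $\lambda_{\max}(\widetilde G)>0$, so $B>0$ and each $q^{(j)}\in\mathscr{K}^{-}$. Hence $(q^{(1)},\ldots,q^{(k)})\in\mathscr{H}(K)$ and $K\in\partial\mathscr{A}$. For $k=1$, one instead takes $q^{(1)}$ to be a non-degenerate Morse critical point with trace-free Hessian (possible since $n\geq 5$: e.g.\ Hessian eigenvalues $(1,1,1,1,-4,\ldots,-4)$ summing to zero), which gives $\mu(M(q^{(1)}))=-\Delta_{g_{0}}K(q^{(1)})/K(q^{(1)})^{n/(2\sigma)}=0$ and $q^{(1)}\in\mathscr{H}(K)$.

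Applying Theorem \ref{thm5}(ii) to this $K$ yields sequences $K_{i}\to K$ in $C^{2}(\mathbb{S}^n)$ and $v_{i}\in\mathscr{M}_{K_{i}}$ blowing up at precisely the $k$ prescribed points. To upgrade $\{K_{i}\}$ to a sequence in $\mathscr{A}$, use that $\mathscr{A}$ is open and dense in $C^{2}(\mathbb{S}^n)^{*}$ (as noted in the excerpt): choose $\widetilde K_{i}\in\mathscr{A}$ with $\|\widetilde K_{i}-K_{i}\|_{C^{2}(\mathbb{S}^n)}<1/i$ and arrange that $\widetilde K_{i}-K_{i}$ be supported outside fixed small neighborhoods of the $q^{(j)}$'s, so that the local quadratic profiles driving the bubble concentration are preserved. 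The principal obstacle is then to show that the perturbed equation still admits a solution $\widetilde v_{i}$ blowing up at the same $k$ points; this is handled by the standard Lyapunov--Schmidt / implicit function stability of the finite-dimensional reduction underlying the proof of Theorem \ref{thm5}(ii), the reduced bifurcation equations being $C^{1}$-stable under small $C^{2}$ perturbations of the coefficient. Equivalently, one may inspect that proof directly and verify that the free parameters in the construction of $K_{i}$ may be chosen generically so as to already satisfy the open conditions defining $\mathscr{A}$, producing a sequence $\{K_{i}\}\subset\mathscr{A}$ without any separate perturbation.
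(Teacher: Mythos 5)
Your construction of the boundary function $K$ is correct and usefully explicit: placing non-degenerate local maxima with value $A$ and $\Delta_{g_0}K(q^{(j)})=-2B$ at the prescribed points gives $M=\frac{2B}{A^{n/2\sigma}}I_k-\frac{1}{A^{1/\sigma}}\widetilde G$, and since $n-2=2\sigma$ the choice $B=\frac12 A\,\lambda_{\max}(\widetilde G)$ indeed forces $\mu(M)=0$ (and the $k=1$ variant with trace-free non-degenerate Hessian also works). Reducing the corollary to Theorem \ref{thm5}(ii) applied to such a $K\in\partial\mathscr{A}$ is exactly the intended route.

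The gap is in the final step, which is the only extra content of the corollary beyond Theorem \ref{thm5}(ii): you must produce $K_i$ that are Morse functions \emph{in} $\mathscr{A}$, and your primary argument for this does not work. First, a perturbation $\widetilde K_i-K_i$ supported \emph{outside} fixed neighborhoods of the $q^{(j)}$'s cannot restore the defining condition of $\mathscr{A}$: the matrix $M(q^{(1)},\ldots,q^{(k)})$ depends only on $K(q^{(j)})$, $\Delta_{g_0}K(q^{(j)})$ and the fixed Green's function values, all of which your perturbation leaves untouched, so if $\mu(M(K_i;q^{(1)},\ldots,q^{(k)}))=0$ (or some other eigenvalue/Laplacian condition fails at the $q^{(j)}$'s) then $\widetilde K_i\notin\mathscr{A}$ no matter how you perturb away from these points; conversely, perturbing at the $q^{(j)}$'s destroys the very local data you want to preserve. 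Second, there is no Lyapunov--Schmidt construction of the blowing-up solutions in this paper to appeal to: Theorem \ref{thm5}(ii) (via the proof of Theorem \ref{thm2}) obtains blowing-up solutions from a jump of $\mathrm{Index}(K_t)$ together with the compactness of Theorem \ref{thm2.1}, which yields solutions only along a carefully selected sequence $t_i\to 0$; persistence of a given large solution under an arbitrary $C^2$-small change of $K_i$ is neither standard nor established here, so ``implicit function stability of the reduced equations'' is not available. The correct repair is the alternative you mention only in passing, and it is how the paper intends the corollary to be read: in the proof of Theorem \ref{thm2} (Case 1, to which Case 2 is reduced), the approximating functions are the explicitly constructed one-parameter family $K_t$, which for $t\neq 0$ are Morse functions with unchanged critical points and Morse indices and with all relevant eigenvalues nonzero, hence $K_{t_i}\in\mathscr{A}$ already; applying that proof to your boundary function $K$ (arranged, after a generic choice of the extension, so that $(q^{(1)},\ldots,q^{(k)})$ is the distinguished null tuple) gives the sequence $\{K_i\}\subset\mathscr{A}$ directly, with no posterior perturbation needed. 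With that substitution your argument is complete and coincides with the paper's.
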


In order to obtain the compactness results,
we need to further characterize the behavior of the blow up point of the solutions
to \eqref{1.1} (see Theorem \ref{thm1} below).
More precisely, we will use the Pohozaev type identity
(see Proposition \ref{prop1.1} below)
to  judge the sign of the Laplacian of the prescribing curvature function at these
isolated simple blow up point (see Definition \ref{defn1.3} below). Due to the limit of the form of
the Pohozaev type identity, the proof method is only effective for the case $n-2\sigma=2.$
In addition, when proving the existence results,
we transform the conclusion to be proved into solving the
Brouwer degree of the operator on finite dimensional manifolds
through the homotopy invariance
of the Leray-Schauder degree.
In the process of solving,  we need to get a strictly convex function according to
the form of the operator,
and the condition ``$n-2\sigma=2$'' just ensures the
existence of the form of strictly convex function.
For $n=2\sigma+2,$ $0<\sigma<1,$ we obtain the corresponding
compactness and existence results with $n=3,\sigma=1/2,$
see \cite{ltz}.

The paper is organized as follows:

 In Section \ref{sec3}, our main task is to prove Theorem \ref{thm2.1}.
 Before that, we should  further characterizes  the behavior of blow up points for solutions  to \eqref{1.1}
 (see Theorem \ref{thm1} below), we mainly consider the subcritical equation with $\tau>0$ small:
\be\label{1.92}
P_{\sigma}v_{i}=c(n,\sigma)Kv_{i}^{n-1-\tau}, \quad v_{i}>0 \quad \text{ on } \, \mathbb{S}^n.
\ee
In proving Theorem \ref{thm1}, we first use
 the Green's representation \eqref{2.27}
to transform \eqref{1.92}  into
$$
v_{i}(\xi)=\frac{\Gamma(\frac{n+2 \sigma}{2})}{2^{2 \sigma} \pi^{n / 2} \Gamma(\sigma)}
\int_{\mathbb{S}^{n}} \frac{K_{i}(\eta) v_{i}(\eta)^{n-1-\tau_{i}}}{|\xi-\eta|^{2}}\,
 \ud \eta \quad \text { on }\, \mathbb{S}^{n},
$$  and
then use some results of blow up analysis given in Appendix \ref{sec2}
to complete the proof.
By using Theorem \ref{thm1},  integral representation,  Harnack inequality
and Schauder type estimates,  we have completed the proof of Theorem \ref{thm2.1}.

Section \ref{sec4} is devoted to proving the
Theorems \ref{thm4}, \ref{thm2}, and \ref{thm5}.
Firstly, recall the classification  of solutions for integral
equation (\cite{clo}) and  optimal representation in small tubular neighborhood
(\cite{BC}), we give the definition of
 $\Sigma_{\tau}=\Sigma_{\tau}(\overline{P}_{1},\cdots, \overline{P}_{k})$
  for $\overline{P}_{1},\cdots, \overline{P}_{k}\in \mathscr{K}^{-}$
   with $\mu(M(\overline{P}_{1},\cdots, \overline{P}_{k}))>0.$
   Then by using Theorem \ref{thm1}
 and some results in \cite{jlxm}, we obtain that for $\tau>0$ very small,
  the solutions to \eqref{1.92}
 either stay bounded or stay  in one of the $\Sigma_{\tau}$
 (see Proposition \ref{prop3} below).
 Furthermore,  we obtain the $H^{\sigma}$
  topological degree of the solutions to \eqref{1.92} on
$\Sigma_{\tau}\,($see Theorem \ref{thm3} below$).$
It follows from the above results that  for all $0<\tau<2,$
 the $H^{\sigma}$ total degree of the solutions to
 \eqref{1.92} is equal
 to $-1$ (see Proposition \ref{prop4} below).
Then we can conclude that $H^{\sigma}$ topological
degree of those solutions to \eqref{1.92} which remain
bounded as $\tau$ tends to zero is equal to $\mathrm{Index}(K).$
Some well-known results in degree theory
imply that the $H^{\sigma}$ degree contribution above is equal
 to the $C^{2\sigma,\alpha}$ topological degree
 of those bounded solutions to \eqref{1.92}.
Thus,  we proved Theorem \ref{thm4}.
Furthermore, we complete the proof of
Theorem \ref{thm2} by using the degree-counting
formula and perturbing the function $K$ near its critical point.
In the end, using Theorem \ref{thm1} and the idea
of the proof of Theorem \ref{thm2}, we prove  Theorem \ref{thm5}.

In Appendix \ref{sec2},  since the fact that
by the Green's representation \eqref{2.27}
and the stereographic projection, we can write
Eq. \eqref{1.1} as the form
\begin{align}\label{1.90}
u(x)=\int_{\mathbb{R}^{n}}
\frac{K(y) u(y)^{\frac{n+2\sigma}{n-2\sigma}}}
{|x-y|^{n-2\sigma}}\,
 \ud y \quad \text { on }\, \mathbb{R}^{n},
\end{align}
we first review the H\"older estimates, Schauder type estimates,
blow up profile for nonlinear integral equations \eqref{1.90}
 established by Jin-Li-Xiong \cite{jlxm}.

In  Appendix \ref{sec5},
we provide some useful technical
results and elementary estimates.

\section{The characterization of blow up behavior and  compactness result}\label{sec3}
In this section, our main task is to prove Theorem \ref{thm2.1}.
Before that, we need further characterizes the blow up points for solutions to \eqref{1.1}
by using integral representation
and some estimates in the Appendix \ref{sec2} (see Theorem \ref{thm1} below),
 which  plays a key role in proving main result concerning compactness and existence.
We first review some definitions of blow up points.

Let $\Omega$ be a domain in $\mathbb{R}^{n}$ and $K_{i}$ are
 nonnegative bounded functions in $\mathbb{R}^{n}.$
 Let $\{\tau_{i}\}_{i=1}^{\infty}$ be a sequence of
 nonnegative constants satisfying $\lim _{i \rightarrow \infty} \tau_{i}=0$, and set
$$
p_{i}=\frac{n+2 \sigma}{n-2 \sigma}-\tau_{i}.
$$
Suppose that $0 \leq u_{i} \in L_{{loc}}^{\infty}(\mathbb{R}^{n})$
satisfies the nonlinear integral equation
\be\label{2.1}
u_{i}(x)=\int_{\mathbb{R}^{n}} \frac{K_{i}(y) u_{i}(y)^{p_{i}}}{|x-y|^{n-2 \sigma}}\, \ud y
\quad \text { in } \,\Omega.
\ee
We assume that $K_{i} \in C^{1}(\Omega)$ $(K_{i}\in C^{1,1}(\mathbb{S}^{n})$ if
$\sigma\leq 1/2$) and, for some positive constants $A_{1}$ and $A_{2}$,
\be\label{2.2111}
1 / A_{1} \leq K_{i}, \quad \text { and } \quad
\|K_{i}\|_{C^{1}(\Omega)} \leq A_{2},\,( \|K_{i}\|_{C^{1,1}(\Omega)} \leq A_{2}\, \text { if }\,\sigma \leq \frac{1}{2}).
\ee

\begin{definition}
  Suppose that $\{K_{i}\}$ satisfies \eqref{2.2111} and $\{u_{i}\}$ satisfies \eqref{2.1}.
  A point $\overline{y} \in \Omega$ is called a blow up point of $\{u_{i}\}$ if there exists a sequence
  $y_{i}$ tending to $\overline{y}$ such that $u_{i}(y_{i}) \rightarrow \infty$.
\end{definition}

\begin{definition}\label{defn1.1}
A blow up point $\overline{y} \in \Omega$ is called an isolated blow up point of $\{u_{i}\}$ if there exists $0<\overline{r}<\operatorname{dist}(\overline{y}, \Omega)$, $\overline{C}>0$, and $a$
sequence $y_{i}$ tending to $\overline{y}$, such that $y_{i}$ is a local maximum point of $u_{i},$ $u_{i}(y_{i}) \rightarrow \infty$ and
\be\label{1.57}
u_{i}(y) \leq \overline{C}|y-y_{i}|^{-2 \sigma /(p_{i}-1)} \quad \text { for all }\, y \in B_{\overline{r}}(y_{i}).
\ee
\end{definition}
Let $y_{i} \rightarrow \overline{y}$ be an isolated blow up point of $\{u_{i}\}$, and define,
for $r>0$,
$$
\overline{u}_{i}(r):=\frac{1}{|\partial B_{r}(y_{i})|} \int_{\partial B_{r}(y_{i})} u_{i}
\quad \text{and}\quad
\overline{w}_{i}(r):=r^{2 \sigma /(p_{i}-1)} \overline{u}_{i}(r).
$$

\begin{definition}\label{defn1.3}
A point  $y_{i} \rightarrow \overline{y} \in \Omega$ is called an isolated simple blow up point if $y_{i} \rightarrow \overline{y}$ is an isolated blow up point such that for some $\rho>0$ (independent of i), $\overline{w}_{i}$ has precisely one critical point in $(0, \rho)$ for large $i$.
\end{definition}

\subsection{ Characterization of blow up behavior }
Recall the definitions of the matrix $M$ given in \eqref{M} and its smallest eigenvalue $\mu(M)$.
The result about characterization of blow up behavior
of the solutions to \eqref{1.1} is:
\begin{theorem}\label{thm1}
Let $\sigma=1+m/2,$ $m\in \mathbb{N}_{+}$ and $n=2\sigma+2.$
Let $K \in C^{2}(\mathbb{S}^{n})$ be a positive function and $\mathscr{K},
\mathscr{K}^{-},\mathscr{K}^{+}$ be as in \eqref{1.2}.
Let $p_{i}$ satisfy $p_{i} \leq \frac{n+2\sigma}{n-2\sigma}=\frac{n+2\sigma}{2}=n-1,$ $ p_{i} \rightarrow
n-1,$
$K_{i} \in C^{2}(\mathbb{S}^{n})$ satisfy $K_{i} \rightarrow K$
in $C^{2}(\mathbb{S}^{n}),$ and $v_{i}\in C^{2\sigma}(\mathbb{S}^n)$ satisfy
\be\label{1.7}
P_{\sigma}v_{i}=c(n,\sigma)K_iv_{i}^{p_i}
\ee
and
$$
\lim_{i\rightarrow\infty}\max_{\mathbb{S}^n}v_{i}=\infty.
$$
Then there exists a constant $\delta^{*}>0$
depending only on $ \min_{\mathbb{S}^{n}} K,$ $\|K\|_{C^{2}(\mathbb{S}^{n})},$ and
the modulus of the continuity of $\nabla_{g_{0}} K$ if $\sigma>1/2$
such that after passing to a subsequence, we have:
\begin{enumerate}
  \item[(i)]  $\{v_i\}$ (still denote the subsequence by $\{v_i\}$)  has only
  isolated simple blow up points $q^{(1)}, \cdots, q^{(k)} \in \mathscr{K} \backslash \mathscr{K}^{+}$
   $(k \geq 1)$ with $|q^{(j)}-q^{(\ell)}| \geq \delta^{*},$  $\forall\, j \neq \ell,$
  and $\mu(M(q^{(1)}, \cdots, q^{(k)})) \geq 0.$
  Furthermore, $q^{(1)}, \cdots, q^{(k)} \in \mathscr{K}^{-}$ if $k \geq 2.$
  \item[(ii)]
    Let $q^{(1)},\cdots,q^{(k)}$ be as in (i), and
     $q_{i}^{(j)}$ be the local maximum of $v_i$
     with $q_{i}^{(j)}\rightarrow q^{(j)},$ we have
  \begin{align}
  \lambda_{j}&:=K(q^{(j)})^{-1/2\sigma}
  \lim_{i\rightarrow\infty}v_{i}(q_{i}^{(1)})(v_{i}(q_{i}^{(j)}))^{-1}\in (0,\infty),
  \label{1.3}\\
  \mu^{(j)}&:=\lim_{i\rightarrow \infty} \tau_{i} v_{i}(q_{i}^{(j)})^{2} \in[0, \infty).\label{1.4}
 \end{align}
  \item[(iii)]
  Let $\lambda_{j},\mu^{(j)}, j=1,\cdots,k$ be as in (ii), then when $k=1,$
    \be\label{1.5}
    \mu^{(1)}=-\frac{2}{\sigma}\frac{\Delta_{g_0} K(q^{(1)})}{K(q^{(1)})^{n/2\sigma}},
     \ee
when $k\geq 2$,
\be\label{1.6}
\sum_{\ell=1}^{k} M_{\ell j}(q^{(1)}, \cdots, q^{(k)})
\lambda_{\ell}= \frac{\sigma}{2}\lambda_{j} \mu^{(j)}, \quad \forall\, j: 1 \leq j \leq k.
\ee

\item [(iv)]
$\mu^{(j)}\in(0,\infty),$ $\forall\,j=1, \cdots, k,$ if and only if $\mu(M(q^{(1)},\cdots,q^{(k)})) >0.$
\end{enumerate}
\end{theorem}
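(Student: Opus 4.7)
The plan is to translate Eq.~\eqref{1.7} to an integral equation on $\mathbb{R}^n$ via stereographic projection, apply the isolated simple blow up analysis from Appendix~\ref{sec2}, and then extract the balance conditions in (iii) from a Pohozaev type identity localized around each blow up point; part (iv) will then follow by linear algebra. Concretely, the Green representation \eqref{2.27} together with stereographic projection recasts \eqref{1.7} as a positive solution $u_i$ of \eqref{2.1}, after which the machinery of Jin--Li--Xiong applies. A standard selection procedure extracts, along a subsequence, finitely many isolated blow up points $q^{(1)},\dots,q^{(k)}$. To upgrade these to isolated \emph{simple} blow up points I would rely on the bubble classification of \cite{clo} and show that any additional critical radius of $\overline{w}_i$ at criticality would contradict the $C^{2\sigma}_{\mathrm{loc}}$ convergence of the rescaled profile to the standard Aubin--Talenti bubble.

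To pin down the location of the $q^{(j)}$ and establish the separation $|q^{(j)}-q^{(\ell)}| \geq \delta^*$, I would apply the Pohozaev identity (Proposition~\ref{prop1.1}) on a small geodesic ball around each blow up point. Its leading term forces $\nabla_{g_0} K(q^{(j)}) = 0$, hence $q^{(j)} \in \mathscr{K}$, and the next-order refinement gives $\Delta_{g_0} K(q^{(j)}) \leq 0$, excluding $\mathscr{K}^+$. Uniform separation is then obtained by contradiction: if two blow up points coalesced the rescaled bubble profiles would overlap in a way incompatible with the a priori bound \eqref{1.57} of isolated simplicity. Furthermore, when $k \geq 2$ the positive Green interaction between distinct bubbles forces the diagonal to strictly dominate, so $q^{(j)} \in \mathscr{K}^-$; the inequality $\mu(M) \geq 0$ in (i) then emerges as the scalar consequence of the system derived in (iii).

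For (ii) and (iii) I would study the rescaled sequences
\[
w_i^{(j)}(y) = v_i(q_i^{(j)})^{-1}\, v_i\!\bigl(q_i^{(j)} + v_i(q_i^{(j)})^{-(p_i-1)/(2\sigma)} y\bigr),
\]
which converge locally in $C^{2\sigma}$ to a standard bubble. Comparing heights across different bubbles via the spherical Riesz kernel yields existence and positivity of $\lambda_j$ in \eqref{1.3}. The existence of $\mu^{(j)}$ in \eqref{1.4} and the balance equations then come from the Pohozaev identity applied on $B_\rho(q_i^{(j)})$: the subcritical exponent contributes a term proportional to $\tau_i v_i(q_i^{(j)})^2$, producing $\mu^{(j)}$ in the limit; the order-two Taylor term of $K_i$ at $q^{(j)}$ contributes $\sigma^{-1}M_{jj}$; and the asymptotic expansion of $v_i$ near $q^{(j)}$ induced by the other bubbles through the Green kernel of \eqref{2.27} contributes the off-diagonal entries $M_{\ell j}$ weighted by $\lambda_\ell$. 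For $k=1$ only the first two terms survive, giving \eqref{1.5}; for $k \geq 2$ the full system \eqref{1.6} emerges.

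Part (iv) is then linear algebra: \eqref{1.6} reads $M\vec{\lambda} = \tfrac{\sigma}{2}\, D\vec{\lambda}$ with $\vec{\lambda} > 0$ and $D = \mathrm{diag}(\mu^{(j)})$. Since $M$ is a symmetric Z-matrix (positive diagonal, non-positive off-diagonal), the Perron--Frobenius principle for $tI-M$ with $t$ large shows that $\mu(M)$ is attained on a positive eigenvector $\vec{\phi}$; testing $\vec{\phi}^T M \vec{\lambda} = \mu(M)\,\vec{\phi}^T\vec{\lambda}$ against $\tfrac{\sigma}{2}\vec{\phi}^T D \vec{\lambda}$ yields $\mu(M) > 0$ if and only if all $\mu^{(j)} > 0$. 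The main obstacle I anticipate is the Pohozaev step for the off-diagonal balance: extracting the precise constant $-n(n-1)$ in front of the Green interaction requires a delicate cancellation between bulk and boundary contributions around each blow up, and this is exactly where $n - 2\sigma = 2$ is essential — only in this range does the weight $|y|^2$ in the Pohozaev integrand align with the second-order Taylor term of $K$, so that the identity closes up into \eqref{1.6}.
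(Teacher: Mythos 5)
Your plan is correct and follows essentially the same route as the paper: pass to the integral formulation via \eqref{2.27} and stereographic projection, invoke the isolated simple blow up machinery of Appendix \ref{sec2}, obtain \eqref{1.3}--\eqref{1.4} from the bubble profile and the gradient/$\tau_i$ estimates, derive \eqref{1.5}--\eqref{1.6} from the localized Pohozaev identity (Proposition \ref{prop1.1}) with the Green-function interaction producing the off-diagonal terms, and finish with the nonnegative eigenvector test for $\mu(M)$. The one step you should make explicit is the reverse implication in (iv): the eigenvector identity by itself only shows that all $\mu^{(j)}>0$ implies $\mu(M)>0$ (and that $\mu(M)>0$ forces some $\mu^{(j)}>0$), so you need to add that, by \eqref{1.3}, the blow up heights are comparable and hence the $\mu^{(j)}$ are either all zero or all positive; in the former case \eqref{1.6} gives $M\lambda=0$ with $\lambda>0$, so $\mu(M)=0$, which closes the equivalence.
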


We first give the following proposition:
\begin{proposition}\label{prop6}
Let $K \in C^{2}(\mathbb{S}^{n})$, $n\geq 2$, be a positive function and $\mathscr{K},
\mathscr{K}^{-},\mathscr{K}^{+}$ be as in \eqref{1.2}.
Let $p_{i}$ satisfy $p_{i} \leq \frac{n+2\sigma}{n-2\sigma}$ $ p_{i} \rightarrow
\frac{n+2\sigma}{n-2\sigma},$
$K_{i} \in C^{2}(\mathbb{S}^{n})$ satisfy $K_{i} \rightarrow K$
in $C^{2}(\mathbb{S}^{n}),$ and $v_{i}$ satisfy
$$
P_{\sigma}v_{i}=c(n,\sigma)K_iv_{i}^{p_i}.
$$
Then exists a constant $\delta^{*}>0$ depending only on
$ \min_{\mathbb{S}^{n}} K,$ $\|K\|_{C^{2}(\mathbb{S}^{n})},$
and the modulus of the continuity of $\nabla_{g_{0}} K$ if $\sigma>1/2$
 such that, after passing to a subsequence,
either $\{v_{i}\}$ stays bounded in $L^{\infty}(\mathbb{S}^{n})$
or $\{v_{i}\}$ has only isolated simple blow up points and the distance between any two blow up points is bounded blow by $\delta^{*}$.
\end{proposition}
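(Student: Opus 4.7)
\emph{Proof plan.} The strategy is to transfer Proposition \ref{prop6} to the integral equation setting on $\mathbb{R}^n$ via the stereographic projection and then invoke the blow-up machinery recalled in Appendix \ref{sec2}. Setting $u_i(x):=|J_F(x)|^{(n-2\sigma)/(2n)}v_i(F(x))$, where $F$ is the inverse stereographic projection, the Green's representation \eqref{2.27} together with the conformal covariance identity for $P_\sigma$ stated in the introduction shows that $u_i$ satisfies a nonlinear integral equation of the form \eqref{2.1} on every bounded $\Omega\subset\mathbb{R}^n$, with coefficients $\widetilde K_i$ fulfilling the bounds \eqref{2.2111} (since $K_i\to K$ in $C^2$ and $K$ is bounded away from zero). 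If $\{v_i\}$ fails to be uniformly bounded in $L^\infty(\mathbb{S}^n)$, then after choosing the pole of $F$ away from a concentration point, $\{u_i\}$ has a blow-up point in $\mathbb{R}^n$, and the problem is reduced to analyzing \eqref{2.1}.

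The first step is to show that, along a subsequence, every blow-up point is isolated, and that distinct blow-up points are separated by a universal $\delta^*>0$. I would follow the standard selection scheme, picking local maxima $y_i$ of $u_i$ with $u_i(y_i)\to\infty$ and rescaling
\begin{equation}
\widehat u_i(z):=u_i(y_i)^{-1}\,u_i\bigl(y_i+u_i(y_i)^{-(p_i-1)/(2\sigma)}z\bigr).
\end{equation}
The compactness and blow-up-profile estimates from Appendix \ref{sec2} produce a subsequential limit $\widehat u$ solving the critical integral equation on $\mathbb{R}^n$ with constant coefficient $K(\overline y)>0$, and the Chen--Li--Ou classification forces $\widehat u$ to be a standard bubble. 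A covering argument combined with the Harnack-type inequality for \eqref{2.1} then yields the pointwise estimate \eqref{1.57} on a ball of universal radius, hence isolation, together with the uniform separation $\delta^*$ depending only on $\min K$, $\|K\|_{C^2}$, and the modulus of continuity of $\nabla_{g_0}K$ when $\sigma>1/2$.

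The second step is to upgrade each isolated blow-up point to an isolated simple one by contradiction. Assume that $\overline w_i(r):=r^{2\sigma/(p_i-1)}\overline u_i(r)$ has at least two critical points $r_i^{(1)}<r_i^{(2)}$ with $r_i^{(2)}\to 0$ for arbitrarily large $i$. Rescaling
\begin{equation}
\widetilde u_i(z):=\bigl(r_i^{(2)}\bigr)^{2\sigma/(p_i-1)}\,u_i\bigl(y_i+r_i^{(2)}z\bigr),
\end{equation}
the sharp off-diagonal decay $u_i(y)\le C u_i(y_i)^{-1}|y-y_i|^{-(n-2\sigma)}$ from Appendix \ref{sec2} together with the Harnack-type inequality allows passage to a limit $\widetilde u$ on $\mathbb{R}^n\setminus\{0\}$ that solves the critical integral equation and whose radial weighted average has a critical point at $r=1$; the classification of axisymmetric solutions with a possible isolated singularity (again drawn from Appendix \ref{sec2}) excludes such a second critical point, delivering the contradiction. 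Hence $\overline w_i$ has a unique critical point in $(0,\rho)$ for some $\rho>0$ independent of $i$, which is exactly the isolated simple property.

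The main technical obstacle is the second step, where the global nature of the integral kernel $|x-y|^{-(n-2\sigma)}$ means that far-field contributions to the rescaled integrals must be controlled uniformly in $i$. Without the sharp decay and Harnack bounds furnished by Appendix \ref{sec2}, one cannot pass to the limit in $C^0_{\mathrm{loc}}(\mathbb{R}^n\setminus\{0\})$, nor identify the limit as a radial solution amenable to classification. Once those tools are invoked in the correct combination, the argument parallels the local-PDE treatments of Schoen--Zhang and Li, and combining the two steps yields the dichotomy claimed in the proposition.
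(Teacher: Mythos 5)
Your overall skeleton (stereographic projection to the integral equation \eqref{2.1}, Schoen-type selection of bubbles, rescaling and the Chen--Li--Ou classification, Harnack inequality) is the right frame — the paper itself simply defers to the argument of Theorem 3.3 in \cite{jlxm}, which follows exactly this route. But there is a genuine gap in the mechanism you propose for the two hard steps. The engine of the actual proof, both for upgrading isolated blow up to isolated \emph{simple} blow up and for obtaining the universal separation $\delta^{*}$, is the Pohozaev-type identity (Proposition \ref{prop1.1}), and your plan never uses it. In the isolated-simple step, after rescaling by the second critical radius $r_i^{(2)}\to 0$ the rescaled functions $\widetilde u_i$ do \emph{not} converge to a nontrivial singular solution of the critical integral equation: one has $\widetilde u_i(0)\to\infty$, and the decay estimate of Proposition \ref{prop4} forces $\widetilde u_i\to 0$ locally uniformly away from the origin, so the nonlinearity disappears in the limit. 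The correct object is $\widetilde u_i(0)\,\widetilde u_i(\cdot)$, whose limit is $a|x|^{2\sigma-n}+b(x)$ with $b(0)>0$ (the positivity coming precisely from the critical point of the weighted average at $r=1$), and the contradiction is produced by inserting this into the Pohozaev identity, whose left-hand side tends to zero because the rescaled $\nabla K_i$ and $\tau_i$ are small while the right-hand side tends to a positive multiple of $a\,b(0)$. Moreover, the tool you invoke instead — a ``classification of axisymmetric solutions with a possible isolated singularity'' — is not among the results in Appendix \ref{sec2}, and even if it were, it would not exclude a second critical point: singular Fowler/Delaunay-type solutions of the critical equation exist and their weighted spherical averages have infinitely many critical points, so classification alone yields no contradiction.

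The same omission affects your first step: the uniform lower bound $\delta^{*}$ on the distance between blow up points is not a consequence of a covering argument plus the Harnack inequality. The selection process only gives mutual distances controlled in terms of the rescaling parameters (possibly tending to zero); to promote this to a universal $\delta^{*}$ one argues by contradiction on the minimal distance $\sigma_i\to 0$, rescales by $\sigma_i$, shows the two resulting blow up points are isolated simple, and again applies the Pohozaev identity to the limit $a|x|^{2\sigma-n}+b(x)$, where now $b(0)>0$ because of the contribution of the second bubble. So both places where your outline says ``this yields the conclusion'' are exactly where Proposition \ref{prop1.1} must be brought in; without it the argument does not close.
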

\begin{proof}
The proof follows from the same arguments used to prove Theorem 3.3 in
\cite{jlxm}, so we omit it.
\end{proof}

\begin{proof}[Proof of Theorem \ref{thm1}]
From  Proposition \ref{prop6} and
$\lim_{i\rightarrow\infty}\max_{\mathbb{S}^n}v_{i}=\infty,$
there exists a constant $\delta^{*}>0$ depending only on
$ \min_{\mathbb{S}^{n}} K,$ $\|K\|_{C^{2}(\mathbb{S}^{n})},$
and the modulus of the continuity of $\nabla_{g_{0}} K$ if $\sigma>1/2$
such that $\{v_{i}\}$ has only isolated simple blow up points $q^{(1)}, \cdots, q^{(k)} \in \mathscr{K}$ $(k \geq 1)$ with $|q^{(j)}-q^{(\ell)}| \geq \delta^{*}$
$(j \neq \ell).$

By \eqref{2.27}, \eqref{1.7} is equivalent to
\begin{align}\label{1.101}
v_{i}(\xi)=\frac{\Gamma(\frac{n+2 \sigma}{2})}{2^{2 \sigma} \pi^{n / 2} \Gamma(\sigma)}
\int_{\mathbb{S}^{n}} \frac{K_{i}(\eta) v_{i}(\eta)^{p_{i}}}{|\xi-\eta|^{2}}\,
 \ud \eta \quad \text { on }\, \mathbb{S}^{n}.
\end{align}
Let $F$ be the stereographic projection with
with $q^{(j)}$ being the south pole:
$$
\begin{aligned}
F:\mathbb{R}^n&\rightarrow \mathbb{S}^n \backslash \{-q^{(j)}\},\\
 x&\mapsto\Big(\frac{2x}{1+|x|^2},\frac{|x|^2-1}{|x|^2+1}\Big).
\end{aligned}
$$
Let $\tau_{i}= n-1-p_{i},$
via the stereographic projection, the equation \eqref{1.101}
is translated to
$$
u_{i}(x)=\frac{\Gamma(\frac{n+2 \sigma}{2})}{2^{2 \sigma} \pi^{n / 2} \Gamma(\sigma)}
\int_{\mathbb{R}^n}\frac{\wdt{K}_{i}(y)H(y)^{\tau_{i}}u_{i}(y)^{p_{i}}}{|x-y|^{2}}\,\ud y
\quad
\text{ on }\, \mathbb{R}^{n},
$$
where
\be\label{1.9}
H(x)=\frac{2}{1+|x|^2},\quad
u_i(x)=H(x)v_i(F(x)), \quad \widetilde{K}_{i}(x)=K_{i}(F(x)).
\ee

Let $x_{i}^{(j)}$ be the local maximum of $u_{i}$ and $x_{i}^{(j)}\rightarrow 0.$
It follows from Propositions \ref{prop4} and \ref{prop7} that
\begin{align}
\begin{aligned}\label{1.10}
u_{i}(x_{i}^{(j)})u_{i}(x)\rightarrow h^{(j)}(x) :=a
K(q^{(j)})^{-1/\sigma}|x|^{-2}&+b^{(j)}(x)\\
\quad& \text { in }\, C_{{loc}}^{2}(\mathbb{R}^{n}
\backslash\{\cup_{\ell=1}^{k}x^{(\ell)}\}),
\end{aligned}
\end{align}
where
\be\label{2.16}
a=4c_{n,\sigma}c(n,\sigma)\int_{\mathbb{R}^n}\Big(\frac{1}{1+|y|^{2}}\Big)^{n-1}\,\ud y
=2c_{n,\sigma}c(n,\sigma)|\mathbb{S}^{n-1}|\mathrm{B}(\sigma,n/2),
\ee
{$\mathrm{B}(\sigma,n/2)$} is the Beta function, and $c_{n,\sigma}$ is as in \eqref{2.27}.
From the maximum principle, $b^{(j)}(x)$ satisfies
\be\label{2.17}
 b^{(j)}(x)\equiv 0\quad \text{ if } \, k=1,\quad
 b^{(j)}(x)>0\quad\text{ if }\, k\geq 2.
 \ee

By \eqref{1.9} and $y_{i}^{(j)}\rightarrow 0$ as $i\rightarrow \infty,$ we have
$$
\lim_{i\rightarrow \infty}  v_{i}(q_{i}^{(j)}) v_{i}(q)
=\frac{1}{4}\lim_{i\rightarrow \infty}(1+|x|^2)u_{i}(x_{i}^{(j)}) u_{i}(x),
$$
combining with \eqref{1.10}, it easy to see that for $q \neq q^{(j)}$ and close to $q^{(j)},$
\be\label{1.11}
\lim _{i \rightarrow \infty} v_{i}(q_{i}^{(j)}) v_{i}(q)
=\frac{aG_{q^{(j)}}(q)}{2K(q^{(j)})^{{1}/{\sigma}}}
+\wdt{b}^{(j)}(q)\quad \text{ in } \,C^{2}_{{loc}}
(\mathbb{S}^n\backslash \{\cup_{\ell=1}^{k}q^{(\ell)}\}),
\ee
 where $a$ is as in \eqref{2.16}, and
 $\wdt{b}^{(j)}(q)$ is some regular function  on $\mathbb{S}^{n} \backslash$
 $\cup_{\ell \neq j}\{q^{(\ell)}\}$
satisfying $P_{\sigma}\widetilde{b}^{(j)}=0,$
and  $G_{q^{(j)}}(q)$ is the Green function defined as in \eqref{1.8}.

When $k\geq 2,$  taking into account the contribution of all
the poles, we deduce
\be\label{1.121}
\begin{aligned}
\lim _{i \rightarrow \infty} v_{i}(q_{i}^{(j)}) v_{i}(q)
=
\frac{aG_{q^{(j)}}(q)}{2K(q^{(j)})^{{1}/{\sigma}}}
+\frac{a}{2}\sum_{\ell \neq j}
&\lim _{i \rightarrow \infty}
\frac{v_{i}(q_{i}^{(j)})}
{v_{i}(q_{i}^{(\ell)})}
\frac{G_{q^{(\ell)}}(q)}{K(q^{(\ell)})^{{1}/{\sigma}}}\\
&\quad \text{ in }\, C^{2}_{{loc}}
(\mathbb{S}^n\backslash \{\cup_{\ell=1}^{k}q^{(\ell)}\}).
\end{aligned}
\ee
In fact, subtracting all the poles from the limit function, we obtain
a regular function  $\wdt{b}_{0}: \mathbb{S}^n\rightarrow \mathbb{R}$
such that $P_{\sigma}\wdt{b}_{0}=0$ on $\mathbb{S}^n,$
so it must be $\wdt{b}_{0}\equiv 0.$
Using \eqref{1.121}, we have,  for $|y|> 0$ small,
\be\label{2.5}
h^{(j)}(y)=
\frac{a}{K(q^{(j)})^{1/\sigma}|y|^{2}}+2a
\sum_{\ell \neq j} \lim _{i \rightarrow \infty}
 \frac{v_{i}(q_{i}^{(j)})}{v_{i}(q_{i}^{(\ell)})}
\frac{G_{q^{(\ell)}}(q^{(j)})}{K(q^{(\ell)})^{{1}/{\sigma}}}+O(|y|),
\ee
where $a$ is as in \eqref{2.16}.
The conclusion obtained from the above is easy to see that \eqref{1.3} is true.

Before stating the result to be proved, we give the following estimates \eqref{2.3} and \eqref{2.4}.
Using Proposition \ref{prop8}, we obtain
\be\label{2.3}
|\nabla K_{i}(y_{i}^{(j)})|
=O(u_{i}(y_{i}^{(j)})^{-1}), \quad \tau_{i}
=O(u_{i}(y_{i}^{(j)})^{-2}).
\ee
It is obvious that \eqref{1.4} can be proved by  \eqref{2.3}.
 We have proved Part (ii).

Let $y=(y_{(1)},\cdots,y_{(n)})\in \mathbb{R}^{n}.$
It follows from Propositions \ref{prop3.1}, \ref{prop4}, and \ref{prop5}, that
for sufficiently small $\delta>0,$
\be\label{2.4}
\begin{aligned}
&\sum\limits_{j=1}^{n}\Big|\int_{B_{\delta}} y_{(j)} u_{i}(y+x_{i}^{(j)})^{p_{i}+1}\Big|
=o(u_{i}(x_{i}^{(j)})^{-1}), \\
&\sum\limits_{j \neq \ell}\Big|\int_{B_{\delta}} y_{(j)} y_{(\ell)} u_{i}(y+x_{i}^{(j)})^{p_{i}+1}\Big|
=o(u_{i}(x_{i}^{(j)})^{-2}), \\
&\int_{\partial B_{\delta}} u_{i}(y+x_{i}^{(j)})^{p_{i}+1}
=O(u_{i}(x_{i}^{(j)})^{-p_{i}-1}), \\
&\lim \limits_{i \rightarrow \infty} u_{i}(x_{i}^{(j)})^{2}
\int_{B_{\delta}}|y|^{2} u_{i}(y+x_{i}^{(j)})^{p_{i}+1}
=\frac{n2^{1+n}|\mathbb{S}^{n-1}|}{n+2\sigma}
\frac{\ub(\sigma,n/2)}{K(q^{(j)})^{1+{2}/{\sigma}}}.
\end{aligned}
\ee
In fact, the first three formulas in \eqref{2.4} can be easily obtained
 from Proposition \ref{prop5}. For the last formula in \eqref{2.4},
 let $R_{i}$ be as in Proposition \ref{prop3.1} and
\be\label{2.11}
m_{ij}:=u_{i}(x_{i}^{(j)}),\quad
r_{ij}:=R_{i}m_{ij}^{-(p_i-1)/2\sigma},
\quad k_{ij}:=2^{-2}\wdt{K}_{i}(x_{i}^{(j)})^{1/\sigma}.
\ee
Using  Proposition \ref{prop3.1} again, we have
\begin{align*}
 &m_{ij}^{2}\int_{|y|\leq r_{ij}}|y|^{2} u_{i}(y+x_{i}^{(j)})^{p_{i}+1}\,\ud y\\
 =&m_{ij}^{2}\int_{|x|\leq R_{i}}m_{ij}^{\frac{-(2+n)(p_i-1)}{2\sigma}+p_{i}+1}|x|^{2}
 (m_{ij}^{-1}u_{i}(m_{ij}^{-(p_i-1)/2\sigma}x+x_{i}^{(j)}))^{p_{i}+1}\,\ud x\\
 =&m_{ij}^{\frac{-(2+n)(p_i-1)}{2\sigma}+p_{i}+3}
 \int_{|x|\leq R_{i}}|x|^2\Big(\frac{1}{1+k_{ij}|x|^2}\Big)^{p_{i}+1}\,
 \ud x+o(1)\\
 =&\int_{\mathbb{R}^{n}}\frac{|x|^2}{(1+k_{ij}|x|^2)^{n}}\,\ud x+o(1)\\
 =&\frac{n2^{1+n}|\mathbb{S}^{n-1}|}{n+2\sigma}K(q^{(j)})^{-1-{2}/{\sigma}}
 \ub(\sigma,n/2)+o(1).
\end{align*}
We have completed the proof of \eqref{2.4}.

By $n-2\sigma=2$ and $\tau_{i}=(n+2\sigma)/(n-2\sigma)-p_{i},$ it is easy to see that
\be\label{2.6}
\frac{1}{p_{i}+1}=\frac{1}{2\sigma+2-\tau_{i}}
=\frac{1}{n}\Big(1+\frac{\tau_{i}}{n}+O(\tau_{i}^{2})\Big).
\ee
For sufficiently small $\delta>0,$  $u_{i}$ satisfy
$$
u_{i}(x)=\frac{c_{n,\sigma}c(n,\sigma)}{2^{2 \sigma}}
\int_{B_{\delta}(x_{i}^{(j)})}
\frac{\wdt{K}_{i}(y)H(y)^{\tau_{i}}u_{i}(y)^{p_{i}}}{|x-y|^{2}}\,\ud y
+h_{\delta}(x),
$$
where
\be\label{2.7}
h_{\delta}(x)=\frac{c_{n,\sigma}c(n,\sigma)}{2^{2 \sigma}}
\int_{\mathbb{R}^{n}\backslash B_{\delta}(x_{i}^{(j)})}
\frac{\wdt{K}_{i}(y)H(y)^{\tau_{i}}u_{i}(y)^{p_{i}}}{|x-y|^{2}}\,\ud y.
\ee
By Proposition \ref{prop1.1}, we have
\begin{align}
\begin{aligned}\label{1.102}
&\Big(\frac{n-2 \sigma}{2}-\frac{n}{p_{i}+1}\Big)
\int_{B_{\delta}(x_{i}^{(j)})} \wdt{K}_{i}(x)
H(x)^{\tau_{i}} u_{i}(x)^{p_{i}+1} \,\ud x\\
&\quad-\frac{1}{p_{i}+1} \int_{B_{\delta}(x_{i}^{(j)})}(x-x_{i}^{(j)}) \cdot
\nabla (\wdt{K}_{i}(x)H(x)^{\tau_{i}}) u(x)^{p_{i}+1} \,\ud x \\
=& \frac{n-2 \sigma}{2} \int_{B_{\delta}(x_{i}^{(j)})}
\wdt{K}_{i}(x)
H(x)^{\tau_{i}} u_{i}(x)^{p_{i}} h_{\delta}(x) \,\ud x\\
&\quad
+\int_{B_{\delta}(x_{i}^{(j)})} (x-x_{i}^{(j)})\cdot \nabla h_{\delta}(x) \wdt{K}_{i}(x)H(x)^{\tau_{i}}
u_{i}(x)^{p_{i}} \,\ud x \\
&\quad-\frac{\delta}{p_{i}+1} \int_{\partial B_{\delta}(x_{i}^{(j)})}
\wdt{K}_{i}(x) H(x)^{\tau_{i}} u_{i}(x)^{p_{i}+1} \, \ud s.
\end{aligned}
\end{align}

Let  $x=(x_{(1)},\cdots,x_{(n)})\in \mathbb{R}^n,$  by \eqref{2.6} we have
\begin{align}\label{2.8}
&-\frac{1}{p_{i}+1}\int_{B_{\delta}}
x\cdot\nabla
(\wdt{K}_{i}(x+x_{i}^{(j)})H(x+x_{i}^{(j)})^{\tau_{i}})
u_{i}(x+x_{i}^{(j)})^{p_{i}+1}\,\ud x\notag\\
=&-\frac{1}{n}
\sum_{\ell=1}^{n}\int_{B_{\delta}}x_{(\ell)}
\frac{\partial \wdt{K}_{i}}{\partial x_{(\ell)}}(x+x_{i}^{(j)})
u_{i}(x+x_{i}^{(j)})^{p_{i}+1}\,\ud x
+o(m_{ij}^{-2})\notag\\
=&-\frac{1}{n}\int_{B_{\delta}}x\cdot\nabla\wdt{K}(x_{i}^{(j)})u_{i}(x+x_{i}^{(j)})^{p_{i}+1}
\,\ud x\notag\\
& -\frac{1}{n}
\sum_{\ell,m}\int_{B_{\delta}}x_{(\ell)}x_{(m)}\frac{\partial^2 \wdt{K}}{\partial x_{(\ell)}\partial x_{(m)}}(x_{i}^{(j)})
u_{i}(x+x_{i}^{(j)})^{p_{i}+1}\,\ud x
+o(m_{ij}^{-2})\notag\\
=&-\frac{1}{n^{2}}\Delta \wdt{K}(0) \int_{B_{\delta}}|x|^{2} u_{i}(x+x_{i}^{(j)})^{p_{i}+1} \,\ud x
+o(m_{ij}^{-2})\notag\\
=&-\frac{4}{n^2}\Delta_{g_{0}}K(q^{(j)})
\int_{B_{\delta}}
|x|^{2} u_{i}(x+x_{i}^{(j)})^{p_{i}+1} \,\ud x
+o(m_{ij}^{-2}).
\end{align}
Then, by \eqref{2.4} and \eqref{2.8},
\be\label{2.13}
\begin{aligned}
&\lim_{i\rightarrow\infty}
-\frac{m_{ij}^{2}}{p_{i}+1}\int_{B_{\delta}}
x\cdot\nabla \wdt{K}_{i}(x+x_{i}^{(j)})u_{i}(x+x_{i}^{(j)})^{p_{i}+1}\,\ud x\\
=&-\frac{2^{3+n}|\mathbb{S}^{n-1}|\ub(\sigma,n/2)}{n(n+2\sigma)}
\frac{\Delta_{g_{0}} K(q^{(j)})}{K(q^{(j)})^{1+{2}/{\sigma}}}.
\end{aligned}
\ee

Let $r_{i}$ be as in \eqref{2.11} and by \eqref{2.6}, we have
\begin{align}\label{2.9}
&\Big(\frac{n-2\sigma}{2}-\frac{n}{p_{i}+1}\Big)
\int_{B_{\delta}}\wdt{K}_{i}(x+x_{i}^{(j)})H(x+x_{i}^{(j)})^{\tau_i}
u_{i}(x+x_{i}^{(j)})^{p_i+1}\,\ud x\notag\\
=& -\frac{\tau_{i}}{n}\int_{B_{\delta}}
\wdt{K}_{i}(x+x_{i}^{(j)})u_{i}(x+x_{i}^{(j)})^{p_i+1}\,\ud x
+o(m_{ij}^{-2})\notag\\
=&-\frac{\tau_{i}}{n}\int_{B_{\delta}}\wdt{K}_{i}(x_{i}^{(j)})
u_{i}(x+x_{i}^{(j)})^{p_i+1}\,\ud x\notag\\
\quad&+O\Big(\Big|\int_{B_{\delta}}x\cdot \nabla \wdt{K}_{i}(x_{i}^{(j)})
u_{i}(x+x_{i}^{(j)})^{p_i+1}\,\ud x\Big|\Big)\notag\\
\quad&+O\Big(  \int_{B_{\delta}} |x|^{2}
u_{i}(x+x_{i}^{(j)})^{p_i+1}\,\ud x\Big)
+o(m_{ij}^{-2})\notag\\
=&-\frac{\tau_{i}}{n}\wdt{K}(x_{i}^{(j)})
\int_{|x|<r_{ij}} u_{i}(x+x_{i}^{(j)})^{p_i+1}\,\ud x
+o(m_{ij}^{-2})\notag\\
=&-\frac{\tau_{i}2^{n}}{n}K(q^{(j)})^{-1/\sigma}
\int_{\mathbb{R}^{n}}\frac{1}{(1+|x|^2)^n}\,\ud x
+o(m_{ij}^{-2})\notag\\
=&-\frac{\tau_{i}2^{n}|\mathbb{S}^{n-1}|}{n}\Big(\frac{\sigma}{n+2\sigma}\Big)
\mathrm{B}(n/2,\sigma)K(q^{(j)})^{-{1}/{\sigma}}
+o(m_{ij}^{-2}).
\end{align}
It follows from \eqref{1.4} and \eqref{2.9} that
\begin{align}\label{2.14}
&\lim_{i\rightarrow \infty}-m_{ij}^{2}\Big(1-\frac{n}{p_{i}+1}\Big)
\int_{B_{\delta}}\wdt{K}_{i}(x+x_{i}^{(j)})H(x+x_{i}^{(j)})^{\tau_i}
u_{i}(x+x_{i}^{(j)})^{p_i+1}\,\ud x\notag\\
\quad=&-\frac{2^{n+2}|\mathbb{S}^{n-1}|\sigma\ub(\sigma,n/2)}{n(n+2\sigma)}
\frac{\mu^{(j)}}{K(q^{(j)})^{{1}/{\sigma}}}.
\end{align}
In view of  \eqref{2.4}, we obtain
\begin{align}\label{2.12}
&\lim_{i\rightarrow \infty}-m_{ij}^2\frac{\delta}{p_{i}+1}\int_{\partial B_{\delta}}
\wdt{K}_{i}(x+x_{i}^{(j)})H(x+x_{i}^{(j)})^{\tau_{i}}
u_{i}(x+x_{i}^{(j)})^{p_{i}+1}\,\ud x\notag\\
=&\lim_{i\rightarrow \infty}-m_{ij}^2
\frac{\delta}{n}\Big(1+\frac{\tau_{i}}{n}\Big)
\int_{\partial B_{\delta}}\wdt{K}_{i}(x+x_{i}^{(j)})u_{i}(x+x_{i}^{(j)})^{p_{i}+1}\,\ud x\notag\\
=&0.
\end{align}
Using \eqref{2.7} and  Proposition \ref{prop3.1}, we have
\begin{align*}
&m_{ij}^{2}\frac{n-2\sigma}{2}
\int_{B_{\delta}(x_{i}^{(j)})}
\wdt{K}_{i}(x)
u_{i}(x)^{p_{i}}H(x)^{\tau_{i}}
h_{\delta}(x)\,\ud x\\
=&m_{ij}^{2}\int_{B_\delta(x_{i}^{(j)})}
(\wdt{K}_{i}(x_{i}^{(j)})+(x-x_{i}^{(j)})\cdot \nabla\wdt{K}(x_{i}^{(j)})+O(|x-x_{i}^{(j)}|^2))
u_{i}(x)^{p_{i}}h_{\delta}(x)\,\ud x\\
=&m_{ij}^{2-\frac{n(p_{i}-1)}{2\sigma}+p_{i}}
\wdt{K}_{i}(x_{i}^{(j)})\int_{|y|<R_{i}} (m_{ij}^{-1}u_{i}(m_{ij}^{-\frac{p_i-1}{2\sigma}}y+x_{i}^{(j)}))^{p_{i}}
h_{\delta}(m_{ij}^{-\frac{p_{i}-1}{2\sigma}}y+x_{i}^{(j)})\,\ud y+o(1)\\
=&\wdt{K}_{i}(x_{i}^{(j)})\int_{\mathbb{R}^{n}}
\frac{1}{(1+k_{ij}|y|^{2})^{p_{i}}}
h_{\delta}(m_{ij}^{-\frac{p_{i}-1}{2\sigma}}y+x_{i}^{(j)})\,\ud y+o(1)\\
=&2^{n-1}K(q^{(j)})^{-1/\sigma}|\mathbb{S}^{n-1}|\ub(\sigma,n/2)
b^{(j)}(0)+o(1),
\end{align*}
it follows that
\begin{align}\label{2.15}
&\lim_{i\rightarrow\infty}
m_{ij}^{2}\frac{n-2\sigma}{2}
\int_{B_{\delta}(x_{i}^{(j)})}
\wdt{K}_{i}(x)
u_{i}(x)^{p_{i}}H(x)^{\tau_{i}}
h_{\delta}(x)\,\ud x\notag\\
=&2^{n-1}|\mathbb{S}^{n-1}|\ub(\sigma,n/2)\frac{1}{K(q^{(j)})^{{1}/{\sigma}}}
b^{(j)}(0).
\end{align}

When $|x-x_{i}^{(j)}|<\delta,$ a direct calculation gives
\begin{align}\label{2.26}
|\nabla h_{\delta}(x)|\leq
\begin{cases}
\displaystyle C\frac{|\delta^{2\sigma-1}-(\delta-|x-x_{i}^{(j)}|)^{2\sigma-1}|}
 {2\sigma-1}m_{ij}^{-1} & \text{ if }\, \sigma\ne{1}/{2}, \\
\displaystyle C|\log \delta -\log(\delta-|x-x_{i}^{(j)}|)|m_{ij}^{-1} & \text{ if } \, \sigma={1}/{2}.
\end{cases}
\end{align}
The detailed proof of \eqref{2.26} can refer to \cite{jlxm}.
 Using Proposition \ref{prop3.1} and \eqref{2.26}, we can obtain
\begin{align}\label{2.21}
&\Big|
\int_{B_{\delta}(x_{i}^{(j)})}(x-x_{i}^{(j)})\nabla h_{\delta}(x)
\wdt{K}_{i}(x)H(x)^{\tau_{i}}u_{i}(x)^{p_{i}}\,\ud x \Big|\notag\\
\leq &
Cm_{ij}^{-1}\int_{|x-x_{i}^{(j)}|<\delta}|x-x_{i}^{(j)}|u_{i}(x)^{p_{i}}\,\ud x
\notag\\
\leq& Cm_{ij}^{-1-\frac{(n+1)(p_{i}-1)}{2\sigma}+p_{i}}
\int_{|y|<R_{i}}|y|
(m_{ij}^{-1}u_{i}(m_{ij}^{-\frac{(p_{i}-1)}{2\sigma}}y+x_{i}^{(j)})
)^{p_{i}}\,\ud y\notag\\
=&o(m_{ij}^{-2}).
\end{align}

By \eqref{1.102}, \eqref{2.13}, \eqref{2.14}, \eqref{2.12}, \eqref{2.15},
and \eqref{2.21}, we have
\be\label{2.18}
\frac{8\sigma\mu^{(j)}}{n(n+2\sigma)}
\frac{1}{K(q^{(j)})^{{1}/{\sigma}}}
+\frac{16}{n(n+2\sigma)}
\frac{\Delta_{g_0} K(q^{(j)})}{K(q^{(j)})^{1+{2}/{\sigma}}}
=-\frac{1}{K(q^{(j)})^{{1}/{\sigma}}}
b^{(j)}(0).
\ee
Consequently, $q^{(j)} \in \mathscr{K} \backslash \mathscr{K}^{+}$, $1 \leq j \leq k,$
 and when $k \geq 2$, $q^{(j)} \in \mathscr{K}^{-},$  $ 1 \leq j \leq k.$
It is easy to see that \eqref{1.5} follows from \eqref{2.17} and \eqref{2.18} when $k=1$.

From \eqref{2.5}, \eqref{2.16}, \eqref{1.9}, and \eqref{1.3}, we can obtain
\be\label{2.22}
\begin{aligned}
b^{(j)}(0)=&\frac{2^2\Gamma({n}/{2})|\mathbb{S}^{n-1}|}{\pi^{n/2}}
\sum_{\ell \neq j} \frac{\lambda_{\ell}}{\lambda_{j}}
\frac{G_{q^{(\ell)}}(q^{(j)})}{(K(q^{(j)})K(q^{(\ell)}))^{{1}/{2\sigma}}}\\
=&8\sum_{\ell \neq j} \frac{\lambda_{\ell}}{\lambda_{j}}
\frac{G_{q^{(\ell)}}(q^{(j)})}{(K(q^{(j)})K(q^{(\ell)}))^{{1}/{2\sigma}}}.
\end{aligned}
\ee
Substituting \eqref{2.22} into \eqref{2.18} to get
$$
-n(n-1)\sum_{\ell \neq j}
\frac{G_{q^{(\ell)}}(q^{(j)})}{(K(q^{(j)})K(q^{(\ell)}))^{{1}/{2\sigma}}} \lambda_{\ell}
-\frac{\Delta_{g_0} K(q^{(j)})}{K(q^{(j)})^{{n}/{2\sigma}}}\lambda_{j}
=\frac{\sigma}{2}\lambda_{j}\mu^{(j)}.
$$
We have established \eqref{1.6} and thus verified Part (iii).

We claim that there exists some
\be\label{eig}
\eta=(\eta_{1},\cdots,\eta_{k})\neq 0\quad
\hbox{with}\quad\eta_{\ell}\geq 0, \, \forall\, \ell=1,\cdots,k,
 \ee
such that
$$
\sum_{\ell=1}^{k} M_{\ell j}(q^{(1)},
 \cdots, q^{(k)}) \eta_{\ell}=\mu(M) \eta_{j}, \quad \forall\, j=1,\cdots,k.
$$
Indeed, choose $\Lambda>\max_{i}M_{ii},$ then the matrix $\Lambda I-M$ is a positive
matrix (see \cite{HJ} for the definition), where $I$ denotes the unit matrix.
The claim can follows from \cite[Theorem 8.2.2]{HJ}.

Multiplying \eqref{1.6} by $\eta_{j}$
and summing over $j,$  then using Part (ii) and  \eqref{eig}, we have
\be\label{1.84}
\mu(M) \sum_{j} \lambda_{j} \eta_{j}
=\sum_{\ell, j} M_{\ell j} \lambda_{\ell} \eta_{j}
=\frac{1}{4} \sum_{j} \lambda_{j} \eta_{j} \mu^{(j)}\geq 0.
\ee
It follows that $\mu(M)\geq 0$. We have verified part (i) of Theorem \ref{thm1}.
 Part (iv) follows from (i)--(iii). The proof  is completed.
\end{proof}

\subsection{Proof of Theorem \ref{thm2.1}}
Using some results of blow up analysis in Appendix \ref{sec2} and
Theorem \ref{thm1},  we are going to prove Theorem \ref{thm2.1}.

\begin{proof}[Proof of Theorem \ref{thm2.1}]
We first prove the existence of upper bounds.
Suppose the assertion of the theorem  is false. Then we can find
 that there exists $K_{i} \rightarrow K$ in $C^{2}(\mathbb{S}^{n})$ such that
$\max _{\mathbb{S}^{n}} v_{i} \rightarrow \infty$ for some $v_{i} \in \mathscr{M}_{K_{i}}.$
Theorem \ref{thm1} shows that $\{v_{i}\}$ has only isolated simple
blow up points $\{q^{(1)}, \cdots, q^{(k)}\}\subset \mathscr{K}
\backslash\mathscr{K}^{+}.$

Next, we prove that $k>1.$
Let $q_{0}$ be the isolated simple blow up point of $v_{i}.$
It follows from Proposition \ref{prop8}
and $K\in \mathscr{A}$ that  $q_{0}$ is a
non-degenerate critical point
of $K$.
Let $F$ be the stereographic projection
with $q_{0}$ being the south pole,
and $\wdt{K}:=K(F(y)).$

We assert that for any $\widehat{y}\in \mathbb{R}^{n},$
\begin{align}\label{3.13}
\left(
\begin{array}{c}
\int_{\mathbb{R}^{n}}\nabla^{2}\wdt{K}(0)(y+\widehat{y}) (1+|y|^{2})^{-n}\\\\
\int_{\mathbb{R}^{n}}
 \frac{1}{2}\langle (y+\widehat{y}), \nabla^{2}\wdt{K}(0)(y+\widehat{y})\rangle
  (1+|y|^{2})^{-n}
\end{array}
\right)
\ne 0.
\end{align}
In fact,  if there exists some $\widehat{y}\in \mathbb{R}^{n}$ such that
$$
\int_{\mathbb{R}^{n}}\nabla^{2}\wdt{K}(0)(y+\widehat{y}) (1+|y|^{2})^{-n}=0,
$$
then by  the property of odd function, the non degeneracy of $\nabla^{2}\wdt{K}(0)$,
 and $\Delta\wdt{K}(0)\ne 0,$ we can obtain that
\begin{align*}
 \int_{\mathbb{R}^{n}} \frac{1}{2}\langle (y+\widehat{y}),
 \nabla^{2}\wdt{K}(0)(y+\widehat{y})\rangle
  (1+|y|^{2})^{-n}\ne 0.
\end{align*}
Thus \eqref{3.13} is proved.

Suppose the contrary that $q_{0}$ is the only blow up of $v_i.$
We are going to find some $\widehat{y}$ such that \eqref{3.13} fails.
By \eqref{2.27}, we know that \eqref{1.1} is equivalent to
\begin{align}\label{3.1}
v_{i}(\xi)=\frac{\Gamma(\frac{n+2 \sigma}{2})}{2^{2 \sigma} \pi^{n / 2} \Gamma(\sigma)}
\int_{\mathbb{S}^{n}} \frac{K_{i}(\eta) v_{i}(\eta)^{n-1}}{|\xi-\eta|^{2}}\,
 \ud \eta \quad \text { on }\, \mathbb{S}^{n}.
\end{align}
Under the  stereographic projection $F,$ the equation \eqref{3.1} is transformed to
$$
u_{i}(x)=\frac{\Gamma(\frac{n+2 \sigma}{2})}{2^{2 \sigma} \pi^{n / 2} \Gamma(\sigma)}
\int_{\mathbb{R}^n}\frac{\wdt{K}_{i}(y)u_{i}(y)^{n-1}}{|x-y|^{2}}\,\ud y
\quad
\text{ on }\, \mathbb{R}^{n},
$$
where
\be\label{1.9}
H(x)=\frac{2}{1+|x|^2},\quad
u_i(x)=H(x)v_i(F(x)), \quad \widetilde{K}_{i}(x)=K_{i}(F(x)).
\ee
Let $y_{i}$ be the local maximum point of $u_{i}(y)$
and $m_{i}=:u_{i}(y_{i}).$
First, we establish
\begin{align}\label{3.2}
|y_{i}|=O(m_{i}^{-1}) .
\end{align}
Since we have assumed that $v_{i}$ has no blow up
point other than $q_{0}$, it follows from Proposition
\ref{prop4} and the Harnack inequality that
$u_{i}(y)\leq C(\varepsilon)|y|^{-2}m_{i}^{-1}$
for $|y|\geq \varepsilon>0.$

By the Kazdan-Warner condition, we have
\begin{align}\label{3.3}
\int_{\mathbb{R}^{n}} \nabla \wdt{K}_{i} u_{i}^{n}=0.
\end{align}
It follows that for $\varepsilon>0$ small we have
\begin{align}\label{3.5}
\Big|\int_{B_{\varepsilon}} \nabla \wdt{K}_{i}
(y+y_{i}) u_{i}(y+y_{i})^{n}\Big|
\leq C(\varepsilon) m_{i}^{-n}.
\end{align}
For $|y|\leq \varepsilon,$
\begin{align}\label{3.6}
\wdt{K}_{i}(y)=\wdt{K}_{i}(0)+\frac{1}{2}
\langle y, \nabla^{2}\wdt{K}_{i}(0)y\rangle+o(|y|^{2}),
\end{align}
it follows that
\begin{align}\label{3.7}
\lim_{|y|\to 0}
\nabla \big(\wdt{K}_{i}(y)-
\langle y, \nabla^{2}\wdt{K}_{i}(0)y\rangle \big)|y|^{-1}=0,
\end{align}
where $\langle\cdot,\cdot\rangle$ denotes the inner product in $\mathbb{R}^{n}.$
Since $\det(\nabla^{2}\wdt{K}(0))\ne 0$ and $\wdt{K}_{i}\to \wdt{K},$
there exists a constant $C>0$ such that
\begin{align}\label{3.8}
\Big|\frac{1}{2}\nabla \langle y, \nabla^{2}\wdt{K}_{i}(0)y\rangle \Big|=
|\nabla^{2}\wdt{K}_{i}(0)y|\geq C|y|,\quad \forall\, |y|\leq \varepsilon.
\end{align}

By \eqref{3.5}, \eqref{3.7} and \eqref{3.8}, we can obtain
\begin{align*}
&\Big|\int_{B_{\varepsilon}}(1+o_{\varepsilon}(1))
  \nabla^{2}\wdt{K}_i(0)(y+y_{i})
u_{i}(y+y_{i})^{n}
\Big|\leq  C(\varepsilon)m_{i}^{-n}.
\end{align*}
Multiplying the above by $m_{i},$ and let $ \wdt{y}_{i}:=m_{i}y_{i},$ we have
\begin{align*}
&\Big|\int_{B_{\varepsilon}}(1+o_{\varepsilon}(1))
\nabla^{2}\wdt{K}_i(0)(m_{i}y+\wdt{y}_{i})
u_{i}(y+y_{i})^{n}
\Big|\leq  C(\varepsilon)m_{i}^{1-n}.
\end{align*}
Suppose \eqref{3.2} is false, namely
$\wdt{y}_{i}\rightarrow \infty$ along a subsequence.
From Proposition \ref{prop3.1},
we can choose $R_{i}\leq |\wdt{y}_{i}|/4$ such that
\begin{align*}
&\Big|\int_{|y|\leq R_{i}m_{i}^{-1}}(1+o_{\varepsilon}(1))
\nabla^{2}\wdt{K}_i(0) (m_{i}y+\wdt{y}_{i})
u_{i}(y+y_{i})^{n}
\Big|
\\
=&
\Big|\int_{|z|\leq R_{i}}(1+o_{\varepsilon}(1))
\nabla^{2}\wdt{K}_i(0)(z+\wdt{y}_{i})
(m_{i}^{-1}u_{i}(m_{i}^{-1}z+y_{i}))^{n}\Big| \sim |\wdt{y}_{i}|.
\end{align*}
On the other hand, it follows from Proposition \ref{prop5} that
\begin{align*}
&\Big|\int_{R_{i}m_{i}^{-1}\leq |y|\leq \varepsilon}(1+o_{\varepsilon}(1))
\nabla^{2}\wdt{K}_i(0) (m_{i}y+\wdt{y}_{i})
u_{i}(y+y_{i})^{n}
\Big|
\\
\leq &
C\Big|
\int_{R_{i}m_{i}^{-1}\leq |y|\leq \varepsilon}
(|m_{i}y|+|\wdt{y}_{i}|)u_{i}(y+y_{i})^{n}
\Big|\leq o(1)|\wdt{y}_{i}|.
\end{align*}
It follows that $|\wdt{y}_{i}|\leq C(\varepsilon) m_{i}^{1-n}.$
This contradicts to $\wdt{y}_{i}\to \infty.$
Thus \eqref{3.2} is proved.

It follows from the Kazdan-Warner condition that
$$
\int_{\mathbb{R}^{n}}\langle y, \nabla \wdt{K}_{i} (y+y_{i})\rangle
 u_{i}(y+y_{i})^{n}=0 .
$$
Similar to \eqref{3.5}, we have for any $\varepsilon>0,$
$$
\Big|
\int_{B_{\varepsilon}}\langle y, \nabla \wdt{K}_{i} (y+y_{i})
\rangle  u_{i}(y+y_{i})^{n}
\Big|
\leq C(\varepsilon)m_{i}^{-n}.
$$
By \eqref{3.6}, \eqref{3.7}, \eqref{3.8}, and Proposition \ref{prop5},  we have
\begin{align*}
&\Big|
\int_{B_{\varepsilon}}
\langle y,\nabla^{2}\wdt{K}_{i}(0)(y+y_{i}) \rangle
u_{i}(y+y_{i})^{n}
\Big|\\
\leq& C(\varepsilon) m_{i}^{-n}
+o_{\varepsilon}(1)\int_{B_{\varepsilon}}(|y|^{2}+|y||y_{i}|)u_{i}(y+y_{i})^{n}\\
\leq &C(\varepsilon)m_{i}^{-n}+o_{\varepsilon}(1)m_{i}^{-2}.
\end{align*}
Multiplying the above by $m_{i}^{2},$ due to $n-2=2\sigma,$ we have
\begin{align*}
\lim_{i\to\infty}m_{i}^{2}
\Big|
\int_{B_{\varepsilon}}
\langle y,  \nabla^{2}\wdt{K}_{i}(0)(y+y_{i})\rangle
u_{i}(y+y_{i})^{n}
\Big|=o_{\varepsilon}(1).
\end{align*}
Let $R_{i}\to \infty$ as $i\to \infty,$ and $r_{i}:=R_{i}m_{i}^{-1}$.
By Proposition \ref{prop5}, we have
\begin{align*}
&m_{i}^{2}
\Big|\int_{r_{i}\leq |y|\leq \varepsilon}
\langle y, \nabla^{2}\wdt{K}_{i}(y+y_{i})\rangle
u_{i}(y+y_{i})^{n}
\Big|\notag\\
\leq&C m_{i}^{2}
\Big|
\int_{r_{i}\leq |y|\leq \varepsilon}
(|y|^{2}+|y||y_{i}|)u_{i}(y+y_{i})^{n}
\Big|\rightarrow 0\quad \text{ as }\, i\to \infty.
\end{align*}
Using Proposition \ref{prop3.1}, making a change of
variable $z=m_{i}y,$ and then letting $\varepsilon\to 0,$ we have,
\begin{align}\label{3.11}
\int_{\mathbb{R}^{n}}
\langle z, \nabla^{2}\wdt{K}(0)(z+z_{0})\rangle (1+k|z|^{2})^{-n}=0,
\end{align}
where $z_{0}=\lim_{i\to\infty} m_{i}y_{i}$
and $k=\lim_{i\to\infty}\wdt{K}_{i}(y_{i})^{1/\sigma}/4.$

It follows from \eqref{3.3} that
$$
\int_{\mathbb{R}^{n}}\nabla\wdt{K}_{i}(y+y_{i})u_{i}(y+y_{i})^{n}=0.
$$
Using the same method above, we obtain
\begin{align}\label{3.12}
\int_{\mathbb{R}^{n}}
\nabla^{2}\wdt{K}(0)(z+z_{0}) (1+k|z|^{2})^{-n}=0.
\end{align}
It follows from \eqref{3.11} and \eqref{3.12} that
\begin{align}\label{3.17}
\int_{\mathbb{R}^{n}}\frac{1}{2}
\langle z+z_{0}, \nabla^{2}\wdt{K}(0) (z+z_{0})\rangle
 (1+k|z|^{2})^{-n}=0.
\end{align}
From \eqref{3.12} and \eqref{3.17} we can see that \eqref{3.13} does not hold for $\widehat{y}=k^{1/2}z_{0}.$
Therefore, we proved that $k>1.$

By  Part (i) of Theorem \ref{thm1}, we have $\{q^{(1)},\cdots, q^{(k)}\}\subset \mathscr{K}^{-}$
and $\mu(M(q^{(1)},\cdots,q^{(k)}))\geq 0.$
It follows from $v_{i}\in \mathscr{M}_{K_{i}}$ that $\tau_{i}=0.$
Applying Part (iv) of Theorem \ref{thm1}, we deduce  that $\mu(M(q^{(1)}, \cdots, q^{(k)}))=0.$
 This leads to a contradiction with $K \in \mathscr{A}.$
From the Harnack inequality  and Schauder  type estimates,
we complete the proof of  Theorem \ref{thm2.1}.
\end{proof}

\section{The degree-counting formula and  existence results}\label{sec4}

This section is devoted to the proof of Theorems \ref{thm4}, \ref{thm2}, and \ref{thm5}.
 It is worth noting that
due to  Theorem \ref{thm2.1},
homotopy invariance of Leray-Schauder degree and the properties of ``$\mathrm{Index}$'',
 we only need to prove Theorem \ref{thm4}
 for $K\in \mathscr{A}$ being a Morse function. Once this is achieved,
 we also prove that
the $\mathrm{Index}$ as in Definition \ref{defn1.2} is well defined on $\mathscr{A}.$
Therefore, we always assume that $K\in \mathscr{A}$ is a Morse function in this section.

\subsection{On the case of subcritical equations}
Let $\sigma=1+m/2,$ $m\in \mathbb{N}_{+},$ and $n=2\sigma+2.$
In this subsection,  we consider the following  subcritical equation:
\be\label{subequ}
P_{\sigma}v=c(n,\sigma)K v^{n-1-\tau}
\quad \text{ on }\, \mathbb{S}^n,
\ee
where $c(n,\sigma)=\Gamma(n-1),$ $K\in C^{2}(\mathbb{S}^n),$ and $\tau>0.$

We will soon prove that when $K\in \mathscr{A},$   the solutions to
\eqref{subequ} either stay bounded and converge  to the solutions to critical equations \eqref{1.1} in
$C^{2\sigma}$ norm or become unbounded and blow up at finite points as $\tau \rightarrow 0^{+}$.

Denote the  $H^{\sigma}(\mathbb{S}^n)$ inner product and norm by
$$
\langle u, v\rangle=\int_{\mathbb{S}^{n}}(P_{\sigma} u) v,
\quad\|u\|_{\sigma}=\sqrt{\langle u, u\rangle}.
$$
The  Euler-Lagrange functional associated  with \eqref{subequ} is
\be\label{fun}
I_{\tau}(u)=\frac{1}{2}\int_{\mathbb{S}^n}(P_{\sigma} u) u-\frac{
\Gamma(n-1)}{n-\tau}\int_{\mathbb{S}^n}K|u|^{n-\tau},\quad \forall\, u\in H^{\sigma}(\mathbb{S}^n).
\ee

\begin{definition}\label{1.89}
Let $K\in C^{2}(\mathbb{S}^n),$ $\mathscr{K}^{-}$ be as in \eqref{1.2} and $k\in \mathbb{N}_{+}.$
Let $\overline{P}_{1}, \cdots, \overline{P}_{k} \in \mathscr{K}^{-}$ be the critical points
of $K$ with $\mu(M(\overline{P}_{1}, \cdots, \overline{P}_{k}))>0,$
and  $\varepsilon_{0}>0$ be sufficiently small. Define
$$
\begin{aligned}
\Omega_{\varepsilon_{0}}=&\Omega_{\varepsilon_{0}}
(\overline{P}_{1}, \cdots, \overline{P}_{k}) \\
=&\{(\alpha, t, P) \in \mathbb{R}_{+}^{k} \times
\mathbb{R}_{+}^{k} \times(\mathbb{S}^{n})^{k}:|\alpha_{i}-(K(P_{i}))^{-{1}/{2\sigma}}|<\varepsilon_{0},\\
&\quad t_{i}>1/\varepsilon_{0},\, |P_{i}-\overline{P}_{i}|<\varepsilon_{0},\, 1 \leq i \leq k\}.
\end{aligned}
$$
\end{definition}

For $P\in \mathbb{S}^n$ and $t>0,$
\be\label{delta}
\delta_{P, t}(x)=\frac{t}
{1+\frac{t^{2}-1}{2}(1-\cos\, d(x, P))},\quad x\in \mathbb{S}^{n}
\ee
 is the family of the solutions for
\be\label{1.51}
P_{\sigma} v=\Gamma(n-1) v^{n-1}, \quad v>0 \quad\text { on }\, \mathbb{S}^{n}.
\ee

We have the following lemma based on the ideas provided by Bahri in \cite{Ba}:
\begin{lemma}\label{lem8}
 Let $\varepsilon_{0}$ be sufficiently small and $\Omega_{\varepsilon_{0}}=\Omega_{\varepsilon_{0}}(\overline{P}_{1}, \cdots, \overline{P}_{k})$ be as in Definition \ref{1.89}. For any $u\in H^{\sigma}(\mathbb{S}^n)$
satisfying
$$
\Big\|u-\sum_{i=1}^k\wdt{\alpha}_{i}\delta_{\wdt{P}_{i},\wdt{t}_{i}}
\Big\|_{\sigma}<\frac{\varepsilon_{0}}{2}
$$
 for some $(\wdt{\alpha},\wdt{t},\wdt{P})\in \Omega_{\varepsilon_0/2},$
then there exists a unique $(\alpha,t,P)\in \Omega_{\varepsilon_{0}}$ such that
$$
u=\sum_{i=1}^{k}\alp_{i}\delta_{P_{i},t_{i}}+v,
$$
with $v$
satisfies
 \be\label{1.12}
\langle v, \delta_{P_{i}, t_{i}}\rangle=\big\langle v, \frac{\partial \delta_{P_{i}, t_{i}}}{\partial P_{i}^{(\ell)}}\big\rangle=\big\langle v, \frac{\partial \delta_{P_{i}, t_{i}}}{\partial t_{i}}\big\rangle=0,
\ee
where $\frac{\partial}{\partial P_{i}^{(\ell)}}$ denotes the corresponding derivatives.
\end{lemma}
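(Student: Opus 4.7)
The plan is to prove this via the implicit function theorem applied to a finite-dimensional map whose zeros encode the orthogonality conditions \eqref{1.12}. Specifically, introduce
$\Phi : \Omega_{\varepsilon_0} \to \mathbb{R}^{(n+2)k}$ with components
\begin{align*}
\Phi_i^{(0)}(\alpha,t,P) &= \big\langle u - {\textstyle\sum_j} \alpha_j \delta_{P_j,t_j},\, \delta_{P_i,t_i}\big\rangle,\\
\Phi_i^{(\ell)}(\alpha,t,P) &= \big\langle u - {\textstyle\sum_j} \alpha_j \delta_{P_j,t_j},\, t_i^{-1}\tfrac{\partial \delta_{P_i,t_i}}{\partial P_i^{(\ell)}}\big\rangle,\quad 1\le \ell\le n,\\
\Phi_i^{(n+1)}(\alpha,t,P) &= \big\langle u - {\textstyle\sum_j} \alpha_j \delta_{P_j,t_j},\, t_i\tfrac{\partial \delta_{P_i,t_i}}{\partial t_i}\big\rangle,
\end{align*}
for $i=1,\ldots,k$. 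The scalings by $t_i$ and $t_i^{-1}$ are inserted so that each of $\delta_{P_i,t_i}$, $t_i\partial_{t_i}\delta_{P_i,t_i}$ and $t_i^{-1}\partial_{P_i^{(\ell)}}\delta_{P_i,t_i}$ has $H^\sigma$-norm of order one uniformly in $(t_i,P_i)$; finding a zero of $\Phi$ is equivalent to \eqref{1.12}.

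Next I would bound $\Phi$ at the reference point. Set $w := u - \sum_j \widetilde{\alpha}_j \delta_{\widetilde{P}_j,\widetilde{t}_j}$, so $\|w\|_\sigma < \varepsilon_0/2$. Cauchy--Schwarz together with the above normalization gives $|\Phi(\widetilde{\alpha},\widetilde{t},\widetilde{P})| \le C\varepsilon_0$. Then I would compute the differential $D\Phi$ at any point of $\Omega_{\varepsilon_0}$. The dominant contribution comes from the $-\sum_j \alpha_j\delta_{P_j,t_j}$ term: in the block indexed by $i$, the derivatives yield
$-\|\delta_{P_i,t_i}\|_\sigma^2,\ -\|t_i^{-1}\partial_{P_i^{(\ell)}}\delta_{P_i,t_i}\|_\sigma^2,\ -\|t_i\partial_{t_i}\delta_{P_i,t_i}\|_\sigma^2,$
which are explicit positive constants in the limit $t_i\to\infty$, with mixed $\partial_{P_i^{(\ell)}}/\partial_{P_i^{(m)}}$ entries equal to the Euclidean inner product of the tangent frame. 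The cross blocks involve quantities like $\langle\delta_{P_i,t_i},\delta_{P_j,t_j}\rangle$ for $i\neq j$; since $\widetilde{P}_i\to \overline{P}_i$ are distinct and $t_i>1/\varepsilon_0$, the standard interaction estimates for concentrating bubbles (recorded e.g. in Bahri's monograph and reproved in Appendix \ref{sec5}) bound them by $O(\varepsilon_0)$. Finally, the terms arising from $\langle u-\sum\alpha_j\delta_{P_j,t_j},\cdot\rangle$ contain $w$ and are likewise $O(\varepsilon_0)$. Thus, after the normalization above, $D\Phi(\widetilde\alpha,\widetilde t,\widetilde P)$ is a perturbation of size $O(\varepsilon_0)$ of a fixed invertible block-diagonal matrix, hence invertible with inverse uniformly bounded for $(\widetilde{\alpha},\widetilde{t},\widetilde{P})\in\Omega_{\varepsilon_0/2}$.

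With these two ingredients, a quantitative implicit function theorem (equivalently, Newton iteration applied to $(\alpha,t,P)\mapsto (\widetilde\alpha,\widetilde t,\widetilde P) - D\Phi^{-1}\Phi$) produces a unique $(\alpha,t,P)\in\Omega_{\varepsilon_0}$ with $\Phi(\alpha,t,P)=0$, provided $\varepsilon_0$ is chosen small enough so that both $\|\Phi(\widetilde\alpha,\widetilde t,\widetilde P)\| \cdot \|D\Phi^{-1}\|$ and the Lipschitz-type remainder of $D\Phi$ are dominated. The main technical obstacle, and the step that forces the constraint $t_i>1/\varepsilon_0$ and $|P_i-\overline{P}_i|<\varepsilon_0$, is the asymptotic block diagonality of the Jacobian: one must control the interaction terms $\langle \delta_{P_i,t_i},\delta_{P_j,t_j}\rangle$ and their derivatives uniformly. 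This reduces to explicit integral estimates for the kernel \eqref{2.27} when the bubble centers are separated and the concentration parameters are large, and is where the smallness of $\varepsilon_0$ is consumed.
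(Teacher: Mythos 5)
The paper itself gives no proof of this lemma (it is stated with a pointer to Bahri \cite{Ba}), so your proposal is measured against the standard parametrization argument from that literature. Your setup is the right one for the existence part: the normalizations you choose are exactly consistent with the appendix estimates \eqref{p1p1}, \eqref{part11}, \eqref{P1P1}, the cross interactions $\langle\delta_{P_i,t_i},\delta_{P_j,t_j}\rangle$ are indeed small because the $\overline{P}_i$ are distinct and $t_i>1/\varepsilon_0$, and a quantitative implicit function theorem/Newton scheme starting from $(\widetilde{\alpha},\widetilde{t},\widetilde{P})$ does produce a zero of $\Phi$, i.e. a representation satisfying \eqref{1.12}.

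The genuine gap is your claim that $D\Phi$ is uniformly invertible ``at any point of $\Omega_{\varepsilon_0}$'' and the consequent claim that the quantitative IFT yields uniqueness in all of $\Omega_{\varepsilon_0}$. The Jacobian entries contain terms in which derivatives of the test functions hit the residual $u-\sum_j\alpha_j\delta_{P_j,t_j}$, and you dismiss these as $O(\varepsilon_0)$ ``because they contain $w$''. That is only true in a scale-adapted neighborhood of the reference parameters. The set $\Omega_{\varepsilon_0}$ is large in the relevant metric: $t_i$ is only bounded below, and $|P_i-\overline{P}_i|<\varepsilon_0$ allows $P_i$ to move by many concentration lengths $1/t_i$ once $t_i\gg 1/\varepsilon_0$; at such points $\delta_{P_i,t_i}$ is nearly orthogonal to $\delta_{\widetilde{P}_i,\widetilde{t}_i}$, the residual has norm of order one, and your perturbation analysis of $D\Phi$ fails. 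Consequently uniqueness in the whole of $\Omega_{\varepsilon_0}$ does not follow from the local IFT. To close this you need a preliminary localization: if $(\alpha,t,P)\in\Omega_{\varepsilon_0}$ satisfies \eqref{1.12}, then $\langle u,\delta_{P_i,t_i}\rangle=\alpha_i\|\delta_{P_i,t_i}\|_\sigma^2+O(\varepsilon_0^2)\geq c>0$ since $\alpha_i$ is bounded below (because $K>0$) and the interactions between different bubbles are $O(\varepsilon_0^2)$; writing $u=\sum_j\widetilde{\alpha}_j\delta_{\widetilde{P}_j,\widetilde{t}_j}+w$ with $\|w\|_\sigma<\varepsilon_0/2$, this forces $\langle\delta_{\widetilde{P}_i,\widetilde{t}_i},\delta_{P_i,t_i}\rangle\geq c$, and the standard interaction estimate then gives $t_i/\widetilde{t}_i+\widetilde{t}_i/t_i+t_i\widetilde{t}_i|P_i-\widetilde{P}_i|^2\leq C$, i.e. any admissible competitor already lies in the scale-adapted neighborhood where your Jacobian computation is valid; only then does local uniqueness finish the argument. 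Equivalently, the cleaner classical route (Bahri \cite{Ba}, Bahri--Coron \cite{BC}) is to minimize $\|u-\sum_i\alpha_i\delta_{P_i,t_i}\|_\sigma$ over $\overline{\Omega}_{\varepsilon_0}$, observe that the minimizer has residual at most $\varepsilon_0/2$ and is therefore interior and in the good neighborhood, and then apply your non-degeneracy computation at the minimizer.
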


In what follows, we say that $v \in E_{P,t}$ if $v$ satisfies \eqref{1.12}
and we work in some orthonormal basis near $\{\overline{P}_{1},\cdots,\overline{P}_{k}\}.$

\begin{definition}\label{sta}
Let $\tau,$ $\varepsilon_{0},$ $\nu_{0}>0$ be sufficiently small, $A>0$ be sufficiently large,
and $\Omega_{\varepsilon_{0}/2}=\Omega_{\varepsilon_{0}/2}(\overline{P}_{1},\cdots,\overline{P}_{k})$
be as in Definition \ref{1.89}. Define
\be\label{stau}
\begin{aligned}
& \Sigma_{\tau}(\overline{P}_{1}, \cdots, \overline{P}_{k}) \\
=&\{(\alpha, t, P, v)
\in \Omega_{\varepsilon_{0}/2}\times H^{\sigma}(\mathbb{S}^{n}):\\
&\quad|P_{i}-\overline{P}_{i}|<\tau^{1/2}|\log \tau|,\,
A^{-1} \tau^{-1 / 2}<t_{i}<A \tau^{-1 / 2},\, v \in E_{P,t},\,\|v\|_{\sigma}<\nu_{0}\}.
\end{aligned}
\ee
Without confusion we use the same notation for
$$
\Sigma_{\tau}(\overline{P}_{1},\cdots,\overline{P}_{k})=\Big\{u=\sum_{i=1}^{k} \alpha_{i} \delta_{P_{i}, t_{i}}+v:(\alpha, t, P, v)
 \in \Sigma_{\tau}\Big\} \subset H^{\sigma}(\mathbb{S}^{n}).
$$
\end{definition}

Combined with Theorem \ref{thm1},  we can obtain
 the necessary conditions on blowing up solutions to \eqref{subequ} when $K\in \mathscr{A}$
 as $\tau$ tends to $0^{+}.$
\begin{proposition}\label{prop3}
Let $\sigma=1+m/2,$ $m\in \mathbb{N}_{+},$ and $n=2\sigma+2.$
Let $K \in \mathscr{A}$ be a  Morse function and $\mathscr{K}^{-}$ be as in \eqref{1.2}.
 Then for any $\alpha \in (0,1),$ there exists some
positive constants $\varepsilon_{0},\nu_{0} \ll 1,$ and  $A, R \gg 1$ depending only on $K,$ such that
 when $\tau>0$ is sufficiently small,  for all u satisfying $u \in H^{\sigma}
(\mathbb{S}^{n}),$ $u>0,$ $I_{\tau}^{\prime}(u)=0,$ we have
$$
u \in \mathscr{O}_{R} \cup\{\cup_{k \geq 1}
\cup_{\overline{P}_{1}, \cdots, \overline{P}_{k} \in \mathscr{K}^{-},\,
\mu(M(\overline{P}_{1}, \cdots, \overline{P}_{k}))>0}
 \Sigma_{\tau}(\overline{P}_{1}, \cdots, \overline{P}_{k})\},
$$
where $I_{\tau}'(u)$ is as in \eqref{subequ}, $\mathscr{O}_{R}$ is as in \eqref{1.82}
and $\Sigma_{\tau}(\overline{P}_{1},\cdots,\overline{P}_{k})$ is as in \eqref{stau}.
\end{proposition}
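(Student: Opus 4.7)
The plan is to argue by contradiction. Suppose the conclusion fails. Then there exist $\tau_i \to 0^+$ and positive critical points $u_i$ of $I_{\tau_i}$ such that $u_i \notin \mathscr{O}_R$ and $u_i \notin \Sigma_{\tau_i}(\overline{P}_1,\ldots,\overline{P}_k)$ for every admissible configuration. Since $u_i \notin \mathscr{O}_R$ for any fixed $R$, after passing to a subsequence we have $\max_{\mathbb{S}^n} u_i \to \infty$ (a lower bound blowup would be excluded by the Harnack inequality once upper blowup is ruled out). Because $u_i$ is a critical point of $I_{\tau_i}$, it solves the subcritical equation~\eqref{1.92} with $p_i = n-1-\tau_i$, so Theorem~\ref{thm1} applies.

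Applying Theorem~\ref{thm1}, after a further subsequence $\{u_i\}$ has isolated simple blow up points $q^{(1)},\ldots,q^{(k)} \in \mathscr{K}\setminus\mathscr{K}^+$ with $\mu(M(q^{(1)},\ldots,q^{(k)}))\ge 0$, and $q^{(j)}\in\mathscr{K}^-$ for all $j$ when $k\ge 2$. Since $K\in\mathscr{A}$, by the definition of $\mathscr{A}$ we have $\Delta_{g_0}K\ne 0$ on $\mathscr{K}$ and $\mu(M)\ne 0$ for any $k\ge 2$ tuple in $\mathscr{K}^-$. Hence $q^{(j)}\in\mathscr{K}^-$ in all cases ($k=1$ forced because $q^{(1)}\notin\mathscr{K}^+$ combined with $\Delta_{g_0}K(q^{(1)})\ne 0$) and $\mu(M(q^{(1)},\ldots,q^{(k)}))>0$. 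Then Part~(ii)--(iv) of Theorem~\ref{thm1} give $\mu^{(j)}=\lim \tau_i v_i(q_i^{(j)})^2\in(0,\infty)$ and $\lambda_j\in(0,\infty)$, which means the maximum heights $m_{ij}:=v_i(q_i^{(j)})$ satisfy $m_{ij}\sim \tau_i^{-1/2}$ and the ratios $v_i(q_i^{(1)})/v_i(q_i^{(j)})$ converge to $\lambda_j K(q^{(j)})^{1/2\sigma}$.

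Next, set $\overline{P}_j:=q^{(j)}\in\mathscr{K}^-$ and encode the bubble parameters: $t_{i,j}:=m_{ij}$, $P_{i,j}:=q_i^{(j)}$, $\alpha_{i,j}:=K(\overline{P}_j)^{-1/2\sigma}+o(1)$. The local profile near $q^{(j)}$ given by Proposition~\ref{prop3.1} (the $(1+k|x|^2)^{-(n-2\sigma)/2}$-type bubble) is precisely the stereographic image of $\alpha_{i,j}\,\delta_{P_{i,j},t_{i,j}}$. The $H^\sigma$-energy localized near $q^{(j)}$ then differs from $\|\alpha_{i,j}\delta_{P_{i,j},t_{i,j}}\|_\sigma^2$ by $o(1)$, and the $L^{\infty}$ decay away from the blow up points (Proposition~\ref{prop4}) gives the cross-term estimates. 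Thus
\[
\Big\|u_i-\sum_{j=1}^{k}\alpha_{i,j}\delta_{P_{i,j},t_{i,j}}\Big\|_\sigma \longrightarrow 0.
\]
Now Lemma~\ref{lem8} applies, producing a unique decomposition $u_i=\sum_{j}\widetilde\alpha_{i,j}\delta_{\widetilde P_{i,j},\widetilde t_{i,j}}+v_i$ with $v_i\in E_{\widetilde P,\widetilde t}$ and $\|v_i\|_\sigma\to 0$, together with $|\widetilde\alpha_{i,j}-K(\overline P_j)^{-1/2\sigma}|\to 0$ and the parameter rate $\widetilde t_{i,j}\sim \tau_i^{-1/2}$. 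This already places $u_i$ in $\Sigma_{\tau_i}$ except possibly for the sharp location bound $|\widetilde P_{i,j}-\overline P_j|<\tau_i^{1/2}|\log\tau_i|$.

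The hard part will be the concentration rate $|\widetilde P_{i,j}-\overline P_j|<\tau_i^{1/2}|\log\tau_i|$. I would derive this by revisiting the Pohozaev-type identity used to prove Theorem~\ref{thm1} but now testing against the coordinate functions $y_{(\ell)}$ in a small geodesic chart at $\overline P_j$: expanding $\nabla\widetilde K_i(y)=\nabla^2\widetilde K(0)(y-\overline P_j+O(\tau_i^{1/2}))+o(|y-\overline P_j|)$ and using the first Kazdan-Warner type identity $\int \nabla\widetilde K_i\,u_i^{n-\tau_i}=0$ localized near $q_i^{(j)}$, together with the moment estimates~\eqref{2.4} rescaled, yields a linear system for the rescaled displacement $\widehat y_i:=\widetilde t_{i,j}(q_i^{(j)}-\overline P_j)$ whose coefficient matrix is (a multiple of) the nondegenerate Hessian $\nabla^2 K(\overline P_j)$. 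The right-hand side is $O(m_{ij}^{-1})=O(\tau_i^{1/2})$ once the bubble interactions (decaying like Green's function) are incorporated, so $|q_i^{(j)}-\overline P_j|=O(\tau_i^{1/2})\cdot \widetilde t_{i,j}^{-1}\cdot\widetilde t_{i,j}=O(\tau_i^{1/2})$, in particular $\ll\tau_i^{1/2}|\log\tau_i|$. Combining with the almost-orthogonal decomposition above contradicts $u_i\notin\Sigma_{\tau_i}$, completing the proof.
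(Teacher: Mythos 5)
Your proposal is correct and follows essentially the same route as the paper's proof: the dichotomy bounded/blow-up, Theorem \ref{thm1} together with $K\in\mathscr{A}$ to force isolated simple blow up points in $\mathscr{K}^{-}$ with $\mu(M)>0$ (hence $\mu^{(j)}\in(0,\infty)$ and heights $\sim\tau^{-1/2}$), and then the profile estimates (Propositions \ref{prop9}, \ref{prop4}, \ref{prop7}) plus Lemma \ref{lem8} to place $u$ in $\Sigma_{\tau}(\overline{P}_{1},\cdots,\overline{P}_{k})$. The only remark is that the location bound you single out as the ``hard part'' need not be re-derived: Proposition \ref{prop8} (already invoked as \eqref{2.3} in the proof of Theorem \ref{thm1}) gives $|\nabla K(q_{i}^{(j)})|=O(m_{ij}^{-1})=O(\tau^{1/2})$, which with the non-degeneracy of the critical point yields $|q_{i}^{(j)}-\overline{P}_{j}|=O(\tau^{1/2})\ll\tau^{1/2}|\log\tau|$ directly, so your Kazdan--Warner re-derivation (whose scaling bookkeeping with $\widetilde t_{i,j}$ is a bit muddled) is superfluous though not wrong in substance.
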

\begin{proof}
For any $\tau>0$  sufficiently small, let $u_{\tau}\in H^{\sigma}(\mathbb{S}^n),$ $u_{\tau}>0$
be a critiacl point of $I_{\tau}(u).$ If $u_{\tau}$ is uniformly bounded,
then by the Schauder type estimates we know that there exists a $R>0$ such that $u_{\tau}\in \mathscr{O}_{R}. $ The proof is now completed.
If not,   there exists $\tau_{i}\rightarrow 0$ such that $\max_{\sn}u_{\tau_{i}}\rightarrow \infty.$
It follows from Theorem \ref{thm1} and $K\in \mathscr{A}$ that
there exists a constant $\delta^{*}>0$ such that
$\{u_{\tau_{i}}\}$ has only isolated simple blow up points
$q^{(1)},\cdots, q^{(k)}\in \mathscr{K}^{-},$ with $|q^{(j)}-q^{(\ell)}|\geq \delta^{*}
,$ $\forall\, j\ne \ell,$ and $\mu (M(q^{(1)},\cdots, q^{(k)}))>0.$
Then Proposition \ref{prop3} can be deduced from
Propositions \ref{prop9},  \ref{prop4}, \ref{prop7}, and Lemma \ref{lem8}.
\end{proof}

Now we are going to show that if $K\in \mathscr{A}$ is a Morse function,
 one can construct solutions highly concentrating at arbitrary
 points $q^{(1)}, \cdots, q^{(k)} \in \mathscr{K}^{-}$ provided $\mu(M(q^{(1)}, \cdots, q^{(k)}))>0.$

\begin{theorem}\label{thm3}
Let $\sigma=1+m/2,$ $m\in \mathbb{N}_{+},$ and $n=2\sigma+2.$
Let $ K \in \mathscr{A}$ be a Morse function and $\mathscr{K}^{-}$ be as in \eqref{1.2}.
Let $\tau, \varepsilon_{0},\nu_{0}>0$ be sufficiently  small,
$A>0$ be sufficiently  large and $k\in \mathbb{N}_{+}.$
Then for any $\overline{P}_1,\cdots,\overline{P}_{k}\in \mathscr{K}^{-}$
satisfying  $\mu(M(\overline{P}_{1},\cdots,\overline{P}_{k}))>0,$ we have
\be\label{1.75}
\deg_{H^{\sigma}}
(u-P_{\sigma}^{-1}(c(n,\sigma)K|u|^{2\sigma-\tau}u),
\Sigma_{\tau}(\overline{P}_{1},\cdots,\overline{P}_{k}), 0)
=(-1)^{k+\sum_{j=1}^{k}i(\overline{P}_{j})},
\ee
where $\deg_{H^{\sigma}}$ denotes the Leray-Schauder degree in $H^{\sigma}(\mathbb{S}^n),$
and $i(\overline{P}_{j})$ is the Morse index of $K$ at $\overline{P}_{j}.$
\end{theorem}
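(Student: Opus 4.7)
The plan is a Lyapunov-Schmidt reduction to a finite-dimensional Brouwer degree computation, following the framework developed by Bahri and refined by Li in the $\sigma=1,$ $n=4$ case. By Lemma \ref{lem8}, every $u\in\Sigma_{\tau}(\overline{P}_{1},\cdots,\overline{P}_{k})$ admits a unique decomposition $u=\sum_{i=1}^{k}\alpha_{i}\delta_{P_{i},t_{i}}+v$ with $v\in E_{P,t}$. Splitting the equation $u-P_{\sigma}^{-1}(c(n,\sigma)K|u|^{2\sigma-\tau}u)=0$ orthogonally in $H^{\sigma}(\mathbb{S}^{n})$, the component along the bubble tangent space yields a finite system in $(\alpha,t,P)$, while the component along $E_{P,t}$ is a nonlinear equation for $v$ whose linearization is invertible by the standard non-degeneracy of the linearization of \eqref{1.51} at $\delta_{P,t}$ modulo its tangent directions. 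The implicit function theorem then produces a unique smooth $v=v(\alpha,t,P)$ with $\|v\|_{\sigma}$ small in $\tau$, so the problem is reduced to locating critical points of $\widetilde{I}_{\tau}(\alpha,t,P):=I_{\tau}\bigl(\sum_{i}\alpha_{i}\delta_{P_{i},t_{i}}+v(\alpha,t,P)\bigr)$ on $\Omega_{\varepsilon_{0}/2}$.

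Next I would expand $\widetilde{I}_{\tau}$ in $\tau$ and in $t_{i}^{-1}$. Using the asymptotics of $\langle\delta_{P_{i},t_{i}},\delta_{P_{j},t_{j}}\rangle$ for $i\neq j$ (whose leading off-diagonal contribution involves the Green's function $G_{P_{i}}(P_{j})$ from \eqref{1.8}), the second-order Taylor expansion of $K$ at each $\overline{P}_{j}$, and the scaling $t_{i}\sim\tau^{-1/2}$ built into the definition of $\Sigma_{\tau}$, the critical point system reproduces to leading order the normalization $\alpha_{i}^{2\sigma}=K(P_{i})^{-1}$, the criticality condition $\nabla_{g_{0}}K(P_{i})=0$, and the interaction balance $\sum_{\ell}M_{\ell i}\alpha_{\ell}=\frac{\sigma}{2}\alpha_{i}\mu^{(i)}$ of Theorem \ref{thm1}. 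Since $K\in\mathscr{A}$ is Morse, $\overline{P}_{j}\in\mathscr{K}^{-}$, and $\mu(M(\overline{P}_{1},\cdots,\overline{P}_{k}))>0$, this system has a unique non-degenerate solution in $\Omega_{\varepsilon_{0}/2}$.

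The degree then reduces to a Morse-index count of $\widetilde{I}_{\tau}$ at this critical point. To leading order the Hessian is block-diagonal in $(\alpha,t,P)$: the $\alpha$-block is a positive multiple of $M(\overline{P}_{1},\cdots,\overline{P}_{k})$ and has no negative eigenvalues by $\mu(M)>0$; the $t$-block is strictly negative definite, contributing exactly $k$ negative eigenvalues (one per concentration scale, which is an unstable direction of $\widetilde{I}_{\tau}$); and the $P$-block splits into $k$ pieces, the $j$-th of which produces exactly $i(\overline{P}_{j})$ negative eigenvalues after the reduction is fully carried out, paralleling Li's computation in dimension four. Summing, the Morse index of $\widetilde{I}_{\tau}$ is $k+\sum_{j=1}^{k}i(\overline{P}_{j})$, so the Brouwer degree of $\nabla\widetilde{I}_{\tau}$ on $\Omega_{\varepsilon_{0}/2}$ is $(-1)^{k+\sum_{j}i(\overline{P}_{j})}$. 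The standard equality between the Leray-Schauder degree of a variational operator and the Brouwer degree of its Lyapunov-Schmidt reduction converts this into \eqref{1.75}.

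The main obstacle is the second step: producing an expansion of $\widetilde{I}_{\tau}$ with enough precision to identify the matrix $M$ exactly in the $\alpha$-block. The off-diagonal entries of $M$ emerge directly from the Green's function expansion of the inner products, but the diagonal entries require a Pohozaev-type argument parallel to the identity \eqref{1.102} used in the proof of Theorem \ref{thm1} in order to bring in $\Delta_{g_{0}}K(\overline{P}_{i})$. This is precisely where the critical-dimension hypothesis $n-2\sigma=2$ is essential: it ensures that the subcritical $\tau$-correction and the Green-function interaction balance at the same order, so that the reduced finite-dimensional Hessian captures $M$ and the hypothesis $\mu(M)\neq 0$ makes the reduction effective.
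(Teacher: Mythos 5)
Your overall skeleton (reduce via Lemma \ref{lem8} and the solvability of the $v$-equation, then compute a finite-dimensional degree in $(\alpha,t,P)$) is in the same family as the paper's argument, which however does not compute the Hessian of a reduced functional but instead splits $I_{\tau}'=\xi+\eta$ and deforms, through an explicit homotopy $X_{\theta}$ of Id+compact maps, to a product map $X_{0}$ whose zero is computed from \eqref{x0}--\eqref{2.0003}. The genuine gap is in your sign bookkeeping, which misidentifies where each piece of structure lives. First, the matrix $M(\overline{P}_{1},\cdots,\overline{P}_{k})$ does not appear in the $\alpha$-block: by Proposition \ref{lem5} the $\alpha$-component of the reduced gradient is $-2\sigma\|\delta_{P_{i},t_{i}}\|_{\sigma}^{2}\beta_{i}+\cdots$, so the amplitude block is (approximately) a negative multiple of the identity and is precisely what contributes the factor $(-1)^{k}$. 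The matrix $M$ enters the scale equations \eqref{2.0003}, i.e.\ the $t$-block: by Proposition \ref{lem6} the $t$-system is the gradient of $\widehat{F}$, and in the variables $s_{i}=t_{i}^{-1}$ the function $F(s)=-\tfrac{\sigma\tau}{4}\sum_{j}K(\overline{P}_{j})^{-1/\sigma}\log s_{j}+\tfrac12 s^{T}Ms$ is strictly convex exactly because $\mu(M)>0$; its unique critical point is a minimum, so the $t$-block has Morse index zero and sign $+1$, and this is the only place where the hypothesis $\mu(M)>0$ is used (it gives existence, uniqueness and non-degeneracy of the balanced scales). Your assignment (``$\alpha$-block a positive multiple of $M$'', ``$t$-block strictly negative definite with $k$ negative directions'') contradicts Propositions \ref{lem5}--\ref{lem6} and \eqref{x0}; with it, $\mu(M)>0$ is never invoked where it is actually needed, and the $(-1)^{k}$ is obtained from the wrong block, so the argument as written does not establish \eqref{1.75}.

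Second, the $P$-block claim is not justified: by Proposition \ref{lem7} the reduced gradient in $P_{i}$ is $-\Theta_{4}\nabla_{g_{0}}K(P_{i})+\cdots$ with $\Theta_{4}>0$, so the relevant Jacobian block is $-\Theta_{4}\nabla^{2}K(\overline{P}_{j})$, whose number of negative eigenvalues is $n-i(\overline{P}_{j})$, not $i(\overline{P}_{j})$; appealing to ``Li's computation in dimension four'' hides the issue, since for even $n$ the two parities coincide, whereas here $n=m+4$ may be odd, so the sign count must be carried out explicitly as the paper does in \eqref{x0}--\eqref{1.35}. Two further (more minor) points: the diagonal entries $\Delta_{g_{0}}K(\overline{P}_{i})$ of $M$ arise in this part of the paper from the expansion of $\partial_{t_{i}}I_{\tau}$ (the term $\int_{\mathbb{S}^{n}}|P-P_{i}|^{2}\delta_{P_{i},t_{i}}^{n-\tau}$, estimate \eqref{a.5}), not from a Pohozaev identity, which is only used in the blow-up analysis of Theorem \ref{thm1}; and your proposal says nothing about excluding zeros of $I_{\tau}'$ in $\Sigma_{\tau}\setminus\widehat{\Sigma}_{\tau}$ nor about the Id+compact structure needed to run the homotopy, both of which the paper handles via \eqref{4.9}, \eqref{2.2}, \eqref{1.18} and the explicit form of $X_{\theta}$.
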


In order to prove Theorem \ref{thm3},
we need the following Lemmas \ref{lem4.1}, \ref{lem4.2} and
Propositions \ref{prop1}, \ref{lem5}, \ref{lem6}, \ref{lem7},
whose proofs mainly uses the estimates
 in the appendix.

\begin{lemma} \label{lem4.1}
Under the hypotheses  of  Theorem \ref{thm3},
in addition that $\Sigma_{\tau}=
\Sigma_{\tau}(\overline{P}_{1},\cdots,\overline{P}_{k})$ is as in Definition \ref{sta} for the given
$\tau, \varepsilon_{0},\nu_{0}, A,$ and $\overline{P}_{1},\cdots,\overline{P}_{k}\in \mathscr{K}^{-}.$
Then for any $(\alpha, t, P, v) \in \Sigma_{\tau},$
we have:
$$
\begin{aligned}
&I_{\tau}\Big(\sum_{i=1}^{k}\alpha_{i}\delta_{P_i,t_i}+v\Big)\\
=&\frac{\Gamma(n-1)}{2}\Big(
\sum_{i=1}^{k} \alpha_{i}^{2}\int_{\mathbb{S}^{n}}\delta_{P_{i},t_{i}}^{n}
+\sum_{i\ne j} \alpha_{i} \alpha_{j}
\int_{\mathbb{S}^{n}}\delta_{P_{i}, t_{i}}^{n-1} \delta_{P_{j}, t_{j}}
\Big)\\
\quad&
-\frac{\Gamma(n-1)}{n-\tau}\int_{\mathbb{S}^{n}}
K\Big(\sum_{i=1}^{k}\alpha_{i}\delta_{P_{i},t_{i}}\Big)^{n-\tau}
+f_{\tau}(v)+ Q_{\tau}(v,v)+V(\tau, \alpha, t, P, v),
\end{aligned}
$$
where
\be\label{1.24}
f_{\tau}(v):=-\Gamma(n-1)\int_{\mathbb{S}^{n}}
K\Big(\sum_{i=1}^{k}\alpha_{i}\delta_{P_{i},t_{i}}\Big)^{n-1-\tau}v,
\ee
\be\label{2.24}
Q_{\tau}(v,v):=\frac{1}{2}\int_{\sn}(P_{\sigma}v)v
-(n-1-\tau)\frac{\Gamma(n-1)}{2}\int_{\mathbb{S}^{n}}
K\Big(\sum_{i=1}^{k}\alpha_{i}\delta_{P_{i},t_{i}}\Big)^{2\sigma-\tau}v^2,
\ee
and there exists a constant $C>0$ depends only on $K, \nu_{0},$ and $A$ such that
$$
|V(\tau, \alpha, t, P, v)| \leq C\|v\|_{\sigma}^{3}.
$$
\end{lemma}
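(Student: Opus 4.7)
The plan is to expand $I_\tau(u_0+v)$ with $u_0 := \sum_{i=1}^{k}\al_i \delta_{P_i,t_i}$ by splitting the functional into its quadratic $H^\sigma$ part and its nonlinear $L^{n-\tau}$ part, then identifying terms with the stated expansion.

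For the quadratic part, bilinearity of $\langle\cdot,\cdot\rangle$ gives
$$\frac{1}{2}\int_{\sn}(P_\sigma(u_0+v))(u_0+v) = \frac{1}{2}\langle u_0,u_0\rangle + \langle u_0,v\rangle + \frac{1}{2}\langle v,v\rangle.$$
The cross term $\langle u_0, v\rangle = \sum_i \al_i\langle \delta_{P_i,t_i}, v\rangle$ vanishes because $v\in E_{P,t}$ obeys the orthogonality conditions \eqref{1.12}. For the diagonal term, the identity $P_\sigma\delta_{P_i,t_i} = \Gamma(n-1)\delta_{P_i,t_i}^{n-1}$ from \eqref{1.51} converts $\langle u_0,u_0\rangle$ into the double sum $\Gamma(n-1)\bigl(\sum_i \al_i^2\int_{\sn}\delta_{P_i,t_i}^n + \sum_{i\ne j}\al_i\al_j\int_{\sn}\delta_{P_i,t_i}^{n-1}\delta_{P_j,t_j}\bigr)$ that appears in the lemma. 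The leftover $\frac{1}{2}\int_{\sn}(P_\sigma v)v$ is set aside to later combine with a piece of the nonlinear expansion into $Q_\tau(v,v)$.

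For the nonlinear part, since $u_0>0$ on $\sn$ and $n-\tau>3$ (the latter because $n=2\sigma+2\geq 5$, as $\sigma=1+m/2$ with $m\in\mathbb{N}_+$), the function $s\mapsto |s|^{n-\tau}$ is at least $C^3$ near $u_0$, and I will Taylor-expand to second order in $v$:
$$|u_0+v|^{n-\tau} = u_0^{n-\tau} + (n-\tau)u_0^{n-1-\tau}v + \tfrac{(n-\tau)(n-1-\tau)}{2}u_0^{n-2-\tau}v^2 + R'.$$
Multiplying by $-\Gamma(n-1)/(n-\tau)$, integrating against $K$, and using $n-2=2\sigma$: the zeroth-order term gives precisely the $-\tfrac{\Gamma(n-1)}{n-\tau}\int_{\sn}Ku_0^{n-\tau}$ in the lemma; the linear-in-$v$ term matches $f_\tau(v)$ from \eqref{1.24}; and the quadratic-in-$v$ term $-\tfrac{(n-1-\tau)\Gamma(n-1)}{2}\int_{\sn}K u_0^{2\sigma-\tau}v^2$, combined with the set-aside $\frac{1}{2}\int_{\sn}(P_\sigma v)v$, reproduces $Q_\tau(v,v)$ from \eqref{2.24}. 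What remains is the cubic remainder $V=-\tfrac{\Gamma(n-1)}{n-\tau}\int_{\sn}KR'$.

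The main obstacle is the estimate $|V|\leq C\|v\|_\sigma^3$. Taylor's theorem with integral remainder, together with the elementary bound $(u_0+sv)^{n-3-\tau}\leq C(u_0^{n-3-\tau}+|v|^{n-3-\tau})$ for $s\in[0,1]$, yields the pointwise estimate $|R'|\leq C(u_0^{n-3-\tau}|v|^3+|v|^{n-\tau})$. H\"older's inequality with conjugate exponents $\tfrac{n-\tau}{n-3-\tau}$ and $\tfrac{n-\tau}{3}$ then gives $\int_{\sn} u_0^{n-3-\tau}|v|^3\leq \|u_0\|_{L^{n-\tau}(\sn)}^{n-3-\tau}\|v\|_{L^{n-\tau}(\sn)}^3$. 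Since $n-2\sigma=2$ makes $n$ the critical Sobolev exponent, the bubbles $\delta_{P,t}$ from \eqref{delta} have $L^n(\sn)$ norm independent of $(P,t)$ by conformal invariance; together with $\al_i$ bounded on $\Omega_{\varepsilon_0/2}$ this produces a uniform bound on $\|u_0\|_{L^{n-\tau}}$. The Sobolev embedding $H^\sigma(\sn)\hookrightarrow L^n(\sn)$ yields $\|v\|_{L^{n-\tau}}\leq C\|v\|_\sigma$, and the $\int|v|^{n-\tau}$ piece is absorbed via $\|v\|_\sigma<\nu_0<1$ and $n-\tau>3$. The delicate aspect is ensuring all constants remain uniform over the full parameter range $\Sigma_\tau$ (including the regime $t_i\to\infty$ as $\tau\to 0$), which is precisely where the conformal invariance of the bubbles' $L^n$ norm becomes essential.
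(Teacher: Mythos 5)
Your proposal is correct and follows essentially the same route as the paper: expand $I_{\tau}$ around $u_{0}=\sum_{i}\alpha_{i}\delta_{P_{i},t_{i}}$, kill the cross term with the orthogonality conditions \eqref{1.12}, use $P_{\sigma}\delta_{P_{i},t_{i}}=\Gamma(n-1)\delta_{P_{i},t_{i}}^{n-1}$ for the quadratic main term, and Taylor-expand the nonlinearity to second order with a cubic remainder. The only cosmetic difference is that you re-derive the pointwise remainder bound via Taylor's theorem (the paper simply invokes Lemma \ref{cplema1}) and control the remainder through Sobolev embedding and the parameter-independent bubble norms, which is exactly the role of \eqref{p1p1} in the paper's argument.
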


\begin{proof}
By \eqref{fun} and \eqref{1.12}, we have
\be\label{1.26}
 \begin{aligned}
 &I_{\tau}\Big(\sum_{i=1}^{k}\alpha_{i}\delta_{P_i,t_i}+v\Big)\\
=&\frac{\Gamma(n-1)}{2}
\Big(
\sum_{i=1}^{k}\alpha_{i}^2
\int_{\mathbb{S}^n}\delta_{P_i,t_i}^{n}
+\sum_{j\ne i}\alpha_{i}\alpha_{j}
\int_{\mathbb{S}^n}\delta_{P_i,t_i}^{n-1}\delta_{P_j,t_j}
\Big)
+\frac{1}{2}
\int_{\mathbb{S}^n}(P_{\sigma} v) v
\\
&-\frac{\Gamma(n-1)}{n-\tau}
\int_{\mathbb{S}^n}K\Big|\sum_{i=1}^{k}\alpha_i\delta_{P_i,t_i}+v\Big|^{n-\tau}.
\end{aligned}
 \ee
Then, it follows from  Lemma \ref{cplema1} and \eqref{p1p1} that
Lemma \ref{lem4.1} holds.
\end{proof}

\begin{lemma}\label{lem4.2}
Under the hypotheses of Lemma \ref{lem4.1}, in addition that
$E_{P,t}$ is as in \eqref{1.12}.
Then for any $(\alpha, t, P, v) \in \Sigma_{\tau},$
 there exists some function $V_{v}$ and a constant $C>0$
 depending only on $K, \nu_{0},$ and $A$ such that
$$
I_{\tau}'\Big(\sum_{i=1}^{k}\alpha_{i}\delta_{P_{i},t_{i}}+v\Big)\varphi
=f_{\tau}(\varphi)+2Q_{\tau}(v,\varphi)+
\langle V_{v}(\tau,\alpha,t,P,v),\varphi\rangle,
$$
and
$$
\|V_{v}(\tau,\alpha,t,P,v)\|_{\sigma}\leq
   \|v\|_{\sigma}^{2},
$$
where $f_{\tau}(v)$ is as in \eqref{1.24} and $Q_{\tau}(v,\varphi)$ is as in \eqref{2.24}.
\end{lemma}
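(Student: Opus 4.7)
The plan is to differentiate the critical functional $I_\tau$ at $w + v$, with $w := \sum_{i=1}^{k}\alpha_{i}\delta_{P_{i},t_{i}}$, then Taylor-expand the nonlinearity to first order around $w$ to recover the leading contributions $f_\tau(\varphi)+2Q_\tau(v,\varphi)$ and package the quadratic-in-$v$ Taylor remainder as $\langle V_{v},\varphi\rangle$. Conceptually this is the $v$-derivative of the expansion in Lemma \ref{lem4.1}, and $\varphi$ is naturally taken in $E_{P,t}$ so that the same orthogonality \eqref{1.12} that underpins Lemma \ref{lem4.1} continues to apply here.

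First I would write, for $u=w+v>0$ (positivity holds on $\sn$ for $\nu_{0}$ small by the Sobolev embedding applied to $v$),
$I_{\tau}'(w+v)\varphi=\int_{\sn}(P_{\sigma}w)\varphi+\int_{\sn}(P_{\sigma}v)\varphi-\Gamma(n-1)\int_{\sn}K(w+v)^{n-1-\tau}\varphi$.
Since $P_{\sigma}\delta_{P_{i},t_{i}}=\Gamma(n-1)\delta_{P_{i},t_{i}}^{n-1}$ and $\langle\delta_{P_{i},t_{i}},\varphi\rangle=0$ by \eqref{1.12}, the first integral drops out. Taylor-expanding
$(w+v)^{n-1-\tau}=w^{n-1-\tau}+(n-1-\tau)w^{n-2-\tau}v+R(w,v)$
and using $n-2-\tau=2\sigma-\tau$, one sees that the zeroth- and first-order terms reassemble precisely into $f_{\tau}(\varphi)+2Q_{\tau}(v,\varphi)$. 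I would then \emph{define} $\langle V_{v}(\tau,\alpha,t,P,v),\varphi\rangle:=-\Gamma(n-1)\int_{\sn}K\,R(w,v)\,\varphi$, so that the desired identity holds by construction.

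The main obstacle is then the bound $\|V_{v}\|_{\sigma}\leq C\|v\|_{\sigma}^{2}$. The key is that $\sigma=1+m/2\geq 3/2$ forces $n=2\sigma+2\geq 5$, and therefore $p:=n-1-\tau>2$ for $\tau$ small, which yields the standard pointwise Taylor estimate $|R(w,v)|\leq C\bigl(w^{p-2}v^{2}+|v|^{p}\bigr)$. Combining this with the critical Sobolev embedding $H^{\sigma}(\sn)\hookrightarrow L^{2n/(n-2\sigma)}(\sn)=L^{n}(\sn)$ (using $n-2\sigma=2$) and the uniform $L^{n}$-boundedness of $w$ on $\Sigma_{\tau}$ (which follows from conformal invariance of $\int\delta_{P,t}^{n}$), I would apply H\"older's inequality with exponents $\tfrac{n}{n-3-\tau},\tfrac{n}{2},\tfrac{n}{1+\tau}$ to the $w^{p-2}v^{2}|\varphi|$ integral and with exponents $\tfrac{n}{p},\tfrac{n}{1+\tau}$ to the $|v|^{p}|\varphi|$ integral to obtain $|\langle V_{v},\varphi\rangle|\leq C(\|v\|_{\sigma}^{2}+\|v\|_{\sigma}^{p})\|\varphi\|_{\sigma}$. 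Since $\|v\|_{\sigma}<\nu_{0}$ and $p>2$, the $\|v\|_{\sigma}^{p}$ term is absorbed into a multiple of $\|v\|_{\sigma}^{2}$, and Riesz representation upgrades this pairing estimate to the desired $H^{\sigma}$-norm bound. The careful bookkeeping of H\"older exponents at the critical Sobolev index is the delicate point, and the strategy leans essentially on $n\geq 5$: for $n\leq 4$ one would have $p\leq 2$ and only obtain $\|V_{v}\|_{\sigma}\lesssim\|v\|_{\sigma}^{\min(p,2)}$, which would be insufficient for the later reduction argument.
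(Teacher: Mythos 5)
Your argument is essentially the paper's: kill $\int_{\mathbb{S}^n}(P_{\sigma}w)\varphi$ via the orthogonality \eqref{1.12}, Taylor-expand the nonlinearity around $w=\sum_i\alpha_i\delta_{P_i,t_i}$ so that the zeroth and first order terms give $f_\tau(\varphi)+2Q_\tau(v,\varphi)$, define $V_v$ through the at-least-quadratic remainder, and bound it by H\"older together with the critical Sobolev embedding $H^{\sigma}(\mathbb{S}^n)\hookrightarrow L^{n}(\mathbb{S}^n)$ and the uniform bound on $\int_{\mathbb{S}^n}\delta_{P,t}^{n}$ (the paper does the same via Lemma \ref{cplema1}, Sobolev imbedding and \eqref{p1p1}). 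One caveat: your parenthetical claim that $w+v>0$ pointwise ``by the Sobolev embedding applied to $v$'' is unjustified, since at the critical index $\sigma=(n-2)/2$ there is no embedding of $H^{\sigma}(\mathbb{S}^n)$ into $L^{\infty}$; this is harmless, however, because $I_\tau$ involves $|u|^{n-\tau}$, so one should expand $|w+v|^{n-2-\tau}(w+v)$ rather than $(w+v)^{n-1-\tau}$, and the pointwise remainder estimate you invoke (precisely the paper's Lemma \ref{cplema1}, valid for $a\geq 0$ and arbitrary real $b$) applies verbatim, leaving the rest of your H\"older bookkeeping unchanged.
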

\begin{proof}
For any $\varphi \in E_{P,t},$  by using \eqref{1.26}, Lemma \ref{cplema1}, and \eqref{1.12}, we have
\begin{align*}
&I_{\tau}'\Big(\sum_{i=1}^{k}\alpha_{i}\delta_{P_i,t_i}+v\Big)\varphi\\
=& \int_{\sn}P_{\sigma}(v)\varphi
-\Gamma(n-1)\int_{\sn}K\Big|\sum_{i=1}^{k}\alpha_{i}
\delta_{P_{i},t_{i}}+v\Big|^{2\sigma-\tau}
\Big(\sum_{i=1}^{k}\alpha_{i}\delta_{P_{i},t_{i}}+v\Big)
\varphi\\
=&\int_{\sn}P_{\sigma}(v)\varphi
-\Gamma(n-1)\int_{\sn}K\Big( \sum_{i=1}^{k} \alpha_{i}\delta_{P_{i},t_{i}}\Big)^{n-1-\tau}
\varphi\\
\quad &-\Gamma(n-1)(n-1-\tau)\int_{\sn}K\Big(\sum_{i=1}^{k}
\alpha_{i}\delta_{P_{i},t_{i}}\Big)^{2\sigma-\tau}v\varphi\\
&+\langle V_{v}(\tau,\alpha,t,P,v),\varphi \rangle.
\end{align*}
Then, the estimates of $V_{v}(\tau,\alpha,t,P,v)$ can be
can be obtained by Sobolev imbedding and \eqref{p1p1}.
\end{proof}

\begin{proposition}\label{prop1}
Under the hypotheses of the Theorem \ref{thm3},
in addition that $\Sigma_{\tau}(\overline{P}_{1},\cdots,
\overline{P}_{k})$ is as in \eqref{1.75}
and $E_{P,t}$ is as in \eqref{1.12} for the given $(\alpha, t,P).$
 Then there exists a unique minimizer $\overline{v}=\overline{v}_{\tau}(\alpha, t, P)
  \in E_{P,t}$ of  $I_{\tau}(\sum_{i=1}^{k} \alpha_{i} \delta_{P_{i}, t_{i}}+v)$
   with respect to $\{v \in E_{P,t}:\|v\|_{\sigma}<\nu_{0}\}.$ Furthermore,
   there exists a constant $C$ independent of $\tau$ such that
\be\label{2.25}
\|\overline{v}\|_{\sigma}\leq
C\sum_{i=1}^{k}|\nabla K(P_i)|\tau^{1/2}+C\tau|\log \tau|\leq C\tau|\log \tau|.
\ee
\end{proposition}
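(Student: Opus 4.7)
The plan is to obtain $\overline{v}$ as the minimizer of the $C^2$ functional $v\mapsto I_\tau\bigl(\sum_{i=1}^k\alpha_i\delta_{P_i,t_i}+v\bigr)$ on the closed ball $\{v\in E_{P,t}:\|v\|_\sigma\leq \nu_0\}$, and then to read off \eqref{2.25} from the Euler--Lagrange relation of Lemma \ref{lem4.2}. By Lemma \ref{lem4.1}, up to an additive constant independent of $v$ the functional takes the form $f_\tau(v)+Q_\tau(v,v)+V(\tau,\alpha,t,P,v)$ with $|V|\leq C\|v\|_\sigma^3$. The analysis thus splits into three ingredients: uniform coercivity of $Q_\tau$ on $E_{P,t}$, a sharp linear estimate on $f_\tau$, and a convexity/comparison argument to extract the norm bound.

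For the coercivity I would show that there exists $c_0>0$ independent of $(\tau,\alpha,t,P)\in\Sigma_\tau$ such that $Q_\tau(v,v)\geq c_0\|v\|_\sigma^2$ for all $v\in E_{P,t}$. The quadratic form $Q_\tau$ is a $\tau$-family; its pointwise limit as $\tau\to 0$ with $\alpha_i\to K(\overline{P}_i)^{-1/(2\sigma)}$ and $t_i\to\infty$ coincides, after using $\alpha_i^{2\sigma}K(\overline{P}_i)=1$, with the second variation of the unperturbed energy at the standard bubbles $\delta_{\overline{P}_i,t_i}$. The classical non-degeneracy of the bubble manifold for \eqref{1.51} identifies the kernel of this limiting form with the span of $\{\delta_{P_i,t_i},\partial_{P_i^{(\ell)}}\delta_{P_i,t_i},\partial_{t_i}\delta_{P_i,t_i}\}$; a contradiction argument built on a blow-up analysis at each $\overline{P}_i$, together with the separation $|\overline{P}_i-\overline{P}_j|>\delta^\ast$ to decouple distinct bubbles, then upgrades this to uniform coercivity on the orthogonal complement $E_{P,t}$.

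For the linear estimate I would expand the exponent in two steps, writing $\bigl(\sum_i\alpha_i\delta_{P_i,t_i}\bigr)^{n-1-\tau}=\bigl(\sum_i\alpha_i\delta_{P_i,t_i}\bigr)^{n-1}+O\bigl(\tau\bigl|\log(\sum_i\alpha_i\delta_{P_i,t_i})\bigr|(\sum_i\alpha_i\delta_{P_i,t_i})^{n-1}\bigr)$ and then separating single-bubble from interaction contributions. For each single-bubble piece $\alpha_i^{n-1}K\delta_{P_i,t_i}^{n-1}$, Taylor-expand $K(y)=K(P_i)+\nabla K(P_i)\cdot y+O(|y|^2)$ in geodesic normal coordinates at $P_i$: the constant piece produces $\alpha_i^{n-1}K(P_i)\,\Gamma(n-1)^{-1}\langle v,\delta_{P_i,t_i}\rangle=0$ via \eqref{1.51} and the orthogonality $v\in E_{P,t}$; the gradient piece is absorbed modulo an error $\leq C|\nabla K(P_i)|t_i^{-1}\|v\|_\sigma\sim C|\nabla K(P_i)|\tau^{1/2}\|v\|_\sigma$ using the orthogonality to $\partial_{P_i^{(\ell)}}\delta_{P_i,t_i}$ and a duality bound for $|y|\delta_{P_i,t_i}^{n-1}$; the quadratic remainder contributes $O(t_i^{-2})\|v\|_\sigma=O(\tau\|v\|_\sigma)$. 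Interaction terms $\int K\delta_{P_i,t_i}^a\delta_{P_j,t_j}^b v$ with $i\neq j$ and $a+b=n-1$ are of order $\tau\|v\|_\sigma$ because $|\overline{P}_i-\overline{P}_j|\geq\delta^\ast$ and $t_i,t_j\sim\tau^{-1/2}$. Finally, the $\tau\log$ correction from the exponent expansion contributes $O(\tau|\log\tau|\|v\|_\sigma)$. Summing yields
$$|f_\tau(v)|\leq C\eta_\tau\|v\|_\sigma,\qquad \eta_\tau:=\sum_{i=1}^k|\nabla K(P_i)|\tau^{1/2}+\tau|\log\tau|.$$

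Combining (i) and (ii) on the ball $\{\|v\|_\sigma\leq\nu_0\}$ in $E_{P,t}$ gives
$$I_\tau\!\Bigl(\sum_i\alpha_i\delta_{P_i,t_i}+v\Bigr)-I_\tau\!\Bigl(\sum_i\alpha_i\delta_{P_i,t_i}\Bigr)\geq -C\eta_\tau\|v\|_\sigma+c_0\|v\|_\sigma^2-C\|v\|_\sigma^3,$$
which is strictly convex in $v$ once $\nu_0$ is chosen small, independently of $\tau$. This produces a unique interior minimizer $\overline{v}\in E_{P,t}$. Comparing the value at $\overline{v}$ with the value at $v=0$ and absorbing the cubic error into the quadratic term for $\nu_0$ small yields $c_0\|\overline{v}\|_\sigma^2\leq C\eta_\tau\|\overline{v}\|_\sigma$, hence $\|\overline{v}\|_\sigma\leq C\eta_\tau$, which is exactly \eqref{2.25}. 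I expect the main obstacle to be the uniform coercivity of $Q_\tau$ on $E_{P,t}$: the quadratic form becomes singular as $\tau\to 0$ because the bubbles concentrate, and ruling out a contradicting sequence requires the full non-degeneracy/blow-up analysis at each $\overline{P}_i$ combined with the bubble separation to prevent cross-bubble degeneracies.
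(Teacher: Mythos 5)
Your proposal is correct in substance and rests on the same two key estimates as the paper: uniform coercivity of $Q_\tau$ on $E_{P,t}$ (which the paper obtains by the method of \cite{ACN,clz}, i.e.\ exactly the non-degeneracy/compactness argument you sketch) and the linear bound $|f_\tau(v)|\leq C\bigl(\tau^{1/2}\sum_i|\nabla K(P_i)|+\tau|\log\tau|\bigr)\|v\|_\sigma$, which you derive by the same Taylor expansion of $K$ at $P_i$, the orthogonality relations \eqref{1.12} combined with \eqref{1.51}, and the interaction/exponent estimates corresponding to \eqref{p-p1}, \eqref{1-tau}, \eqref{a.9}. Where you differ is the final mechanism: the paper represents $f_\tau$ and $Q_\tau$ by $\widetilde f_\tau$ and a coercive operator $\widetilde Q_\tau$ on $E_{P,t}$, solves the Euler--Lagrange equation $\widetilde f_\tau+2\widetilde Q_\tau v+V_v=0$ by the implicit function theorem (as in \cite{Liu_cpaa,rey}), and reads off $\|\overline v\|_\sigma\leq C\|\widetilde f_\tau\|_\sigma$ directly, which also yields that $\overline v$ depends $C^1$ on $(\alpha,t,P)$ (used later in the degree computation); you instead minimize directly on the closed ball and get the bound by comparing with $v=0$, which is more elementary. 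The one loose point in your write-up is the uniqueness step: strict convexity of the displayed \emph{lower bound} $-C\eta_\tau\|v\|_\sigma+c_0\|v\|_\sigma^2-C\|v\|_\sigma^3$ does not by itself imply that the functional $v\mapsto I_\tau(\sum_i\alpha_i\delta_{P_i,t_i}+v)$ is strictly convex on the ball; you need positivity of its second variation there, i.e.\ $I_\tau''[\varphi,\varphi]=2Q_\tau(\varphi,\varphi)+O(\|v\|_\sigma)\|\varphi\|_\sigma^2\geq c_0\|\varphi\|_\sigma^2$ for $\|v\|_\sigma\leq\nu_0$ small, which does follow from the same coercivity of $Q_\tau$ (note $2\sigma-\tau\geq 1$ here, so the nonlinear term is differentiable enough), or alternatively the paper's contraction/IFT route. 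With that clause added, your argument closes and gives \eqref{2.25} exactly as stated.
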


\begin{proof}
From Lemma \ref{lem4.2}, we have, for all $\varphi\in E_{P,t},$
\be\label{1.66}
I_{\tau}'\Big(\sum_{i=1}^{k}\alpha_{i}\delta_{P_{i},t_{i}}+v\Big)\varphi
=f_{\tau}(\varphi)+2Q_{\tau}(v,\varphi)+
\langle V_{v}(\tau,\alpha,t,P,v),\varphi\rangle,
\ee
where
$$
f_{\tau}(\varphi):=-
\Gamma(n-1)
\int_{\mathbb{S}^{n}}K\Big(\sum_{i=1}^{k}\alpha_{i}\delta_{P_{i},t_{i}}\Big)^{n-1-\tau}\varphi,
$$
and
$$
Q_{\tau}(v,\varphi):=\frac{1}{2}\int_{\sn}(P_{\sigma}v)\varphi
-(n-1-\tau)\frac{\Gamma(n-1)}{2}\int_{\mathbb{S}^{n}}
K\Big(\sum_{i=1}^{k}\alpha_{i}\delta_{P_{i},t_{i}}\Big)^{2\sigma-\tau}v\varphi.
$$
 It is obviously that $f_{\tau}$ is a continuous linear functional over $E_{P,t}$,
 there exists   a unique $\widetilde{f}_{\tau}\in E_{P,t}$ such that
\be\label{1.47}
 f_{\tau}(\varphi)=\langle \widetilde{f}_{\tau}, \varphi\rangle,\quad\forall\,\varphi\in E_{P,t}.
\ee
By the same method of proving the coercivity of the quadratic form $Q_{\tau}$ in \cite{ACN,clz},
 it follows  that there exists a constant $\delta_{0}>0$ (independent of $\tau$) such that
\be\label{1.68}
Q_{\tau}(v,v)\geq \frac{\delta_{0}}{2}\|v\|_{\sigma}^{2},\quad \forall
\, (\alpha,t,P,v)\in \Sigma_{\tau},
\ee
thus,  there  exists a unique symmetric continuous and coercive operator $\widetilde{Q}_{\tau}$ from
$E_{P,t}$ onto itself such that,
\be\label{1.67}
Q_{\tau}(v, \varphi)=\langle \widetilde{Q}_{\tau}v, \varphi\rangle,\quad \forall\, \varphi\in E_{P,t}.
\ee
Using these notations, \eqref{1.66}, \eqref{1.47}, and \eqref{1.67}, we have
\be\label{1.96}
I_{\tau}'\Big(\sum_{i=1}^{k}\alpha_{i}\delta_{P_{i},t_{i}}+v\Big)
=\widetilde{f}_{\tau}+2\widetilde{Q}_{\tau}v+V_{v}(\tau,\alpha,t,P,v).
\ee

There is an equivalence between the existence of minimizer $\overline{v}_{\tau}$ and
\be\label{1.46}
\widetilde{f}_{\tau}+2\widetilde{Q}_{\tau}v+V_{v}(\tau,\alpha,t,P,v)=0,\quad v\in E_{P,t}.
\ee
 As in \cite{Liu_cpaa,rey}, by the implicit function theorem, there exist  a
 $C^{1}$-map $\overline{v}: (\alpha,t,P)\mapsto E_{P,t}$
 satisfying  \eqref{1.46} and
\be\label{1.48}
\|\overline{v}\|_{\sigma}\leq C\|\widetilde{f}_{\tau}\|_{\sigma}.
\ee
Therefore, in order to prove \eqref{2.25}, we only need to estimate $\|\widetilde{f}_{\tau}\|_{\sigma}.$

 Applying  Lemma \ref{lem3}, \eqref{1-tau}, \eqref{a.9}, \eqref{stau},
 and \eqref{p-p1}, we can obtain
$$
\begin{aligned}
f_{\tau}(v)=&-\Gamma(n-1)\int_{\mathbb{S}^{n}}K
\Big(\sum_{i=1}^{k} (\alpha_{i}\delta_{P_{i},t_{i}})^{n-1-\tau}\Big)v
+O\Big( \sum_{i\ne j}\int_{\sn} \delta_{P_{i},t_{i}}^{n-2-\tau}\delta_{P_{j},t_{j}}|v|\Big)\\
=&-\Gamma(n-1)\int_{\mathbb{S}^{n}}(K-K(P_{i}))\sum_{i=1}^{k}
\alpha_{i}^{n-1-\tau}\delta_{P_{i},t_{i}}^{n-1}v\\
&+
O\Big(\sum_{i=1}^{k}\int_{\sn}|
\delta_{P_{i},t_{i}}^{n-1-\tau}-\delta_{P_{i},t_{i}}^{n-1}||v|\Big)+O\Big(
\sum_{i\ne j}\|\delta_{P_i,t_i}^{n-2-\tau}\delta_{P_j,t_j}\|_{L^{n/(n-1)}(\sn)}\|v\|_{\sigma}
\Big)\\
=&O\Big(\sum_{i=1}^{k}
|\nabla_{g_{0}}K(P_{i})|\int_{\sn}|P-P_{i}|\delta_{P_{i},t_{i}}^{n-1} |v|\Big)
+O\Big(\sum_{i=1}^k\int_{\mathbb{S}^n}|P-P_{i}|^2\delta_{P_i,t_{i}}^{n-1}|v|\Big)\\
&+O(\tau |\log \tau|\|v\|_{\sigma}),
\end{aligned}
$$
where $|P-P_{i}|$ represents the distance between
two points $P$ and $P_{i}$  after through a stereographic  projection with $P_{i}$ as the south pole of $\mathbb{S}^{n}.$

From \eqref{stau} and \eqref{p-p1}, we have, for all $(\alpha,t,P,v)\in  \Sigma_{\tau}(\overline{P}_{1},
\cdots,\overline{P}_{k})$,
\be\label{2.2}
\begin{aligned}
|f_{\tau}(v)|\leq &C\Big\{
\tau^{1/2}
\sum_{i=1}^{k}|\nabla K(P_i)|+\tau+\tau|\log \tau|
\Big\}\|v\|_{\sigma}\\
\leq &C\tau|\log \tau|\|v\|_{\sigma},
\end{aligned}
\ee
this,  combining \eqref{1.47} and \eqref{1.48}, we obtain \eqref{2.25}.
\end{proof}

\begin{proposition}\label{lem5}
Under the hypotheses of Theorem \ref{thm3},
then for any $(\alpha, t,P, v)\in \Sigma_{\tau}(\overline{P}_{1},\cdots,\overline{P}_{k}),$  we have
$$
\frac{\partial}{\partial \alpha_{i}} I_{\tau}\Big(\sum_{j=1}^{k} \alpha_{j} \delta_{P_{j}, t_{j}}+v\Big)
=-2\sigma\|\delta_{P_{i},t_{i}}\|_{\sigma}^{2}\beta_i+V_{\alpha_i}(\tau,\alpha,t,P,v),
$$
where $\beta=(\beta_{1},\cdots,\beta_{k}),$
$\beta_{i}:=\alpha_{i}-K(P_{i})^{-{1}/{2\sigma}},$  $i=1,\cdots,k,$
and
$$
V_{\alpha_i}(\tau,\alpha,t,P,v)=O(|\beta|^2)+O(\tau|\log\tau|)
+O(\|v\|_{\sigma}^{2}).
$$
Furthermore, let $\overline{v}$ be as in Proposition \ref{prop1}, then we have
$$
\frac{\partial}{\partial \alpha_{i}} I_{\tau}\Big(\sum_{j=1}^{k} \alpha_{j} \delta_{P_{j}, t_{j}}+\overline{v}\Big)
=-2\sigma\|\delta_{P_{i},t_{i}}\|_{\sigma}^{2}\beta_i+O(|\beta|^2+\tau|\log \tau|).
$$
\end{proposition}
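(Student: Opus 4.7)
The plan is to differentiate directly, exploiting that $u = \sum_{j} \alpha_j \delta_{P_j,t_j} + v$ depends on $\alpha_i$ only through the linear term $\alpha_i \delta_{P_i,t_i}$ (with $v$ held fixed in the first part). Since $\partial u/\partial \alpha_i = \delta_{P_i,t_i}$, I get $\partial_{\alpha_i} I_\tau(u) = I_\tau'(u)\cdot \delta_{P_i,t_i}$, and using that $\delta_{P_i,t_i}$ solves \eqref{1.51} so that $\int_{\sn}(P_\sigma u)\delta_{P_i,t_i} = \Gamma(n-1)\int_{\sn} u\,\delta_{P_i,t_i}^{n-1}$, this becomes
\[
\frac{\partial I_\tau}{\partial \alpha_i} = \Gamma(n-1)\int_{\sn} u\,\delta_{P_i,t_i}^{n-1} - \Gamma(n-1)\int_{\sn} K\, u^{n-1-\tau}\delta_{P_i,t_i}.
\]
The first integral equals $\alpha_i \int \delta_{P_i,t_i}^n + \sum_{j\neq i}\alpha_j \int \delta_{P_j,t_j}\delta_{P_i,t_i}^{n-1} + \int v\,\delta_{P_i,t_i}^{n-1}$, where the last piece vanishes because $v\in E_{P,t}$ and the cross-terms are $O(\tau|\log\tau|)$ by Lemma \ref{lem3}.

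For the second integral, I would expand $u^{n-1-\tau}$ around its dominant local piece $\alpha_i\delta_{P_i,t_i}$, keeping the leading term $\alpha_i^{n-1-\tau}\int K\,\delta_{P_i,t_i}^n$ and handling remainders via \eqref{p1p1}--\eqref{p-p1}. Writing $K = K(P_i) + \nabla K(P_i)\cdot(P-P_i) + O(|P-P_i|^2)$ in geodesic normal coordinates centered at $P_i$, the linear term integrates to zero against the radially symmetric density $\delta_{P_i,t_i}^n$, while the quadratic correction contributes $O(t_i^{-2}) = O(\tau)$ from $t_i\asymp \tau^{-1/2}$. The remaining cross-bubble and $v$-contributions split into a linear-in-$v$ piece killed by $\langle v,\delta_{P_i,t_i}\rangle = 0$ and a quadratic piece of size $O(\|v\|_\sigma^2)$, via a Taylor expansion of $t\mapsto t^{n-1-\tau}$. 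Combining, the leading behavior collapses to $\|\delta_{P_i,t_i}\|_\sigma^2\,F_\tau(\alpha_i)$, where $F_\tau(\alpha) := \alpha - K(P_i)\alpha^{n-1-\tau}$. The algebraic miracle uses $n-2=2\sigma$: at the natural value $\alpha = K(P_i)^{-1/2\sigma}$ one has $K(P_i)\cdot K(P_i)^{-(n-1)/2\sigma} = K(P_i)^{-1/2\sigma}$, so $F_0 = 0$ there and $F_\tau(K(P_i)^{-1/2\sigma}) = K(P_i)^{-1/2\sigma}(1 - K(P_i)^{\tau/2\sigma}) = O(\tau)$, while $F'_\tau(K(P_i)^{-1/2\sigma}) = 1-(n-1-\tau)K(P_i)^{\tau/2\sigma} = -2\sigma + O(\tau)$. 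Taylor expanding in $\beta_i$ then produces exactly $-2\sigma\,\|\delta_{P_i,t_i}\|_\sigma^2\,\beta_i$ with remainder $O(|\beta|^2) + O(\tau|\log\tau|) + O(\|v\|_\sigma^2)$, proving the first identity.

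For the second statement, set $v = \overline{v}(\alpha,t,P)$ and apply the chain rule: $\partial_{\alpha_i} I_\tau(u) = I_\tau'(u)\cdot(\delta_{P_i,t_i} + \partial_{\alpha_i}\overline{v})$. Since $\overline{v}(\alpha,t,P) \in E_{P,t}$ for all $\alpha$ and $E_{P,t}$ depends only on $(t,P)$, the derivative $\partial_{\alpha_i}\overline{v}$ also lies in $E_{P,t}$; but the Euler-Lagrange equation \eqref{1.46} for the minimizer $\overline{v}$ is precisely that $I_\tau'(u)$ annihilates $E_{P,t}$, so this extra contribution vanishes. Hence the first part applies with $v = \overline{v}$, and the $O(\|\overline{v}\|_\sigma^2)$ error is bounded by $O(\tau^2|\log\tau|^2)$ via \eqref{2.25}, which is absorbed into $O(\tau|\log\tau|)$. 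The main technical obstacle is the careful bookkeeping of the nonlinear expansion of $K\, u^{n-1-\tau}\delta_{P_i,t_i}$: one must verify that every cross-bubble interaction, every interaction with $v$ beyond the $v^\perp\delta_{P_i,t_i}$ cancellation, and every tail of the Taylor expansion in $\tau$ stays within $O(\tau|\log\tau|) + O(\|v\|_\sigma^2)$, for which the estimates collected in the appendix (Lemma \ref{lem3}, \eqref{p1p1}, \eqref{p-p1}) are essential.
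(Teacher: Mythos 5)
Your proposal is correct and follows essentially the same route as the paper: differentiate in $\alpha_i$ (equivalently pair $I_\tau'(u)$ with $\delta_{P_i,t_i}$), kill the linear-in-$v$ term through the orthogonality \eqref{1.12}, absorb the cross-bubble, $\nabla K$, and $\tau$-corrections via the appendix estimates, and extract $-2\sigma\|\delta_{P_i,t_i}\|_{\sigma}^{2}\beta_i$ from expanding $\alpha\mapsto \alpha-K(P_i)\alpha^{n-1-\tau}$ at $\alpha=K(P_i)^{-1/2\sigma}$ using $n-2=2\sigma$. For the ``furthermore'' part the paper simply evaluates the formula at $v=\overline{v}$ and invokes \eqref{2.25}, and your chain-rule observation (that $\partial_{\alpha_i}\overline{v}\in E_{P,t}$ is annihilated by $I_\tau'$ at the minimizer) is a harmless additional remark consistent with that.
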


\begin{proof}
Using Lemma \ref{cplema1},  \eqref{a.10}, \eqref{1-tau},
and Lemma \ref{lem3} we have
$$
\begin{aligned}
&\frac{\partial}{\partial \alpha_{i}} I_{\tau}\Big(
\sum_{j=1}^{k} \alpha_{j} \delta_{P_{j}, t_{j}}+v\Big)
\nonumber \\
=&\Gamma(n-1)\Big(\alpha_{i}\int_{\sn}\delta_{P_{i}, t_{i}}^{n}
+\sum_{j\ne i} \alpha_{j}
\int_{\mathbb{S}^{n}}\delta_{P_{i}, t_{i}}^{n-1} \delta_{P_{j}, t_{j}}
\Big)\\
&-\Gamma(n-1)\int_{\mathbb{S}^n}K
\Big|\sum_{i=1}^{k}\alpha_i\delta_{P_i,t_i}+v\Big|^{n-2-\tau}
\Big(\sum_{j=1}^{k}\alpha_j\delta_{P_j,t_j}+v\Big)\delta_{P_{i},t_{i}}\\
=&\Gamma(n-1)
\Big(\alpha_{i}\int_{\sn}\delta_{P_{i}, t_{i}}^{n}
-\int_{\sn} K\Big|\sum_{j=1}^{k} \alpha_{j}\delta_{P_{j},t_{j}}\Big|^{n-1-\tau}
\delta_{P_{i},t_{i}}\Big)\\
& -\Gamma(n-1)(n-\tau-1)\int_{\sn}
K\Big|\sum_{j=1}^{k} \alpha_{j}\delta_{P_{j},t_{j}}\Big|^{n-2-\tau}v\delta_{P_{i},t_{i}}+O(\tau)+O(\|v\|_{\sigma}^{2})\\
=&\Gamma(n-1)
\Big(\alpha_{i}\int_{\sn}\delta_{P_{i}, t_{i}}^{n}
-\int_{\sn}K\Big(\sum_{j=1}^{k} (\alpha_{j}\delta_{P_{j},t_{j}})^{n-1-\tau}\Big)\delta_{P_{i},t_{i}}\Big)\\
&-\Gamma(n-1)\int_{\sn}K\Big(\sum_{j=1}^{k}(\alpha_{j}\delta_{P_{j},t_{j}})^{n-2-\tau}\Big)v
\delta_{P_{i},t_{i}}+O(\tau)+O(\|v\|_{\sigma}^2).
\end{aligned}
$$
It follows from \eqref{p-p1} and \eqref{stau} that
\be\label{1.77}
\begin{aligned}
\int_{\mathbb{S}^n}K\alpha_{i}^{n-1}
\delta_{P_i,t_i}^{n-\tau}=&
\int_{\mathbb{S}^n}K(P_i)\alpha_{i}^{n-1}\delta_{P_i,t_i}^{n-\tau}
-\int_{\mathbb{S}^n}({K}(P)-K(P_i))\alpha_{i}^{n-1}\delta_{P_i,t_i}^{n-\tau}\\
=&
\int_{\mathbb{S}^n}K(P_i)\alpha_{i}^{n-1}\delta_{P_i,t_i}^{n-\tau}
+O(\tau).
\end{aligned}
\ee
Similarly, by \eqref{1.12}, \eqref{a.9}, \eqref{stau}, and
 \eqref{p-p1}, we have
\begin{align}\label{1.78}
&\int_{\mathbb{S}^n}
K\alpha_{i}^{n-2}\delta_{P_i,t_i}^{n-1-\tau}v\notag\\
=&
\int_{\mathbb{S}^n}K(P_i) \alpha_{i}^{n-2}\delta_{P_i,t_i}^{n-1}v
+\int_{\mathbb{S}^n}(K(P)-K(P_i))\alpha_{i}^{n-2}\delta_{P_i,t_i}^{n-1}v
+O( \tau|\log \tau|\|v\|_{{\sigma}})\notag\\
 =&O(\tau|\log\tau|)+O(\|v\|_{{\sigma}}^{2}).
\end{align}
By using the fact $|\alpha_i^{n-1-\tau}-\alpha_{i}^{n-1}|=O(\tau),$ \eqref{1.77}, \eqref{1.78},
 \eqref{a.3}, and \eqref{a.71} that
\begin{align*}
&\frac{\partial}{\partial \alpha_{i}} I_{\tau}\Big(
\sum_{j=1}^{k} \alpha_{j} \delta_{P_{j}, t_{j}}+v\Big) \\
=&\Gamma(n-1)\Big(\alpha_{i}\int_{\sn}\delta_{P_{i}, t_{i}}^{n}
-K(P_{i})
\int_{\mathbb{S}^{n}} \alpha_{i}^{n-1}\delta_{P_i,t_i}^{n-\tau}
-\int_{\mathbb{S}^n}K\alpha_i^{n-2}\delta_{P_i,t_i}^{n-1-\tau}v
\Big)\\
&+O(\tau|\log\tau|)+O(\|v\|_{\sigma}^{2})
\\
=&-2\sigma\|\delta_{P_{i},t_{i}}\|_{\sigma}^{2}\beta_{i}
+O(|\beta|^2)
+O(\tau|\log\tau|)+O(\|v\|_{\sigma}^{2}).
\end{align*}
Hence
$$
\frac{\partial}{\partial \alpha_{i}} I_{\tau}
\Big(\sum_{j=1}^{k} \alpha_{j} \delta_{P_{j}, t_{j}}+v\Big)
=-2\sigma\|\delta_{P_{i},t_{i}}\|_{\sigma}^{2}\beta_{i}+V_{\alpha_i}(\tau,\alpha,t,P,v),
$$
where
$$
V_{\alpha_i}(\tau,\alpha,t,P,v)=O(|\beta|^2)
+O(\tau|\log\tau|)
+O(\|v\|_{\sigma}^{2}).
$$
Combining with \eqref{2.2}, we obtain
\be\label{1.18}
\begin{aligned}
\frac{\partial}{\partial \alpha_{i}} I_{\tau}\Big(\sum_{j=1}^{k} \alpha_{j} \delta_{P_{j}, t_{j}}+\overline{v}\Big)
=&-2\sigma\|\delta_{P_{i},t_{i}}\|_{\sigma}^{2}\beta_{i}+V_{\alpha_i}(\tau,\alpha,t,P,\overline{v})\\
=&-2\sigma\|\delta_{P_{i},t_{i}}\|_{\sigma}^{2}\beta_{i}+O(|\beta|^2+\tau|\log \tau|).
\end{aligned}
\ee
Proposition \ref{lem5} follows from the above.
\end{proof}

\begin{proposition}\label{lem6}
Under the hypotheses of  Proposition \ref{lem5},
then for any $(\alpha, t,P, v)\in \Sigma_{\tau}(\overline{P}_{1},\cdots,\overline{P}_{k}),$  we have
$$
\begin{aligned}
\frac{\partial}{\partial t_{i}}I_{\tau}\Big(\sum_{j=1}^{k} \alpha_{j}
\delta_{P_{j}, t_{j}}+v\Big)
=&\Theta_{1}\frac{1}{K(P_{i})^{{1}/{\sigma}}}\frac{\tau}{t_{i}}
+\Theta_{2}\frac{\Delta_{g_{0}}K(P_i)}{K(P_{i})^{{n}/{2\sigma}}}\frac{1}{t_{i}^{3}}\\
&+\Theta_3\sum_{j\ne i} \frac{G_{P_{i}}(P_{j})}{(K(P_{i})K(P_{j}))^{{1}/{2\sigma}}}
\frac{1}{t_i^2t_j}
+V_{t_i}(\tau,\alpha, t,P,v),
\end{aligned}
$$
where $\Theta_{1},$ $\Theta_{2},$ $\Theta_{3}$ are positive constants,
$G_{P_{i}}(P_{j})$ is as in \eqref{1.8}, and
$$
V_{t_i}(\tau,\alpha, t,P,v)=O(\tau\|v\|_{\sigma})+O(\tau^{1/2}\|v\|_{\sigma}^{2})+O(|\beta|\tau^{3/2})+o(\tau^{3/2}).
$$
\end{proposition}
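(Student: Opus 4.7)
The plan is to parallel the proof of Proposition~\ref{lem5}, differentiating $I_\tau$ in $t_i$ directly and then isolating three sources of leading contributions of order $\tau^{3/2}$ (recall $t_i\sim\tau^{-1/2}$, so that $\tau/t_i$, $1/t_i^3$, and $1/(t_i^2 t_j)$ are all $O(\tau^{3/2})$). Writing $U=\sum_j\alpha_j\delta_{P_j,t_j}$, I would begin from
\[
\tfrac{\partial}{\partial t_i}I_\tau(U+v) = \alpha_i\langle U+v,\partial_{t_i}\delta_{P_i,t_i}\rangle - \alpha_i\Gamma(n-1)\int_{\sn}K|U+v|^{n-2-\tau}(U+v)\,\partial_{t_i}\delta_{P_i,t_i},
\]
use the orthogonality $\langle v,\partial_{t_i}\delta_{P_i,t_i}\rangle=0$ from \eqref{1.12} to kill the $v$-piece of the inner product, and use $P_\sigma\delta_{P_j,t_j}=\Gamma(n-1)\delta_{P_j,t_j}^{n-1}$ from \eqref{1.51} to rewrite $\langle U,\partial_{t_i}\delta_{P_i,t_i}\rangle$ as $\Gamma(n-1)\sum_j\alpha_j\int\delta_{P_j,t_j}^{n-1}\partial_{t_i}\delta_{P_i,t_i}$. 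I would then Taylor expand $|U+v|^{n-2-\tau}(U+v)=U^{n-1-\tau}+(n-1-\tau)U^{n-2-\tau}v+O(v^2)$ and split $U^{n-1-\tau}$ via Lemma~\ref{lem3} into the diagonal sum plus a cross-interaction remainder.

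The three leading terms then emerge as follows. For the diagonal piece $j=i$, using $\alpha_i^{n-\tau}K(P_i)=1+O(|\beta|+\tau)$, the integrands $\alpha_i^2\delta_{P_i,t_i}^{n-1}\partial_{t_i}\delta_{P_i,t_i}$ and $\alpha_i K\delta_{P_i,t_i}^{n-1-\tau}\partial_{t_i}\delta_{P_i,t_i}$ differ by two pieces: the exponent shift $\delta^{n-1-\tau}-\delta^{n-1}\sim-\tau\delta^{n-1}\log\delta$, which after stereographic rescaling at $P_i$ reduces to a constant multiple of the model integral $\int_{\R^n}\log(1+|y|^2)\,\partial_t[\text{standard bubble}]\,\ud y$ and produces the $\Theta_1\tau/(K(P_i)^{1/\sigma}t_i)$ term; and the Taylor expansion $K(x)-K(P_i)=\tfrac{1}{2}\langle\nabla^2K(P_i)(x-P_i),x-P_i\rangle+o(|x-P_i|^2)$, whose linear part integrates to zero by the radial symmetry of $\delta_{P_i,t_i}$ and whose quadratic part rescales to a radial integral collapsing $\nabla^2K$ to $\Delta_{g_0}K(P_i)$ and yielding the $\Theta_2\Delta_{g_0}K(P_i)/(K(P_i)^{n/2\sigma}t_i^3)$ term. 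For cross terms $j\ne i$, the asymptotics $\delta_{P_j,t_j}(x)\sim G_{P_i}(P_j)/t_j$ uniformly near $P_i$ (from \eqref{a.9}--\eqref{a.10}), together with $\int\delta_{P_i,t_i}^{n-1}\partial_{t_i}\delta_{P_i,t_i}\sim 1/t_i^2$ and $\alpha_i\alpha_j=(K(P_i)K(P_j))^{-1/2\sigma}+O(|\beta|)$, deliver the Green's function term $\Theta_3 G_{P_i}(P_j)/((K(P_i)K(P_j))^{1/2\sigma}t_i^2 t_j)$.

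For the remainder $V_{t_i}$, the linear-in-$v$ term $-(n-1-\tau)\alpha_i\Gamma(n-1)\int KU^{n-2-\tau}v\,\partial_{t_i}\delta_{P_i,t_i}$ has dominant piece $-(n-1)\alpha_i^{n-1}K(P_i)\int\delta_{P_i,t_i}^{n-2}v\,\partial_{t_i}\delta_{P_i,t_i}$ which vanishes by a second use of \eqref{1.12}: differentiating $P_\sigma\delta_{P_i,t_i}=\Gamma(n-1)\delta_{P_i,t_i}^{n-1}$ in $t_i$ gives $(n-1)\delta_{P_i,t_i}^{n-2}\partial_{t_i}\delta_{P_i,t_i}=\Gamma(n-1)^{-1}P_\sigma\partial_{t_i}\delta_{P_i,t_i}$, so its pairing with $v\in E_{P,t}$ is zero; the rest is $O(\tau\|v\|_\sigma)$ using $\|\partial_{t_i}\delta_{P_i,t_i}\|_\sigma\sim 1/t_i$ and Sobolev embedding. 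The $O(v^2)$ expansion piece contributes $O(\tau^{1/2}\|v\|_\sigma^2)$, the substitution $\alpha_i=K(P_i)^{-1/2\sigma}+\beta_i$ in the leading balance adds $O(|\beta|\tau^{3/2})$, and all higher-order Taylor remainders are $o(\tau^{3/2})$. The main obstacle is the delicate bookkeeping: verifying that all apparent $O(\tau^{1/2})$ and $O(\tau)$ contributions from the $\alpha_i^{n-\tau}K(P_i)=1$ cancellation disappear so that only the three $O(\tau^{3/2})$ terms with the stated explicit prefactors $\Theta_1,\Theta_2,\Theta_3$ survive; the hypothesis $n-2\sigma=2$ enters precisely to synchronize the stereographic $t_i$-scaling with the $\tau$-power, so that the Laplacian and Green's function contributions appear at the same order $\tau^{3/2}$ rather than at different orders.
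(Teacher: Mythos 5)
Your overall route is the paper's route: differentiate $I_{\tau}$ in $t_{i}$ against $\alpha_{i}\partial_{t_{i}}\delta_{P_{i},t_{i}}$, kill the $v$-part of the quadratic term by \eqref{1.12}, expand the nonlinearity via Lemma \ref{lem3}, get the $\Theta_{1}$ term from the $\tau$-shift of the exponent (the paper's \eqref{part3-tau}), get the $\Theta_{2}$ term from the second-order Taylor expansion of $K$ at $P_{i}$ with the gradient part killed by radial symmetry (cf. \eqref{a.5}), and cancel the leading $v$-linear term by differentiating the bubble equation $P_{\sigma}\delta_{P_{i},t_{i}}=\Gamma(n-1)\delta_{P_{i},t_{i}}^{n-1}$ in $t_{i}$ and pairing with $v\in E_{P,t}$ --- this is exactly the paper's identity \eqref{1.55}; your remainder bounds are weaker than but sufficient for the stated form of $V_{t_{i}}$.

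One step is wrong as written, and it is precisely the step that is supposed to produce $\Theta_{3}$. You factor the cross term as ``$\delta_{P_{j},t_{j}}\sim G_{P_{i}}(P_{j})/t_{j}$ near $P_{i}$'' times ``$\int\delta_{P_{i},t_{i}}^{n-1}\partial_{t_{i}}\delta_{P_{i},t_{i}}\sim 1/t_{i}^{2}$''. But
$\int_{\sn}\delta_{P_{i},t_{i}}^{n-1}\partial_{t_{i}}\delta_{P_{i},t_{i}}
=\frac{1}{n}\,\partial_{t_{i}}\int_{\sn}\delta_{P_{i},t_{i}}^{n}=0$
identically, since $\int_{\sn}\delta_{P,t}^{n}$ is independent of $t$ (conformal invariance of the volume integral, or \eqref{p1p1} together with \eqref{1.51}); so this factorization yields nothing. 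The quantity that actually carries the interaction is $\partial_{t_{i}}\int_{\sn}\delta_{P_{i},t_{i}}^{n-1}\delta_{P_{j},t_{j}}$ (and its companion $\partial_{t_{i}}\int_{\sn}\delta_{P_{i},t_{i}}\delta_{P_{j},t_{j}}^{n-1}$), computed in \eqref{a.1}. Moreover $\Theta_{3}$ is not the contribution of a single term: it is the sum of three cross contributions --- one from the quadratic part of $I_{\tau}$ and two from the mixed terms in the expansion of $K\big(\sum_{j}\alpha_{j}\delta_{P_{j},t_{j}}\big)^{n-1-\tau}$, handled through \eqref{4.4}--\eqref{4.5} --- whose coefficients combine, using $n-2=2\sigma$, to $\alpha_{i}\alpha_{j}\big(1-\alpha_{i}^{n-2}K(P_{i})-\alpha_{j}^{n-2}K(P_{j})\big)=-\alpha_{i}\alpha_{j}+O(|\beta|)$; note the net sign is opposite to that of the quadratic-part contribution alone. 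Since the positivity of $\Theta_{1},\Theta_{2},\Theta_{3}$ is exactly what makes $\widehat{F}$ strictly convex in the subsequent degree computation, this three-term bookkeeping (which you defer to ``delicate bookkeeping'') is not optional. Once you replace your factorization by the interaction derivative \eqref{a.1} and carry out that combination, your argument coincides with the paper's proof.
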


\begin{proof}
By \eqref{1.26},  Lemma \ref{cplema1}, H\"{o}lder inequality, and Sobolev embedding,
we have
\begin{align}\label{1.31}
\begin{aligned}
&\frac{\partial}{\partial t_{i}}I_{\tau}\Big(\sum_{j=1}^{k}
 \alpha_{j} \delta_{P_{j}, t_{j}}+v\Big)\\
=&\Gamma(n-1)
\Big(\sum_{j\ne i}\alpha_{i}\alpha_{j}\frac{\partial }{\partial t_i}
\int_{\mathbb{S}^n}\delta_{P_i, t_i}^{n-1}\delta_{P_j,t_j}
-\int_{\mathbb{S}^n}K\Big( \sum_{j=1}^{k}\alpha_j\delta_{P_j,t_j}\Big)^{n-1-\tau}\alpha_{i}
\frac{\partial\delta_{P_i, t_i}}{\partial t_{i}}
\Big)\\
&
-\Gamma(n-1)(n-1-\tau)\int_{\mathbb{S}^n}
K\Big(\sum_{j=1}^{k}\alpha_{j}\delta_{P_j,t_j}\Big)^{n-2-\tau}v
\alpha_{i}\frac{\partial \delta_{P_i,t_i}}{\partial t_i}\\
 &
+O\Big(\|v\|_{\sigma}^{2}\Big\|\frac{\partial \delta_{P_i,t_i}}{\partial t_i}\Big\|_{\sigma}\Big).
\end{aligned}
\end{align}

From \eqref{1.12},  we can obtain
\begin{align}\label{1.55}
\begin{aligned}
\int_{\mathbb{S}^n}
\delta_{P_i,t_i}^{n-2}
\frac{\partial \delta_{P_i,t_i}}{\partial t_i} v
=&\frac{2}{n+2\sigma}\int_{\mathbb{S}^n}v\frac{\partial}{\partial t_i}
(\delta_{P_i,t_i}^{n-1})\\
=&\frac{2}{(n+2\sigma)c(n,\sigma)}\frac{\partial}{\partial t_i}\langle v,\delta_{P_i,t_i}\rangle\\
=&\frac{2}{(n+2\sigma)c(n,\sigma)}\Big\langle v, \frac{\partial \delta_{P_i,t_i}}{\partial t_i}\Big\rangle\\
=&0.
\end{aligned}
\end{align}
It follows from \eqref{1.55}, \eqref{stau},  \eqref{a.9}, \eqref{part11}, and \eqref{n.1} that
\begin{align*}
&\Big|\int_{\mathbb{S}^n} K\delta_{P_i,t_i}^{n-2-\tau}
\frac{\partial \delta_{P_i,t_i}}{\partial t_i} v\Big|\\
=&
\Big| \int_{\mathbb{S}^n}(K-K(P_i))\delta_{P_i,t_i}^{n-2}
\frac{\partial \delta_{P_i,t_i}}{\partial t_i}v
+\int_{\mathbb{S}^n}K(\delta_{P_i,t_i}^{n-2-\tau}-\delta_{P_i,t_i}^{n-2})
\frac{\partial \delta_{P_i,t_i}}{\partial t_i} v
 \Big|\\
 \leq&
C (\tau^{1/2}|\log \tau|)
 \int_{\mathbb{S}^n}|P-P_i|\delta_{P_i,t_i}^{n-2}
 \frac{\partial \delta_{P_i,t_i}}{\partial t_i}v\\
& +O\Big(\|\delta_{P_i,t_i}^{n-2-\tau}-\delta_{P_i,t_i}^{n-2}\|_{L^{n/(n-2)}(\mathbb{S}^n)}
 \Big\|\frac{\partial \delta_{P_i,t_i}}{\partial t_i}\Big\|_{\sigma}
 \|v\|_{\sigma}\Big)\\
 \leq&
 (\tau^{1/2}|\log \tau|)
 O\Big(\big\||\cdot-P_i|\delta_{P_i,t_i}^{n-2}\big\|_{L^{n/(n-2)}(\mathbb{S}^n)}
 \Big\|\frac{\partial \delta_{P_i,t_i}}{\partial t_i}\Big\|_{\sigma}
 \|v\|_{\sigma}\Big)+O\big(\tau^{3/2}|\log\tau|\|v\|_{\sigma}\big)\\
 \leq& C\tau^{3/2}|\log \tau|\|v\|_{\sigma},
\end{align*}
this, Lemma \ref{lem3},  \eqref{a.11},  and \eqref{n.2} yields,
\begin{align}\label{1.27}
&\Big|
\int_{\mathbb{S}^n}K\Big(\sum_{j=1}^{k}\alpha_{j}\delta_{P_j,t_j}\Big)^{n-2-\tau}
\frac{\partial \delta_{P_i,t_i}}{\partial t_i}v
\Big|\notag\\
\leq &
\Big|\int_{\mathbb{S}^n}K\alpha_{i}^{n-2-\tau}
\delta_{P_i,t_i}^{n-2-\tau}\frac{\partial \delta_{P_i,t_i}}{\partial t_i}v\Big|
+C\sum_{j\ne i} \int_{\mathbb{S}^n} \delta_{P_{i},t_{i}}\delta_{P_j,t_j}^{n-3-\tau}
\Big|\frac{\partial \delta_{P_i,t_i}}{\partial t_i}\Big||v|\notag\\
&+C\sum_{j\ne i}\int_{\sn}\delta_{P_{j},t_{j}}^{n-2-\tau}
\Big|\frac{\partial \delta_{P_i,t_i}}{\partial t_i}\Big||v|\notag\\
\leq& C(\tau^{3/2}|\log \tau|\|v\|_{\sigma}+
\tau^{3/2}\|v\|_{\sigma})\notag\\
\leq& C(\tau^{3/2}|\log\tau|\|v\|_{\sigma}).
\end{align}

Using \eqref{1.31}, \eqref{1.27}, Lemma \ref{lem3}, and  \eqref{n.6}, we obtain
\begin{align}\label{4.2}
&\frac{\partial}{\partial t_{i}}I_{\tau}
\Big(\sum_{j=1}^{k} \alpha_{j} \delta_{P_{j}, t_{j}}+v\Big)\notag\\
=&\Gamma(n-1)
\Big(
\sum_{j\ne i}\alpha_{i}\alpha_{j}
\frac{\partial}{\partial t_i}\int_{\mathbb{S}^n}\delta_{P_j,t_j}
\delta_{P_i, t_i}^{n-1}
 -\int_{\mathbb{S}^n}K(\alpha_i\delta_{P_i,t_i})^{n-1-\tau}
    \alpha_{i}\frac{\partial\delta_{P_i, t_i}}{\partial t_{i}}
    \Big)\notag\\
&
  -\Gamma(n-1)\int_{\sn}K\Big(\sum_{j\ne i} \alpha_{j}\delta_{P_{j},t_{j}}\Big)^{n-1-\tau}
  \alpha_{i}\frac{\partial \delta_{P_{i},t_{i}}}{\partial t_{i}}\notag\\
&
-\Gamma(n-1)(n-1-\tau)\int_{\sn}
K(\alpha_{i}\delta_{P_{i},t_{i}})^{n-2-\tau}
\Big(\sum_{j\ne i}\alpha_{j}\delta_{P_{j},t_{j}}\Big)
\alpha_{i}\frac{\partial \delta_{P_{i},t_{i}}}{\partial t_{i}}\notag\\
&+o(\tau^{3/2})+O(\tau^{3/2}|\log \tau|\|v\|_{\sigma})
+O(\tau^{1/2}\|v\|_{\sigma}^{2})\notag\\
=&
\Gamma(n-1)
\Big(
\sum_{j\ne i}\alpha_{i}\alpha_{j}\frac{\partial}{\partial t_i}
\int_{\mathbb{S}^n}\delta_{P_j,t_j}\delta_{P_i,t_i}^{n-1}
    -\int_{\mathbb{S}^n}K\alpha_{i}^{n-\tau}\delta_{P_i,t_i}^{n-1-\tau}
\frac{\partial \delta_{P_i,t_i}}{\partial t_i}
\Big)\notag\\
&
-\Gamma(n-1)\int_{\sn}\alpha_{i}K
\Big(\sum_{j\ne i}\alpha_{j}\delta_{P_{j},t_{j}}\Big)^{n-1-\tau}
\frac{\partial \delta_{P_{i},t_{i}}}{\partial t_{i}}\notag\\
&
-\Gamma(n-1)(n-1)
\int_{\sn}\alpha_{i}^{n-1}
K\Big(\sum_{j\ne i}\alpha_{j}\delta_{P_{j},t_{j}}\Big)
\delta_{P_{i},t_{i}}^{n-2-\tau}
\frac{\partial \delta_{P_{i},t_{i}}}{\partial t_{i}}\notag\\
&+o(\tau^{3/2})+O(\tau^{3/2}|\log \tau|\|v\|_{\sigma})
+O(\tau^{1/2}\|v\|_{\sigma}^{2}).
\end{align}
It follows from Lemma \ref{lem3}  that
\be\label{4.1}
\Big(\sum_{j\ne i}\alpha_{j}\delta_{P_{j},t_{j}}\Big)^{n-1-\tau}=
\sum_{j\ne i}(\alpha_j\delta_{P_{j},t_{j}})^{n-1-\tau}
+O\Big(\sum_{\substack{j\ne i,\ell \ne i, j\ne \ell}} \delta_{P_{j},t_{j}}^{n-2-\tau}\delta_{P_{\ell},t_{\ell}}\Big).
\ee
By \eqref{4.2}, \eqref{4.1}, \eqref{part11}, \eqref{1-tau}, and \eqref{n.6}, we can obtain
\begin{align*}
&\frac{\partial}{\partial t_{i}}I_{\tau}\Big(\sum_{j=1}^{k}
 \alpha_{j} \delta_{P_{j}, t_{j}}+v\Big)\notag\\
 =&\Gamma(n-1)
\Big(
\sum_{j\ne i}\alpha_{i}\alpha_{j}\frac{\partial}{\partial t_i}
\int_{\mathbb{S}^n}\delta_{P_j,t_j}\delta_{P_i,t_i}^{n-1}
    -\int_{\mathbb{S}^n}K\alpha_{i}^{n}\delta_{P_i,t_i}^{n-1-\tau}
\frac{\partial \delta_{P_i,t_i}}{\partial t_i}
\Big)
 \\
&
-\Gamma(n-1)\int_{\sn}\alpha_{i}K
\sum_{j\ne i}(\alpha_{j}\delta_{P_{j},t_{j}})^{n-1-\tau}
\frac{\partial \delta_{P_{i},t_{i}}}{\partial t_{i}}\\
&
-\Gamma(n-1)\alpha_{i}^{n-1}
\sum_{j\ne i}\int_{\sn}
K\alpha_{j}\delta_{P_{j},t_{j}}
\frac{\partial }{\partial t_{i}}(\delta_{P_{i},t_{i}})^{n-1-\tau}\\
&
+o(\tau^{3/2})+O(\tau^{3/2}|\log \tau|\|v\|_{\sigma})
+O(\tau^{1/2}\|v\|_{\sigma}^{2}).
\end{align*}
By \eqref{n.3}, we have
\begin{align}\label{4.4}
&\int_{\sn}K\delta_{P_{j},t_{j}}
\frac{\partial }{\partial t_{i}}(\delta_{P_{i},t_{i}})^{n-1-\tau}\notag\\
=&\frac{\partial }{\partial t_{i}}
\int_{\sn}K\delta_{P_{j},t_{j}}\delta_{P_{i},t_{i}}^{n-1-\tau}\notag\\
=&K(P_{i})\frac{\partial }{\partial t_{i}}
\int_{\sn}\delta_{P_{j},t_{j}}\delta_{P_{i},t_{i}}^{n-1-\tau}
+\frac{\partial }{\partial t_{i}}
\int_{\sn}(K-K(P_{i}))\delta_{P_{j},t_{j}}\delta_{P_{i},t_{i}}^{n-1-\tau}\notag\\
=&K(P_{i})\frac{\partial }{\partial t_{i}}
\int_{\sn}\delta_{P_{j},t_{j}}\delta_{P_{i},t_{i}}^{n-1-\tau}
+O(\tau^2),
\end{align}
and by \eqref{n.4},
\begin{align}\label{4.5}
&\int_{\sn}K\delta_{P_{j},t_{j}}^{n-1-\tau}
\frac{\partial \delta_{P_{i},t_{i}}}{\partial t_{i}}\notag\\
=&\frac{\partial }{\partial t_{i}}\int_{\sn} K
\delta_{P_{i},t_{i}}\delta_{P_{j},t_{j}}^{n-1-\tau}\notag\\
=&K(P_{j})\frac{\partial }{\partial t_{i}}\int_{\sn}
\delta_{P_{i},t_{i}}\delta_{P_{j},t_{j}}^{n-1-\tau}
+\frac{\partial }{\partial t_{i}}\int_{\sn}(K-K(P_{j}))
\delta_{P_{i},t_{i}}\delta_{P_{j},t_{j}}^{n-1-\tau}\notag\\
=&K(P_{j})\frac{\partial }{\partial t_{i}}\int_{\sn}
\delta_{P_{i},t_{i}}\delta_{P_{j},t_{j}}^{n-1-\tau}+O(\tau^{2}).
\end{align}
Thus,  from \eqref{4.4}, \eqref{4.5}, \eqref{a.1}, \eqref{part3-tau}, and \eqref{a.5},  we get
\begin{align*}
&\frac{\partial}{\partial t_{i}}I_{\tau}\Big(\sum_{j=1}^{k}
 \alpha_{j} \delta_{P_{j}, t_{j}}+v\Big)\notag\\
 =&
 \Gamma(n-1)
\Big(
\sum_{j\ne i}\alpha_{i}\alpha_{j}\frac{\partial}{\partial t_i}
\int_{\mathbb{S}^n}\delta_{P_j,t_j}\delta_{P_i,t_i}^{n-1}
    -\int_{\mathbb{S}^n}K\alpha_{i}^{n}\delta_{P_i,t_i}^{n-1-\tau}
\frac{\partial \delta_{P_i,t_i}}{\partial t_i}
\Big)
 \\
&
 -\Gamma(n-1)
 K(P_{i})\alpha_{i}^{n-1}\sum_{j\ne i}\alpha_{j}\frac{\partial }{\partial t_{i}}
\int_{\sn}\delta_{P_{j},t_{j}}\delta_{P_{i},t_{i}}^{n-1}\\
&
-\Gamma(n-1)
\alpha_{i}\sum_{j\ne i} K(P_{j})\alpha_{j}^{n-1} \frac{\partial }{\partial t_{i}}\int_{\sn}
\delta_{P_{i},t_{i}}\delta_{P_{j},t_{j}}^{n-1}\\
&
 +o(\tau^{3/2})+O(\tau^{3/2}|\log \tau|\|v\|_{\sigma})
+O(\tau^{1/2}\|v\|_{\sigma}^{2})\\
=&\Gamma(n-1)
\Big(
\sum_{j\ne i}\alpha_{i}\alpha_{j}\frac{\partial}{\partial t_i}
\int_{\mathbb{S}^n}\delta_{P_j,t_j}\delta_{P_i,t_i}^{n-1}
-\frac{1}{n-\tau}\int_{\mathbb{S}^n}K(P_{i})\alpha_{i}^{n}
\frac{\partial \delta_{P_i,t_i}^{n-\tau}}{\partial t_i}
\Big)
\\
& -\Gamma(n-1)\frac{2\Delta_{g_0} K(P_{i})}{n(n-\tau)}\int_{\sn}
|P-P_{i}|^2
\alpha_{i}^{n}\frac{\partial \delta_{P_{i},t_{i}}^{n-\tau}}{\partial t_{i}}\\
 &
-\Gamma(n-1)\sum_{j\ne i}\{\alpha_{i}^{n-1}\alpha_{j}K(P_{i})
+\alpha_{i}\alpha_{j}^{n-1}K(P_{j})\}
\frac{\partial}{\partial t_{i}}
\int_{\sn}\delta_{P_{j},t_{j}}\delta_{P_{i},t_{i}}^{n-1}\\
&
 +o(\tau^{3/2})+O(\tau^{3/2}|\log \tau|\|v\|_{\sigma})
+O(\tau^{1/2}\|v\|_{\sigma}^{2})\\
=&\Gamma(n-1)\sum_{j\ne i}
\{\alpha_{i}\alpha_{j}-\alpha_{i}^{n-1}\alpha_{j}K(P_{i})
-\alpha_{i}\alpha_{j}^{n-1}K(P_{j})\}
\frac{\partial}{\partial t_{i}}
\int_{\sn}\delta_{P_{j},t_{j}}\delta_{P_{i},t_{i}}^{n-1}\\
&
-\frac{\Gamma(n-1)}{n-\tau}\alpha_{i}^{n}K(P_{i})
\frac{\partial}{\partial t_{i}}\int_{\sn}\delta_{P_{i},t_{i}}^{n-\tau}\\
&
-\frac{2\Gamma(n-1)}{n(n-\tau)}\Delta_{g_0} K(P_{i})\alpha_{i}^{n}
\frac{\partial }{\partial t_{i}}\int_{\sn}
|P-P_{i}|^2\delta_{P_{i},t_{i}}^{n-\tau}\\
&
+o(\tau^{3/2})+O(\tau^{3/2}|\log \tau|\|v\|_{\sigma})
+O(\tau^{1/2}\|v\|_{\sigma}^{2})\\
=&-\Gamma(n-1)\sum_{j\ne i}\frac{1}{(K(P_{i})K(P_{j}))^{{1}/{2\sigma}}}
\frac{\partial }{\partial t_{i}}\int_{\sn}\delta_{P_{j},t_{j}}\delta_{P_{i},t_{i}}^{n-1}\\
 &- \frac{\Gamma(n-1)}{n}\frac{1}{K(P_{i})^{{1}/{\sigma}}}
\frac{\partial}{\partial t_{i}}\int_{\sn}\delta_{P_{i},t_{i}}^{n-\tau}\\
&-\frac{2\Gamma(n-1)}{n^2}\frac{\Delta_{g_{0}}K(P_{i})}{K(P_{i})^{{n}/{2\sigma}}}
\frac{\partial }{\partial t_{i}} \int_{\sn}|P-P_{i}|^2\delta_{P_{i},t_{i}}^{n-\tau}\\
&
+o(\tau^{3/2})+O(\tau^{3/2}|\log \tau|\|v\|_{\sigma})
+O(\tau^{1/2}\|v\|_{\sigma}^{2})+O(|\beta|\tau^{3/2}),
\end{align*}
where $|P-P_{i}|$ represents the distance between  two points $P$ and $P_{i}$  after through a stereographic  projection with $P_{i}$ as the south pole of $\mathbb{S}^{n}.$

It follows that
\begin{align}\label{1.32}
&\frac{\partial }{\partial t_{i}}I_{\tau}\Big(\sum_{j=1}^{k}
\alpha_{j}\delta_{P_{j},t_{j}}+v\Big)\notag\\
=&\Theta_{1}\frac{1}{K(P_{i})^{{1}/{\sigma}}}\frac{\tau}{t_{i}}
+\Theta_{2}\frac{\Delta_{g_{0}}K(P_i)}{K(P_{i})^{{n}/{2\sigma}}}\frac{1}{t_{i}^{3}}\notag\\
&\quad+\Theta_3\sum_{j\ne i} \frac{G_{P_{i}}(P_{j})}{(K(P_{i})K(P_{j}))^{{1}/{2\sigma}}}
\frac{1}{t_i^2t_j}+V_{t_{i}}(\tau,\alpha,t,P,v),
\end{align}
where
\begin{align*}
&\Theta_{1}
=2^{n-2}\Gamma(n-1)|\mathbb{S}^{n-1}|\frac{n-2}{n(n-1)}\ub(\frac{n}{2},\frac{n}{2}-1),\\
&\Theta_{2}
=2^{n}\Gamma(n-1)
|\mathbb{S}^{n-1}|\frac{1}{n(n-1)}\ub(\frac{n}{2},\frac{n}{2}-1),\\
&\Theta_{3}
=2^{n}\Gamma(n-1)|\mathbb{S}^{n-1}|\ub(\frac{n}{2},\frac{n}{2}-1),
\end{align*}
and
$$
V_{t_i}(\tau,\alpha,t,P,v)=o(\tau^{3/2})+O(\tau\|v\|_{\sigma})
+O(\tau^{1/2}\|v\|_{\sigma}^{2})+O(|\beta|\tau^{3/2}).
$$
Proposition \ref{lem6} follows from the above.
\end{proof}

\begin{proposition}\label{lem7}
Under the hypotheses  of  Proposition \ref{lem5},
then for any $(\alpha, t,P, v)\in \Sigma_{\tau}(\overline{P}_{1},\cdots,\overline{P}_{k}),$
we have
$$
\frac{\partial}{\partial P_{i}}
I_{\tau}\Big(\sum_{j=1}^{k} \alpha_{j} \delta_{P_{j}, t_{j}}+v\Big)
=-\Theta_4\nabla_{g_0}{K}(P_i)
+V_{P_i}(\tau,\alpha,t,P,v),
$$
where $\Theta_{4}\geq\nu_{1}>0$ is a constant,
and
$$
V_{P_i}(\tau,\alpha,t,P,v)=
  O(\tau^{1/2})+O( \|v\|_{\sigma})+O( \tau^{-1/2}\|v\|_{\sigma}^{2}).
$$
\end{proposition}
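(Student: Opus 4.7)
The plan is to mimic the derivations in Propositions \ref{lem5} and \ref{lem6}, differentiating $I_\tau$ directly in the $P_i$ variable and extracting the $\nabla_{g_0}K(P_i)$ contribution via a Taylor expansion of $K$, after all the conformally invariant pieces have been shown to drop out. Starting from
$$
\frac{\partial I_\tau}{\partial P_i^{(\ell)}}\Big(\sum_j \alpha_j\delta_{P_j,t_j}+v\Big)
=\int_{\sn}(P_\sigma u)\alpha_i\frac{\partial\delta_{P_i,t_i}}{\partial P_i^{(\ell)}}
-\Gamma(n-1)\int_{\sn}K|u|^{n-2-\tau}u\,\alpha_i\frac{\partial\delta_{P_i,t_i}}{\partial P_i^{(\ell)}},
$$
the linear (quadratic-form) piece collapses: using $P_\sigma\delta_{P_j,t_j}=\Gamma(n-1)\delta_{P_j,t_j}^{n-1}$, the $v$-contribution is killed by $v\in E_{P,t}$ (see \eqref{1.12}), and the diagonal term $\int\delta_{P_i,t_i}^{n-1}\partial_{P_i^{(\ell)}}\delta_{P_i,t_i}$ vanishes because $\int_{\sn}\delta_{P_i,t_i}^n$ is independent of $P_i$ by conformal invariance on $\sn$. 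Only the off-diagonal interactions $\sum_{j\ne i}\alpha_j\int\delta_{P_j,t_j}^{n-1}\alpha_i\partial_{P_i^{(\ell)}}\delta_{P_i,t_i}$ survive, which by the appendix estimates are $O(\tau^{1/2})$.

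For the nonlinear term, I will expand $|u|^{n-2-\tau}u$ via Lemma \ref{lem3} so as to separate the dominant piece $K\alpha_i^{n-1-\tau}\delta_{P_i,t_i}^{n-1-\tau}$ (which is concentrated at $P_i$) from the cross contributions $O(\sum_{j\ne i}\delta_{P_i,t_i}^{n-2-\tau}\delta_{P_j,t_j})$ and from the $v$-pieces $(n-1-\tau)K(\sum \alpha_j\delta_{P_j,t_j})^{n-2-\tau}v+$ quadratic remainder. Inserting the Taylor expansion $K(x)=K(P_i)+\nabla_{g_0}K(P_i)\cdot(x-P_i)+O(|x-P_i|^2)$ into the dominant piece and integrating, the $K(P_i)$-term gives $K(P_i)\alpha_i^{n-\tau}(n-\tau)^{-1}\partial_{P_i^{(\ell)}}\int\delta_{P_i,t_i}^{n-\tau}$, which again vanishes because $\int\delta_{P_i,t_i}^{n-\tau}$ is a function of $t_i$ only (isometry invariance of the sphere). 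The next term produces exactly $-\Theta_4\,\partial_\ell K(P_i)$, where passing to stereographic coordinates centered at $P_i$ and rescaling $z=t_i y$ shows
$$
\Theta_4 = c\,K(P_i)^{-n/2\sigma}\int_{\R^n}\frac{z_\ell^2}{(1+|z|^2)^{n+1}}\,\ud z +o(1),
$$
which is positive and bounded below by some $\nu_1>0$ uniformly in $P_i\in\sn$ since $K$ is positive and bounded.

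To bound the remainder $V_{P_i}$ I will proceed as in Proposition \ref{lem6}: the $O(|x-P_i|^2)$ Taylor tail integrated against $\delta_{P_i,t_i}^{n-1-\tau}\partial_{P_i^{(\ell)}}\delta_{P_i,t_i}$ contributes $O(\tau^{1/2})$ (one factor $t_i^{-1}$ from each power of $|x-P_i|$ after rescaling, combined with the $O(t_i)$ growth of $\partial_{P_i}\delta$); the cross interactions are $O(\tau^{1/2})$ by \eqref{a.9}--\eqref{a.11}; the linear $v$-term is controlled by Cauchy--Schwarz together with the $L^{2n/(n-2\sigma)}$ bound $\|v\|_{L^{2n/(n-2\sigma)}}\le C\|v\|_\sigma$ and the relevant appendix norms, yielding $O(\|v\|_\sigma)$; and the quadratic $v$-term $\int K\delta_{P_i,t_i}^{n-2-\tau}v^2\partial_{P_i^{(\ell)}}\delta_{P_i,t_i}$ is bounded by $\|\delta_{P_i,t_i}^{n-2}\partial_{P_i}\delta_{P_i,t_i}\|_{L^{n/2}}\|v\|_\sigma^2\le Ct_i\|v\|_\sigma^2=O(\tau^{-1/2}\|v\|_\sigma^2)$, giving the claimed error $V_{P_i}=O(\tau^{1/2})+O(\|v\|_\sigma)+O(\tau^{-1/2}\|v\|_\sigma^2)$.

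The main obstacle is the bookkeeping of the several sources of error at the borderline rate $O(\tau^{1/2})$: in particular, one has to ensure that the $K(P_i)$-term (leading order in the Taylor expansion) cancels \emph{exactly} rather than only up to an $O(\tau)$ remainder, since an imperfect cancellation would destroy the sign information $\Theta_4\ge\nu_1>0$ that is needed to conclude the nondegeneracy argument in Theorem \ref{thm3}. This is handled by invoking the full $O(n+1)$-isometry invariance of $\int_{\sn}\delta_{P,t}^{n-\tau}$, which gives $\partial_{P_i}\int\delta_{P_i,t_i}^{n-\tau}=0$ \emph{identically} in $\tau$, rather than only modulo lower-order terms. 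After this point, the remaining estimates are standard consequences of the integral bounds collected in Appendix \ref{sec5}.
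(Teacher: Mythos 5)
Your plan follows essentially the same route as the paper: differentiate $I_\tau$ in $P_i$, kill the $v$-contribution of the quadratic form by \eqref{1.12}, kill the diagonal term and the $K(P_i)$ Taylor-constant term by the exact $P$-independence of $\int_{\mathbb{S}^n}\delta_{P,t}^{n}$ and $\int_{\mathbb{S}^n}\delta_{P,t}^{n-\tau}$, reduce everything to $-\Gamma(n-1)\alpha_i^n\int_{\mathbb{S}^n}K\delta_{P_i,t_i}^{n-1-\tau}\partial_{P_i}\delta_{P_i,t_i}$, and extract $-\Theta_4\nabla_{g_0}K(P_i)$ with a uniform lower bound for $\Theta_4$ via stereographic coordinates and rescaling; the cross-term and $v$-linear errors are handled exactly as in the paper (via \eqref{n.5}, \eqref{n.7}--\eqref{n.9}, \eqref{P1P1}, \eqref{n.1} rather than the labels you cite, but the substance is the same).

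One step as written would fail: your bound for the quadratic-in-$v$ remainder. First, the second-order term in the expansion of the nonlinearity carries $\delta_{P_i,t_i}^{n-3-\tau}v^2\,\partial_{P_i}\delta_{P_i,t_i}$, not $\delta_{P_i,t_i}^{n-2-\tau}v^2\,\partial_{P_i}\delta_{P_i,t_i}$; second, the inequality $\|\delta_{P_i,t_i}^{n-2}\partial_{P_i}\delta_{P_i,t_i}\|_{L^{n/2}}\le Ct_i$ is false (since $|\partial_{P_i}\delta|\lesssim t_i\delta$, this norm behaves like $t_i\|\delta^{n-1}\|_{L^{n/2}}\sim t_i^{\,n-2}$, which is much larger than $t_i$ for $n\ge 5$). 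The fix is either to use the correct power and H\"older as $\int\delta^{n-3-\tau}|\partial_{P_i}\delta|v^2\lesssim t_i\|\delta^{n-2}\|_{L^{n/(n-2)}}\|v\|_{L^{n}}^2=O(t_i)\|v\|_\sigma^2$, or simply to argue as the paper does, bounding all $v$-quadratic contributions by $O(\|v\|_\sigma^2\|\partial_{P_i}\delta_{P_i,t_i}\|_\sigma)$ and invoking \eqref{P1P1} to get $\|\partial_{P_i}\delta_{P_i,t_i}\|_\sigma=O(t_i)=O(\tau^{-1/2})$; either way the claimed error $O(\tau^{-1/2}\|v\|_\sigma^2)$ is recovered and the rest of your argument stands.
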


\begin{proof}
Using \eqref{1.26} and Lemma \ref{cplema1}, we have
\begin{align}\label{1.33}
&\frac{\partial}{\partial P_{i}}
I_{\tau}\Big(\sum_{j=1}^{k} \alpha_{j} \delta_{P_{j}, t_{j}}+v\Big)\notag\\
=&\Gamma(n-1)\sum_{j\ne i}
\alp_i\alp_j\frac{\partial}{\partial P_i}\int_{\mathbb{S}^n}
\delta_{P_{j},t_{j}}^{n-1}\delta_{P_i,t_i}\notag\\
&-\Gamma(n-1) \int_{\mathbb{S}^n}K\Big|\sum_{j=1}^{k}
\alp_j\delta_{P_j,t_j}+v\Big|^{n-2-\tau}\Big( \sum_{j=1}^{k}\alp_j\delta_{P_j,t_j}+v\Big)
\alp_i\frac{\partial\delta_{P_i, t_i}}{\partial P_{i}}\notag\\
=&\Gamma(n-1)\sum_{j\ne i}
\alp_i\alp_j\frac{\partial}{\partial P_i}\int_{\mathbb{S}^n}
\delta_{P_{j},t_{j}}^{n-1}\delta_{P_i,t_i}\notag\\
&-\Gamma(n-1)\int_{\mathbb{S}^n}K\Big(\sum_{j=1}^{k}\alp_{j}\delta_{P_j,t_j}\Big)^{n-1-\tau}
\alp_{i}\frac{\partial \delta_{P_i,t_i}}{\partial P_i}\notag\\
&-\Gamma(n-1)(n-1-\tau)\int_{\mathbb{S}^n}
K\Big(\sum_{j=1}^{k}\alpha_{j}\delta_{P_{j},t_{j}}\Big)^{n-2-\tau}v
\alpha_{i}\frac{\partial \delta_{P_i,t_i}}{\partial P_i}\notag\\
&+O\Big( \|v\|_{\sigma}^{2}
\Big\|\frac{\partial \delta_{P_i,t_i}}{\partial P_i}\Big\|_{\sigma}\Big).
\end{align}

It follows from Lemma \ref{lem3} that
\begin{align*}
\Big( \sum_{j=1}^{k} \alpha_{j}\delta_{P_j,t_j}\Big)^{n-2-\tau}
=&\Big(\alpha_{i}\delta_{P_i,t_i} +\sum_{j\ne i}\alpha_{j}\delta_{P_j,t_j}\Big)^{n-2-\tau}\\
=&(\alpha_{i}\delta_{P_{i},t_{i}})^{n-2-\tau}
+O\Big( \sum_{j\ne i}\delta_{P_{i},t_{i}}^{n-3-\tau}
\delta_{P_j,t_j}+\sum_{j\ne i}\delta_{P_j,t_j}^{n-2-\tau}\Big).
\end{align*}
By \eqref{n.5}, \eqref{P1P1}, \eqref{1-tau}, \eqref{n.1}, and \eqref{a.9}, we have
\begin{align*}
&\int_{\sn} K\Big(\sum_{j=1}^{k}\alpha_{j}\delta_{P_j,t_{j}}\Big)^{n-2-\tau}
v\alpha_{i}\frac{\partial \delta_{P_i,t_i}}{\partial P_i}\\
=&\int_{\sn}K(\alpha_{j}\delta_{P_j,t_{j}})^{n-2-\tau}
v\alpha_{i}\frac{\partial \delta_{P_i,t_i}}{\partial P_i}
+O\Big( \sum_{j\ne i}\int_{\sn} \delta_{P_{i},t_i}^{n-3-\tau}\delta_{P_j,t_j}
\Big| \frac{\partial \delta_{P_i,t_i}}{\partial P_{i}}\Big||v|\Big)\\
&+O\Big(\sum_{j\ne i}
\int_{\sn} \delta_{P_j,t_j}^{n-2-\tau}
\Big| \frac{\partial \delta_{P_i,t_i}}{\partial P_{i}}\Big||v|
\Big)\\
=&
\int_{\sn}K(P_{i})(\alpha_{j}\delta_{P_j,t_{j}})^{n-2-\tau}
v\alpha_{i}\frac{\partial \delta_{P_i,t_i}}{\partial P_i}
+O\Big(\int_{\sn} |P-P_{i}|\delta_{P_{i},t_{i}}^{n-2-\tau}
\Big|\frac{\partial \delta_{P_i,t_i}}{\partial P_i}\Big||v|\Big)
\\
&+O(\tau^{1/2}\|v\|_{\sigma})+O(\tau\|v\|_{\sigma})\\
=&O\Big(\int_{\sn}|\delta_{P_{i},t_{i}}^{n-2-\tau}-\delta_{P_{i},t_{i}}^{n-2}|
\Big|\frac{\partial \delta_{P_i,t_i}}{\partial P_i}\Big| |v|\Big)+O(\|v\|_{\sigma})\\
=&O(\|v\|_{\sigma}).
\end{align*}
From Lemma \ref{lem3}, we have
$$
\begin{aligned}
&\Big(\sum_{j=1}^{k}\alp_{j}\delta_{P_j,t_j}\Big)^{n-1-\tau}\\
=&\Big(\alpha_{i}\delta_{P_{i},t_{i}}+\sum_{j\ne i}\alpha_{j}\delta_{P_{j},t_{j}}\Big)^{n-1-\tau}\\
=&(\alpha_{i}\delta_{P_{i},t_{i}})^{n-1-\tau}
+\Big(\sum_{j\ne i}\alpha_{j}\delta_{P_{j},t_j}\Big)^{n-1-\tau}
+(n-1-\tau)\alpha_{i}\delta_{P_{i},t_{i}}^{n-2-\tau}\Big(\sum_{j\ne i}\alpha_{j}\delta_{P_{j},t_{j}}\Big)\\
&+O\Big(\sum_{j\ne i}\delta_{P_{i},t_{i}}^{n-3-\tau}\delta_{P_{j}t_{j}}^{2} \Big),
\end{aligned}
$$
then, by using \eqref{P1P1} and \eqref{n.7}, \eqref{n.8}, \eqref{n.9}, we can obtain
\begin{align*}
&\frac{\partial}{\partial P_{i}}
I_{\tau}\Big(\sum_{j=1}^{k} \alpha_{j} \delta_{P_{j}, t_{j}}+v\Big)\\
=&\Gamma(n-1)\sum_{j\ne i}
\alp_i\alp_j\frac{\partial}{\partial P_i}\int_{\mathbb{S}^n}
\delta_{P_{j},t_{j}}^{n-1}\delta_{P_i,t_i}\\
&
-\Gamma(n-1)\alpha_{i}\int_{\mathbb{S}^n}K(\alpha_{i}\delta_{P_i,t_i})^{n-1-\tau}
\frac{\partial \delta_{P_i,t_i}}{\partial P_i}\\
&
-\Gamma(n-1)\alpha_{i}(n-1-\tau)\int_{\sn}K\Big( \sum_{j\ne i} \alpha_{j}\delta_{P_{j},t_{j}}\Big)
(\alpha_{i}\delta_{P_i,t_i})^{n-2-\tau}\frac{\partial\delta_{P_{i},t_{i}}}{\partial P_i}\\
&
-\Gamma(n-1)\alpha_{i}\int_{\sn}\Big(\sum_{j\ne i}\alpha_{j}\delta_{P_{j},t_j}\Big)^{n-1-\tau}
\frac{\partial\delta_{P_{i},t_{i}}}{\partial P_i}\\
&+O(\|v\|_{\sigma})+O(\tau^{-1/2}\|v\|_{\sigma}^2)+O(\tau^{3/2})\\
=&-\Gamma(n-1)\alpha_{i}^{n}\int_{\mathbb{S}^n}K\delta_{P_i,t_i}^{n-1-\tau}
\frac{\partial \delta_{P_i,t_i}}{\partial P_i}+O(\tau^{1/2})+O(\|v\|_{\sigma})+O(\tau^{-1/2}\|v\|_{\sigma}^2)\\
=&-\Theta_{4}(\tau,\alpha, t,P,v)\nabla K(P_{i})
+V_{P_{i}}(\tau,\alpha,t,P,v),
\end{align*}
where
\begin{align*}
\Theta_4(\tau,\alpha, t,P,v)\geq \nu_{1}>0 \quad\text{ with }\, \nu_1 \text{ independent  of }\, \tau,
\end{align*}
and
\begin{align}\label{1.34}
V_{P_i}(\tau,\alpha,t,P,v)=O(\tau^{1/2})
+O(\|v\|_{\sigma})
+O( \tau^{-1/2}\|v\|_{\sigma}^{2}).
\end{align}

We now prove that the existence of $\nu_{1}.$
Let $P_{i}$ be the south pole and make a stereographic projection $F$ to the
equatorial plane of $\mathbb{S}^n$ with $y=(y_{(1)},\cdots ,y_{(n)})$
as the stereographic projection coordinates,
let $\widetilde{K}=K(F(y))$
and  $|J_{F}|:=(2/(1+|y|^{2}))^{n}.$
Then  we have $F(0)=P_{i}$ and
\begin{align*}
&\int_{\mathbb{S}^n}K\delta_{P_i,t_i}^{n-1-\tau}\frac{\partial \delta_{P_{i},t_{i}}}{\partial P_{i}}\\
=&\int_{\mathbb{R}^{n}}
\omega_{y_i,t_i}^{n-1}(\nabla\wdt{K}(0)\cdot y+O(|y|^2))
g_{\tau}(y)
\frac{\partial \omega_{y_{i},t_{i}}}{\partial y_{i}}\\
=&:\mathcal{L}=(\mathcal{L}^{(1)},\cdots,\mathcal{L}^{(n)}),
\end{align*}
where $\omega_{y_i,t_i}(y)=\frac{2t_i}{1+t_{i}^2|y|^2},$
and
$
g_{\tau}(y):=(\omega_{y_1,0}^{-1}|J_{F}|^{1/n})^{\tau}.
$
For $\ell =1,\cdots, n,$ we have
\begin{align*}
\mathcal{L}^{(\ell)}=&\int_{\mathbb{R}^{n}}
\omega_{y_i,t_i}^{n-1}(\nabla\wdt{K}(0)\cdot y+O(|y|^2))
g_{\tau}(y)\frac{\partial \omega_{y_{i},t_{i}}}{\partial y_{i}}
\\
=&\int_{\mathbb{R}^{n}}t_{i}y_{(\ell)}\omega_{y_i,t_i}^{n+1}
(\nabla \wdt{K}(0)+O(|y|^2))g_{\tau}(y)\\
=&\frac{1}{n}\frac{\partial \wdt{K}}{\partial y_{(\ell)}}(0)
\int_{\mathbb{R}^n}t_{i}|y|^2
\omega_{y_i,t_i}^{n+1}g_{\tau}(y)+O(\tau^{1/2}),
\end{align*}
thus,
$$
\mathcal{L}=\nabla_{g_{0}}K(P_{i})\frac{2}{n}
\int_{\mathbb{S}^n}t_{i}|y|^2
\omega_{y_i,t_i}^{n+1}g_{\tau}(y)+O(\tau^{1/2}).
$$
It follows from $t_{i}^{-\tau}\leq g_{\tau}(y)\leq t_{i}^{\tau}$ that
$$
\int_{\mathbb{S}^n}t_{i}|y|^2
\omega_{y_i,t_i}^{n+1}g_{\tau}(y)\geq t_{i}^{-\tau}
\int_{\mathbb{S}^n}t_{i}|y|^2
\omega_{y_i,t_i}^{n+1}\rightarrow
\int_{\mathbb{R}^{n}} \frac{2^{n+1}}{(1+|x|^2)^{n}}
$$
as $\tau\rightarrow 0.$ This ensures the existence of $\nu_{1}.$
 We have proved Proposition \ref{lem7}.
\end{proof}

By using Propositions \ref{prop1}, \ref{lem5}, \ref{lem6}, \ref{lem7},
and constructing a family of homotopy Id+compact operators,
 we will obtain the degree-counting formula of the solutions to the subcritical equation \eqref{subequ} on
$\Sigma_{\tau}(\overline {P}_{1},\cdots,\overline{P}_{k}).$

\begin{proof}[Proof of Theorem \ref{thm3}]

The $\mathscr{K}^{-}$ be as in \eqref{1.2} for the given $K$ and
 $\Sigma_{\tau}(\overline{P}_{1},\cdots,\overline{P}_{k})$ be as in \eqref{stau}
for the given $\overline{P}_{1},\cdots,\overline{P}_{k}\in \mathscr{K}^{-}.$

For $u=\sum_{i=1}^{k}\alp_{i}\delta_{P_i,t_i}+v\in \Sigma_{\tau}(\overline{P}_{1},
\cdots,\overline{P}_{k}),$  we have
$$
T_{u}H^{\sigma}(\mathbb{S}^n)=E_{P,t}\bigoplus
 \mathrm{span}\{\delta_{P_i,t_i},\frac{\partial \delta_{P_i,t_i}}{\partial t_i},
 \frac{\partial \delta_{P_i,t_i}}{\partial P_i} \}.
$$
Since $I'_{\tau}(u)\in T_{u}H^{\sigma}(\mathbb{S}^n),$
there exist $\xi \in E_{P,t},$ $\eta\in \mathrm{span}
\big\{\delta_{P_i,t_i},\frac{\partial \delta_{P_i,t_i}}{\partial t_i},
 \frac{\partial \delta_{P_i,t_i}}{\partial P_i} \big\} $ such that
$$
I'_{\tau}(u)=\xi+\eta.
$$
By Lemma \ref{lem4.2}, we have
\begin{align}\label{1.95}
\langle \xi, \varphi\rangle=I'_{\tau}(u)\varphi=f_{\tau}(\varphi)+2Q_{\tau}(v,\varphi)+
\langle V_{v}(\tau,\alpha,t,P,v),\varphi\rangle,\quad \forall\, \varphi \in E_{P,t},
\end{align}
where $\|V_{v}(\tau,\alpha,t,P,v)\|_{\sigma}\leq C\|v\|_{\sigma}^{2}.$
Replacing  $\varphi$  by $v$ in \eqref{1.95} and using \eqref{1.68}, we have
\be\label{4.9}
\|\xi\|_{\sigma}\geq \delta_{0}\|v\|_{\sigma}-
\|{f}_{\tau}\|-O(\|v\|_{\sigma}^{2})
\geq \frac{\delta_{0}}{2}\|v\|_{\sigma}-\|f_{\tau}\|,
\ee
where $\delta_{0}$ is as in \eqref{1.68}.
Let $\beta=(\beta_{1},\cdots,\beta_{k}),$
$\beta_{i}=\alpha_{i}-K(P_{i})^{-{1}/{2\sigma}}$ be as in Proposition \ref{lem5}, we define
$$
\widehat{\Sigma}_{\tau}=
\Big\{ u=\sum_{i=1}^{k}\alpha_{i}\delta_{P_i,t_i}+v
\in \Sigma_{\tau}(\overline{P}_{1},\cdots,\overline{P}_{k}):\|v\|_{\sigma}<\tau|\log\tau|^3,\,
|\beta|<\tau|\log\tau|^2\Big\}.
$$
It follows from Proposition \ref{prop1} and \eqref{1.18} that
  $$
  I'_{\tau}(u)\ne 0, \quad\forall\, u\in \Sigma_{\tau}(\overline{P}_{1},\cdots,\overline{P}_{k})
 \backslash \widehat{\Sigma}_{\tau}.
 $$

For any $u=\sum_{i=1}^{k}\alpha_{i}\delta_{P_i,t_i}+v\in \widehat{\Sigma}_{\tau},$
by \eqref{1.26} and Proposition \ref{lem5}, we have
$$
\begin{aligned}
\langle  \eta, \delta_{P_i,t_i}\rangle
 =&I_{\tau}'(u)\delta_{P_i,t_i}\\
 =&
 \Gamma(n-1)\Big( \alpha_{i}\int_{\sn}\delta_{P_{i}, t_{i}}^n
+\sum_{j=1}^{k} \alpha_{j}
\int_{\mathbb{S}^{n}}\delta_{P_{i}, t_{i}}^{n-1} \delta_{P_{j}, t_{j}}
\Big)\\
&
-\Gamma(n-1)\int_{\mathbb{S}^n}K
\Big|\sum_{i=1}^{k}\alpha_i\delta_{P_i,t_i}+v\Big|^{n-2-\tau}
\Big(\sum_{j=1}^{k}\alpha_j\delta_{P_j,t_j}+v\Big)\delta_{P_{i},t_{i}}\\
=&
\frac{\partial }{\partial \alpha_{i}}I_{\tau}\Big(\sum_{j=1}^{k}\alpha_{j}\delta_{P_j,t_j}+v\Big)\\
=&-2\sigma\|\delta_{P_{i},t_{i}}\|_{\sigma}^{2}\beta_i+V_{\alpha_i}(\tau,\alpha,t,P,v),
\end{aligned}
$$
 and
\be\label{4.6}
V_{\alpha_{i}}(\tau,\alp,t,P,v)= O(\tau|\log \tau|).
\ee

It follows from \eqref{1.31} and \eqref{1.32} that
$$
\begin{aligned}
\Big\langle \eta, \frac{\partial \delta_{P_i,t_i}}{\partial t_i}\Big\rangle
=&I'_{\tau}(u)\,\frac{\partial \delta_{P_i,t_i}}{\partial t_i}\\
=&\frac{1}{\alpha_{i}}
\frac{\partial }{\partial t_i}
I_{\tau}\Big(\sum_{j=1}^{k}\alpha_{j}\delta_{P_j,t_j}+v\Big)\\
=&\frac{1}{\alpha_{i}}
\Big\{
\Theta_{1}\frac{1}{K(P_{i})^{{1}/{\sigma}}}\frac{\tau}{t_{i}}
+\Theta_{2}\frac{\Delta_{g_{0}}K(P_i)}{K(P_{i})^{{n}/{2\sigma}}}\frac{1}{t_{i}^{3}}
 \Big.\\
&\phantom{=\;\;}\Big.
+\Theta_3\sum_{j\ne i} \frac{G_{P_{i}}(P_{j})}{(K(P_{i})K(P_{j}))^{{1}/{2\sigma}}}
\frac{1}{t_i^2t_j}+V_{t_{i}}(\tau,\alpha,t,P,v)
\Big\},
\end{aligned}
$$
where
\be\label{4.7}
|V_{t_i}(\tau, \alpha, t,P,v)|=o(\tau^{3/2}).
\ee

By  \eqref{1.33} and \eqref{1.34}, we obtain
$$
\begin{aligned}
\Big\langle \eta, \frac{\partial \delta_{P_i,t_i}}{\partial P_{i}}\Big\rangle
=&I_{\tau}'(u)\frac{\partial \delta_{P_i,t_i}}{\partial P_{i}}\\
=&\frac{1}{\alpha_{i}}
\frac{\partial }{\partial P_i}
I_{\tau}\Big(\sum_{i=1}^{k} \alpha_{i}\delta_{P_i,t_i}+v\Big)\\
=&\frac{1}{\alpha_{i}}
\{
-\Theta_{4}\nabla_{g_0}K(P_{i})
+V_{P_i}(\tau, \alpha, t,P, v)
\},
\end{aligned}
$$
with $V_{P_i}$ satisfying
\be\label{4.8}
|V_{P_i}(\tau, \alpha, t, P,v)|=O (\tau^{1/2}).
\ee

Using the estimates  stated above, we define a family of
operators on $\widehat{\Sigma}_{\tau}$ as follows:
for any $u=\sum_{i=1}^{k}\alpha_{i}\delta_{P_i,t_i}+v\in \widehat{\Sigma}_{\tau}$,
$$
X_{\theta}(u):=\xi_{\theta}(u)+\eta_{\theta}(u),\quad 0\leq\theta\leq 1,
$$
where, for any $\varphi\in E_{P,t},$
\be\label{xi}
\langle
\xi_{\theta},\varphi
\rangle:=
\theta f_{\tau}(\varphi)+(1-\theta)\langle v,\phi\rangle
+2\theta Q_{\tau}(\varphi, v)
+\theta\langle V_{v}(\tau,\alpha, t,P,v),\varphi \rangle,
\ee
and
\begin{align}\label{eta}
\begin{aligned}
\langle \eta_{\theta}, \delta_{P_{i},t_i}  \rangle
:=&-2\sigma\|\delta_{P_{i},t_{i}}\|_{\sigma}^{2}
\Big\{ \alpha_{i}-\frac{\theta}{K(P_{i})^{{1}/{2\sigma}}}
-\frac{1-\theta}{K(\overline{P}_{i})^{{1}/{2\sigma}}} \Big\}\\
&+\theta V_{\alpha_{i}}(\tau, \alpha,t,P,v),\\
\Big\langle \eta_{\theta}, \frac{\partial \delta_{P_i,t_i}}{\partial t_i}  \Big\rangle
:=&\Big\{
\frac{\theta}{\alpha_{i}}+(1-\theta)
\Big\}
\Big\{
\frac{\Theta_{1}}{K(P_i(\theta))^{{1}/{\sigma}}}\frac{\tau}{t_{i}}
+\frac{\Theta_{2}\Delta_{g_0}K(P_{i}(\theta))}{K(P_i(\theta))^{{n}/{2\sigma}}}
\frac{1}{t_i^3} \Big.\\
&
+\sum_{j\ne i}\frac{\Theta_{3}G_{P_{i}(\theta)}(P_{j}(\theta))}{(K(P_i(\theta))K(P_j(\theta)))^{{1}/{2\sigma}}}
\frac{1}{t_i^2t_j}
\Big\}+\frac{\theta}{\alpha_{i}}V_{t_i}(\tau,\alpha, t,P,v),\\
\Big\langle
\eta_{\theta}, \frac{\partial \delta_{P_{i}, t_{i}}}{\partial P_{i}}
\Big\rangle
:=&-\Big\{
(1-\theta)+\frac{\theta}{\alpha_{i}}\Theta_{4}
\Big\}
\nabla_{g_{0}}K(P_{i})+\frac{\theta}{\alpha_{i}}
V_{P_{i}}(\tau, \alpha, t,P, v),
\end{aligned}
\end{align}
where $P_{i}(\theta)$ is the short geodesic trajectory on $\mathbb{S}^n$ with $
P_{i}(0)=\overline{P}_{i},$ $P_{i}(1)=P_{i}.$

Obviously, $X_{1}=I_{\tau}'(u)=\xi+\eta.$
 It is well known from \eqref{fun} that $I_{\tau}'(u)$ is of
  the form Id+compact on $H^{\sigma}(\mathbb{S}^{n})$.
  From Sobolev compact imbedding theorem, the explicit
  forms of $V_{v}, V_{\alpha_{i}}, V_{t_{i}},
 V_{P_{i}},$ $A^{-2}<t_{i}^{2}\tau<A^{2},$ \eqref{4.6}, \eqref{4.7}, \eqref{4.8},
 and $\Omega_{\varepsilon_{0}/2}$ in the definition
 of $\widehat{\Sigma}_{\tau}$ is a finite dimensional submanifold of $H^{\sigma}(\mathbb{S}^n),$
we can conclude  that $X_{\theta}$ $(0\leq \theta\leq 1)$ is the form Id+compact.
Furthermore, we have $X_{\theta}\ne 0 $  on $\partial \widehat{\Sigma}_{\tau},$
$\forall\, 0\leq \theta\leq 1.$
In fact, for a given $u=\sum_{i=1}^{k} \alpha_{i}
\delta_{P_{i},t_{i}}+v\in \partial \widehat{\Sigma}_{\tau},$
 we obtain $\xi\ne0$ by using \eqref{4.9} and \eqref{2.2}.
When $\theta=0,$ $\xi_{0}=v\ne 0.$
It follows from  \eqref{xi} that $\xi_{\theta}\ne 0,$ $\forall \, 0<\theta<1.$
 By the homotopy invariance of the Leray-Schauder degree, we have
\be\label{1.36}
 \deg_{H^{\sigma }}(X_{1},\widehat{\Sigma}_{\tau}, 0)=
 \deg_{H^{\sigma}}(X_{0},\widehat{\Sigma}_{\tau},0).
 \ee
It is  easily seen from    \eqref{xi} and \eqref{eta}  that for any
$u=\sum_{i=1}^{k}\alpha_{i}\delta_{P_{i},t_{i}}+v\in \widehat{\Sigma}_{\tau},$
$$
X_{0}(u)=\xi_{0}(u)+\eta_{0}(u),
$$
where $\xi_{0}\in E_{P,t}$, $\eta_{0}\in \mathrm{span}\{\delta_{P_{i},t_i},\frac{\partial \delta_{P_{i},t_{i}}}{\partial t_{i}},\frac{\partial \delta_{P_i,t_{i}}}{\partial P_{i}}\}$ satisfy
\be\label{x0}
\begin{aligned}
\langle\xi_{0},\varphi\rangle
=&
\langle v,\varphi\rangle,
\\
\langle \eta_{0}, \delta_{P_{i},t_i}  \rangle
=&
-\beta_i2\sigma\|\delta_{P_{i},t_{i}}\|_{\sigma}^{2}
(\alpha_{i}-K(\overline{P}_{i})^{-{1}/{2\sigma}}),\\
 \Big\langle \eta_{0}, \frac{\partial \delta_{P_i,t_i}}{\partial t_i}  \Big\rangle
=&
\frac{\Theta_{1}}{K(\overline{P}_{i})^{{1}/{\sigma}}}\frac{\tau}{t_{i}}
+\frac{\Theta_{2}\Delta_{g_0}
K(\overline{P}_{i})}{K(\overline{P}_i)^{{n}/{2\sigma}}}\frac{1}{t_i^3}
+\sum_{j\ne i}\frac{\Theta_{3}G_{\overline{P}_{i}}
(\overline{P}_{j})}{(K(\overline{P}_i)K(\overline{P}_j))^{{1}/{2\sigma}}}
\frac{1}{t_i^2t_j}
,\\
\Big\langle
\eta_{0}, \frac{\partial \delta_{P_{i}, t_{i}}}{\partial P_{i}}
\Big\rangle
=&-\nabla_{g_{0}}K(P_{i}).
\end{aligned}
\ee
 Recalling the definition of $M(\overline{P}_{1},\cdots,\overline{P}_{k}).$
 From the above, we can easily get
$$
X_{0}(u)=0\quad \text{ on }\, \widehat{\Sigma}_{\tau},
$$
if and only if
\be\label{2.0003}
\begin{aligned}
&\alpha_{i}=K(\overline{P}_{i})^{-{1}/{2\sigma}}, \quad P_{i}=\overline{P}_{i},\quad v=0,\\
&\frac{\sigma}{4K(\overline{P}_{i})^{{1}/{\sigma}}}\frac{\tau}{t_{i}}
-\sum_{j=1}^{k}M_{ij}(\overline{P}_{1},\cdots,\overline{P}_{k}) \frac{1}{t_{i}^2t_{j}}=0.
\end{aligned}
\ee
For any $(s_1,\cdots, s_{k})\in \mathbb{R}^{k},$ $s_{i}>0,$ $i=1,\cdots, k,$ we define
$$
F(s_{1},\cdots,s_{k}):=-\frac{\sigma\tau}{4}
\sum_{j=1}^{k}\frac{1}{K(\overline{P}_{j})^{{1}/{\sigma}}}\log s_{j}
+\frac{1}{2}\sum_{i,j=1}^{k}M_{ij}(\overline{P}_{1},\cdots,\overline{P}_{k})
s_{i}s_{j},
$$
and for  $t_{i}=s_{i}^{-1},$
$$
\widehat{F}(t_1,\cdots,t_{k}):=F(s_{1},\cdots,s_{k}).
$$

The derivative with respect to $t_{i}$ is
$$
\frac{\partial \widehat{F}}{\partial t_{i}}
(t_{1},\cdots,t_{k})=\frac{\sigma\tau}{4K(\overline{P}_{i})^{{1}/{\sigma}}}
\frac{\tau}{t_{i}}-\sum_{j=1}^{k}M_{ij}(\overline{P}_{1},\cdots,\overline{P}_{k})\frac{1}{t_{i}^{2}t_{j}},
$$
combining this and \eqref{x0}, we have
$$
\Big\langle
\eta_{0}, \frac{\partial \delta_{P_i,t_i}}{\partial t_{i}}
\Big\rangle=\frac{\partial \widehat{F}}{\partial t_{i}}(t_{1},\cdots, t_{k}).
$$

It is obvious that $\nabla\widehat{F}(t_{1},\cdots, t_{k})=0$ if and only if $\nabla F(s_{1},\cdots,s_{k})=0$.
Since $\mu(M(\overline{P_{1}},\cdots,\overline{P}_{k}))>0,$
a trivial verification shows that $F(s_{1},\cdots,s_{k})$ is a strictly convex function, and having a unique critical
point in the first quadrant.
It follows that $\widehat{F}(t_1,\cdots,t_k)$ has unique critical point in the first quadrant with Morse index
zero. Hence $X_{0}$ has precisely one non-degenerate zero in $\widehat{\Sigma}_{\tau}$.
Furthermore, by \eqref{2.0003} we can easily obtain
  \be\label{1.35}
  \deg_{H^{\sigma}}(X_{0},\widehat{\Sigma}_{\tau},0)
  =(-1)^{k+\sum_{i=1}^{k}i(\overline{P}_{i})}.
  \ee
Combining \eqref{1.35} and \eqref{1.36},  we complete the
proof of Theorem \ref{thm3}.
\end{proof}

Recall the  definition of $\mathscr{O}_{R}$ in \eqref{1.82}. For $\delta>0$ suitably small, define
\be\label{1.85}
\mathscr{O}_{R,\delta}:=\{u\in H^{\sigma}(\mathbb{S}^{n}):
\inf_{\omega\in \mathscr{O}_{R}}\|u-\omega\|_{\sigma}<\delta\}.
\ee

\begin{proposition}\label{prop444}
Let $\sigma=1+m/2,$ $m\in \mathbb{N}_{+},$ and $n=2\sigma+2.$ Let $K\in \mathscr{A}$  be a Morse function and
$0<\tau_{0}\leq \tau \leq4/(n-2\sigma)-\tau_{0}.$
Then there exists some constants $C_{0}>0,$ $\delta_{0}>0$
depending only on $\tau_{0}$ and $K,$
such that
\be\label{0.21}
\{ u\in H^{\sigma}(\mathbb{S}^n):u>0~~  a.e.,\,I'_{\tau}(u)=0
\}\subset \mathscr{O}_{C_{0}, \delta_{0}}.
\ee
 Furthermore, we have $I_{\tau}'(u)\ne 0$ on $\partial \mathscr{O}_{C_0,\delta_{0}}$ and
\be\label{1.37}
\deg_{H^{\sigma} }(u-P_{\sigma}^{-1}(\Gamma(n-1)K|u|^{\frac{4\sigma}{n-2\sigma}-\tau}u),
\mathscr{O}_{C_{0},\delta_{0}}, 0)=-1.
\ee
\end{proposition}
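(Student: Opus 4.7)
The argument divides naturally into three parts: an a priori bound establishing the inclusion \eqref{0.21}, a non-vanishing statement on the boundary $\partial\mathscr{O}_{C_0,\delta_0}$, and the actual degree calculation \eqref{1.37}.

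\textbf{A priori bounds.} I first show there exists $C_0 = C_0(\tau_0, K)$ such that every positive $H^{\sigma}$-solution of $I'_{\tau}(u) = 0$, $\tau \in [\tau_0, 2-\tau_0]$, lies in $\mathscr{O}_{C_0/2}$. Suppose, toward a contradiction, $u_i > 0$ solves the equation with $\tau_i \in [\tau_0, 2-\tau_0]$ and $\max_{\sn} u_i \to \infty$. Translating to $\mathbb{R}^n$ via stereographic projection as in the proof of Theorem~\ref{thm1} and rescaling at a local maximum, the standard blow-up analysis (cf.\ Appendix~\ref{sec2}, in particular Propositions~\ref{prop3.1}, \ref{prop4}, \ref{prop7}) produces a nontrivial nonnegative limit $v$ on $\mathbb{R}^n$ satisfying the integral equation $v(x) = \int_{\R^n} K_{\infty}(y) v(y)^{p_{\infty}} |x-y|^{-(n-2\sigma)}\, \ud y$, with $p_\infty \le \frac{n+2\sigma}{n-2\sigma} - \tau_0$ strictly subcritical. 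The Liouville theorem for subcritical integral equations (Chen--Li--Ou, see Jin--Li--Xiong \cite{jlxm}) rules out such a $v$, a contradiction. Harnack inequality gives a uniform positive lower bound, and Schauder-type estimates in Appendix~\ref{sec2} upgrade this to a uniform $C^{2\sigma,\alpha}$ bound. Hence all positive solutions sit in $\mathscr{O}_{C_0/2}$ for a constant $C_0$ depending only on $\tau_0$ and $K$.

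\textbf{Non-vanishing on the boundary.} Set $F_\tau(u) = u - P_{\sigma}^{-1}(\Gamma(n-1) K |u|^{2\sigma-\tau} u)$. I claim that for $\delta_0>0$ small enough (depending on $C_0$), any zero $u$ of $F_\tau$ in $\mathscr{O}_{C_0, \delta_0}$ must in fact be positive and lie in $\mathscr{O}_{C_0/2}$. The key observation is a trichotomy: by bootstrap regularity any $H^\sigma$ solution is classical $C^{2\sigma,\alpha}$; since the Green kernel in \eqref{2.27} is strictly positive and $K>0$, the strong maximum principle forces $u\equiv 0$, $u>0$ everywhere, or $u<0$ everywhere. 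Every $\omega \in \mathscr{O}_{C_0}$ satisfies $\omega > 1/C_0$, so the $H^\sigma$-distance from the zero function (or from any strictly negative function $u$ with $u<-1/C_0$ on $\sn$) to $\mathscr{O}_{C_0}$ is bounded below by a constant $c_0 > 0$ via the continuous embedding $H^\sigma \hookrightarrow L^2(\sn)$. Choosing $\delta_0 < c_0/2$ eliminates the nonpositive alternatives, so any solution in $\mathscr{O}_{C_0,\delta_0}$ is positive and, by Part~1, lies in $\mathscr{O}_{C_0/2}$, which is a strict interior subset. In particular $F_\tau\neq 0$ on $\partial\mathscr{O}_{C_0,\delta_0}$.

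\textbf{Degree calculation.} Consider the $C^2$-homotopy $K_s = (1-s) + s K$, $s\in[0,1]$, of positive functions. The two previous steps apply uniformly along the family $\{K_s\}$ (since $\inf_{s,\sn} K_s >0$ and $\sup_s \|K_s\|_{C^2}<\infty$), giving common constants $C_0,\delta_0$ and $F_{\tau, s}\neq 0$ on $\partial \mathscr{O}_{C_0,\delta_0}$. Homotopy invariance of the Leray--Schauder degree reduces the problem to $K\equiv 1$, for which $P_\sigma(1)=\Gamma(n-1)$ gives $u\equiv 1$ as a solution. Uniqueness of the positive $C^{2\sigma,\alpha}$ solution for the constant-coefficient strictly subcritical problem follows from the method of moving spheres applied to the pull-back integral equation \eqref{1.90} on $\R^n$, which forces the solution to be a standard bubble and, after pull-back, a constant; the equation then identifies that constant as $1$. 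The linearization reads $F'_{\tau,0}(1)v = v - (n-1-\tau)P_\sigma^{-1}v$. On the spherical harmonic decomposition with $P_\sigma$-eigenvalues $\lambda_k = \Gamma(k+\tfrac{n}{2}+\sigma)/\Gamma(k+\tfrac{n}{2}-\sigma)$, one has $\lambda_0=\Gamma(n-1)$ and $\lambda_1/\lambda_0 = \tfrac{n+2\sigma}{n-2\sigma} = n-1$, so the $k=0$ eigenvalue of $F'_{\tau,0}(1)$ equals $\tau-(n-2) < 0$ (using $\tau<2\le n-2$), the $k=1$ eigenvalue equals $\tau/(n-1) > 0$, and $k\geq 2$ eigenvalues are even larger. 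The Morse index is therefore $1$, the critical point is nondegenerate, and the degree equals $(-1)^1=-1$.

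\textbf{Main obstacle.} The delicate step is the uniqueness claim at $K\equiv 1$: the moving-sphere argument for the integral equation must be carried out carefully in the fractional setting, and the associated lack of a pointwise maximum principle for non-integer $\sigma$ is why one prefers the integral formulation \eqref{2.27}. The second, somewhat technical, point is the clean separation in Part~2, which hinges on the strict positivity of the Green's kernel of $P_\sigma$ so that the maximum principle trichotomy is available.
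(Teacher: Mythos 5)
Your Part 1 is fine and is essentially what the paper does for \eqref{0.21}: for $\tau$ bounded away from $0$ and $2-\tau_0$ the blow-up analysis of Appendix \ref{sec2} forbids unbounded sequences, and Harnack plus Schauder estimates give the uniform $C^{2\sigma,\alpha}$ bounds. The real problem is Part 3, which is where your route diverges from the paper and where there is a genuine gap. Your entire degree computation rests on the claim that for $K\equiv 1$ the subcritical problem $P_\sigma u=\Gamma(n-1)u^{\,n-1-\tau}$ has $u\equiv 1$ as its \emph{unique} positive solution, justified by ``the method of moving spheres.'' Moving spheres classifies solutions of the conformally invariant (critical) equation; after pulling the subcritical equation back to $\mathbb{R}^n$ the integrand carries the weight $H(y)^{\tau}$ (cf.\ the equation below \eqref{1.101}), conformal invariance is destroyed, and no bubble classification applies. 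Even in the local case $\sigma=1$ the uniqueness of the constant for subcritical exponents on $\mathbb{S}^n$ is the Bidaut-V\'eron--V\'eron theorem, proved by entirely different (Bochner/Obata-type) arguments, and no analogue is available for $P_\sigma$ with $\sigma=1+m/2$. Without uniqueness your nondegeneracy/Morse-index computation at $u\equiv 1$ (which is itself correct) says nothing about the \emph{total} degree over $\mathscr{O}_{C_0,\delta_0}$. The paper avoids exactly this obstacle: instead of homotoping $K$ to a constant, it homotopes to $K^{*}(x)=x_{(n+1)}+2$ and simultaneously sends $\tau$ to small values; the Kazdan--Warner obstruction shows the critical equation for $K^{*}$ has no solution, so for small $\tau$ there are no bounded solutions, Proposition \ref{prop3} localizes all solutions in the single set $\Sigma_\tau$ attached to the unique point of $\mathscr{K}^{-}(K^{*})$, and Theorem \ref{thm3} computes that contribution. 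Your proposal has no substitute for this mechanism.

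A secondary but also real flaw is the ``trichotomy'' in Part 2. Since the nonlinearity is $|u|^{2\sigma-\tau}u$, it changes sign with $u$, so positivity of the Green kernel of $P_\sigma$ does \emph{not} force a solution of the integral equation to be one-signed; sign-changing solutions of the subcritical problem exist in abundance, and your invocation of a ``strong maximum principle'' here is simply false. What must be ruled out are sign-changing zeros of $F_\tau$ lying within $H^\sigma$-distance $\delta_0$ of $\mathscr{O}_{C_0}$, and this needs a quantitative argument (for instance: evaluating the integral equation at a negative minimum gives $\|u^-\|_{L^\infty}\le C\|u^-\|_{L^\infty}^{\,p}$, hence a uniform lower bound $\|u^-\|_{L^\infty}\ge c_0>0$ whenever $u^-\not\equiv 0$, which together with the uniform H\"older bounds contradicts $L^2$-closeness to functions bounded below by $1/C_0$ once $\delta_0$ is small). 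So the boundary non-vanishing can be saved, but not by the reasoning you gave; the uniqueness claim at $K\equiv 1$, by contrast, is the step that would make the proof collapse.
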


\begin{proof}
From  Proposition \ref{prop3},  we know that for $\tau>0$ small there exists some
   suitable value of $\nu_{0}, A, R$ such that
  $u$ satisfying $u\in H^{\sigma}(\mathbb{S}^n)$, $u>0$, a.e., $I'_{\tau}(u)=0$ are
 either in $\mathscr{O}_{R}$ or in
 some $\Sigma_{\tau}(q^{(1)},\cdots, q^{(k)}).$
 Combining \eqref{stau}, \eqref{1.12},  \eqref{a.2}, and \eqref{p1p1},
    we conclude that there exists some
positive constants
$C_{0}$ and $\delta_{0}$ such that
  \eqref{0.21} holds.

For $K^{*}(x)=x_{(n+1)}+2,$ $x=(x_{(1)},\cdots,x_{(n+1)})
\in\mathbb{S}^n\subset\mathbb{R}^{n+1}$ and $t\in (0,1),$ we consider $K_{t}=tK+(1-t)K^{*}.$
By the homotopy invariance of the Leray-Schauder degree, we only need to establish
\eqref{1.37} for $K^*$ and $\tau$ very small.
It is easy to see that $K^{*}\in \mathscr{A}$ is a Morse function.
The proof of \eqref{1.37} is straightforward by  the Kazdan-Warner condition, Theorem \ref{thm3},
and a homotopy argument.
\end{proof}

\subsection{Proof of  Theorems \ref{thm4}, \ref{thm2} and \ref{thm5}}
Using Theorem \ref{thm3} and Proposition \ref{prop444}, we next prove Theorem \ref{thm4}.
\begin{proof}[Proof of Theorem \ref{thm4}]
The existence of $R_{0}$ can be easily obtained from Theorem \ref{thm2.1}.
For all $R\geq R_{0},$ using Theorem \ref{thm2.1}, Proposition \ref{prop3},  and
By the homotopy invariance of the Leray-Schauder degree, we have
\be\label{2.0004}
\begin{aligned}
 &\deg_{C^{2\sigma,\alpha}}(u-P_{\sigma}^{-1}(\Gamma(n-1)Ku^{n-1}), \mathscr{O}_{R}, 0)
\\
=&\deg_{C^{2\sigma,\alpha}}(u-P_{\sigma}^{-1}(\Gamma(n-1)K|u|^{2\sigma-\tau}u), \mathscr{O}_{R}, 0)
\end{aligned}
\ee
for $\tau>0$ sufficiently small.

Let $C_{0}\gg R,$ $0<\delta_{1}\ll \delta_{0},$
and  $\tau_{0}$ be given by  Proposition \ref{prop444}.
Using \eqref{1.37}, Proposition \ref{prop3}, \eqref{1.75}, \eqref{index}, and
the excision property of the degree, we have
\be\label{2.0005}
\deg_{H^{\sigma}}(u-P_{\sigma}^{-1}(\Gamma(n-1)K|u|^{2\sigma-\tau}u), \mathscr{O}_{R,\delta_{1}}, 0)
=\mathrm{Index} (K).
\ee
As in the proof of Proposition \ref{prop444}, one can check that
there  are no critical points of $I_{\tau}$ in $\overline{\mathscr{O}_{R,\delta_{1}}}\backslash \mathscr{O}_{R}.$
Using the same proof idea as Li \cite[Theorem B.2]{LYYJ}
and \cite[Theorems 2.4 and 2.5]{jlxm}, we can easily get
\be\label{2.0006}
\begin{aligned}
&\deg_{C^{2\sigma,\alpha}}(u-P_{\sigma}^{-1}(\Gamma(n-1)
K|u|^{2\sigma-\tau}u),\mathscr{O}_{R},0)\\
=&\deg_{H^{\sigma}}(u-P_{\sigma}^{-1}(\Gamma(n-1) K|u|^{2\sigma-\tau}u),\mathscr{O}_{R,\delta_{1}},0).
\end{aligned}
\ee
It follows from \eqref{2.0004}--\eqref{2.0006} that for $R\geq R_{0},$ \eqref{1.38} is proved.
Theorem \ref{thm4} follows from the above.
\end{proof}

Using Theorem \ref{thm4} and perturbing the prescribing function near its critical point,
we can know  exactly where the blow up occur when $K\notin \mathscr{A}.$

\begin{proof}[Proof of the Theorem \ref{thm2}]
Since  the Morse functions in
$C^{2}(\mathbb{S}^{n})^{*}\backslash\mathscr{A}=\partial \mathscr{A}$
are dense in $\partial \mathscr{A},$ without loss of generality we consider
the case that $K\in \partial\mathscr{A}$ is a Morse function.
 First recall the definition  of $\mathscr{K}$ and $\mathscr{K}^{+},$
 we can assume here  $\mathscr{K}\backslash\mathscr{K}^{+}=
 \{ q^{(1)}, \cdots, q^{(m)}\},$ $m\in \mathbb{N}_{+}.$
 From the definition of $\mathscr{A}$
 and $K\in \partial \mathscr{A},$ we know that
there exists $1\leq i_{1}<\cdots<i_{k}\leq m,$ $k\geq 1,$
 such that
 \be\label{2.0009}
 \mu(M( q^{(i_1)}, \cdots, q^{(i_{k})}))=0.
 \ee

 {\bf Case 1}: There is only one such $\{q^{(i_1)},\cdots,q^{(i_k)}\}$ satisfying \eqref{2.0009}.
Using the same $C^{2}$ perturbation method as in  Li \cite{LYY,LYYN},
we can obtain a smooth, one-parameter
  family of Morse functions $\{K_{t}\}$ $(-1\leq t\leq 1)$ with the following
   properties:
\begin{itemize}
  \item [(a)]$K_{t}$ $(-1\leq t\leq 1)$ are identically the same as $K$ except in some small
 balls around $q^{(i_1)},\cdots, q^{(i_k)}$ and $K_{0}=K.$
 $K_{t}$  have the same critical points with the same Morse index  for any $-1\leq t\leq 1.$
  \item [(b)] $\mu(M(K_{t}; q^{(j_1)},\cdots, q^{(j_s)}))$ have the same sign for $-1<t<1$
for any $1\leq j_{1}<\cdots<j_{s}\leq m,$ $(j_{1},\cdots, j_{s})\ne (i_1,\cdots, i_{k}).$
Furthermore,
$$
\mu(M(K_{t}; q^{(i_1)},\cdots, q^{(i_{k})}))
\begin{cases}
<0  , & \text{ if }\, -1<t<0, \\
=0  , &\text{ if }\, t=0,\\
>0,   & \text{ if }\, 0<t<1.
\end{cases}
$$
\end{itemize}
It is easily seen that  $K_{t}\in \mathscr{A}$ when $t\ne 0.$  From the definition of
$\mathrm{Index},$  we have
$$
\mathrm{Index}(K_1)
=\mathrm{Index}(K_{-1})+(-1)^{k-1+\sum_{j=1}^{k}i(q^{(i_{j})})},
$$
evidently, $\mathrm{Index}(K_{1})\ne \mathrm{Index}(K_{-1}).$

By the homotopy invariance of
the Leray-Schauder degree and Theorem \ref{thm4},
there exists $t_i$ and $v_{i}\in \mathscr{M}_{K_{t_i}},$
such that
$$
\lim_{i\rightarrow \infty} \|v_{i}\|_{C^{2\sigma, \alpha}(\mathbb{S}^n)}
=\infty\quad \text{ or }\quad \lim_{i\rightarrow \infty} (\min_{\mathbb{S}^n} v_{i})=0.
$$

In fact, we can prove that if $\lim_{i\to \infty }(\min_{\mathbb{S}^{n}}v_{i})=0$,
then $\lim_{i\to \infty } \|v\|_{C^{2\sigma, \alpha}}=\infty.$
If not, it means that $\{v_{i}\}$ has no blow up point,
 then $v_{i}\equiv 0 $ on $\mathbb{S}^{n}$ can be obtained from
 $\lim_{i\to \infty }(\min_{\mathbb{S}^{n}}v_{i})=0$
 and Hanarck inequality. This leads to contradictions and
we deduce that \eqref{1.72}holds.

It follows from $K_{t}\in \mathscr{A}$ $(t\ne 0)$ and Theorem \ref{thm2.1}
that $t_{i}\rightarrow 0,$ namely, $K_{t_{i}}\rightarrow K.$
Then by Theorem \ref{thm1}, we can know that
 $\{v_{i}\}$ blows up exactly at $k$ points $q^{(i_{1})},\cdots,q^{(i_{k})}.$

{\bf Case 2}:
If  $\{q^{(i_{1})},\cdots,q^{(i_{k})}\}$
satisfying \eqref{2.0009} is not unique,
 we can perturb as described above  the function $K$  near its some critical
points to change the Hessian matrix of $K$ at these points,
such that there exists a sequence of Morse functions ${K_{\ell}}$ satisfying:
 $K_{\ell}\rightarrow K,$  ${K_{\ell}}$ are identically the same as $K$ except in some small
 balls and have the same critical points with the
same Morse index; there is only one such $(i_{1},\cdots, i_{k})$
such that \eqref{2.0009} is true for any $\ell.$
From Case 1,  we know that there exists a sequence of $K_{i}\to K$ in $C^{2}(\mathbb{S}^n),$
$v_{i}\in \mathscr{M}_{K_{i}}$ such that $\{v_{i}\}$ blows up at precisely the $k$ points
$q^{(i_{1})},\cdots,q^{(i_{k})}.$
We have thus proved Theorem \ref{thm2}.
\end{proof}

Using Theorem \ref{thm1} and the proof method of Theorem \ref{thm2}, we show Theorem \ref{thm5} holds.
\begin{proof}[Proof of Theorem \ref{thm5}]
By using Theorem \ref{thm1} we can prove the Part (i) of Theorem \ref{thm5}.
The Part (ii) of Theorem \ref{thm5} is similar to the proof of Theorem \ref{thm2}, we omit it here.
\end{proof}

\appendix

\section{Appendix}\label{sec2}

In this section, we review some results about the local analysis and blow
up profiles for nonlinear integral equations obtained in Jin-Li-Xiong \cite{jlxm}.
For any $x\in \mathbb{R}^{n}$ and $r>0,$
the symbol ${B}_{r}(x)$ denotes the ball in $\mathbb{R}^{n}$
with radius $r$ and center $x$, and $B_{r}:=B_{r}(0).$
\subsection{H\"older estimates and Schauder type estimates}
Consider nonnegative solutions of the integral equation
\begin{align}\label{2.41}
u(x)=\int_{\mathbb{R}^{n}} \frac{V(y) u(y)}{|x-y|^{n-2 \sigma}} \mathrm{d} y
\quad \text { a.e \, in } B_{3},
\end{align}
where $0<\sigma<n / 2$.

The  H\"older estimates for solutions to \eqref{2.41} is following:
\begin{proposition}\label{thm22}
For $n \geq 1$, $0<\sigma<n / 2,$ $r>n /(n-2 \sigma)$ and $p>n / 2 \sigma$, let $0 \leq V \in L^{p}(B_{3}),$
$0 \leq u \in L^{r}(B_{3})$ and $0 \leq V u \in L_{l o c}^{1}(\mathbb{R}^{n})$.
If u satisfies \eqref{2.41}, then $u \in C^{\alpha}(B_{1})$,
$$
\|u\|_{C^{\alpha}(B_{1})} \leq C\|u\|_{L^{r}(B_{3})},
$$
and u satisfies the Harnack inequality
$$
\max _{\bar{B}_{1}} u \leq C \min _{\bar{B}_{1}} u,
$$
where $C>0$ and $\alpha \in(0,1)$ depend only on $n,$ $\sigma,$ $p$, and an upper bound
of $\|V\|_{L^{p}(B_{3})}$.
\end{proposition}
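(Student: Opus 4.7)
\textbf{Proof proposal for Proposition \ref{thm22}.} My plan proceeds in three stages: an $L^\infty$ bound via a bootstrap, a Hölder estimate via kernel decomposition, and the Harnack inequality via a careful comparison of integral representations.

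For the $L^\infty$ bound, I would decompose, for $x\in B_1$,
\[
u(x)=\int_{B_2}\frac{V(y)u(y)}{|x-y|^{n-2\sigma}}\,dy+\int_{\mathbb{R}^n\setminus B_2}\frac{V(y)u(y)}{|x-y|^{n-2\sigma}}\,dy.
\]
On $\mathbb{R}^n\setminus B_2$ the kernel is bounded and its values at any two points of $B_1$ are mutually comparable, so the tail is a bounded function of $x\in B_1$ whose size is controlled by evaluating \eqref{2.41} at a generic point of $B_3$ where the equation holds. For the local piece, Hölder's inequality gives $V u\,\chi_{B_2}\in L^{q_0}$ with $1/q_0=1/p+1/r$, and the Hardy--Littlewood--Sobolev inequality applied to the Riesz potential $I_{2\sigma}$ upgrades this to $u\in L^{q_1}(B_{1+\eta})$ with $1/q_1=1/q_0-2\sigma/n$. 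Because $p>n/(2\sigma)$, each iteration gains a fixed positive amount $2\sigma/n-1/p$ on the reciprocal exponent, so finitely many steps yield $u\in L^\infty(B_1)$ with $\|u\|_{L^\infty(B_1)}\leq C\|u\|_{L^r(B_3)}$ (the assumption $r>n/(n-2\sigma)$ is what allows the starting step).

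For the Hölder estimate, with $u\in L^\infty$ one has $Vu\in L^p(B_2)$ for $p>n/(2\sigma)$. For $x,x'\in B_1$, writing
\[
u(x)-u(x')=\int_{B_2}V(y)u(y)\bigl(|x-y|^{2\sigma-n}-|x'-y|^{2\sigma-n}\bigr)dy+(\text{tail difference}),
\]
the tail difference is $O(|x-x'|)$ by smooth dependence of the kernel away from $B_2$, and the $B_2$-integral is bounded by $C\|Vu\|_{L^p(B_2)}|x-x'|^{\alpha}$ with $\alpha=\min(1,2\sigma-n/p)>0$, by the standard Hölder regularity of Riesz potentials of $L^p$ data (splitting $B_2$ into $B_{2|x-x'|}(x)$, $B_{2|x-x'|}(x')$ and the far region, and estimating each piece by Hölder).

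For the Harnack inequality, I would adapt the Chen--Li--Ou type strategy used in \cite{jlxm}. For any $x,x'\in\bar B_1$, the tail integrals over $\mathbb{R}^n\setminus B_2$ at $x$ and $x'$ are mutually comparable by a purely geometric kernel comparison ($|x-y|\asymp|x'-y|$ when $|y|\geq 2$). The nontrivial piece is the local integral $\int_{B_2}Vu\,|\cdot-y|^{2\sigma-n}\,dy$; exploiting the nonnegativity of $V,u$ and the just-established uniform bound, one covers $B_2$ by a finite family of small balls and compares the local contribution at $x$ with that at $x'$ by splitting each piece according to whether $y$ is close to $\{x,x'\}$ or not. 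The ``far'' contributions transfer by kernel comparison, while the ``close'' contributions are absorbed into $\min_{\bar B_1}u$ by re-inserting the integral equation at a nearby point, ultimately yielding $\max_{\bar B_1}u\leq C\min_{\bar B_1}u$.

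The main obstacle is Stage~3: Riesz potentials of nonnegative functions do not obey a Harnack principle in general, so one cannot simply treat $I_{2\sigma}(Vu)$ as an abstract potential. The delicate point is to exploit the self-referential structure of \eqref{2.41} together with the quantitative bounds from Stages~1 and~2 to absorb the singular local contributions back into $\min u$, and this is precisely where the hypothesis $p>n/(2\sigma)$ (rather than merely $p\geq n/(2\sigma)$) is essential, since it provides the positive Hölder exponent needed to control the ``close'' pieces.
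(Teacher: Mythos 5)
A preliminary remark: the paper contains no proof of Proposition \ref{thm22} to compare against; Appendix \ref{sec2} simply restates it from Jin--Li--Xiong \cite{jlxm}. Judged on its own, your Stage~1 and Stage~2 are correct and standard: the tail $\int_{\mathbb{R}^n\setminus B_2}V u\,|x-\cdot|^{2\sigma-n}$ is uniformly comparable, for $x$ near $\bar B_1$, to its value at a.e.\ point of $B_3$ where \eqref{2.41} holds, hence bounded by $C\|u\|_{L^r(B_3)}$; the bootstrap starts because $1/p+1/r<1$ (this is where $r>n/(n-2\sigma)$ and $p>n/(2\sigma)$ enter) and gains the fixed amount $2\sigma/n-1/p$ at each step; and the H\"older continuity of Riesz potentials of $L^p$ data with $p>n/(2\sigma)$ is classical (only a borderline adjustment is needed when $2\sigma-n/p\ge 1$, where one takes any $\alpha<1$).

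The genuine gap is in Stage~3, exactly where you flag it. The tail comparison and the easy bound $\int_{B_2}Vu\,dy\le 3^{n-2\sigma}\min_{\bar B_1}u$ are fine, but your mechanism for the near-singular piece $\int_{B_\delta(x)}V u\,|x-\cdot|^{2\sigma-n}$ does not work as described: ``absorbing the close contributions into $\min_{\bar B_1}u$'' would require $\|u\|_{L^\infty(B_\delta(x))}\le C\min_{\bar B_1}u$, which is the Harnack inequality you are proving, hence circular; and the alternative bound $C\|V\|_{L^p}\delta^{2\sigma-n/p}\max_{\bar B_{1+\delta}}u$ leads to an iteration in the radius that never closes, because after finitely many steps one is left with $\varepsilon^N\max_{\bar B_{3/2}}u$, which is controlled only by $\|u\|_{L^r(B_3)}$ and not by $\min_{\bar B_1}u$, while the Harnack constant must be independent of $u$. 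What does close the argument---and what ``re-inserting the integral equation'' should mean precisely---is to substitute \eqref{2.41} for $u(y)$ at the integration variable $y\in B_\delta(x)\subset B_3$ and use Tonelli together with the iterated-kernel estimate
\begin{equation*}
\int_{B_\delta(x)}\frac{V(y)}{|x-y|^{n-2\sigma}\,|y-z|^{n-2\sigma}}\,\ud y\;\le\;\varepsilon(\delta)\,|x-z|^{2\sigma-n},\qquad z\in\mathbb{R}^n,
\end{equation*}
with $\varepsilon(\delta)\to 0$ as $\delta\to 0$ depending only on $n,\sigma,p,\|V\|_{L^p(B_3)}$ (this is where $p>n/(2\sigma)$ is used, via H\"older and $(n-2\sigma)p'<n$). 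This gives $\int_{B_\delta(x)}V u\,|x-y|^{2\sigma-n}\ud y\le\varepsilon(\delta)\,u(x)$, which is absorbed into the left-hand side (finite by Stage~1), yielding $u(x)\le C(\delta)\min_{\bar B_1}u$ for all $x\in\bar B_1$. Without this or an equivalent device, Stage~3 remains incomplete.
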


The Schauder type estimates for solutions $u$ to \eqref{2.41} is following:
\begin{proposition}
 In addition to the assumptions in Proposition \ref{thm22},
 we assume that $V \in$ $C^{\alpha}(B_{3})$ for some $\alpha>0$
 but not an integer, then $u \in C^{2 \sigma+\alpha^{\prime}}(B_{1})$ and
$$
\|u\|_{C^{2 \sigma+\alpha^{\prime}}(B_{1})} \leq C\|u\|_{L^{r}(B_{3})},
$$
where $\alpha^{\prime}=\alpha$
if $2 \sigma+\alpha \notin \mathbb{N}_{+}$, otherwise $\alpha^{\prime}$ can be any positive constant less than $\alpha$. Here $C>0$ depends only on $n, \sigma, \alpha$ and an upper bound of $\|V\|_{C^{\alpha}(B_{3})}$.
\end{proposition}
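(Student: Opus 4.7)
The plan is to prove the Schauder-type estimate by a standard bootstrap argument: starting from the Hölder continuity of $u$ supplied by Proposition \ref{thm22}, one repeatedly applies the mapping properties of the Riesz potential of order $2\sigma$ to gain regularity, until the limitation imposed by $V \in C^{\alpha}$ is reached.

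First I would localize. For $x \in B_{2}$, split
$$
u(x) = \int_{B_{5/2}} \frac{V(y) u(y)}{|x-y|^{n-2\sigma}}\,\ud y + \int_{\mathbb{R}^{n}\setminus B_{5/2}} \frac{V(y) u(y)}{|x-y|^{n-2\sigma}}\,\ud y =: u_{1}(x) + u_{2}(x).
$$
For $x \in B_{1}$ the kernel $|x-y|^{-(n-2\sigma)}$ in $u_{2}$ is smooth in $x$ uniformly in $y \in \mathbb{R}^{n}\setminus B_{5/2}$, so $u_{2} \in C^{\infty}(B_{1})$ with all derivatives bounded by $C\int_{\mathbb{R}^{n}} V(y) u(y)\,\ud y$; this integral is finite for a.e.\ $x$ by \eqref{2.41}, and is controlled by $\|u\|_{L^{r}(B_{3})}$ together with $\|V\|_{L^{p}(B_{3})}$ after applying the equation at a generic $x_{0} \in B_{3}$ and using Hölder's inequality on the far field.

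Next I would analyze $u_{1}$, which is the Riesz potential of order $2\sigma$ applied to the compactly supported function $f := Vu\chi_{B_{5/2}}$. By Proposition \ref{thm22}, $u \in C^{\alpha_{0}}(B_{5/2})$ for some $\alpha_{0} \in (0,1)$ with quantitative control by $\|u\|_{L^{r}(B_{3})}$, hence $f \in C^{\min(\alpha,\alpha_{0})}(B_{5/2})$. The classical mapping property that the Riesz potential of order $2\sigma$ sends $C^{\beta}_{c}$ into $C^{2\sigma+\beta}$, whenever $2\sigma+\beta \notin \mathbb{N}_{+}$ (and into $C^{2\sigma+\beta-\epsilon}$ for arbitrarily small $\epsilon>0$ at integer thresholds), then yields $u_{1} \in C^{2\sigma + \min(\alpha,\alpha_{0})}(B_{2})$ with a norm bound in terms of $\|V\|_{C^{\alpha}(B_{3})}$ and $\|u\|_{C^{\alpha_{0}}(B_{5/2})}$. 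Adding back $u_{2}$, we obtain the same regularity for $u$ on $B_{2}$.

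Now I would iterate: if $\min(\alpha,\alpha_{0}) < \alpha$, then $u$ now has Hölder exponent $\alpha_{1} := 2\sigma + \alpha_{0}$ (truncated below $\alpha$ if necessary) on a slightly smaller ball, so $Vu$ inherits exponent $\min(\alpha,\alpha_{1})$, and a second application of the Riesz mapping property improves $u$ to exponent $2\sigma + \min(\alpha,\alpha_{1})$ on $B_{2-\eta}$. After finitely many steps on a nested sequence of balls shrinking from $B_{2}$ down to $B_{1}$, the exponent of $Vu$ saturates at $\alpha$, producing $u \in C^{2\sigma+\alpha'}(B_{1})$ with the asserted estimate. The main obstacle I anticipate is bookkeeping around the integer jumps: the Riesz estimate is sharp only when the target exponent is non-integer, so each bootstrap step that would land on an integer must be slightly reduced, which is exactly the $\alpha' < \alpha$ clause in the statement. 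A secondary technical issue is to choose the nested radii quantitatively so that the constants accumulated over the (uniformly bounded number of) iterations remain under control and depend only on $n,\sigma,\alpha$ and $\|V\|_{C^{\alpha}(B_{3})}$.
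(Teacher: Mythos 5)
The paper itself contains no proof of this proposition: Appendix~\ref{sec2} is explicitly a review of the local estimates of Jin--Li--Xiong, and this Schauder-type statement is quoted from \cite{jlxm} without argument. Your bootstrap is, in substance, the standard strategy (and the one used in that reference): split off the far-field part of the potential in \eqref{2.41}, which is smooth on the inner ball and controlled through the equation and Proposition~\ref{thm22}, and upgrade the near-field Riesz potential of $Vu$ step by step, gaining $2\sigma$ derivatives per step until the exponent of $Vu$ saturates at $\alpha$, the integer thresholds accounting for the $\alpha'<\alpha$ clause. So the approach is sound; there is no comparison to make with an in-paper proof.

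A few points need tightening before your sketch is a proof. First, $Vu\chi_{B_{5/2}}$ is not in $C^{\beta}_{c}(\mathbb{R}^{n})$ (it jumps across $\partial B_{5/2}$), so you cannot literally invoke the global mapping property $I_{2\sigma}\colon C^{\beta}_{c}\to C^{2\sigma+\beta}$; replace the characteristic function by a smooth cutoff equal to $1$ on a slightly smaller concentric ball and absorb the annular piece into the smooth far-field term, or use an interior version of the Riesz estimate. Second, Proposition~\ref{thm22} as stated yields $u\in C^{\alpha_{0}}(B_{1})$ only, so to launch the iteration with $u\in C^{\alpha_{0}}(B_{5/2})$ you must apply it after translation and scaling on small balls whose triples lie in $B_{3}$ and cover $\overline{B}_{5/2}$; this is routine but should be said. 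Third, your bound for the tail, ``$C\int_{\mathbb{R}^{n}}V(y)u(y)\,\ud y$'', is not the right quantity (global integrability of $Vu$ is not assumed); the correct controlled quantity is $\int_{\mathbb{R}^{n}\setminus B_{5/2}}V(y)u(y)\,|y|^{-(n-2\sigma)}\,\ud y$, which is finite because it is dominated by $u(x_{0})$ for a.e.\ $x_{0}\in B_{1}$ and hence by $C\|u\|_{L^{r}(B_{3})}$ via Proposition~\ref{thm22} --- this is exactly the ``apply the equation at a generic point'' device you mention, so the fix is cosmetic. With these repairs, and your acknowledged care at integer exponents and with the nested radii, the argument is correct and gives the stated estimate with the stated dependence of the constant.
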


\subsection{Blow up profiles for nonlinear integral equations}

\begin{proposition}[Pohozaev type identity]\label{prop1.1}
Let $u \geq 0$ in $\mathbb{R}^{n}$, and $u \in C(\overline{B}_{R})$ be a solution of
$$
u(x)=\int_{B_{R}} \frac{K(y) u(y)^{p}}{|x-y|^{n-2 \sigma}} \,\ud y+h_{R}(x),
$$
where $1<p \leq \frac{n+2 \sigma}{n-2 \sigma},$ and $h_{R}(x) \in C^{1}(B_{R}),$ $\nabla h_{R} \in L^{1}(B_{R}).$ Then
\begin{align*}
&\Big(\frac{n-2 \sigma}{2}-\frac{n}{p+1}\Big) \int_{B_{R}} K(x) u(x)^{p+1} \,\ud x
-\frac{1}{p+1} \int_{B_{R}} x \nabla K(x) u(x)^{p+1} \,\ud x \\
=& \frac{n-2 \sigma}{2} \int_{B_{R}} K(x) u(x)^{p} h_{R}(x) \,\ud x
+\int_{B_{R}} x \nabla h_{R}(x) K(x) u(x)^{p} \,\ud x \\
&-\frac{R}{p+1} \int_{\partial B_{R}} K(x) u(x)^{p+1} \, \ud s.
\end{align*}
\end{proposition}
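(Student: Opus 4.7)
The plan is to compute the integral $I := \int_{B_R} (x \cdot \nabla u(x))\, K(x) u(x)^p \,\ud x$ in two different ways and equate the results.

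For the first computation, I would use the pointwise identity $(x \cdot \nabla u)\, u^p = \frac{1}{p+1} x \cdot \nabla(u^{p+1})$ and integrate by parts on $B_R$:
\begin{align*}
I
&= \frac{1}{p+1}\int_{B_R} K(x)\, x\cdot\nabla(u^{p+1})\,\ud x\\
&= -\frac{1}{p+1}\int_{B_R}\nabla\cdot(xK(x))\,u^{p+1}\,\ud x + \frac{R}{p+1}\int_{\partial B_R} K\, u^{p+1}\,\ud s\\
&= -\frac{n}{p+1}\int_{B_R}K\,u^{p+1}\,\ud x - \frac{1}{p+1}\int_{B_R}(x\cdot\nabla K)\,u^{p+1}\,\ud x + \frac{R}{p+1}\int_{\partial B_R}K\,u^{p+1}\,\ud s.
\end{align*}

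For the second computation, I would split $u(x) = w(x) + h_R(x)$ with $w(x) = \int_{B_R}G(x,y)K(y)u(y)^p\,\ud y$ and $G(x,y)=|x-y|^{-(n-2\sigma)}$, so $I = \int_{B_R}(x\cdot\nabla w)K u^p\,\ud x + \int_{B_R}(x\cdot\nabla h_R)K u^p\,\ud x$. The key observation is that the first integral becomes a symmetric double integral after invoking the equation for $w$. Writing
\[
\int_{B_R}(x\cdot\nabla w)K u^p\,\ud x = \iint_{B_R\times B_R} K(x)u(x)^p\,(x\cdot\nabla_x G(x,y))\,K(y)u(y)^p\,\ud x\,\ud y
\]
and swapping $x\leftrightarrow y$ (using $G(x,y)=G(y,x)$), we obtain the same quantity with $y\cdot\nabla_y G$ in place of $x\cdot\nabla_x G$. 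Averaging, the integrand contains $(x\cdot\nabla_x + y\cdot\nabla_y)G(x,y)$, which by Euler's homogeneity relation for $G$ (degree $-(n-2\sigma)$ under the joint scaling $(x,y)\mapsto(tx,ty)$) equals $-(n-2\sigma)G(x,y)$. Hence
\[
\int_{B_R}(x\cdot\nabla w)K u^p\,\ud x = -\frac{n-2\sigma}{2}\int_{B_R}K u^p\, w\,\ud x = -\frac{n-2\sigma}{2}\int_{B_R}K u^{p+1}\,\ud x + \frac{n-2\sigma}{2}\int_{B_R}K u^p h_R\,\ud x,
\]
using $w = u - h_R$ in the last step. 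Equating the two expressions for $I$ and rearranging yields the stated identity.

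The main technical obstacle is that $u$ is only assumed continuous, so $\nabla u$ need not exist classically; this is what forces the symmetrization trick above rather than any direct differentiation of $u$ itself. Differentiating under the integral sign for $\nabla w$ is legitimate (the singular kernel gives at worst a $|x-y|^{-(n-2\sigma)-1}$ factor, which is integrable against the bounded density $K u^p$ when combined with the $x$-multiplier on compact sets), and the double integral after symmetrization is absolutely convergent since the symmetrized integrand has the same order of singularity as $G$ itself. The approximation by $w_\varepsilon = \int_{B_R}G_\varepsilon(x,y)K(y)u(y)^p\,\ud y$ with a truncated or mollified kernel, followed by passing to the limit $\varepsilon\to 0$ using dominated convergence, provides a rigorous justification. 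The boundary term on $\partial B_R$ is well-defined by the continuity of $u$ and $K$ up to the boundary, and the hypothesis $\nabla h_R \in L^1(B_R)$ ensures the $h_R$-contribution in the second computation is finite.
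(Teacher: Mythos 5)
Your argument is correct and coincides with the standard proof of this identity: the paper itself offers no proof (Proposition \ref{prop1.1} is reviewed from Jin--Li--Xiong \cite{jlxm}), and the argument there is exactly your two-fold computation of $\int_{B_R}(x\cdot\nabla u)\,K u^{p}\,\ud x$ — once by integration by parts, once by inserting the integral equation and symmetrizing the double integral via the homogeneity $(x\cdot\nabla_x+y\cdot\nabla_y)|x-y|^{2\sigma-n}=-(n-2\sigma)|x-y|^{2\sigma-n}$ — which reproduces the stated identity after using $w=u-h_R$. The only imprecision is the claim that the differentiated kernel $|x-y|^{-(n-2\sigma)-1}$ is always locally integrable: this requires $\sigma>1/2$ (automatic in this paper, where $\sigma=1+m/2$), and for smaller $\sigma$ the justification must genuinely go through the truncated/mollified-kernel approximation you mention rather than absolute convergence of the unsymmetrized double integral.
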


\begin{proposition}\label{prop2}
Suppose that $0 \leq u_{i} \in L_{{loc}}^{\infty}(\mathbb{R}^{n})$ satisfies \eqref{2.1}
with $K_{i}$ satisfying \eqref{2.2111}. Suppose that $x_{i} \rightarrow 0$ is an isolated blow up point of $\{u_{i}\}$, i.e., for some positive constants $A_{3}$ and $\bar{r}$ independent of $i$,
$$
|x-x_{i}|^{2 \sigma /(p_{i}-1)}u_{i}(x) \leq A_{3}\quad \text { for all }\, x \in B_{\bar{r}} \subset \Omega.
$$
Then for any $0<r<\bar{r}/3$, we have the following Harnack inequality
$$
\sup _{B_{2r}(x_{i}) \backslash \overline{B_{r / 2}(x_{i})}} u_{i}
\leq C \inf_{B_{2 r}(x_{i})\backslash \overline{B_{r/2}(x_{i})}} u_{i},
$$
where $C$ is a positive constant depending only on $\sup_{i}\|K_{i}\|_{L^{\infty}(B_{\bar{r}}(x_{i}))},
n, \sigma, \bar{r}$ and $A_{3}.$
\end{proposition}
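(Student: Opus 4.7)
The plan is to reduce the claim to the linear Harnack inequality of Proposition \ref{thm22} by a scale-invariant rescaling. I would first set
\begin{align*}
\tilde u_i(y):=r^{2\sigma/(p_i-1)}u_i(x_i+ry),\qquad \tilde K_i(y):=K_i(x_i+ry),
\end{align*}
for $y\in\mathbb{R}^n$. A direct change of variables in \eqref{2.1} shows that $\tilde u_i$ satisfies an integral equation of exactly the same form with $\tilde K_i$ in place of $K_i$, since the $r$-exponents combine to $2\sigma+\tfrac{2\sigma}{p_i-1}-\tfrac{2\sigma p_i}{p_i-1}=0$. The isolated blow-up bound $|x-x_i|^{2\sigma/(p_i-1)}u_i(x)\le A_3$ rescales to $|y|^{2\sigma/(p_i-1)}\tilde u_i(y)\le A_3$ for $|y|\le \bar r/r$. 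Since $r<\bar r/3$ and $p_i$ stays in a fixed compact subinterval of $(1,\infty)$ (so that $2\sigma/(p_i-1)$ is bounded), this produces a uniform pointwise bound for $\tilde u_i$, and hence by \eqref{2.2111} for the linearized potential $V_i(y):=\tilde K_i(y)\tilde u_i(y)^{p_i-1}$, on the annulus $\{1/8\le |y|\le 3\}$, with a constant depending only on $n,\sigma,A_1,A_2,A_3$.

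Next, I would rewrite $\tilde u_i$ as a solution of the linear integral equation
\begin{align*}
\tilde u_i(y)=\int_{\mathbb{R}^n}\frac{V_i(w)\tilde u_i(w)}{|y-w|^{n-2\sigma}}\,\mathrm{d}w,
\end{align*}
and apply Proposition \ref{thm22} locally. Fix $\rho:=1/8$. For any $\xi_0\in\overline{B_2\setminus B_{1/2}}$ I would consider the secondary rescaling $\hat u(z):=\tilde u_i(\xi_0+\rho z)$ together with $\hat V(z):=\rho^{2\sigma}V_i(\xi_0+\rho z)$; a direct computation shows that $\hat u$ satisfies \eqref{2.41} on $\mathbb{R}^n$ and that $\|\hat V\|_{L^\infty(B_3)}=\rho^{2\sigma}\|V_i\|_{L^\infty(B_{3\rho}(\xi_0))}$ is controlled purely by the uniform bound on $V_i$ obtained above, so $\hat V\in L^p(B_3)$ for every $p>n/(2\sigma)$ uniformly in $i$ and $\xi_0$. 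Proposition \ref{thm22} then yields
\begin{align*}
\sup_{B_\rho(\xi_0)}\tilde u_i\le C\inf_{B_\rho(\xi_0)}\tilde u_i,
\end{align*}
with $C$ independent of $i$ and $\xi_0$. Covering $\overline{B_2\setminus B_{1/2}}$ by a fixed finite number $N=N(n)$ of such balls with overlapping interiors and chaining these local Harnack estimates produces the Harnack inequality on the entire rescaled annulus, and scaling back through $u_i(x)=r^{-2\sigma/(p_i-1)}\tilde u_i((x-x_i)/r)$ delivers the claim on $B_{2r}(x_i)\setminus\overline{B_{r/2}(x_i)}$.

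The step that needs the most care, rather than a genuine analytic obstacle, is the verification that every constant remains uniform in $i$ and $r$. The pointwise bound on $V_i$ depends only on $A_3$, on the $L^\infty$ bound from \eqref{2.2111}, and on a lower bound $p_i-1\ge c>0$; the covering number $N$ depends only on the dimension; and the Harnack constant of Proposition \ref{thm22} depends only on $n,\sigma,p$ and on the upper bound of $\|\hat V\|_{L^p(B_3)}$. No step depends on the particular sequences $x_i\to 0$ or $\tau_i\to 0$, so the final Harnack constant is the claimed uniform one.
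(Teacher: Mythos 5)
Your argument is correct and is essentially the standard proof of this fact: the paper itself only quotes Proposition \ref{prop2} from Jin--Li--Xiong \cite{jlxm}, and the proof there is exactly this scale-invariant rescaling $\tilde u_i(y)=r^{2\sigma/(p_i-1)}u_i(x_i+ry)$, which turns the isolated blow-up bound into a uniform bound for the potential $V_i=\tilde K_i\tilde u_i^{p_i-1}$ on a fixed annulus, followed by the linear Harnack inequality of Proposition \ref{thm22} on small balls and a covering/chaining step (legitimate here since $n=2\sigma+2\ge 3$ makes the annulus connected, and $p_i-1$ is bounded below for large $i$). No gaps beyond these routine uniformity checks, which you address.
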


\begin{proposition}\label{prop3.1}
Assume the hypotheses  in Proposition \ref{prop2}.
Then for every $R_{i} \rightarrow \infty$, $\varepsilon_{i} \rightarrow 0^{+},$
 we have, after passing to a subsequence (still denoted as $\{u_{i}\},$ $\{x_{i}\},$ etc.), that
$$
\|m_{i}^{-1} u_{i}(m_{i}^{-(p_{i}-1) /2\sigma}\cdot+x_{i})
-(1+k_{i}|\cdot|^{2})^{(2 \sigma-n) / 2}\|_{C^{2}(B_{2 R_{i}}(0))} \leq \varepsilon_{i},
$$
$$
r_{i}:=R_{i} m_{i}^{-(p_{i}-1) / 2 \sigma} \rightarrow 0  \quad \text { as }\, i \rightarrow \infty,
$$
where $ m_{i}:=u_{i}(x_{i})$ and $ k_{i}:=({K_{i}(x_{i}) \pi^{n/2}\Gamma(\sigma)}/{\Gamma(\frac{n}{2}+\sigma)})^{1/\sigma}.$
\end{proposition}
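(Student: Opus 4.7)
The plan is to carry out the standard rescaling/classification argument adapted to the nonlinear integral equation \eqref{2.1}. For any fixed $R>0$ and large $i$, I would define the rescaled functions
\[
\tilde u_i(y):=m_i^{-1}\,u_i\!\left(m_i^{-(p_i-1)/(2\sigma)}y+x_i\right),\qquad y\in B_R,
\]
so that $\tilde u_i(0)=1$ and, by the isolated blow-up bound in Proposition \ref{prop2}, $\tilde u_i(y)\le \overline C|y|^{-2\sigma/(p_i-1)}$ on compact sets once $i$ is large. A change of variables converts \eqref{2.1} into
\[
\tilde u_i(y)=\int_{\mathbb{R}^n}\frac{\tilde K_i(z)\,\tilde u_i(z)^{p_i}}{|y-z|^{n-2\sigma}}\,\ud z
\]
on balls of radius $\bar r\,m_i^{(p_i-1)/(2\sigma)}\to\infty$, where $\tilde K_i(z):=K_i(m_i^{-(p_i-1)/(2\sigma)}z+x_i)\to K_*:=\lim_i K_i(x_i)>0$ uniformly on compacts.

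First I would invoke the H\"older estimate of Proposition \ref{thm22} together with the Harnack inequality of Proposition \ref{prop2} (after rescaling) to obtain a uniform $C^\alpha_{\mathrm{loc}}(\mathbb{R}^n)$ bound for $\tilde u_i$. Arzel\`a-Ascoli and a diagonal extraction then yield a nonnegative subsequential limit $\tilde u\in C^\alpha_{\mathrm{loc}}(\mathbb{R}^n)$ with $\tilde u(0)=1$. To pass to the limit in the integral equation on the whole of $\mathbb{R}^n$, I would split the convolution at a fixed radius and control the tail uniformly using the pointwise decay $\tilde u_i(z)\le \overline C|z|^{-2\sigma/(p_i-1)}$; since $p_i\to (n+2\sigma)/(n-2\sigma)$, the combined integrand $\tilde u_i(z)^{p_i}|y-z|^{-(n-2\sigma)}$ is integrable at infinity uniformly in $i$, and dominated convergence yields
\[
\tilde u(y)=\int_{\mathbb{R}^n}\frac{K_*\,\tilde u(z)^{(n+2\sigma)/(n-2\sigma)}}{|y-z|^{n-2\sigma}}\,\ud z,\quad y\in\mathbb{R}^n.
\]

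Next, I would apply the Chen-Li-Ou classification of positive solutions of this conformally invariant integral equation, which forces $\tilde u(y)=c(1+k|y-y_0|^2)^{(2\sigma-n)/2}$ for some $c,k>0$ and $y_0\in\mathbb{R}^n$, with an explicit algebraic relation linking $c$, $k$ and $K_*$. Because $x_i$ is a local maximum of $u_i$ by Definition \ref{defn1.1}, the origin is a local maximum of $\tilde u$, forcing $y_0=0$; the normalization $\tilde u(0)=c=1$ and the integral equation then pin down $k=k_*=\bigl(K_*\pi^{n/2}\Gamma(\sigma)/\Gamma(n/2+\sigma)\bigr)^{1/\sigma}$, matching $\lim_i k_i$.

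Finally, I would upgrade the convergence from $C^\alpha_{\mathrm{loc}}$ to $C^2_{\mathrm{loc}}$ using the Schauder-type estimates stated immediately after Proposition \ref{thm22}, noting that $\tilde K_i$ is uniformly $C^1$ (or $C^{1,1}$ when $\sigma\le 1/2$) on compacts by assumption \eqref{2.2111}. Since this $C^2_{\mathrm{loc}}$ convergence holds on every fixed ball and $k_i\to k_*$, a standard diagonal extraction produces $R_i\to\infty$ and $\varepsilon_i\to 0^+$ realizing the stated $C^2(B_{2R_i})$ bound; the auxiliary claim $r_i=R_i m_i^{-(p_i-1)/(2\sigma)}\to 0$ then comes for free by choosing $R_i$ to grow slowly compared with $m_i^{(p_i-1)/(2\sigma)}\to\infty$, valid since $m_i\to\infty$ and $(p_i-1)/(2\sigma)$ is bounded below by a positive constant. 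The main technical obstacle I anticipate is the uniform tail control of the rescaled integrals: the decay of $\tilde u_i^{p_i}$ is only borderline near the critical exponent, so the tail bound must combine the pointwise decay with the uniform local $L^\infty$ bounds and integrability of $|y-z|^{-(n-2\sigma)}$ at the outer scale.
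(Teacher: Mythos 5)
The paper itself offers no proof of Proposition \ref{prop3.1}: it is quoted verbatim from Jin--Li--Xiong \cite{jlxm}, and your rescaling-plus-classification sketch is the same strategy used there (uniform estimates for the rescaled functions, passage to the limit in the integral equation, the Chen--Li--Ou classification, centering via the local maximum, then a diagonal extraction), so the overall route is the right one.

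There is, however, one step that does not go through as you wrote it: the uniform local compactness of $\tilde u_i$ \emph{near the origin}. You claim a uniform $C^\alpha_{\mathrm{loc}}$ bound from Proposition \ref{thm22} plus the Harnack inequality, but Proposition \ref{thm22} needs $\|V\|_{L^p(B_3)}$ bounded for some $p>n/(2\sigma)$ with $V=\tilde K_i\tilde u_i^{\,p_i-1}$, and near $0$ the only pointwise information you use is the rescaled decay $\tilde u_i(z)\le \bar C|z|^{-2\sigma/(p_i-1)}$, which gives $V\lesssim |z|^{-2\sigma}$ --- exactly the borderline power whose $p$-th power is not locally integrable for any $p>n/(2\sigma)$; the Harnack inequality of Proposition \ref{prop2} holds only on annuli away from the center, so it cannot close this hole. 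With only these bounds a secondary concentration at points $z_i\to0$ at a finer scale is not excluded, and then neither local boundedness nor the stated $C^2(B_{2R_i})$ closeness near $0$ would follow. The missing ingredient is the local-maximum property in Definition \ref{defn1.1}: since $x_i$ maximizes $u_i$ on a ball whose radius does not shrink with $i$ while the rescaling factor $m_i^{-(p_i-1)/(2\sigma)}\to0$, one gets $\tilde u_i\le\tilde u_i(0)=1$ on balls of radii tending to infinity, which rules out finer-scale bubbling and makes the H\"older and Schauder estimates applicable; you invoke the local maximum only to center the limit bubble. Two smaller points: the decay bound is valid only in the rescaled image of $B_{\bar r}(x_i)$, so the part of the convolution coming from outside this set must be treated separately (it equals $m_i^{-1}$ times a regular part that is uniformly bounded near $x_i$ by kernel comparability, hence vanishes in the limit), and $r_i\to0$ should be arranged through the subsequence extraction, since the $R_i$ are prescribed rather than chosen. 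With these corrections the remaining steps --- the limit equation, the classification, the identification of $k$, and the diagonal argument --- are fine.
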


\begin{proposition}\label{prop9}
Under the  hypotheses  of Proposition \ref{prop3.1},
there exists a positive constant $C=C(n, \sigma, A_{1}, A_{2}, A_{3})$ such that,
$$u_{i}(x) \geq C^{-1} m_{i}(1+k_{i} m_{i}^{(p_{i}-1) / \sigma}|x-x_{i}|^{2})^{(2 \sigma-n) / 2}\quad
\text{for all} \quad |x-x_{i}| \leq 1.$$
In particular, for any $e \in \mathbb{R}^{n},|e|=1$, we have
$$
u_{i}(x_{i}+e) \geq C^{-1} m_{i}^{-1+((n-2 \sigma) / 2 \sigma) \tau_{i}}
$$
where $\tau_{i}=(n+2 \sigma) /(n-2 \sigma)-p_{i}$.
\end{proposition}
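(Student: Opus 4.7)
The strategy is to split the ball $\{|x-x_i|\leq 1\}$ into the inner ``bubble region'' $\{|x-x_i|\leq r_i\}$ where $r_i=R_i m_i^{-(p_i-1)/2\sigma}$, and the outer annular region $\{r_i\leq |x-x_i|\leq 1\}$, and handle each by a different method. In the inner region, Proposition \ref{prop3.1} directly gives the rescaled profile convergence
\[
m_i^{-1}u_i(m_i^{-(p_i-1)/2\sigma}z+x_i)\to (1+k_i|z|^2)^{(2\sigma-n)/2}\quad \text{in }C^2(B_{2R_i}),
\]
so the desired lower bound on $u_i(x)$ there follows simply by undoing the rescaling (with a constant independent of $i$ for $i$ large, since the right-hand side is strictly positive on any compact set).

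For the outer region $r_i\leq |x-x_i|\leq 1$, I would use the integral representation \eqref{2.1} and throw away all but the bubble contribution:
\[
u_i(x)\;\geq\;\int_{B_{r_i}(x_i)}\frac{K_i(y)u_i(y)^{p_i}}{|x-y|^{n-2\sigma}}\,\ud y\;\geq\;\frac{c}{|x-x_i|^{n-2\sigma}}\int_{B_{r_i}(x_i)}K_i(y)u_i(y)^{p_i}\,\ud y,
\]
using $|x-y|\leq 2|x-x_i|$ for $y\in B_{r_i}(x_i)$ since $|y-x_i|\leq r_i\leq |x-x_i|$. The inner integral is evaluated by the change of variable $z=m_i^{(p_i-1)/2\sigma}(y-x_i)$ combined with Proposition \ref{prop3.1}, giving
\[
\int_{B_{r_i}(x_i)}K_i(y)u_i(y)^{p_i}\,\ud y\;\geq\; c\, m_i^{\,p_i-n(p_i-1)/2\sigma}\int_{B_{R_i}}(1+k_i|z|^2)^{p_i(2\sigma-n)/2}\,\ud z.
\]
Since $p_i(n-2\sigma)/2\to (n+2\sigma)/2>n/2$, the $z$-integral is bounded below by a positive constant uniformly in $i$. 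A short algebraic simplification shows $p_i-\tfrac{n(p_i-1)}{2\sigma}=-1+\tfrac{\tau_i(n-2\sigma)}{2\sigma}$ and, on the other hand, $1+\tfrac{(p_i-1)(2\sigma-n)}{2\sigma}=-1+\tfrac{\tau_i(n-2\sigma)}{2\sigma}$, which is exactly the exponent that makes the outer bound match the stated profile $m_i(1+k_im_i^{(p_i-1)/\sigma}|x-x_i|^2)^{(2\sigma-n)/2}$ for $|x-x_i|\geq r_i$ (where the ``1'' inside is dominated by the quadratic term).

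The ``in particular'' statement with $|e|=1$ is then immediate by plugging $|x-x_i|=1$ into the uniform lower bound. The main technical point is really just bookkeeping: verifying that the two regimes glue at $|x-x_i|=r_i$ with the same power of $m_i$, and checking that the integrability of $(1+k_i|z|^2)^{p_i(2\sigma-n)/2}$ is preserved uniformly as $p_i\to (n+2\sigma)/(n-2\sigma)$. A Harnack chain from Proposition \ref{prop2} on annuli of geometric scales between $r_i$ and $1$ can be used, if desired, to bypass worrying about the boundary matching and instead bootstrap the outer-region estimate from the lower bound at the single radius $|x-x_i|=2r_i$; this is the cleanest way to package the argument and is the approach I would ultimately write up.
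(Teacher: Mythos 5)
Your main argument is correct and is essentially the standard proof of this statement; note that the paper itself gives no proof here (the proposition is quoted from Jin--Li--Xiong \cite{jlxm}, whose argument is exactly your split: profile convergence on the core, kernel lower bound outside), and your exponent bookkeeping $p_i-\tfrac{n(p_i-1)}{2\sigma}=1+\tfrac{(p_i-1)(2\sigma-n)}{2\sigma}=-1+\tfrac{(n-2\sigma)\tau_i}{2\sigma}$ checks out. Two caveats. First, in the inner region your justification ``the right-hand side is strictly positive on any compact set'' is not quite enough: the relevant set is $B_{R_i}$ with $R_i\to\infty$, where the profile decays like $R_i^{2\sigma-n}$ and could be swamped by the error $\varepsilon_i$ from Proposition \ref{prop3.1}. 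This is easily repaired, either by exploiting the ``for every $R_i,\varepsilon_i$'' formulation of Proposition \ref{prop3.1} to choose $\varepsilon_i\leq \tfrac12(1+\bar k(2R_i)^2)^{(2\sigma-n)/2}$ along the subsequence, or by splitting instead at the fixed rescaled radius $|x-x_i|=m_i^{-(p_i-1)/2\sigma}$ and treating $m_i^{-(p_i-1)/2\sigma}\leq|x-x_i|\leq r_i$ by the same kernel argument (restricting the inner integral to $|z|\leq 1$, which also makes your displayed lower bound for $\int_{B_{r_i}(x_i)}K_iu_i^{p_i}$ immune to the $\varepsilon_i$ issue).

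Second, the closing suggestion to replace the outer estimate by a Harnack chain from Proposition \ref{prop2} --- which you say is the version you would ultimately write up --- does not work. The chain from radius $2r_i$ out to radius $1$ passes through roughly $\log_2(1/r_i)\to\infty$ dyadic annuli, and each application of the Harnack inequality loses a fixed factor $C$, so the accumulated constant is $C^{\log_2(1/r_i)}=r_i^{-\log_2 C}$; since there is no reason to have $\log_2 C\leq n-2\sigma$, this degrades the bound by an uncontrolled power of $1/r_i$ and cannot reproduce the sharp $|x-x_i|^{2\sigma-n}$ decay with a constant depending only on $(n,\sigma,A_1,A_2,A_3)$. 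Keep the direct integral-representation estimate as the outer-region argument; the Harnack inequality is at most a convenience for comparing $u_i(x)$ with its spherical average at a single comparable scale, not a substitute for the kernel bound.
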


\begin{proposition}\label{prop4}
Under the hypotheses of Proposition \ref{prop2} with $\bar{r}=2,$
and in addition that $x_{i} \rightarrow 0$ is also an isolated simple blow up point with constant $\rho,$
we have
$$
\tau_{i}=O(u_{i}(x_{i})^{-c_{1}+o(1)})\quad \text{and}\quad
u_{i}(x_{i})^{\tau_{i}}=1+o(1),
$$
where $c_{1}=\min \{2,2 /(n-2 \sigma)\}$.  Moreover,
$$
u_{i}(x) \leq C u_{i}^{-1}(x_{i})|x-x_{i}|^{2 \sigma-n} \quad \text { for all }\,|x-x_{i}| \leq 1.
$$
\end{proposition}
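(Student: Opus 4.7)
The plan is to establish the three assertions of Proposition \ref{prop4} in a chained fashion: first derive the sharp pointwise upper bound $u_i(x) \leq C u_i(x_i)^{-1}|x-x_i|^{2\sigma-n}$, then feed this bound into the Pohozaev-type identity of Proposition \ref{prop1.1} to extract the quantitative control on $\tau_i$, and finally deduce $u_i(x_i)^{\tau_i} = 1 + o(1)$ from $\tau_i \log u_i(x_i) \to 0$.

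For the sharp upper bound I would proceed in two stages. The isolated-simple hypothesis forces $\bar w_i(r) = r^{2\sigma/(p_i-1)}\bar u_i(r)$ to have a single critical point on $(0,\rho)$, which by the bubble convergence of Proposition \ref{prop3.1} must occur at scale $r_i$, so $\bar w_i$ is monotone decreasing on $(r_i,\rho)$. Combined with the Harnack inequality of Proposition \ref{prop2}, this yields a preliminary rough estimate $u_i(x) \leq C\,m_i (r_i/|x-x_i|)^{2\sigma/(p_i-1)}$ on the annulus $r_i \leq |x-x_i|\leq \rho$, whose exponent only tends to $-(n-2\sigma)/2$. To upgrade this to the desired decay $|x-x_i|^{2\sigma-n}$, I would reinsert the estimate into the integral equation \eqref{2.1}, splitting the domain into the bubble region $\{|y-x_i|\leq r_i\}$ and its complement. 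On the bubble region, a change of variable using Proposition \ref{prop3.1} gives $\int K_i u_i^{p_i} = O(m_i^{-1+o(1)})$, since the total scaling exponent $p_i - n(p_i-1)/(2\sigma)$ equals $-1+o(1)$ at the critical value; outside the bubble region the preliminary estimate together with Harnack produces a contribution that is dominated by the bubble one. Matching both against $|x-y|^{2\sigma-n}$ produces the sharp bound.

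Armed with this decay, I would apply Proposition \ref{prop1.1} on $B_\delta(x_i)$ for a fixed small $\delta$. The first left-hand term $(\frac{n-2\sigma}{2} - \frac{n}{p_i+1})\int K_i u_i^{p_i+1}$ expands as a positive multiple of $\tau_i$ times the $i$-independent bubble integral. The Kazdan-Warner type term $\frac{1}{p_i+1}\int (x-x_i)\!\cdot\!\nabla K_i\, u_i^{p_i+1}$ is controlled by combining $|\nabla K_i(x_i)|\int|x-x_i|u_i^{p_i+1} = O(m_i^{-2})$ with the quadratic remainder $\|\nabla^2 K_i\|_{\infty}\int |x-x_i|^2 u_i^{p_i+1} = O(m_i^{-2/(n-2\sigma)})$ obtained from the bubble profile. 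The right-hand side, in which $h_\delta$ is the outer integral of $K_i u_i^{p_i}$, is $O(m_i^{-2})$ by the sharp upper bound. Balancing and solving for $\tau_i$ produces $\tau_i = O(m_i^{-c_1+o(1)})$ with $c_1 = \min\{2,\,2/(n-2\sigma)\}$, whence $u_i(x_i)^{\tau_i} = \exp(\tau_i \log m_i) = 1 + o(1)$ follows at once.

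The main obstacle will be the upgrade from the preliminary decay exponent $2\sigma/(p_i-1)\to(n-2\sigma)/2$ to the sharp exponent $n-2\sigma$. Harnack alone is insufficient and one needs careful nonlocal bookkeeping in the integral equation: one must quantify the $m_i^{-1+o(1)}$ cancellation in the bubble's $L^{p_i}$ mass produced by criticality of the Sobolev exponent, and verify that the preliminary rough decay is strong enough for the complement to be of lower order. Once this bubble bookkeeping is carried out and the outer contribution from $|y-x_i|\gtrsim \rho$ is absorbed using Proposition \ref{prop2}, the Pohozaev step becomes essentially algebraic power-counting in $m_i$.
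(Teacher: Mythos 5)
This proposition is one of the results that the paper merely quotes from Jin--Li--Xiong \cite{jlxm}: Appendix \ref{sec2} states it without proof, so your proposal can only be compared with the argument in \cite{jlxm}. At the level of strategy you have identified the right ingredients (a decay estimate for isolated simple blow up points plus the Pohozaev-type identity of Proposition \ref{prop1.1}), but the execution has genuine gaps, both in the key analytic step and in the order of the argument.

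First, the step you yourself flag as ``the main obstacle'' does not work as you describe it. With only the preliminary bound $u_i(y)\le C\,\overline{w}_i(r_i)\,|y-x_i|^{-2\sigma/(p_i-1)}$ on the annulus, a \emph{single} reinsertion into the integral equation does not make the outer region subordinate to the bubble: at scale $s=|x-x_i|$ the self-contribution of the annulus is of order $\overline{w}_i(r_i)^{p_i}s^{-2\sigma/(p_i-1)}\sim s^{-(n-2\sigma)/2}$ (up to the small prefactor), which exceeds $m_i^{-1}s^{2\sigma-n}$ as soon as $s\gg r_i$; so the claim that ``the complement is dominated by the bubble one'' fails on essentially the whole annulus. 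What is needed, and what \cite{jlxm} actually do, is an iteration: view $V_i=K_iu_i^{p_i-1}\le C\,\overline{w}_i(r_i)^{p_i-1}|y-x_i|^{-2\sigma}$ as a small potential in the linear integral inequality and iterate to obtain $u_i(x)\le Cm_i^{-\lambda_i}|x-x_i|^{2\sigma-n+\delta}$ for each small $\delta>0$; this lemma is the core of the proof and is absent from your outline. Second, your chaining is circular in its constants: the bubble's $L^{p_i}$-mass equals $m_i^{-1+c\tau_i}$ with $c>0$, and the factor $m_i^{c\tau_i}$ is not known to be bounded before the Pohozaev step, so the clean bound $Cm_i^{-1}|x-x_i|^{2\sigma-n}$ cannot come first; the correct order (as in \cite{jlxm}) is: $\delta$-lossy estimate, then Pohozaev giving $\tau_i=O(m_i^{-c_1+o(1)})$ and $m_i^{\tau_i}=1+o(1)$, and only then the sharp bound. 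Finally, the Pohozaev bookkeeping is misassigned: under the hypotheses of Proposition \ref{prop2} one only knows $|\nabla K_i(x_i)|=O(1)$ (its smallness is the separate Proposition \ref{prop8}, proved under stronger hypotheses), and by Proposition \ref{prop5} one has $\int|x-x_i|\,u_i^{p_i+1}=O(m_i^{-2/(n-2\sigma)})$ and $\int|x-x_i|^2u_i^{p_i+1}=O(m_i^{-4/(n-2\sigma)})$, so it is the first-order term, not a Hessian term (which is not even available, since $K_i$ is only assumed $C^1$, or $C^{1,1}$ when $\sigma\le 1/2$), that produces the exponent $2/(n-2\sigma)$. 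The final value $c_1=\min\{2,\,2/(n-2\sigma)\}$ does come out once this accounting is corrected, but as written the power counting is wrong.
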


\begin{proposition}\label{prop7}
Under the hypotheses of Proposition \ref{prop4},
let
\begin{align*}
T_{i}(x):=&
u_{i}(x_{i})
\int_{B_{1}\left(x_{i}\right)} \frac{K_{i}(y) u_{i}(y)^{p_{i}}}{|x-y|^{n-2 \sigma}} \mathrm{d} y
+
u_{i}(x_{i}) \int_{\mathbb{R}^{n} \backslash B_{1}(x_{i})} \frac{K_{i}(y) u_{i}(y)^{p_{i}}}{|x-y|^{n-2 \sigma}} \mathrm{d} y\\
=&:T_{i}^{\prime}(x)+T_{i}^{\prime \prime}(x).
\end{align*}
Then, after passing a subsequence,
$$
T_{i}^{\prime}(x) \rightarrow a|x|^{2 \sigma-n}
 \quad \text { in } \, C_{loc}^{2}(B_{1} \backslash\{0\})
$$
and
$$
T_{i}^{\prime \prime}(x) \rightarrow h(x) \quad \text { in } \, C_{l o c}^{2}(B_{1})
$$
for some $h(x) \in C^{2}(B_{2})$, where
$$
a=\Big(\frac{\pi^{n / 2} \Gamma(\sigma)}
{\Gamma(\frac{n}{2}+\sigma)}\Big)^{-\frac{n}{2 \sigma}} \int_{\mathbb{R}^{n}}\Big(\frac{1}{1+|y|^{2}}\Big)^{\frac{n+2 \sigma}{2}} \mathrm{~d} y \lim _{i \rightarrow \infty} K_{i}(0)^{\frac{2 \sigma-n}{2 \sigma}}.
$$
Consequently, we have
$$
u_{i}(x_{i}) u_{i}(x) \rightarrow a|x|^{2 \sigma-n}
+h(x) \quad\text { in } \, C_{l o c}^{2}(B_{1} \backslash\{0\}).
$$
\end{proposition}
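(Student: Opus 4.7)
The plan is to analyse $T_i'$ and $T_i''$ separately, partitioning the inner integral further into a microscopic bubble region and an intermediate annulus. Split $B_1(x_i)=\{|y-x_i|\leq r_i\}\cup\{r_i\leq|y-x_i|\leq 1\}$ with $r_i=R_i m_i^{-(p_i-1)/(2\sigma)}$ as in Proposition~\ref{prop3.1}, where $m_i:=u_i(x_i)$. On the bubble region I would rescale by $y=x_i+m_i^{-(p_i-1)/(2\sigma)}z$. A bookkeeping exercise on the exponents produces a prefactor $m_i^{1+p_i-n(p_i-1)/(2\sigma)}=m_i^{\tau_i(n-2\sigma)/(2\sigma)}\to 1$, using $m_i^{\tau_i}\to 1$ from Proposition~\ref{prop4}. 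Inside the rescaled integrand, Proposition~\ref{prop3.1} gives $m_i^{-1}u_i(x_i+m_i^{-(p_i-1)/(2\sigma)}z)\to(1+k|z|^2)^{(2\sigma-n)/2}$ uniformly on $B_{R_i}$, the shifted center $x_i+m_i^{-(p_i-1)/(2\sigma)}z\to 0$, and the kernel $|x-x_i-m_i^{-(p_i-1)/(2\sigma)}z|^{2\sigma-n}\to|x|^{2\sigma-n}$ for $x$ bounded away from $0$. Dominated convergence then yields the limit $K(0)|x|^{2\sigma-n}\int_{\mathbb{R}^n}(1+k|z|^2)^{-(n+2\sigma)/2}\,\ud z$, and the change of variable $w=k^{1/2}z$ reproduces exactly $a|x|^{2\sigma-n}$. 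On the intermediate annulus, the sharp decay $u_i(y)\leq Cu_i(x_i)^{-1}|y-x_i|^{2\sigma-n}$ from Proposition~\ref{prop4} bounds that contribution by $CR_i^{-2\sigma+o(1)}=o(1)$ uniformly on compact subsets of $B_1\backslash\{0\}$; differentiating the kernel in $x$ and repeating the same estimate, with a slightly more singular but still integrable exponent, gives the matching $C^2$ bound.

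For $T_i''$, outside $B_1(x_i)$ the sequence $\{u_i\}$ stays uniformly bounded: Proposition~\ref{prop4} applied at $|y-x_i|=1$ gives $u_i(y)\leq Cm_i^{-1}$ on $\partial B_1(x_i)$, and the Harnack inequality (Proposition~\ref{prop2}) together with the integrability inherited from the stereographic conformal factor propagates the estimate to all of $\mathbb{R}^n\backslash B_1(x_i)$. Consequently $u_i(x_i)K_i(y)u_i(y)^{p_i}$ is uniformly integrable there; after extracting a subsequence the corresponding measures $u_i(x_i)K_i(y)u_i(y)^{p_i}\,\ud y$ converge, and since the Riesz kernel $|x-y|^{2\sigma-n}$ is smooth in $x\in B_1$ when $y\in\mathbb{R}^n\backslash B_1(x_i)$, differentiation under the integral gives $T_i''\to h$ in $C^2_{{loc}}(B_1)$ with $h\in C^2(B_2)$. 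Adding the two convergences yields $u_i(x_i)u_i(x)=T_i'(x)+T_i''(x)\to a|x|^{2\sigma-n}+h(x)$ in $C^2_{{loc}}(B_1\backslash\{0\})$, which is the desired conclusion.

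The main obstacle, I expect, will be upgrading the pointwise/$L^1$ convergence to $C^2_{{loc}}$ convergence uniformly in $x$. In the intermediate annulus, differentiating the Riesz kernel sharpens its singularity and one needs an interpolation/Schauder-type argument, combined with the simple blow-up bound on $u_i$, to absorb the extra derivatives. A secondary technical point is the correct identification of the constant $a$: one must carefully track the prefactor $m_i^{\tau_i(n-2\sigma)/(2\sigma)}\to 1$ via $m_i^{\tau_i}\to 1$, a delicate consequence of the \emph{simple}, as opposed to merely isolated, blow-up hypothesis, and then evaluate the normalising integral $\int_{\mathbb{R}^n}(1+|w|^2)^{-(n+2\sigma)/2}\,\ud w$ after the final rescaling, matching it against the stated closed form.
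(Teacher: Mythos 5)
First, note that the paper itself does not prove Proposition \ref{prop7}: Appendix \ref{sec2} explicitly reviews results quoted from Jin--Li--Xiong \cite{jlxm}, so there is no in-paper argument to match against; your outline follows the standard route of that reference. Your treatment of $T_i'$ is essentially correct: the split at $r_i=R_im_i^{-(p_i-1)/(2\sigma)}$, the prefactor bookkeeping $m_i^{\tau_i(n-2\sigma)/(2\sigma)}\to 1$ via Proposition \ref{prop4}, the identification of the limit constant after $w=k^{1/2}z$ (which does reproduce the stated $a$), and the annulus bound of size $O(R_i^{-2\sigma+o(1)})$ obtained from $u_i(y)\le Cu_i(x_i)^{-1}|y-x_i|^{2\sigma-n}$ are all sound; the only points left implicit are the uniform dominating function on the growing balls $|z|\le R_i$ (which again comes from rescaling the Proposition \ref{prop4} bound) and the upgrade to $C^2$ when the annulus contains the diagonal $y=x$, where $|x-y|^{2\sigma-n-2}$ is locally integrable only for $\sigma>1$ (harmless here since $\sigma=1+m/2\ge 3/2$, but not resolved in the generality in which the proposition is stated).

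The genuine gap is in your argument for $T_i''$. You claim that $u_i$ is uniformly bounded on all of $\mathbb{R}^n\setminus B_1(x_i)$, propagating the boundary estimate by the Harnack inequality of Proposition \ref{prop2} ``together with the integrability inherited from the stereographic conformal factor.'' This cannot work: Proposition \ref{prop2} only gives Harnack inequalities on annuli $B_{2r}(x_i)\setminus\overline{B_{r/2}(x_i)}$ with $r<\bar r/3$, i.e.\ inside $B_{4/3}(x_i)$, and the hypotheses of Proposition \ref{prop4} say nothing about $u_i$ outside $B_2(x_i)$; there is no stereographic factor in this abstract setting. Worse, the claim is false precisely in the situation where the proposition is used in the proof of Theorem \ref{thm1}: when $k\ge 2$ the sequence $u_i$ has further blow-up points in $\mathbb{R}^n\setminus B_1(x_i)$, so it is not uniformly bounded there, and even granting boundedness, uniform integrability of $u_i(x_i)K_iu_i^{p_i}$ over the unbounded complement does not follow. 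The conclusion is nevertheless reachable without your intermediate claim: since $T_i''\ge 0$ and $T_i''(x)\le u_i(x_i)u_i(x)\le C|x-x_i|^{2\sigma-n}$ on $|x-x_i|\le 1$ by Proposition \ref{prop4}, one has a uniform bound $T_i''(x_0)\le C$ at a fixed reference point $x_0$; and for $x$ in a compact subset of $B_1$ and $y\notin B_1(x_i)$ the kernel $|x-y|^{2\sigma-n}$ and its $x$-derivatives are comparable, uniformly in $i$, to $|x_0-y|^{2\sigma-n}$, so the value bound at $x_0$ transfers to uniform $C^m$ bounds for $T_i''$ on compact subsets of $B_1$; Arzel\`a--Ascoli then gives the subsequential $C^2_{loc}(B_1)$ limit $h$. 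You should replace the boundedness-of-$u_i$ step by this potential-theoretic comparison; as written, that step is unjustified and, in the intended applications, wrong.
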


\begin{proposition}\label{prop5}
Under the hypotheses of Proposition \ref{prop4},  we have
$$
\int_{|x-x_{i}| \leq r_{i}}|x-x_{i}|^{s} u_{i}(x)^{p_{i}+1}\,\ud x=
\begin{cases}
O(u_{i}(x_{i})^{-2 s /(n-2 \sigma)}), & -n<s<n, \\
O(u_{i}(x_{i})^{-2 n /(n-2 \sigma)} \log u_{i}(x_{i})), & s=n, \\
o(u_{i}(x_{i})^{-2 n /(n-2 \sigma)}), & s>n,
\end{cases}
$$
and
$$
\int_{r_{i}<|x-x_{i}| \leq 1}|x-x_{i}|^{s} u_{i}(x)^{p_{i}+1}\,\ud x=
\begin{cases}
o(u_{i}(x_{i})^{-2 s /(n-2 \sigma)}), & -n<s<n, \\
O(u_{i}(x_{i})^{-2 n /(n-2 \sigma)} \log u_{i}(x_{i})), & s=n, \\
O(u_{i}(x_{i})^{-2 n /(n-2 \sigma)}), & s>n ,
\end{cases}
$$
where $r_{i}$ is as in Proposition \ref{prop3.1}.
\end{proposition}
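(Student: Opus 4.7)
The plan is to estimate the two integrals separately by exploiting the two regimes already established in the appendix: on the inner ball $|x-x_i|\leq r_i$ I will use the sharp profile convergence from Proposition \ref{prop3.1}, and on the outer annulus $r_i<|x-x_i|\leq 1$ I will use the pointwise decay estimate $u_i(x)\leq C u_i(x_i)^{-1}|x-x_i|^{2\sigma-n}$ from Proposition \ref{prop4}. In both cases everything reduces to a one-dimensional radial integral whose behavior splits into the three cases $-n<s<n$, $s=n$, $s>n$, and the key bookkeeping is that the critical bubble has the precise decay $(1+|y|^2)^{-n}$ at infinity, which produces the transition at the exponent $s=n$.

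For the inner integral, I would perform the rescaling $y=m_i^{(p_i-1)/2\sigma}(x-x_i)$, where $m_i=u_i(x_i)$, so $|x-x_i|=m_i^{-(p_i-1)/2\sigma}|y|$, $dx=m_i^{-n(p_i-1)/2\sigma}dy$, and $\{|x-x_i|\leq r_i\}$ is mapped to $\{|y|\leq R_i\}$. By Proposition \ref{prop3.1}, $m_i^{-1}u_i(m_i^{-(p_i-1)/2\sigma}y+x_i)\to(1+k|y|^2)^{(2\sigma-n)/2}$ in $C^2(B_{2R_i})$ with $k=\lim k_i$, so
$$
\int_{|x-x_i|\leq r_i}|x-x_i|^s u_i(x)^{p_i+1}\,\ud x=m_i^{(p_i+1)-(s+n)(p_i-1)/2\sigma}\!\!\int_{|y|\leq R_i}\!\!\frac{|y|^s}{(1+k_i|y|^2)^{(n-2\sigma)(p_i+1)/2}}(1+o(1))\,\ud y.
$$
Using $p_i=(n+2\sigma)/(n-2\sigma)-\tau_i$ together with $u_i(x_i)^{\tau_i}=1+o(1)$ from Proposition \ref{prop4}, the prefactor equals $m_i^{-2s/(n-2\sigma)}(1+o(1))$, while the bubble exponent is $-n+O(\tau_i)$. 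The $|y|$-integral then behaves like $\int|y|^{s-2n-1}\,d|y|$ at infinity: it converges uniformly in $R_i$ for $-n<s<n$, it produces a $\log R_i$ divergence at $s=n$ (absorbed into $\log m_i$ by taking $R_i\leq m_i^{C}$ in Proposition \ref{prop3.1}), and it grows like $R_i^{s-n}$ for $s>n$. In the last case the bound $o(m_i^{-2n/(n-2\sigma)})$ follows from $r_i=R_i m_i^{-(p_i-1)/2\sigma}\to 0$ in Proposition \ref{prop3.1}, which is exactly the relation $R_i^{s-n}=o(m_i^{2(n-s)/(n-2\sigma)})$ needed.

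For the outer integral, I would plug in the bound from Proposition \ref{prop4}, which gives $u_i(x)^{p_i+1}\leq C m_i^{-(p_i+1)}|x-x_i|^{(2\sigma-n)(p_i+1)}$ on $|x-x_i|\leq 1$, where $(2\sigma-n)(p_i+1)=-2n+O(\tau_i)$ and $p_i+1=2n/(n-2\sigma)+O(\tau_i)$. Passing to polar coordinates around $x_i$ reduces the outer integral to a constant multiple of
$$
m_i^{-(p_i+1)}\int_{r_i}^{1}\rho^{s-n-1}\,\ud\rho,
$$
up to $(1+o(1))$ factors. For $s>n$ this is uniformly bounded, giving the $O(m_i^{-2n/(n-2\sigma)})$ claim. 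For $s=n$ it produces $|\log r_i|\sim\log m_i$, matching the logarithmic estimate. For $-n<s<n$ it equals $(n-s)^{-1}r_i^{s-n}+O(1)$, and inserting $r_i=R_i m_i^{-(p_i-1)/2\sigma}$ gives
$$
m_i^{-(p_i+1)}r_i^{s-n}=m_i^{-2s/(n-2\sigma)}R_i^{s-n}(1+o(1)),
$$
which is $o(m_i^{-2s/(n-2\sigma)})$ since $R_i\to\infty$ and $s-n<0$.

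The genuinely delicate point will be to make sure that the perturbations of exponents coming from $\tau_i>0$ (writing the subcritical exponent as a small shift of the critical one) do not destroy the exponent balances above; this is handled uniformly by Proposition \ref{prop4}, which yields $m_i^{\tau_i}=1+o(1)$ and hence $m_i^{C\tau_i}=1+o(1)$ on any bounded range of $C$, so all the $O(\tau_i)$ corrections in the exponents contribute only a $1+o(1)$ multiplicative error. Beyond this, everything amounts to elementary radial integration, so the proof should proceed without further technical obstacles.
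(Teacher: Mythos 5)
Your argument is correct and follows the same route as the proof in Jin--Li--Xiong \cite{jlxm}, which this paper only quotes without reproof: rescale by $m_i^{(p_i-1)/2\sigma}$ and use the bubble profile of Proposition \ref{prop3.1} on $|x-x_i|\leq r_i$, use the decay $u_i(x)\leq C m_i^{-1}|x-x_i|^{2\sigma-n}$ of Proposition \ref{prop4} on the annulus, and absorb all $O(\tau_i)$ exponent shifts via $m_i^{\tau_i}=1+o(1)$. Two harmless slips: the radial integrand at infinity is $|y|^{s-n-1}$ (not $|y|^{s-2n-1}$), and for $s>n$ the needed relation is $R_i^{s-n}=o\bigl(m_i^{2(s-n)/(n-2\sigma)}\bigr)$, i.e.\ $r_i^{s-n}\to 0$; the conclusions you draw are the correct ones in both places.
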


\begin{proposition}\label{prop8}
Let $\sigma=1+m/2,$ $m\in\mathbb{N}_{+}$, $n=2\sigma+2$,
and $K_{i}\to K$ in $C^{2}(B_{3})$.
Let $p_{i}\leq \frac{n+2\sigma}{n-2\sigma}=n-1$, $p_{i}\to n-1,$
and $\tau_{i}=n-1-p_{i}.$
 Let $u_{i}(x)$ satisfy
$$
u_{i}(x)=\int_{\mathbb{R}^{n}}
 \frac{K_{i}(y) H(y)^{\tau_{i}}(y) u_{i}(y)^{p_{i}}}{|x-y|^{2}} \mathrm{d} y \quad \text { for } \quad x \in B_{3},
$$
where $H(y)=2/(1+|y|^2).$
Let $x_{i} \rightarrow 0$ is an isolated simple blow up point
of $\{u_{i}\}$ with constant $A_{3}$ and $\rho$, i.e.,
$
|x-x_{i}|^{(p_{i}-1) / 2 \sigma} u_{i}(x) \leq A_{3},
$
and $r^{\frac{p_{i}-1}{2 \sigma}} \bar{u}_{i}(r)$
has precisely one critical point in $(0, \rho)$ for large $i$,
where $\bar{u}_{i}(r)=$ $f_{\partial B_{r}(x_{i})} u_{i} \mathrm{~d}s$.

Then there exists some constants $C_1$, $C_2$ depending only on $n$, $A_3$,
$\|K\|_{C^{2}(B_{3})}$, $\rho$, such that
$$
|\nabla K_{i}(x_i)|\leq C_{1}u_{i}(x_{i})^{-1},\quad
\tau_{i}\leq C_{2} u_{i}(x_{i})^{-2}.
$$
\end{proposition}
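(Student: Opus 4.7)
The plan is to establish both estimates through Pohozaev-type identities at the isolated simple blow-up point $x_i$, leveraging the refined blow-up profile from Propositions~\ref{prop3.1}--\ref{prop7}. Write $m_i := u_i(x_i)$ and fix a small $\delta > 0$ independent of $i$.

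First I would decompose
$$u_i(x) = \frac{c_{n,\sigma}c(n,\sigma)}{2^{2\sigma}} \int_{B_\delta(x_i)} \frac{K_i(y)H(y)^{\tau_i}u_i(y)^{p_i}}{|x-y|^2}\,\ud y + h_\delta(x),$$
and apply Proposition~\ref{prop1.1} (translated to be centered at $x_i$) together with the Taylor expansions $K_i(y) = K_i(x_i) + \nabla K_i(x_i) \cdot (y-x_i) + O(|y-x_i|^2)$ and $H(y)^{\tau_i} = 1 + O(\tau_i)$. Using that $\frac{n-2\sigma}{2} - \frac{n}{p_i+1} = -\frac{(n-2\sigma)^2}{4n}\tau_i + O(\tau_i^2)$ and that $\int_{B_\delta(x_i)} u_i^{p_i+1}$ has a positive limit (by rescaling and Proposition~\ref{prop3.1}), the identity becomes, schematically,
$$c_1 \tau_i + O\!\left(|\nabla K_i(x_i)| \int_{B_\delta(x_i)}|y-x_i|\, u_i^{p_i+1}\right) + O\!\left(\int_{B_\delta(x_i)}|y-x_i|^2 u_i^{p_i+1}\right) = \mathrm{RHS}_i,$$
with $c_1 > 0$. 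The cases $s = 1, 2$ of Proposition~\ref{prop5} give $\int|y-x_i|^s u_i^{p_i+1} = O(m_i^{-s})$; Proposition~\ref{prop7} applied to $h_\delta$, together with $\int_{B_\delta(x_i)} K_i u_i^{p_i} = O(m_i^{-1})$ by rescaling, shows that the $h_\delta$-contributions in $\mathrm{RHS}_i$ are $O(m_i^{-2})$; and Proposition~\ref{prop4} makes the boundary term $O(m_i^{-(p_i+1)}) = o(m_i^{-2})$. This yields
\begin{equation}\label{obstr-est}
\tau_i \leq C_1|\nabla K_i(x_i)| m_i^{-1} + C_2 m_i^{-2}.
\end{equation}

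To control $|\nabla K_i(x_i)|$ I would exploit that $x_i$ is a local maximum, so differentiating the integral equation under the integral sign gives
$$0 = \nabla u_i(x_i) = (n-2\sigma)\int \frac{K_i(y) H(y)^{\tau_i} u_i(y)^{p_i}(y-x_i)}{|y-x_i|^{n-2\sigma+2}}\,\ud y.$$
Taylor-expanding $K_i(y)$ and rescaling $z = m_i^{(p_i-1)/2\sigma}(y-x_i)$ in the bubble region, the $K_i(x_i)$-coefficient piece vanishes identically against the radial limit $(1+k|z|^2)^{-\sigma}$, so by Proposition~\ref{prop3.1} contributes only a lower-order error; the subleading piece is a positive-definite quadratic form in $\nabla K_i(x_i)$ of size $\sim m_i\, |\nabla K_i(x_i)|$; the outer region, controlled by Proposition~\ref{prop4}, is of strictly lower order. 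Balancing these yields
\begin{equation}\label{grad-est2}
|\nabla K_i(x_i)| \leq C_3 m_i^{-1} + C_4 \tau_i.
\end{equation}
Plugging \eqref{grad-est2} into \eqref{obstr-est} gives $\tau_i \leq C m_i^{-2} + C \tau_i m_i^{-1}$, which absorbs (for $i$ large) to $\tau_i \leq C m_i^{-2}$; reinserting this into \eqref{grad-est2} yields $|\nabla K_i(x_i)| \leq C m_i^{-1}$.

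The main obstacle will be making the symmetry cancellations in the gradient identity quantitative: the rescaled profile $v_i$ converges to the radial bubble only in $C^2$ on expanding balls $B_{R_i}$ (Proposition~\ref{prop3.1}), so the ``odd-function integrates to zero'' argument must be checked carefully — one needs that the bubble-region error is $o(m_i)$ and that the outer-region tail (via Proposition~\ref{prop4} and the integral bounds of Proposition~\ref{prop5}) is of comparable or smaller order — in order to extract a genuine bound of order $m_i^{-1}$ on the gradient rather than merely $o(1)$. The critical value $n-2\sigma=2$ is essential here, both for the scaling exponents in the rescaled integrals and for the exact vanishing of the constant $\frac{n-2\sigma}{2} - \frac{n}{p^*+1}$.
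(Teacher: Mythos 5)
Your second half (the bound $\tau_i\le C m_i^{-2}$, $m_i:=u_i(x_i)$) is the standard route and is essentially sound \emph{modulo} the gradient bound: the dilation Pohozaev identity of Proposition \ref{prop1.1} on $B_\delta(x_i)$, the expansion $\frac{n-2\sigma}{2}-\frac{n}{p_i+1}\sim-\tau_i/n$, Proposition \ref{prop5} with $s=1,2$, $h_\delta=O(m_i^{-1})$, $|\nabla h_\delta|\le Cm_i^{-1}$ (cf.\ \eqref{2.26}), $\int_{B_\delta}u_i^{p_i}=O(m_i^{-1})$ and the boundary bound from Proposition \ref{prop4} do give $\tau_i\le C|\nabla K_i(x_i)|m_i^{-1}+Cm_i^{-2}$. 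Note the paper itself does not prove Proposition \ref{prop8}; it is quoted from \cite{jlxm}, where \emph{both} estimates come from Pohozaev-type identities — the dilation one for $\tau_i$ and its translation companion for $\nabla K_i(x_i)$.

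The genuine gap is in your derivation of $|\nabla K_i(x_i)|\le C_3m_i^{-1}+C_4\tau_i$ from $\nabla u_i(x_i)=0$. Differentiating the representation at $x_i$ and Taylor-expanding $K_iH^{\tau_i}$ leaves the term $K_i(x_i)H(x_i)^{\tau_i}\int_{B_\delta(x_i)}\frac{(y-x_i)}{|y-x_i|^{4}}H^{\tau_i}u_i^{p_i}\,\ud y$, whose natural size after rescaling is $m_i^{2}$, while the useful term is $\sim m_i\,\nabla K_i(x_i)$, the quadratic remainder is $O(1)$ and $\nabla h_\delta(x_i)=O(m_i^{-1})$. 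To conclude $|\nabla K_i(x_i)|=O(m_i^{-1})$ you must show that the first term is $O(1)$, i.e.\ a cancellation of \emph{relative} order $m_i^{-2}$; Proposition \ref{prop3.1} only gives $C^2$-closeness $\varepsilon_i$ to the radial bubble on $B_{R_i}$, hence $o(m_i^{2})$, and even the target you set yourself (``bubble-region error $o(m_i)$'') would only yield $|\nabla K_i(x_i)|=o(1)$. No quantitative symmetry estimate of the required precision follows from Propositions \ref{prop3.1}--\ref{prop7}; one would need a second-order expansion of $u_i$ about the bubble, which is exactly what is not available at this stage. The standard repair is to replace the identity $\nabla u_i(x_i)=0$ by the translation Pohozaev identity (the companion of Proposition \ref{prop1.1} with constant vector fields, as in \cite{jlxm} and Y.Y.~Li's work): antisymmetrizing the kernel makes the dangerous radial term cancel \emph{exactly}, and the identity reads, up to constants, $\frac{1}{p_i+1}\int_{B_\delta(x_i)}\nabla\bigl(K_iH^{\tau_i}\bigr)u_i^{p_i+1}=O\bigl(\int_{B_\delta}u_i^{p_i}|\nabla h_\delta|\bigr)+O\bigl(\int_{\partial B_\delta}u_i^{p_i+1}\bigr)=O(m_i^{-2})$; then Taylor expansion, Proposition \ref{prop5} with $s=1$, and the positive lower bound on $\int_{B_\delta}u_i^{p_i+1}$ give $|\nabla K_i(x_i)|\le Cm_i^{-1}$ directly, and inserting this into your dilation computation closes the second estimate. (Minor point: with $n-2\sigma=2$ the rescaled limit profile is $(1+k|z|^2)^{-1}$, not $(1+k|z|^2)^{-\sigma}$.)
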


\section{Appendix}\label{sec5}
In this appendix, we provide some estimates that can be verified by
elementary calculations  which have been used in the proof of Theorem \ref{thm4}.

For $P\in \mathbb{S}^n$ and $t>0,$ let
$$
\delta_{P,t}(x)=\frac{t}
{1+\frac{t^{2}-1}{2}(1-\cos\, d(x, P))}
,\quad x\in \mathbb{S}^{n},
$$
where $d(\cdot\, , \,\cdot )$ is the distance induced by the standard metric of $\mathbb{S}^{n}.$
Let $P$ be the south pole of $\mathbb{S}^{n}$ and make a stereographic
projection with respect to the equatorial plane, we then have
$$
\delta_{P, t}(y)=\frac{t(1+|y|^{2})}{1+t^{2}|y|^{2}},
\quad \forall\, y \in \mathbb{R}^{n}.
$$

\begin{lemma}\label{cplema1}
Let  $2\leq\alpha \leq \beta,$ there exists a positive constant
$C$ depending only on $\beta$ such that, for any $a \geq 0,$ $b \in \mathbb{R},$
$$
\Big|
| a+b|^{\alpha-1}(a+b)-a^{\alpha}-\alpha a^{\alpha-1} b-\frac{\alpha(\alpha-1)}{2} a^{\alpha-2} b^{2}
 \Big|
  \leq C(|b|^{\alpha}+a^{\gamma}|b|^{\alpha-\gamma}),
$$
where $\gamma=\max\{0,\alpha-3\}.$
\end{lemma}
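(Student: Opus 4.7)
The plan is to view the expression as the second-order Taylor remainder of $f(t) := |t|^{\alpha-1}t$, which is $C^2$ on $\mathbb{R}$ for $\alpha \geq 2$ with $f'(t) = \alpha|t|^{\alpha-1}$ and $f''(t) = \alpha(\alpha-1)|t|^{\alpha-2}\operatorname{sgn}(t)$. Denote
$$
R(a,b) := |a+b|^{\alpha-1}(a+b) - a^{\alpha} - \alpha a^{\alpha-1}b - \tfrac{\alpha(\alpha-1)}{2}a^{\alpha-2}b^{2} = f(a+b) - f(a) - f'(a)b - \tfrac{1}{2}f''(a)b^{2}.
$$
I would split into two regimes according to whether $|b| \leq a/2$ or $|b| > a/2$, and treat each separately. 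This is the natural dichotomy because in the first regime the segment from $a$ to $a+b$ stays uniformly away from the singular set $\{0\}$ of $f''$, while in the second regime $a$ is dominated by $|b|$ and each term on the left can be controlled separately.

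In the regime $|b| > a/2$ (the ``large-$b$'' case), one bounds each of the four terms on the left directly using $a \leq 2|b|$: $|a+b|^{\alpha} \leq (3|b|)^{\alpha}$, $a^{\alpha} \leq (2|b|)^{\alpha}$, $\alpha a^{\alpha-1}|b| \leq \alpha 2^{\alpha-1}|b|^{\alpha}$, and $\tfrac{\alpha(\alpha-1)}{2}a^{\alpha-2}b^{2} \leq \tfrac{\alpha(\alpha-1)}{2} 2^{\alpha-2}|b|^{\alpha}$. Summing gives $|R(a,b)| \leq C_{\beta}|b|^{\alpha}$, which is absorbed into the first term on the right-hand side since $\alpha \leq \beta$.

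In the regime $|b| \leq a/2$ (the ``small-$b$'' case), the segment $a + sb$ with $s \in [0,1]$ stays in $[a/2, 3a/2]$, so $f$ is as smooth as needed there. I would use the integral form of the Taylor remainder,
$$
R(a,b) = \int_{0}^{1} (1-s)\bigl[f''(a+sb) - f''(a)\bigr] b^{2}\,\mathrm{d}s.
$$
For $2 \leq \alpha \leq 3$, the function $f''$ is globally $(\alpha-2)$-Hölder continuous on $\mathbb{R}$ with constant depending only on $\alpha$, so $|f''(a+sb) - f''(a)| \leq C(s|b|)^{\alpha-2}$ and hence $|R(a,b)| \leq C_{\beta}|b|^{\alpha}$, which matches the bound with $\gamma = 0$. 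For $\alpha > 3$, the interval $[a/2, 3a/2]$ is uniformly bounded away from $0$, so $f''$ is $C^{1}$ there with $|f'''(t)| \leq C t^{\alpha-3}$; this yields $|f''(a+sb) - f''(a)| \leq C a^{\alpha-3}s|b|$ and therefore $|R(a,b)| \leq C_{\beta} a^{\alpha-3}|b|^{3} = C_{\beta} a^{\gamma}|b|^{\alpha-\gamma}$.

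The proof is genuinely elementary; the only bookkeeping obstacle is tracking the borderline exponents where the regularity behaviour of $t \mapsto |t|^{\alpha-2}\operatorname{sgn}(t)$ changes, namely $\alpha = 2$ (where $f''$ has a jump at $0$, but the small-$b$ regime keeps us uniformly on one side of the origin) and $\alpha = 3$ (where $\gamma$ transitions from $0$ to $\alpha - 3$, but the two bounds $|b|^{\alpha}$ and $a^{\gamma}|b|^{\alpha-\gamma}$ coincide). Both cases are absorbed uniformly by the argument above with a constant $C$ depending only on $\beta$.
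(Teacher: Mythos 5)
Your proof is correct. The paper states this lemma in Appendix B without proof, describing it as an estimate "verified by elementary calculations," and your argument — the dichotomy $|b|\le a/2$ versus $|b|>a/2$, with the integral-form Taylor remainder of $f(t)=|t|^{\alpha-1}t$ in the first regime (using the $(\alpha-2)$-H\"older bound on $f''$ for $2\le\alpha\le 3$ and the bound $|f'''(t)|\le C a^{\alpha-3}$ on $[a/2,3a/2]$ for $\alpha>3$) and crude term-by-term estimates in the second — is exactly the standard elementary route, with all constants controlled by $\beta$ as required.
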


\begin{lemma}\label{lem3}
For any $2\leq \alpha\leq 3$ and  any $a,b\geq 0,$
there exists some universal
 constant $C>0$ such that for any $a,b\geq 0,$ we have
$$
|(a+b)^{\alpha}-a^{\alpha}-b^{\alpha}-\alpha a^{\alpha-1}b|\leq C
 a^{\alpha-2}b^{2},
$$
$$
|(a+b)^{\alpha}-a^{\alpha}-b^{\alpha}|\leq C|a^{\alpha-1}b+ab^{\alpha-1}|.
$$
For any $1\leq \alpha\leq 2,$ there exists some universal
 constant $C>0$ such that for any $a,b\geq 0,$ we have
$$
 |(a+b)^{\alpha}-a^{\alpha}|\leq C(a^{\alpha-1}b+b^{\alpha}).
$$
\end{lemma}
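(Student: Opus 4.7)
\noindent\textbf{Proof plan for Lemma \ref{lem3}.} All three inequalities are homogeneous, so the natural first move is to reduce to a one–variable statement. If $a=0$ each inequality is trivial (both sides vanish or match), so I may assume $a>0$, divide through by $a^\alpha$, and set $t:=b/a\ge 0$. The three inequalities then become, respectively,
\begin{align*}
\phi_1(t)&:=(1+t)^{\alpha}-1-t^{\alpha}-\alpha t\le C\, t^{2},\qquad 2\le\alpha\le 3,\\
\phi_2(t)&:=(1+t)^{\alpha}-1-t^{\alpha}\le C\,(t+t^{\alpha-1}),\qquad 2\le\alpha\le 3,\\
\phi_3(t)&:=(1+t)^{\alpha}-1\le C\,(t+t^{\alpha}),\qquad 1\le\alpha\le 2,
\end{align*}
for all $t\ge0$, with $C$ a universal constant (independent of $\alpha$ in the stated ranges).

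\noindent My plan is a two-regime split for each function: a \emph{small-$t$} regime ($0\le t\le 1$) handled by Taylor's theorem, and a \emph{large-$t$} regime ($t\ge 1$) handled by direct $L^{\infty}$-bounds after pulling out the dominant power. For $\phi_1$: on $[0,1]$, Taylor expand $g(t)=(1+t)^\alpha$ around $0$ to second order, giving $g(t)=1+\alpha t+\tfrac{\alpha(\alpha-1)}{2}t^2+R(t)$ with $|R(t)|\le C t^3\le C t^2$; since also $t^\alpha\le t^2$ on $[0,1]$ (because $\alpha\ge 2$), the claim follows. On $[1,\infty)$, factor $(1+t)^\alpha=t^\alpha(1+1/t)^\alpha$ and observe $(1+1/t)^\alpha-1\le C/t$, so $(1+t)^\alpha-t^\alpha\le C t^{\alpha-1}\le Ct^2$ because $\alpha\le 3$; the extra terms $1+\alpha t$ are also bounded by $Ct^2$ on $[1,\infty)$.

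\noindent The arguments for $\phi_2$ and $\phi_3$ follow the same pattern. For $\phi_2$ on $[0,1]$, the first-order Taylor remainder of $(1+t)^\alpha$ is bounded by $Ct^2\le Ct$; on $[1,\infty)$, the factorization trick gives $(1+t)^\alpha-t^\alpha\le C t^{\alpha-1}$ directly. For $\phi_3$ with $\alpha\in[1,2]$, the inequality $(1+t)^\alpha-1\le Ct$ on $[0,1]$ is the mean value theorem applied to $s\mapsto s^\alpha$ on $[1,1+t]$, and on $[1,\infty)$, $(1+t)^\alpha\le 2^\alpha t^\alpha\le Ct^\alpha$. Throughout, uniformity of $C$ in $\alpha$ over the compact ranges $[2,3]$ and $[1,2]$ is automatic since all constants appearing in the Taylor remainders and the factorization estimates are continuous functions of $\alpha$.

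\noindent The main obstacle, such as it is, is purely bookkeeping: making sure the constants really are universal (i.e.\ bounded uniformly on the closed $\alpha$-intervals) and that the two regimes match up at $t=1$. There is no genuine difficulty, as these are standard convexity/Taylor estimates; the lemma is essentially a simplified, unscaled version of Lemma \ref{cplema1}, and one could alternatively derive each statement as a direct corollary of that lemma by specializing the exponent and grouping terms.
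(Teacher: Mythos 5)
Your argument is correct, and in fact the paper offers no proof at all to compare against: Lemma \ref{lem3} appears in the appendix of ``estimates that can be verified by elementary calculations,'' so your homogenization to $t=b/a$ followed by a Taylor estimate on $[0,1]$ and a factor-out-the-dominant-power estimate on $[1,\infty)$ is exactly the kind of routine verification the authors left implicit, and all three one-variable reductions check out with constants uniform over the compact $\alpha$-ranges. The only point I would flag is your closing remark that the lemma ``could alternatively be derived as a direct corollary of Lemma \ref{cplema1} by specializing the exponent and grouping terms'': applied naively with the same roles of $a$ and $b$, Lemma \ref{cplema1} (with $\gamma=0$) bounds the error by $Cb^{\alpha}$ plus the discarded $\tfrac{\alpha(\alpha-1)}{2}a^{\alpha-2}b^{2}$ term, and $b^{\alpha}$ is \emph{not} controlled by $a^{\alpha-2}b^{2}$ when $b>a$ and $\alpha>2$, since the needed cancellation of the $b^{\alpha}$ contribution is lost through the triangle inequality; one can rescue that route by splitting into $b\le a$ and $b>a$ and applying Lemma \ref{cplema1} with the roles of $a$ and $b$ interchanged in the second regime, but as stated the shortcut has a gap. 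Since your main proof does not rely on this remark, the proposal stands as a complete and correct argument.
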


\begin{lemma}
We have
$$
\int_{\mathbb{R}^{n}}\frac{1}{(1+|x|^2)^n}=
\frac{(n-2)|\mathbb{S}^{n-1}|}{4(n-1)}
\ub(\frac{n}{2},\frac{n}{2}-1),
$$
$$
\int_{\mathbb{R}^n}\frac{|x|^2}{(1+|x|^2)^n}
=\frac{n|\mathbb{S}^{n-1}|}{4(n-1)}\ub(\frac{n}{2},\frac{n}{2}-1),
$$
$$
\int_{\mathbb{R}^n} \frac{|x|^2-1}{(1+|x|^2)^n}
=\frac{|\mathbb{S}^{n-1}|}{2(n-1)}\ub(\frac{n}{2},\frac{n}{2}-1),
$$
where $\ub(\frac{n}{2},\frac{n}{2}-1)$ is the Beta function.
\end{lemma}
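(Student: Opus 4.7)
The plan is to reduce each of the three integrals to a standard Beta-function integral by exploiting the spherical symmetry of the integrand and then using the elementary recursion relations for $\mathrm{B}$. First I would write, for any radially symmetric integrable $f$,
\[
\int_{\mathbb{R}^{n}}f(|x|)\,\ud x = |\mathbb{S}^{n-1}|\int_{0}^{\infty}f(r)\,r^{n-1}\,\ud r,
\]
and apply the substitution $u=r^{2}$ (so $r^{n-1}\,\ud r = \tfrac{1}{2}u^{n/2-1}\,\ud u$) to convert everything to integrals of the form $\int_{0}^{\infty}\tfrac{u^{p-1}}{(1+u)^{p+q}}\,\ud u = \mathrm{B}(p,q)$.

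For the first identity, the substitution gives
\[
\int_{\mathbb{R}^{n}}\frac{\ud x}{(1+|x|^{2})^{n}} = \frac{|\mathbb{S}^{n-1}|}{2}\int_{0}^{\infty}\frac{u^{n/2-1}}{(1+u)^{n}}\,\ud u = \frac{|\mathbb{S}^{n-1}|}{2}\,\mathrm{B}\!\left(\tfrac{n}{2},\tfrac{n}{2}\right),
\]
and then I would use the identity $\mathrm{B}(p,q) = \frac{q-1}{p+q-1}\,\mathrm{B}(p,q-1)$ with $p=q=n/2$ to rewrite the right-hand side as $\frac{(n-2)|\mathbb{S}^{n-1}|}{4(n-1)}\,\mathrm{B}(n/2,n/2-1)$, which is exactly the asserted value. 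For the second identity, the same substitution yields
\[
\int_{\mathbb{R}^{n}}\frac{|x|^{2}}{(1+|x|^{2})^{n}}\,\ud x = \frac{|\mathbb{S}^{n-1}|}{2}\,\mathrm{B}\!\left(\tfrac{n}{2}+1,\tfrac{n}{2}-1\right),
\]
and the identity $\mathrm{B}(p+1,q) = \frac{p}{p+q}\,\mathrm{B}(p,q)$ with $p=n/2$ and $q=n/2-1$ converts this to $\frac{n|\mathbb{S}^{n-1}|}{4(n-1)}\,\mathrm{B}(n/2,n/2-1)$.

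Finally, the third identity follows immediately from linearity by subtracting the first from the second, so it does not require its own calculation. There is no real obstacle here beyond bookkeeping: the only mild subtlety is choosing the correct form of the Beta-function recursion so that the target $\mathrm{B}(n/2,n/2-1)$ (rather than $\mathrm{B}(n/2,n/2)$ or $\mathrm{B}(n/2+1,n/2-1)$) appears in the denominator-free form used throughout the Pohozaev-identity calculations in Section~\ref{sec3}.
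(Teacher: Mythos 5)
Your proof is correct: the polar-coordinate reduction, the substitution $u=r^{2}$ giving $\tfrac{1}{2}\mathrm{B}(\tfrac n2,\tfrac n2)$ and $\tfrac12\mathrm{B}(\tfrac n2+1,\tfrac n2-1)$ times $|\mathbb{S}^{n-1}|$, the Beta-function recursions you invoke, and the subtraction for the third identity all check out and yield exactly the stated constants. This is precisely the elementary calculation the paper leaves to the reader in its appendix, so there is nothing to add.
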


\begin{lemma}\label{lema.1}
 Let $\varepsilon_{0}, \tau>0$ be suitably small and $A>0$ be suitably large.
 Let $A^{-1} \tau^{-1 / 2}<t_{1}, t_{2}<A \tau^{-1 / 2},$
 $P_{1},P_{2}\in \mathbb{S}^n,$ $|P_{1}-P_{2}|\geq \varepsilon_{0},$
 $\delta_{P_{i},t_{i}}$ be as in
 \eqref{delta} and $G_{P_1}(P_2)$ be as in \eqref{1.8},
 where $|P_{1}-P_{2}|$ represents the distance between  two points $P_{1}$
 and $P_{2}$  after through a stereographic  projection$.$
Then, we have,
\begin{align}\label{a.2}
\int_{\mathbb{S}^n}\delta_{P_1, t_1}^2\delta_{P_2,t_2}
=2^{n+1}\Big( \int_{\mathbb{R}^n} \frac{1}{(1+|x|^2)^{n-1}}\Big)\frac{G_{P_{1}}(P_2)}{t_{1}t_{2}}
+O(\tau^{2}),
\end{align}
\begin{align}\label{a.3}
\int_{\mathbb{S}^n}\delta_{P_1,t_1}^{n-1-\tau}\delta_{P_2,t_2}=O(\tau),
\end{align}
\begin{align}\label{a.1}
\frac{\partial}{\partial t_{1}}
\int_{\mathbb{S}^n}\delta_{P_1,t_{1}}^{n-1}\delta_{P_2, t_{2}}=
-(n-1)2^{n+1}\frac{G_{P_{1}}(P_2)}{t_{1}^{2}t_{2}}
\Big(\int_{\mathbb{R}^{n}}\frac{|x|^2-1}{(1+|x|^2)^n}\Big)
+O(\tau^2),
\end{align}
\begin{align}\label{part3-tau}
\frac{\partial}{\partial t_1}\int_{\mathbb{S}^n}
\delta^{n-\tau}_{P_1,t_1}=-\frac{\tau}{t_1}\int_{\mathbb{R}^n}
\frac{2^n}{(1+|x|^2)^{n}}+O(\tau^{\frac{5}{2}}|\log \tau|),
\end{align}
\begin{align}\label{a.5}
\frac{\partial}{\partial t_1}
\int_{\mathbb{S}^n}
|P-P_1|^2\delta_{P_1,t_1}^{n-\tau}=
-\frac{2^{n+1}}{t_1^3}
\int_{\mathbb{R}^n}
\frac{|x|^{2}}{(1+|x|^2)^n}+O(\tau^{\frac{5}{2}}|\log \tau|).
\end{align}
\end{lemma}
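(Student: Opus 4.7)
The plan is to establish all five estimates by a single uniform strategy: pass to the stereographic projection centered at $P_1$, so that $\delta_{P_1,t_1}$ takes the standard Euclidean bubble form
\[
\delta_{P_1,t_1}(y) = \frac{t_1(1+|y|^2)}{1+t_1^2|y|^2}, \qquad \text{with}\quad dvol_{g_0} = \Big(\frac{2}{1+|y|^2}\Big)^n dy,
\]
and then rescale by $y = z/t_1$ so that the concentration sits on bounded $|z|$. The hypothesis $A^{-1}\tau^{-1/2}<t_i<A\tau^{-1/2}$ gives $1/t_i \sim \tau^{1/2}$, so each extra power of $1/t_1$ produced by a Taylor expansion translates into a half-power of $\tau$, matching the claimed error terms.

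For the three two-bubble estimates \eqref{a.2}, \eqref{a.3}, and \eqref{a.1}, the key step will be to Taylor-expand the slowly varying factor $\delta_{P_2,t_2}$ around $P_1$. Using $|\xi-P_2|_{\mathbb{R}^{n+1}}^2 = 2(1-\cos d(\xi,P_2))$, a direct computation yields
\[
\delta_{P_2,t_2}(P_1) = \frac{t_2}{1+\tfrac{t_2^2-1}{2}(1-\cos d(P_1,P_2))} = \frac{2G_{P_1}(P_2)}{t_2} + O(t_2^{-3}),
\]
while the first-order Taylor correction in the stereographic variable $y$ is $O(|z|/t_1)$, odd in $z$ and therefore vanishing against the rotationally symmetric bubble kernel that appears after rescaling. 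The remaining even $O(|z|^2/t_1^2)$ piece produces the claimed $O(\tau^2)$ remainder in \eqref{a.2}. Estimate \eqref{a.3} follows by factoring $\delta^{n-1-\tau}=\delta^{n-1}\cdot\delta^{-\tau}$ and using $\delta_{P_1,t_1}^{-\tau} = 1+O(\tau|\log\tau|)$ uniformly on the region where the integrand contributes; this reduces \eqref{a.3} to \eqref{a.2} with the softer bound $O(\tau)$. For \eqref{a.1}, I would differentiate under the integral: since $t_1\partial_{t_1}\delta_{P_1,t_1}^{n-1} = (n-1)\delta_{P_1,t_1}^{n-1}(1-t_1^2|y|^2)/(1+t_1^2|y|^2)$, the rescaled kernel becomes $-(n-1)(|z|^2-1)/(1+|z|^2)^n$, which yields the stated coefficient.

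For the single-bubble estimates \eqref{part3-tau} and \eqref{a.5}, the same stereographic-plus-rescaling procedure reduces the integrals to
\[
\int_{\mathbb{S}^n}\delta_{P_1,t_1}^{n-\tau} = 2^n t_1^{-\tau}\int_{\mathbb{R}^n}\frac{(1+|z|^2/t_1^2)^{-\tau}}{(1+|z|^2)^{n-\tau}}\,dz,
\]
together with an analogous formula for \eqref{a.5} carrying the extra factor $4|y|^2/(1+|y|^2)^2 \rightsquigarrow 4|z|^2/(t_1^2(1+|z|^2/t_1^2)^2)$. Differentiating in $t_1$ generates two contributions: the main term $-(\tau/t_1)\cdot 2^n\int(1+|z|^2)^{-n}\,dz$ coming from $\partial_{t_1}(t_1^{-\tau})$, and a subleading term of size $O(\tau/t_1^3) = O(\tau^{5/2})$ coming from $\partial_{t_1}(1+|z|^2/t_1^2)^{-\tau}$. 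Expanding $t_1^{-\tau} = 1+O(\tau|\log\tau|)$ and $(1+|z|^2/t_1^2)^{-\tau} = 1+O(\tau|z|^2/t_1^2)$ will then give \eqref{part3-tau} and \eqref{a.5} with the stated error $O(\tau^{5/2}|\log\tau|)$.

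The main technical obstacle will be controlling the tail contributions outside the concentration region $\{|y|<\varepsilon_0/2\}$ and checking that the parity cancellation of the odd-in-$z$ Taylor correction survives the product with $\delta_{P_2,t_2}(F(z/t_1))$ (whose even leading part is the constant $\delta_{P_2,t_2}(P_1)$). The tail is controlled by bounds of the shape
\[
\int_{\{|y|\geq \varepsilon_0/2\}}\delta_{P_1,t_1}^{n-1}\delta_{P_2,t_2}\,dvol_{g_0} \;\lesssim\; t_1^{1-n}\,\|\delta_{P_2,t_2}\|_{L^1(\mathbb{S}^n)} \;\lesssim\; \tau^{n/2},
\]
using the standard sphere-integral $\|\delta_{P,t}\|_{L^1}\sim 1/t$. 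For the $O(\tau^2)$ remainder in \eqref{a.2} this demands $n/2 \geq 2$, i.e. $n\geq 4$; the standing assumption $\sigma=1+m/2$, $m\in\mathbb{N}_+$, gives $n=2\sigma+2\geq 5$, so the argument closes with room to spare.
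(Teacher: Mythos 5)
The paper itself offers no proof of this lemma (it is stated in Appendix B as a collection of elementary computations), so your sketch has to stand on its own. Your strategy -- stereographic projection centred at $P_1$, rescaling $y=z/t_1$, freezing $\delta_{P_2,t_2}$ at $P_1$ via $\delta_{P_2,t_2}(P_1)=2G_{P_1}(P_2)/t_2+O(t_2^{-3})$, killing the first-order Taylor term by parity against the radial kernel, and crude tail bounds using $\|\delta_{P,t}\|_{L^1}\sim 1/t$ -- is the right one and does deliver \eqref{a.2}, \eqref{a.3}, \eqref{a.1} and \eqref{part3-tau} with the stated errors. (Two remarks: in \eqref{a.2} the printed exponent $2$ must be read as $n-1$, exactly as your tail estimate already does; with the literal exponent $2$ the left-hand side is $O(t_1^{-2}t_2^{-1})=O(\tau^{3/2})$ and the displayed identity cannot hold, whereas with $n-1$ the right-hand side is the classical interaction expansion, consistent with \eqref{a.1}. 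Also keep track that the first-order Taylor coefficient of $\delta_{P_2,t_2}\circ F$ carries a factor $1/t_2$; without the parity cancellation that term would only be $O(\tau^{3/2})$, so the symmetry argument you flag is genuinely needed, and it is exact on the ball $\{|y|\le\rho\}$.)

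The treatment of \eqref{a.5}, however, does not work as written, for two concrete reasons. First, the weight: the paper defines $|P-P_1|$ as the Euclidean distance of the stereographic coordinates with $P_1$ the south pole, so the extra factor is simply $|y|^2=|z|^2/t_1^2$, not $4|y|^2/(1+|y|^2)^2$; your factor is neither this nor the chordal distance squared $4|y|^2/(1+|y|^2)$, and since it equals $4|y|^2$ to leading order it would produce a constant four times the stated $2^{n+1}$. Second, and more importantly, the mechanism you assign to the main term is wrong for \eqref{a.5}: after rescaling one has $\int_{\mathbb{S}^n}|P-P_1|^2\delta_{P_1,t_1}^{n-\tau}=2^n\,t_1^{-\tau-2}\int_{\mathbb{R}^n}|z|^2(1+|z|^2/t_1^2)^{-\tau}(1+|z|^2)^{\tau-n}\,dz$, and the leading contribution to the $t_1$-derivative comes from differentiating the explicit factor $t_1^{-2}$ created by the weight, which gives $-\tfrac{2^{n+1}}{t_1^{3}}\int_{\mathbb{R}^n}|z|^2(1+|z|^2)^{-n}dz\sim\tau^{3/2}$. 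The term you designate as the main term, coming from $\partial_{t_1}(t_1^{-\tau})$, is here only $O(\tau/t_1^{3})=O(\tau^{5/2})$ and belongs to the error, as does $\partial_{t_1}(1+|z|^2/t_1^2)^{-\tau}$. With the correct weight and this bookkeeping the computation closes exactly as in \eqref{part3-tau}, with errors $O(\tau^{5/2}|\log\tau|)$; but as stated, your argument for \eqref{a.5} would neither produce the displayed coefficient nor identify where it comes from.
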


\begin{lemma}
Under the hypotheses of Lemma \ref{lema.1}, in addition that $\Theta_{5},\Theta_{6}$ are
 positive constants independent of $\tau.$
Then, we have,
\begin{align}\label{p1p1}
\langle \delta_{P_{1}, t_{1}}, \delta_{P_{1}, t_{1}}\rangle=
2^{n-1}|\mathbb{S}^{n-1}|\mathrm{B}(\frac{n}{2},\frac{n}{2}),
\end{align}
\begin{align}\label{a.10}
\langle \delta_{P_{1}, t_{1}}, \delta_{P_{2}, t_{2}}\rangle
=O(\tau),
\end{align}
\begin{align}\label{part11}
\Big\langle
\frac{\partial\delta_{P_1, t_1}}{\partial t_{1}},
\frac{\partial\delta_{P_1, t_1}}{\partial t_{1}}
\Big\rangle=\Theta_5 t_1^{-2}=O(\tau),
\end{align}
\begin{align}\label{P1P1}
\Big\langle \frac{\partial\delta_{P_1,t_1}}
{\partial P_1^{(\ell)}},
\frac{\partial{\delta_{P_1,t_1}}}
{\partial P_1^{(\ell)}}
\Big\rangle=\Theta_{6}t_1^2,
\quad
\Big\langle \frac{\partial\delta_{P_1,t_1}}
{\partial P_1^{(\ell)}},
\frac{\partial{\delta_{P_1,t_1}}}
{\partial P_1^{(m)}}
\Big\rangle=0,\, \forall\, \ell\neq m,
\end{align}
\begin{align}\label{1-tau}
\begin{aligned}
&\|\delta_{P_1,t_1}^{n-2-\tau}\delta_{P_2, t_2}\|_{L^{n/(n-1)}
(\mathbb{S}^n)}= O(\tau),\\
&\|\delta_{P_{1},t_1}^{n-3-\tau}\delta_{P_2,t_2}^2\|_{L^{n/(n-1)}(\mathbb{S}^n)}=O(\tau),
\end{aligned}
\end{align}
\begin{align}\label{a.9}
\begin{aligned}
&\|\delta_{P_1,t_1}^{n-1-\tau}-\delta_{P_1, t_1}^{n-1} \|_{L^{n/(n-1)}(\mathbb{S}^n)}=
O(\tau |\log \tau|),\\
&\|\delta_{P_1,t_1}^{n-2-\tau}-\delta_{P_1, t_1}^{n-2} \|_{L^{n/(n-2)}(\mathbb{S}^n)}
=O(\tau |\log\tau|),
\end{aligned}
\end{align}
\begin{align}\label{a.71}
\|\delta_{P_1,t_1}^{n-\tau}-\delta_{P_1,t_1}^n\|_{L^{1}(\mathbb{S}^n)}
=O(\tau|\log \tau|),
\end{align}
\begin{align}\label{p-p1}
\begin{aligned}
&\big\||\cdot-P_1|\delta_{P_1, t_1}^{n-1}\big\|_{L^{n/(n-1)}(\mathbb{S}^n)}=O( \tau^{1/2}),\\
&
\big\||\cdot-P_1|^2 \delta^{n-1}_{P_{1}, t_1}\big\|_{L^{n/(n-1)}(\mathbb{S}^n)}
=O(\tau),
\end{aligned}
\end{align}
\begin{align}\label{n.1}
\begin{aligned}
&\big\| |\cdot-P_{1}|\delta_{P_{1},t_{1}}^{n-2}|\big\|_{L^{n/(n-2)}(\mathbb{S}^n)}=O(\tau^{1/2}),
\\
&\big\| |\cdot-P_{1}|\delta_{P_{1},t_{1}}^{n-2-\tau}|\big\|_{L^{n/(n-2)}(\mathbb{S}^n)}=O(\tau^{1/2}),
\end{aligned}
\end{align}
\begin{align}\label{n.6}
\Big\|\delta_{P_{1},t_{1}}^{n-3-\tau}\delta_{P_{2},t_{2}}^2
\frac{\partial \delta_{P_{1},t_{1}}}{\partial t_{1}}
\Big\|_{L^{1}(\sn)}=o(\tau^{3/2}),
\end{align}
\begin{align}\label{a.11}
\Big\|
\delta_{P_2,t_2}^{n-2-\tau}
\frac{\partial \delta_{P_1,t_1}}{\partial t_1}\Big\|_{L^{n/(n-1)}(\mathbb{S}^n)}
= O(\tau^{3/2}),
\end{align}
\be\label{n.2}
\Big\|\delta_{P_{1},t_{1}}^{n-3-\tau}\delta_{P_{2},t_{2}}\frac{\partial \delta_{P_{1},t_{1}}}
{\partial t_{1}}\Big\|_{L^{n/(n-1)}(\mathbb{S}^n)}
=O(\tau^{3/2}).
\ee
\end{lemma}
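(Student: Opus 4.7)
The plan is to reduce every estimate to an explicit integral on $\mathbb{R}^{n}$ via the stereographic projection with $P_{1}$ (or $P_{2}$) as the south pole, to exploit the scale-covariance of the bubble family $\delta_{P,t}$ under the rescaling $y\mapsto y/t$, and to use the fact that each $\delta_{P_{i},t_{i}}$ is an exact solution of $P_{\sigma}\delta_{P_{i},t_{i}}=\Gamma(n-1)\delta_{P_{i},t_{i}}^{n-1}$. Throughout $t_{i}\sim\tau^{-1/2}$ and $|P_{1}-P_{2}|\geq\varepsilon_{0}$, so $t_{i}^{-1}\sim\tau^{1/2}$ is the natural small parameter.

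For the four $H^{\sigma}$-inner product identities \eqref{p1p1}--\eqref{P1P1} I would invoke
\[
\langle\delta_{P_{i},t_{i}},v\rangle=\int_{\mathbb{S}^{n}}(P_{\sigma}\delta_{P_{i},t_{i}})v=\Gamma(n-1)\int_{\mathbb{S}^{n}}\delta_{P_{i},t_{i}}^{n-1}v.
\]
Specialised to $v=\delta_{P_{1},t_{1}}$ this turns \eqref{p1p1} into $\Gamma(n-1)\int\delta_{P,t}^{n}$, which after the stereographic projection and the rescaling $y\mapsto y/t$ becomes a dimensionless integral equal to $2^{n-1}|\mathbb{S}^{n-1}|\mathrm{B}(n/2,n/2)$. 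The same identity combined with Lemma \ref{lema.1}'s estimate \eqref{a.3} yields \eqref{a.10}. For \eqref{part11} and \eqref{P1P1} I would differentiate $\delta_{P_{1},t_{1}}$ in $t_{1}$ or $P_{1}^{(\ell)}$ and apply the rescaling: the derivatives satisfy the pointwise bounds $|\partial_{t_{1}}\delta_{P_{1},t_{1}}|\leq Ct_{1}^{-1}\delta_{P_{1},t_{1}}$ and $|\partial_{P_{1}^{(\ell)}}\delta_{P_{1},t_{1}}|\leq Ct_{1}\delta_{P_{1},t_{1}}$, so the integrals reduce to dimensionless constants multiplied by $t_{1}^{-2}$ and $t_{1}^{2}$ respectively. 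The off-diagonal orthogonality $\ell\neq m$ in \eqref{P1P1} follows from the odd symmetry $y_{(\ell)}\mapsto -y_{(\ell)}$ of the integrand.

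The $L^{p}$ estimates \eqref{1-tau}, \eqref{p-p1}, \eqref{n.1}, \eqref{n.6}, \eqref{a.11} and \eqref{n.2} are handled by splitting $\mathbb{S}^{n}$ into a small geodesic ball $B(P_{1},r_{0})$ and its complement: on $\mathbb{S}^{n}\setminus B(P_{1},r_{0})$ one has $\delta_{P_{1},t_{1}}\leq Ct_{1}^{-1}$ and $|\partial_{t_{1}}\delta_{P_{1},t_{1}}|\leq Ct_{1}^{-3}$, while on $B(P_{1},r_{0})$ the factor $\delta_{P_{2},t_{2}}$ is pointwise bounded by $Ct_{2}^{-1}$. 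Combined with the rescaling $y\mapsto y/t_{1}$ and H\"older's inequality this yields the required powers of $\tau$. The difference estimates \eqref{a.9} and \eqref{a.71} are proved by writing
\[
\delta^{\alpha-\tau}-\delta^{\alpha}=\delta^{\alpha}(e^{-\tau\log\delta}-1)=-\tau\delta^{\alpha}\log\delta+O(\tau^{2}\delta^{\alpha}\log^{2}\delta),
\]
and observing that over the concentration region where $\delta\sim t\sim\tau^{-1/2}$ one has $|\log\delta|\sim|\log\tau|$, which is the source of the logarithmic factor.

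The main obstacle is the uniform bookkeeping of powers of $\tau$ and $|\log\tau|$ as $(\alpha,t,P)$ varies over $\Sigma_{\tau}$. The most delicate case is the mixed integrals \eqref{n.6}, \eqref{a.11} and \eqref{n.2}, where the $t_{1}$-derivative of $\delta_{P_{1},t_{1}}$ must be combined with a bubble $\delta_{P_{2},t_{2}}^{a}$ concentrated at a different point: the extra factor $t_{1}^{-1}$ coming from $\partial_{t_{1}}$ has to be tracked together with the off-diagonal decay to obtain the claimed $O(\tau^{3/2})$ or $o(\tau^{3/2})$ bounds. For these I would first reduce to the pointwise inequality $|\partial_{t_{1}}\delta_{P_{1},t_{1}}|\leq Ct_{1}^{-1}\delta_{P_{1},t_{1}}$, which converts them into $L^{p}$ bounds for pure products of bubbles, and then apply the rescaling and H\"older arguments of the previous paragraph. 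All of the remaining inequalities then follow by the same routine.
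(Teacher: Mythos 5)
The paper offers no written proof of this appendix lemma---it is recorded as a collection of elementary calculations---and your plan (stereographic projection with $P_1$ as the south pole, exact scale covariance of the bubbles, the identity $P_{\sigma}\delta_{P,t}=\Gamma(n-1)\delta_{P,t}^{\,n-1}$ to convert $H^{\sigma}$ inner products into integrals of products of bubbles, near/far splitting with $\delta_{P_2,t_2}\leq Ct_2^{-1}$ near $P_1$, and the expansion $\delta^{-\tau}=1-\tau\log\delta+O(\tau^{2}\log^{2}\delta)$ together with $|\log\delta|\leq\log t\sim|\log\tau|$) is precisely that verification and does yield every stated bound. Two small corrections: away from $P_1$ the correct bound is $|\partial_{t_1}\delta_{P_1,t_1}|\leq Ct_1^{-2}$, not $Ct_1^{-3}$, which is harmless because your principal reduction $|\partial_{t_1}\delta_{P_1,t_1}|\leq Ct_1^{-1}\delta_{P_1,t_1}$ already gives \eqref{n.6}, \eqref{a.11}, \eqref{n.2} with room to spare; and for the exact identities \eqref{part11}, \eqref{P1P1} the pointwise derivative bounds alone give only upper bounds---the equalities come from differentiating the bubble equation ($P_{\sigma}\partial\delta=\Gamma(n-1)(n-1)\delta^{n-2}\partial\delta$) and the exact change of variables $z=t_1y$, which your rescaling step does supply, with the off-diagonal vanishing in \eqref{P1P1} by odd symmetry as you say. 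Note also that $\langle\delta_{P_1,t_1},\delta_{P_1,t_1}\rangle=\Gamma(n-1)\int_{\mathbb{S}^n}\delta_{P_1,t_1}^{n}$, so the constant in \eqref{p1p1} should carry the factor $\Gamma(n-1)$; this discrepancy is present in the paper's own statement and is immaterial to how \eqref{p1p1} is used, where only a $t$-independent positive constant is needed.
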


\begin{lemma}
In addition to the hypotheses of Lemma \ref{lema.1}, we assume that $ K\in C^{1}(\mathbb{S}^n)$. Then
\begin{align}\label{n.3}
\frac{\partial}{\partial t_{1}}\int_{\sn}(K-K(P_{1}))\delta_{P_{2},t_{2}}
\delta_{P_{1},t_{1}}^{n-1-\tau}=O(\tau^2),
\end{align}
\begin{align}\label{n.4}
\frac{\partial }{\partial t_{1}}\int_{\sn}(K-K(P_{2}))
\delta_{P_{1},t_{1}}\delta_{P_{2},t_{2}}^{n-1-\tau}=O(\tau^2).
\end{align}
\end{lemma}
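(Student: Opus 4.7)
Both identities share the same underlying structure: an integrand containing a factor $(K - K(P_i))$ that vanishes at the concentration point $P_i$ of a bubble of the form $\delta_{P_i,t_i}^{n-1-\tau}$, and one differentiates in a scaling parameter. My plan is to establish both via a local Taylor-expansion and symmetry argument in stereographic coordinates based at the concentration point. The key mechanism is that after differentiating in $t_1$, the bubble-derived weight in the relevant coordinates is radial, so the leading linear-in-$y$ term of the Taylor expansion of $(K-K(P_i))\cdot(\text{other smooth factor})$ integrates to zero, and the next-order remainder is small enough to yield $O(\tau^2)$.

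For \eqref{n.3}, I would first differentiate under the integral to obtain
\[
\partial_{t_1}I \;=\; (n-1-\tau)\int_{\sn}(K-K(P_1))\,\delta_{P_2,t_2}\,\delta_{P_1,t_1}^{n-2-\tau}\,\partial_{t_1}\delta_{P_1,t_1}\,\ud vol,
\]
and choose $\rho>0$ with $d(P_1,P_2)>2\rho$. On $\sn\setminus B_\rho(P_1)$, the pointwise bounds $|\delta_{P_1,t_1}|\le Ct_1^{-1}$, $|\partial_{t_1}\delta_{P_1,t_1}|\le Ct_1^{-2}$, and $\|\delta_{P_2,t_2}\|_{L^1(\sn)}\le Ct_2^{-1}$ yield an outer contribution of order $O(t_1^{-n}t_2^{-1})=O(\tau^{(n+1)/2})=O(\tau^2)$. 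In $B_\rho(P_1)$, pass to stereographic coordinates from $-P_1$, so that $\delta_{P_1,t_1}(y)=t_1(1+|y|^2)/(1+t_1^2|y|^2)$, and introduce $F(y):=(\tilde K(y)-\tilde K(0))\tilde\delta_{P_2,t_2}(y)$. Since $P_2$ maps to a point $y_2$ with $|y_2|$ bounded below, $\tilde\delta_{P_2,t_2}$ and its first two derivatives are $O(t_2^{-1})=O(\tau^{1/2})$ uniformly on $|y|\le\rho$, so by $K\in C^2$ one has $F(y) = \tilde\delta_{P_2,t_2}(0)\,\nabla\tilde K(0)\cdot y + R(y)$ with $|R(y)|\le C|y|^2/t_2$. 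The weight $(1-t_1^2|y|^2)/[(1+|y|^2)^{1+\tau}(1+t_1^2|y|^2)^{n-\tau}]$ produced by $\partial_{t_1}\delta_{P_1,t_1}^{n-1-\tau}$ is radial, so the linear term integrates to $0$ by odd symmetry. For the remainder, after the rescaling $u=t_1y$ the residual integral $\int |u|^2|1-|u|^2|/(1+|u|^2)^{n-\tau}\,\ud u$ is uniformly bounded for $n\ge 5$ (which holds since $\sigma\ge 3/2$, i.e.\ $n\ge 5$), giving an inner contribution of order $t_1^{n-2-\tau}\cdot t_1^{-n-2}\cdot t_2^{-1}=O(t_1^{-4-\tau}t_2^{-1})=O(\tau^{5/2})=O(\tau^2)$.

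For \eqref{n.4}, I would run the symmetric argument with the roles of $P_1$ and $P_2$ interchanged, working in stereographic coordinates from $-P_2$. Differentiating gives $\int(K-K(P_2))\eta\,\delta_{P_2,t_2}^{n-1-\tau}\,\ud vol$ with $\eta:=\partial_{t_1}\delta_{P_1,t_1}$, and direct computation shows $\|\eta\|_{L^\infty(\sn)}\le C$, $\|\eta\|_{L^1(\sn)}=O(t_1^{-2})=O(\tau)$, and that $\tilde\eta$ is $C^2$-smooth near $z=0$ with derivatives of order $O(t_1^{-2})=O(\tau)$. On $\sn\setminus B_\rho(P_2)$ the bound $\delta_{P_2,t_2}^{n-1-\tau}\le Ct_2^{-(n-1-\tau)}$ combined with $\|(K-K(P_2))\eta\|_{L^1(\sn)}=O(\tau)$ gives an outer contribution $O(\tau^{(n+1)/2})=O(\tau^2)$. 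In the inner region, $G(z):=(\tilde K(z)-\tilde K(0))\tilde\eta(z)$ satisfies $G(0)=0$ with linear part $\tilde\eta(0)\nabla\tilde K(0)\cdot z=O(\tau|z|)$ and quadratic remainder $O(\tau|z|^2)$; the radial weight $\tilde\delta_{P_2,t_2}^{n-1-\tau}(2/(1+|z|^2))^n$ kills the linear part by odd symmetry, and the remainder contributes $\tau\cdot O(\tau^{3/2})=O(\tau^{5/2})=O(\tau^2)$, using the standard bubble estimate $\int|z|^2\delta_{P_2,t_2}^{n-1-\tau}\,\ud vol=O(t_2^{-3})$ for $n\ge 5$.

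The main technical point is verifying the sharpness of the symmetry cancellation: one must ensure that the weight emerging from $\partial_{t_1}\delta_{P_1,t_1}^{n-1-\tau}$ (respectively $\delta_{P_2,t_2}^{n-1-\tau}$) is genuinely radial in the chosen stereographic variable, and that the non-bubble factor admits a second-order Taylor expansion with a $C^2$-coefficient of size $O(\tau^{1/2})$ (respectively $O(\tau)$) in a fixed neighborhood. Both requirements rest on the separation $|P_1-P_2|\ge\varepsilon_0$, which keeps the ``other'' bubble smooth with well-controlled derivatives in the ball where we Taylor-expand. Once these are in place the rest reduces to elementary integrals that are uniformly convergent for $n\ge 5$, and the estimates close with the claimed $O(\tau^2)$ bound.
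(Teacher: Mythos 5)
The paper gives no argument for this lemma at all — it is listed among the appendix estimates ``that can be verified by elementary calculations'' — so the only question is whether your verification is sound. Your overall scheme is: the inner/outer decomposition, the observation that in stereographic coordinates centered at the concentration point the weight produced by $\partial_{t_1}\delta_{P_1,t_1}^{n-1-\tau}$ (resp.\ $\delta_{P_2,t_2}^{n-1-\tau}$) is radial, and the power counting with $t_1,t_2\sim\tau^{-1/2}$ and $n=2\sigma+2\ge 5$ are all correct, and your outer-region estimates are fine.

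Two remarks. First, your inner argument Taylor-expands $K$ to second order (``by $K\in C^2$''), while the lemma assumes only $K\in C^{1}$; as written, your proof establishes the statement under a stronger hypothesis than the one stated. Second, the symmetry cancellation you engineer is not actually needed for the claimed bound, and dropping it also removes the regularity issue: for \eqref{n.3}, on $B_{\rho}(P_1)$ the crude bounds $|K-K(P_1)|\le C|y|$, $\delta_{P_2,t_2}\le C t_2^{-1}$, together with $\int_{B_{\rho}}|y|\,\delta_{P_1,t_1}^{n-2-\tau}\,|\partial_{t_1}\delta_{P_1,t_1}|\,\ud vol=O(t_1^{-3-\tau})$ (the rescaled integral converges since $n\ge 5$), already give $O(t_1^{-3}t_2^{-1})=O(\tau^{2})$; for \eqref{n.4}, the bounds $|\partial_{t_1}\delta_{P_1,t_1}|=O(t_1^{-2})=O(\tau)$ near $P_2$ and $\int_{B_{\rho}(P_2)}|z|\,\delta_{P_2,t_2}^{n-1-\tau}\,\ud vol=O(t_2^{-2-\tau})=O(\tau)$ give $O(\tau^{2})$, and the outer regions are of lower order exactly as you computed. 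So the first-order (Lipschitz) bound on $K-K(P_i)$, which is available under the stated $C^{1}$ hypothesis, closes the proof; your sharper $O(\tau^{5/2})$ inner bounds obtained via the odd-symmetry cancellation are a legitimate bonus in the $C^{2}$ setting in which the lemma is actually applied in the paper, but they should not be presented as necessary for the $O(\tau^{2})$ statement.
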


\begin{lemma}
Let $\varepsilon_0, \tau, A$ be as in Lemma \ref{lema.1}, $P_{1},P_2,P_3 \in \mathbb{S}^n$
satisfy $|P_i-P_j|\geq \varepsilon_0,$  $i\neq j,$
 and $A^{-1}\tau^{-1/2}<t_1,t_2,t_3\leq A\tau^{-1/2}.$ Then,  we have,
 \begin{align}\label{n.61}
\Big\| \delta_{P_{2},t_2}^{n-2-\tau}\delta_{P_{3},t_3}
\frac{\partial \delta_{P_{1},t_1}}{\partial t_{1}}\Big\|_{L^{1}(\mathbb{S}^{n})}
=o(\tau^{3/2}),
\end{align}
\begin{align}\label{n.8}
\int_{\sn} \delta_{P_1,t_1}^{n-2-\tau}\delta_{P_{2},t_2}
\Big|\frac{\partial\delta_{P_1, t_1}}{\partial P_1}\Big|=O(\tau^{1/2}),
\end{align}
\begin{align}\label{n.5}
\Big\|\delta_{P_{1},t_{1}}^{n-3-\tau}\delta_{P_{2},t_{2}}
\Big|\frac{\partial \delta_{P_{1},t_{1}}}{\partial P_{1}
}\Big|\Big\|_{L^{n/(n-1)}(\sn)}=O(\tau^{1/2}),
\end{align}
\be\label{n.7}
\int_{\mathbb{S}^n}
\delta_{P_{1},t_1}^{n-3-\tau}\delta_{P_2,t_2}^{2}
\Big|\frac{\partial \delta_{P_{1},t_1}}{\partial P_{1}}\Big|=O(\tau^{3/2}),
\ee
\be\label{n.9}
\Big|\frac{\partial}{\partial P_{1}}
\int_{\sn}\delta_{P_{2},t_2}^{n-1-\tau}\delta_{P_1,t_1}\Big|=O(\tau).
\ee
\end{lemma}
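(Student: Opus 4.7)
The plan is to prove all five estimates by stereographic projection centered at the concentration point $P_{1}$ combined with region splitting. First I would take the stereographic projection with south pole $P_{1}$, so that $\delta_{P_{1},t_{1}}\circ F$ becomes (a multiple of) the standard bubble $\omega_{0,t_{1}}(y)=t_{1}(1+|y|^{2})/(1+t_{1}^{2}|y|^{2})$; the hypothesis $|P_{1}-P_{j}|\geq\varepsilon_{0}$ for $j=2,3$ then yields the pointwise bounds $\delta_{P_{j},t_{j}}\leq Ct_{j}^{-1}$ on $B_{\varepsilon_{0}/2}(0)$ (in stereographic coordinates). The derivatives $\partial_{t_{1}}\delta_{P_{1},t_{1}}$ and $\partial_{P_{1}^{(\ell)}}\delta_{P_{1},t_{1}}$ translate to explicit rational functions of $y$ and $t_{1}$ whose $L^{p}$ norms can be read off directly.

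Next I would split each integral into the concentration region $B_{\varepsilon_{0}/2}(0)$ and its complement. Inside the concentration region, the spectator factors $\delta_{P_{2},t_{2}}^{\alpha}\delta_{P_{3},t_{3}}^{\beta}$ can be pulled out as $O(t_{2}^{-\alpha}t_{3}^{-\beta})=O(\tau^{(\alpha+\beta)/2})$, reducing matters to a single-bubble integral of exactly the type appearing in \eqref{a.11}, \eqref{n.2}, \eqref{n.5}, which is then evaluated by the rescaling $y\mapsto t_{1}y$. Outside the concentration region, $\delta_{P_{1},t_{1}}$ and its derivatives are $O(t_{1}^{-1})$ uniformly, so this contribution is subleading. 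Combined with $t_{i}\sim\tau^{-1/2}$ this yields the advertised orders: for \eqref{n.61} one pulls out $t_{2}^{-(n-2)}t_{3}^{-1}=O(\tau^{(n-1)/2})$ and uses the sharp bound on $\int|\partial_{t_{1}}\delta_{P_{1},t_{1}}|$; for \eqref{n.8}, \eqref{n.5}, \eqref{n.7} one substitutes the explicit form of $\nabla_{P_{1}}\delta_{P_{1},t_{1}}$ and proceeds analogously, using the auxiliary factor $|y-P_{1}|$ to absorb half an extra power of $t_{1}^{-1}$ where needed.

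The subtlest case is \eqref{n.9}, because a naive pointwise bound on $\partial_{P_{1}^{(\ell)}}\delta_{P_{1},t_{1}}$ would only give $O(\tau^{1/2})$. Here I would first pass the derivative inside the integral (permitted since $\delta_{P_{2},t_{2}}$ is independent of $P_{1}$) to get $\int\delta_{P_{2},t_{2}}^{n-1-\tau}\,\partial_{P_{1}^{(\ell)}}\delta_{P_{1},t_{1}}$. Writing $\delta_{P_{2},t_{2}}^{n-1-\tau}$ as its value at $P_{1}$ plus a linear remainder, the constant piece equals $\delta_{P_{2},t_{2}}^{n-1-\tau}(P_{1})\cdot\partial_{P_{1}^{(\ell)}}\int_{\sn}\delta_{P_{1},t_{1}}$, which vanishes because $\int_{\sn}\delta_{P_{1},t_{1}}$ is independent of $P_{1}$ by rotational invariance of the sphere. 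The surviving linear part is then estimated by combining the pointwise bound $|\nabla\delta_{P_{2},t_{2}}|\leq Ct_{2}^{-1}$ on the concentration region with the computed $L^{1}$ norm of $|y-P_{1}|\,|\partial_{P_{1}^{(\ell)}}\delta_{P_{1},t_{1}}|$, producing the claimed $O(\tau)$.

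The main obstacle is the bookkeeping: each estimate involves several competing powers of $t_{i}$ together with $\tau$-corrections in the exponents $n-2-\tau$, $n-3-\tau$, $n-1-\tau$, and the borderline $L^{1}$-type integrands produce logarithmic factors that must be checked to still fit inside $o(\tau^{3/2})$ in \eqref{n.61}. All of these are handled mechanically once the explicit formulas for $\omega_{0,t_{1}}$ and its derivatives are in place and one repeatedly invokes the integrals collected earlier in this appendix, most notably \eqref{a.9}, \eqref{1-tau}, \eqref{p-p1}, \eqref{n.1}, \eqref{a.11}.
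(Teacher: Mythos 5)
Your overall strategy---stereographic projection at $P_{1}$, the pointwise bounds $\delta_{P_{j},t_{j}}=O(t_{j}^{-1})$ away from $P_{j}$, splitting $\sn$ into caps around $P_{1},P_{2},P_{3}$ and the remainder, and the rescaling $z=t_{1}y$ for the resulting single-bubble integrals---is exactly the elementary computation these appendix estimates call for (the paper gives no proof of this lemma), and it does deliver \eqref{n.61}, \eqref{n.8}, \eqref{n.5}, \eqref{n.7} with the stated powers of $\tau$. One caveat on your phrase ``outside the concentration region \dots subleading'': there the spectator bubbles are not pointwise small near their own centers, so you must use their integrated smallness, e.g. $\int_{\sn}\delta_{P_{2},t_{2}}^{\,n-2-\tau}=O(t_{2}^{-2-\tau})$ (which uses $n\ge 5$, valid here since $\sigma\ge 3/2$); for \eqref{n.61} the cap around $P_{2}$ is in fact the largest contribution, of order $\tau^{5/2}$ up to harmless $\tau^{\tau/2}$ factors, still comfortably $o(\tau^{3/2})$.

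The place where your write-up goes astray is \eqref{n.9}. First, the premise is off: the direct estimate already gives $O(\tau)$. Indeed, on $\{d(x,P_{1})\ge \varepsilon_{0}/2\}$ one has $|\partial_{P_{1}}\delta_{P_{1},t_{1}}|=O(t_{1}^{-1})$ while $\|\delta_{P_{2},t_{2}}^{\,n-1-\tau}\|_{L^{1}(\sn)}=O(t_{2}^{-1-\tau})$, so that region contributes $O(t_{1}^{-1}t_{2}^{-1-\tau})=O(\tau)$, and the cap around $P_{1}$ contributes only $O(t_{2}^{-(n-1-\tau)}t_{1}^{-1})$, which is negligible; no cancellation is needed. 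Second, the cancellation argument as you state it is internally inconsistent: the identity $\int_{\sn}\partial_{P_{1}}\delta_{P_{1},t_{1}}=0$ is a global one, whereas the linearization bound coming from $|\nabla\delta_{P_{2},t_{2}}|\le Ct_{2}^{-1}$ holds only away from $P_{2}$. If you Taylor-expand $\delta_{P_{2},t_{2}}^{\,n-1-\tau}$ around $P_{1}$ globally, the gradient bound fails near $P_{2}$ (there $|\nabla\delta_{P_{2},t_{2}}|\sim t_{2}^{2}$), and precisely that cap carries the dominant $O(\tau)$ term; if instead you expand only on the cap around $P_{1}$, the constant term no longer integrates to zero there, so nothing cancels. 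The repair is simply to drop the cancellation step and treat \eqref{n.9} with the same splitting you use for the other four estimates; with that change the proof is complete.
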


\bibliographystyle{plain}

\def\cprime{$'$}

\end{document}